\documentclass[12pt]{amsart}

\usepackage{amsmath}
\usepackage{caption}
\usepackage[curve]{xypic}
\usepackage{cite}
\usepackage{enumitem}
\usepackage{color}
\usepackage{url}
\usepackage[usenames,dvipsnames]{xcolor}
\usepackage{graphicx}
\usepackage{bm}
\usepackage[figure,norelsize]{algorithm2e}

\makeatletter 
\def\@cite#1#2{{\m@th\upshape\bfseries%
[{#1\if@tempswa{\m@th\upshape\mdseries, #2}\fi}]}}
\makeatother 

\theoremstyle{plain}
\newtheorem{thm}{Theorem}[section]
\newtheorem{cor}[thm]{Corollary}
\newtheorem{prop}[thm]{Proposition}
\newtheorem{lem}[thm]{Lemma}

\newtheorem{ass}[thm]{Assumption}
\theoremstyle{definition}

\newtheorem{quest}[thm]{Question}
\newtheorem{prob}[thm]{Problem}

\theoremstyle{remark}
\newtheorem{rem}[thm]{Remark}

\numberwithin{equation}{subsection}
\captionsetup{figurewithin=section}

\renewcommand{\bold}[1]{\medskip \noindent {\bf #1 }\nopagebreak}

\newcommand{\nc}{\newcommand}
\newcommand{\rnc}{\renewcommand}
\newcommand{\e}{\varepsilon}

\newcommand{\ann}[1]{\marginpar{\tiny{}}}

\nc\bA{\mathbb{A}}
\nc\bB{\mathbb{B}}
\nc\bC{\mathbb{C}}
\nc\bD{\mathbb{D}}
\nc\bE{\mathbb{E}}
\nc\bF{\mathbb{F}}
\nc\bG{\mathbb{G}}
\nc\bH{\mathbb{H}}
\nc\bI{\mathbb{I}}
\nc{\bJ}{\mathbb{J}} 
\nc\bK{\mathbb{K}}
\nc\bL{\mathbb{L}}
\nc\bM{\mathbb{M}}
\nc\bN{\mathbb{N}}
\nc\bO{\mathbb{O}}
\nc\bP{\mathbb{P}}
\nc\bQ{\mathbb{Q}}
\nc\bR{\mathbb{R}}
\nc\bS{\mathbb{S}}
\nc\bT{\mathbb{T}}
\nc\bU{\mathbb{U}}
\nc\bV{\mathbb{V}}
\nc\bW{\mathbb{W}}
\nc\bY{\mathbb{Y}}
\nc\bX{\mathbb{X}}
\nc\bZ{\mathbb{Z}}
\nc\cA{\mathcal{A}}
\nc\cB{\mathcal{B}}
\nc\cC{\mathcal{C}}
\rnc\cD{\mathcal{D}}
\nc\cE{\mathcal{E}}
\nc\cF{\mathcal{F}}
\nc\cG{\mathcal{G}}
\rnc\cH{\mathcal{H}}
\nc\cI{\mathcal{I}}
\nc{\cJ}{\mathcal{J}} 
\nc\cK{\mathcal{K}}
\rnc\cL{\mathcal{L}}
\nc\cM{\mathcal{M}}
\nc\cN{\mathcal{N}}
\nc\cO{\mathcal{O}}
\nc\cP{\mathcal{P}}
\nc\cQ{\mathcal{Q}}
\rnc\cR{\mathcal{R}}
\nc\cS{\mathcal{S}}
\nc\cT{\mathcal{T}}
\nc\cU{\mathcal{U}}
\nc\cV{\mathcal{V}}
\nc\cW{\mathcal{W}}
\nc\cY{\mathcal{Y}}
\nc\cX{\mathcal{X}}
\nc\cZ{\mathcal{Z}}

\newcommand{\bk}{{\mathbf{k}}}

\nc{\dmo}{\DeclareMathOperator}
\rnc{\Re}{\operatorname{Re}}
\rnc{\Im}{\operatorname{Im}}
\dmo{\rank}{rank}
\dmo{\End}{End}
\dmo{\Hom}{Hom}
\dmo{\Jac}{Jac}
\dmo{\Id}{Id}
\dmo{\Ann}{Ann}
\dmo{\Area}{Area}
\dmo{\CP}{\bP^1}

\title{Full rank affine invariant submanifolds}
%
\author[Mirzakhani]{Maryam~Mirzakhani}
\author[Wright]{Alex~Wright}
%

\begin{document}
\maketitle
\thispagestyle{empty}


\vspace{1cm}



\section{Introduction}\label{S:intro}

\ann{Code: ``R1-R4" denotes referee reports 1-4. Report 1 was a preliminary report by the author of report 3. Reports 2, 3, 4 have 6, 1, and 4 pages respectively. All comments other than typos have been added as annotations. ``A" denotes the response of the authors and marks all changes the authors made. All comments by all referees have been addressed.} 
\bold{General context.} A translation surface is a pair $(X, \omega)$, where $X$ is a Riemann surface and $\omega$ is a nonzero Abelian differential on $X$. The moduli space of translation surfaces of fixed genus is the complement of the zero section of the Hodge bundle over the  moduli space of Riemann surfaces, and admits a $GL(2, \bR)$ action. The study of this action  grew out of  problems in Teichm\"uller theory, and dynamical systems arising in physics such as billiards in a polygon, and has been enriched in the past decades by connections to ergodic theory on homogeneous spaces and  algebraic geometry.

The relation to billiards is given via a  procedure called unfolding, which associates a translation surface 
to each polygon whose angles are rational multiples of $\pi$. However, many concrete applications to billiards have been elusive, because the set of unfoldings of polygons is measure zero. Thus, despite the fact that almost every translation surface has dense $GL(2, \bR)$ orbit, the $GL(2, \bR)$ orbit closures of unfoldings have remained mysterious. 

One application of our main result will be to show that infinitely many triangles unfold to translation surfaces with dense $GL(2,\bR)$ orbit. Besides  polygons all of whose angles are multiples of $\frac\pi2$ and polygons that unfold to translation surfaces of low genus or with closed orbit, these are the first examples  where the orbit closure of the unfolding of\ann{A: Typo found by R4 fixed.} a polygon has been computed. Our proof uses new results explained below, and builds upon a wide array of technology that has been developed over the past decades by a large and diverse group of mathematicians.

Recently discovered examples of\ann{A: Typo found by R2 fixed.} $GL(2, \bR)$ orbit closures have lead to the absence even of conjectures on what types of $GL(2, \bR)$ orbit closures exist. Our intuition is that very large non-trivial orbit closures should not exist, and largeness should be measured by rank, an integer between 1 and the genus. Our main result, which classifies maximal rank orbit closures, is a first step in this direction. The proof  proceeds by inductive arguments, and marked points produced using degenerations are remembered and made use of.  We hope these techniques will be applicable in future work.

\bold{Statement of main results.} The Hodge bundle is stratified according to the number and order of the zeros, and this stratification is preserved by the $GL(2, \bR)$ action. 
Following \cite{EM, EMM}, orbit closures for the $GL(2, \bR)$ action are also known as affine invariant submanifolds. We say that an affine invariant submanifold has full rank if its rank has maximal value, that is, the rank is equal to the genus. Rank is defined as one half of the dimension of the  image of the tangent space in absolute\ann{A: Typo found by R2 fixed.} cohomology (see Section \ref{S:background}).  
So $\cM$ has full rank if and only if the absolute periods of Abelian differentials are locally unconstrained in $\cM$. 

We prove that \ann{A: Typo found by R2 fixed.} non-trivial full rank affine invariant submanifolds do not exist.  

\begin{thm}\label{T:main}
Let $\cM$ be a full rank affine invariant submanifold. Then $\cM$ is either a connected component of a stratum, or the locus of hyperelliptic translation surfaces therein. 
\end{thm}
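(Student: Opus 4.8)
The plan is to argue by induction on the complexity of the ambient stratum $\cH$ --- say on $\dim\cH$, or on the genus with a secondary induction on the number of zeros --- proving simultaneously a version of the statement that permits and tracks marked points. Assume $\cM$ is full rank but is neither a connected component of a stratum nor a hyperelliptic locus; we derive a contradiction. Two preliminary reductions are available. First, an absolute cycle whose $\omega$-period vanishes identically on a full rank $\cM$ must vanish in homology, since the image of $T\cM$ is all of $H^1(X;\bC)$; consequently any two $\cM$-parallel cylinders have core curves that are proportional in $H_1(X;\bQ)$, and (using Wright's bounds on the field of definition of an affine invariant submanifold) the field of definition of $\cM$ is $\bQ$, so every relation cutting out $T\cM$ has rational coefficients. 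Second, passing to a generic $(X,\omega)\in\cM$ and a completely periodic direction (Smillie--Weiss), we fix a decomposition into horizontal cylinders adapted to $\cM$.

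I would then degenerate. Choosing an equivalence class $\cC$ of horizontal cylinders and applying the standard dilation of Wright's Cylinder Deformation Theorem, one drives the area of the complement of $\cC$ to zero, producing a boundary component $\cM'$ of $\cM$ in the Mirzakhani--Wright partial compactification; the zeros of $\omega$ absorbed by the collapse survive as marked points on the lower-complexity components of $\cM'$, and these are remembered. The first key claim is that $\cM'$ is again full rank, that is, its rank equals the total genus of its components: full rank says the absolute periods are locally unconstrained on $\cM$, and comparing the relative cohomology of a nearly degenerate surface with that of its limit --- where the vanishing cycles account exactly for the loss of genus --- shows that no absolute constraint can survive. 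In particular $\cM'$ is not a ``diagonal'' locus, and each of its factors is individually full rank, so by the inductive hypothesis each is a connected component of a stratum or a hyperelliptic locus, carrying the remembered marked points.

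The main work --- and the step I expect to be the chief obstacle --- is the reverse passage: showing that the now-identified boundary $\cM'$ forces $\cM$ to be all of the ambient component $\cH$ (or of its hyperelliptic locus). Since full rank has already excluded every constraint on absolute periods, the question is purely about the relative-period directions, and the natural approach is local near the boundary: at a surface of $\cM$ close to a chosen boundary point, $T\cM$ is close to the sum of $T\cM'$ and the directions that undo the collapse, the latter being governed by the cylinders born in the collapsed region; since $\cM'$ is, by induction, a full stratum or hyperelliptic locus, those cylinders are as unconstrained as possible there, and one leverages this --- together with the $GL(2,\bR)$ action, further cylinder deformations, and the proportionality constraint on $\cM$-parallel cylinders --- to conclude that $T\cM$ exhausts $T\cH$ (respectively the tangent space to the relevant hyperelliptic locus). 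The genuinely delicate part is the bookkeeping of the remembered marked points: a priori they could be confined to special sub-loci over $\cM'$ (periodic points), so one must carry an appropriate marked-point version of the statement through the induction and rule out that these points secretly constrain the collapsing directions; separating the hyperelliptic alternative, by tracking how the hyperelliptic involution degenerates and how it permutes the factors and marked points of $\cM'$, is woven into the same analysis.
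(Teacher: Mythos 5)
There are several genuine gaps, and the degeneration you propose is the wrong one.

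\textbf{Which degeneration.} You propose to pick an equivalence class $\cC$ of horizontal cylinders and collapse its complement. The paper does the opposite and, crucially, in a minimal way: it first produces (via Theorem~\ref{T:free}, which you never invoke) a surface with a collection $\cC$ of \emph{homologous} cylinders whose heights satisfy a nontrivial linear relation; then in each component of $X\setminus\cC$ it finds an auxiliary cylinder $D_i$ that is homologous to nothing, and degenerates by vertically squashing a single $D_i$, keeping $\cC$ and everything else intact. This choice is essential: $\cC$ survives on the boundary surface as the witness that the boundary submanifold is not a stratum component (Lemma~\ref{L:Mihyp}), and one gets \emph{two} boundary pieces $\cM_1,\cM_2$ by squashing $D_1$ and $D_2$ separately. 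Collapsing the entire complement of $\cC$ would destroy this structure and gives no two pieces to compare.

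\textbf{Full rank of the boundary is not automatic.} You assert that vanishing cycles account for the loss of genus, so no absolute constraint can survive. The paper explicitly exhibits a counterexample to this heuristic outside the affine-invariant context (Figure~\ref{F:RelToAbs}): a relative-period equation that imposes no absolute constraint upstairs can impose one after degeneration. The paper's Lemma~\ref{L:stillfullrank} is a careful argument: it splits into the case where genus drops (produce a Lagrangian from the surviving twists) and the case where genus does not drop (use Assumption~\ref{A1} about $\overline{\bQ}$-genericity of periods plus a hand-built class $v\in \Ann(V_i)\cap T\cM$). Your one-line justification would not survive scrutiny.

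\textbf{The ``reverse passage'' is where the actual argument lives, and you don't have it.} The paper does not show $T\cM$ equals $T\cH$ by some abstract count; rather it first shows (via Apisa's theorem on marked points over non-hyperelliptic strata, Theorem~\ref{T:apisa}, and its refinement Theorem~\ref{T:marked}) that each $\cM_i$ must be \emph{hyperelliptic} rather than a non-hyperelliptic stratum component, then that $\cC$ has exactly two cylinders, then that the two hyperelliptic involutions on $\pi_1(X_1,\omega_1,S_1)$ and $\pi_2(X_2,\omega_2,S_2)$ agree on the overlap $\overline{C_1\cup C_2}$ (using period genericity to pin down the involution uniquely) and so glue to a hyperelliptic involution on $(X,\omega)$ itself; density (Wright's field-of-definition results plus Assumption~\ref{A1}) then gives $\cM\subset$ hyperelliptic locus; finally a separate tangent-space argument gives equality. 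Your sketch gestures at tracking the involution through degeneration but does not identify the gluing mechanism, and it entirely misses the role of Apisa's theorem, which is what rules out the boundary pieces being non-hyperelliptic stratum components (a case in which there is no involution to glue at all).

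In short: the coarse outline (induction, degenerate, invoke the inductive hypothesis, reassemble) matches the paper, but the specific degeneration you choose is not the right one, the ``boundary is full rank'' step requires a real argument you haven't supplied and that fails as a general principle, and the reassembly step --- the actual heart of the proof --- requires the two-piece structure, Apisa's marked-point theorem, and the gluing of involutions, none of which appear in your sketch.
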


By work of Filip \cite[Theorem 1.6]{Fi2}, this is equivalent to the following. 



\begin{thm}\label{T:end}
If $\cM$ is any affine invariant submanifold other than a connected component of a stratum or a hyperelliptic locus, then the Jacobian of every translation surface in $\cM$ has non-trivial endomorphisms. 
\end{thm}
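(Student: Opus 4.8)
My plan is to deduce Theorem~\ref{T:end} from Theorem~\ref{T:main} and then prove the latter by induction. For the reduction, \cite[Theorem 1.6]{Fi2} shows that the Jacobian of every translation surface in $\cM$ has only trivial endomorphisms precisely when $\cM$ is a connected component of a stratum or a hyperelliptic locus, so the two theorems are equivalent and it suffices to show that a full rank affine invariant submanifold is either a connected component of a stratum or the locus of hyperelliptic surfaces in such a component.

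I would prove Theorem~\ref{T:main} by induction, on the genus with the number of zeros (and marked points) as a secondary parameter, establishing the sharper statement that permits marked points: a full rank affine invariant submanifold of translation surfaces with marked points is, after forgetting the marked points, a connected component of a stratum or a hyperelliptic locus therein, with the marked points forming a constrained configuration (in the hyperelliptic case, fixed points of the involution together with points interchanged in pairs by it). Low genus and small strata serve as base cases, where the needed classification is already available. The inductive engine is the boundary theory for affine invariant submanifolds: locating a surface in $\cM$ that is periodic in some direction, one collapses cylinders to pass to the boundary, and each boundary component is again an affine invariant submanifold, living in a product of strata of smaller complexity whose surfaces carry marked points recording both the original zeros and the newly pinched nodes.

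The first substantive step is to show that full rank descends to the boundary: because the absolute periods of Abelian differentials are locally unconstrained on $\cM$, the absolute periods surviving a degeneration remain locally unconstrained on each boundary piece, so every boundary component is again full rank (now as a family of surfaces with marked points), and the inductive hypothesis identifies it as a component of a stratum or a hyperelliptic locus with marked points. The second, and I expect harder, step is the reverse reconstruction: to deduce, from the knowledge that $\partial\cM$ is as large as possible, that $\cM$ itself is as large as possible. Here I would combine the cylinder deformation theorem and its refinements with the fact that the boundary pieces are full strata (with marked points): stretching back the cylinders that were collapsed, and using that the degenerations sweep out these full boundary families, should force $\cM$ to contain an open subset of the ambient stratum and hence to equal a whole component --- unless at every degeneration the pinched curves and the marked points are compatible with a hyperelliptic involution on the boundary pieces, in which case the same propagation instead forces $\cM$ into the hyperelliptic locus.

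The main obstacle is exactly this reconstruction step, together with the marked-point bookkeeping highlighted in the introduction. Knowing that $\partial\cM$ is everything does not by itself determine $\cM$, since a priori a proper affine invariant subvariety could share the same boundary; excluding this requires genuinely quantitative control, namely showing that the relevant cylinder and relative-period deformations sweep out a full-dimensional family of surfaces. One must also carefully separate the stratum-component alternative from the hyperelliptic alternative, tracking across each degeneration which marked points are fixed by, and which are interchanged by, the involutions on the boundary components, and must formulate the ``with marked points'' induction hypothesis so that it is strong enough to feed back into itself --- these, along with verification of the base cases, are where the real work lies.
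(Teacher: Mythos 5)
Your reduction to Theorem~\ref{T:main} via Filip's theorem is exactly what the paper does, and the high-level plan --- induct on genus and stratum, degenerate by collapsing cylinders, identify the boundary pieces, and reconstruct --- is in the right spirit. But several steps you treat as routine are, in the paper, nontrivial obstacles, and you are missing the essential entry point. You never invoke Theorem~\ref{T:free}, which is a new result proved in Section~\ref{S:free} and is what produces the starting configuration: a surface in $\cM$ carrying a collection of \emph{homologous} cylinders whose heights satisfy a nontrivial linear constraint, with a non-homologous cylinder $D_i$ in each complementary piece (Proposition~\ref{P:setup}). Without this, there is nothing to degenerate. More seriously, your claim that full rank descends to the boundary ``because the absolute periods surviving a degeneration remain locally unconstrained'' is asserted as if obvious, but the paper explicitly exhibits a cautionary example (Figure~\ref{F:RelToAbs}) showing that a linear equation on periods that does not restrict absolute periods can restrict them after degeneration. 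Lemma~\ref{L:stillfullrank} needs a careful argument using the isotropic span of the twists $u^{\cF_j}$ together with Assumption~\ref{A1}; your reasoning as stated would also ``prove'' the false general statement.

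Your ``reverse reconstruction'' framing is also not how the paper closes the argument, and you yourself correctly observe that knowing $\partial\cM$ is large cannot by itself determine $\cM$. The paper instead shows each boundary piece $\cM_i$ is \emph{not} a non-hyperelliptic stratum component by appealing to Apisa's theorem (Theorem~\ref{T:apisa}): if it were, the marked points created by the degeneration would have to be unconstrained, contradicting the inherited height constraint. The induction then forces each $\cM_i$ to be hyperelliptic, which (via Lemma~\ref{L:hypinv}) pins down $k=2$, and one then glues the two boundary involutions along $\overline{C_1\cup C_2}$ to get a hyperelliptic involution on $(X,\omega)$ itself (Lemma~\ref{L:subsetofhyp}); a separate argument gives equality with the full hyperelliptic locus. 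Finally, your proposal to strengthen the inductive statement to include marked points is a genuinely different route from the paper, which takes the marked-point classification (Theorems~\ref{T:apisa} and~\ref{T:marked}, due to Apisa and Apisa--Wright) as external input rather than reproving it inside the induction. That route is conceivable but would require substantial additional work that the paper cites rather than carries out.
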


 Theorem \ref{T:main} has applications to unfoldings of polygons. 

\begin{thm}\label{T:billiards}
Given any polygon all of whose angles are multiples of $\frac\pi3$, almost every polygon with the same angles unfolds to a translation surface whose orbit is dense in a connected component of a stratum. 
\end{thm}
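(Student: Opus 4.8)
The plan is to reduce the statement, via Theorem~\ref{T:main}, to showing that the $GL(2,\bR)$--orbit closure of the unfolding of a generic polygon in the family has full rank. Fix the angles, written as $\tfrac{a_i\pi}{3}$, and let $\cP$ be the set of polygons with these angles in a fixed cyclic order, parametrized by the side lengths $(\ell_1,\dots,\ell_k)$ subject to the two real closing relations $\sum \ell_i u_i=0$ (the $u_i$ being the fixed unit side directions); then $\cP$ is a convex, hence connected, open subset of an affine subspace of $\bR^k$, of dimension $k-2$. Since the least common multiple of the angle denominators is $3$, unfolding each $P\in\cP$ produces a translation surface $(X_P,\omega_P)$ carrying an action of the dihedral group $D_3$, with the order--$3$ rotation $\rho$ satisfying $\rho^*\omega_P=\zeta\omega_P$ for $\zeta=e^{2\pi i/3}$, and an anti-holomorphic reflection $\tau$ with $\tau^*\omega_P=\overline{\omega_P}$. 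Let $Z\subset X_P$ be the zeros of $\omega_P$; since the product of the two reflections in the sides at any vertex is a rotation of order $3$, all of $D_3$ stabilizes each vertex, so $\rho$ fixes $Z$ pointwise. Let $U$ be the locus of all such unfoldings inside the relevant stratum $\cH$. By \cite{EMM} orbit closures are affine invariant submanifolds and there are only countably many of them, so a Baire--category argument on the connected set $U$ shows that $U\subseteq\cM_0$ for some affine invariant submanifold $\cM_0$ that is the orbit closure of $(X_P,\omega_P)$ for a.e.\ $P\in\cP$. It therefore suffices to show that $\cM_0$ is a connected component of a stratum.

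The geometric input that makes this work for the prime $3$ (and would fail for larger $N$) is that $X_P/\langle\rho\rangle\cong\bP^1$. Indeed $X_P/D_3$ is the polygon $P$ itself, a closed disk with its edges as mirror boundary, so the closed Riemann surface $Y=X_P/\langle\rho\rangle$ carries the anti-holomorphic involution $\bar\tau$ induced by $\tau$, with $Y/\langle\bar\tau\rangle=X_P/D_3$ a disk; a closed Riemann surface that doubles a disk is $\bP^1$. Hence the $\rho$--invariant subspace $E_1$ of $H^1(X_P;\bC)$ vanishes, and since $\rho^*$ is real we have $\overline{E_\zeta}=E_{\bar\zeta}$, so $H^1(X_P;\bC)=E_\zeta\oplus E_{\bar\zeta}$ with $\dim_{\bC}E_\zeta=\dim_{\bC}E_{\bar\zeta}=g$. (For angles with least common multiple $N\geq 4$ there would be further nonzero eigenspaces of $\rho$, never seen by $\omega_P$, and the argument below would only yield rank $<g$.)

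To prove $\cM_0$ has full rank I would proceed as follows. Since $\cM_0\supseteq U$, its tangent space at $(X_P,\omega_P)$ contains that of $U$, in particular the $(k-2)$--dimensional real space $V$ of deformations obtained by varying the side lengths. Deforming $P$ keeps $\omega_P$ a $\zeta$--eigenform of $\rho$, so $V$ lies in the $\zeta$--eigenspace $E_\zeta^{\mathrm{rel}}$ of $H^1(X_P,Z;\bC)$; moreover $\tau$--equivariance of these deformations forces $V$ into the real form $\mathrm{Fix}(\sigma)$ for the anti-linear involution $\sigma(\eta)=\overline{\tau^*\eta}$ of $E_\zeta^{\mathrm{rel}}$. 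The restriction $p\colon E_\zeta^{\mathrm{rel}}\to E_\zeta$ is an isomorphism (onto because $p$ is onto and $\rho$--equivariant, injective because $\ker p$ lies in the image of $H^0(Z)$, on which $\rho$ acts trivially), so $\dim_{\bC}E_\zeta^{\mathrm{rel}}=g=k-2$; thus $\mathrm{Fix}(\sigma)$ has real dimension $k-2$, forcing $V=\mathrm{Fix}(\sigma)$ and hence $\bC\cdot V=E_\zeta^{\mathrm{rel}}$. Since $T\cM_0$ is complex-linear, $T\cM_0\supseteq E_\zeta^{\mathrm{rel}}$, so $p(T\cM_0)\supseteq E_\zeta$; and since $p(T\cM_0)$ is defined over $\bR$ it is conjugation-stable, so it also contains $E_{\bar\zeta}$, giving $p(T\cM_0)=H^1(X_P;\bC)$. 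Thus $\cM_0$ is full rank, so by Theorem~\ref{T:main} it is a connected component of a stratum or the locus of hyperelliptic surfaces in one. In the latter case a generic unfolding is hyperelliptic; for $g\geq 3$ this can be excluded by a short analysis of the automorphism group of a generic unfolding (a generic member of the family has no holomorphic automorphism beyond the powers of $\rho$, which contain no involution), and for $g\leq 2$ it is harmless because the relevant hyperelliptic locus is then the whole connected stratum. Hence $\cM_0$ is a connected component of a stratum, and a.e.\ polygon in the family unfolds to a surface whose $GL(2,\bR)$--orbit is dense in it.

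The main obstacle is the full-rank step: establishing that the side-length deformations span the entire $\zeta$--eigenspace of the relative cohomology, so that after invoking the standard facts that $T\cM_0$ is complex-linear and that its image in absolute cohomology is defined over $\bR$, one recovers all of $H^1(X_P;\bR)$. A secondary point requiring care is the exclusion of the hyperelliptic alternative, which rests on controlling the automorphism group of a generic unfolding and on identifying which hyperelliptic loci are themselves connected components of strata.
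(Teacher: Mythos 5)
Your proposal is correct and follows essentially the same route as the paper: reduce to Theorem~\ref{T:main} via countability of orbit closures, establish full rank by observing that the order-$3$ deck transformation $\rho=T$ has no invariant cohomology (so $H^1=E_\zeta\oplus E_{\bar\zeta}$, both of which lie in $p(T\cM)$ since the family of cyclic covers locally fills the $\zeta$--eigenspace of relative cohomology), and rule out the hyperelliptic alternative for $g\geq 3$ by showing the generic unfolding carries no extra involution. The paper packages these same three steps as Lemma~\ref{C:MP}, Corollaries~\ref{C:eigenandconjugate}--\ref{C:billiardsfullrank}, and Lemma~\ref{L:nothyp}, respectively.
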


\begin{thm}\label{T:triangles}
There are infinitely many rational triangles whose unfoldings have dense $GL(2, \bR)$ orbit in a connected component of a stratum. 
\end{thm}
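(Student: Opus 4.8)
The plan is to derive Theorem~\ref{T:triangles} from Theorem~\ref{T:main}. It suffices to produce an infinite family of pairwise non-similar rational triangles $T$ for which the $GL(2,\bR)$-orbit closure $\cM_T$ of the unfolding $M_T$ has full rank: by Theorem~\ref{T:main}, such an $\cM_T$ is then a connected component of a stratum or a hyperelliptic locus, and a direct look at the zero data of $M_T$ rules out the hyperelliptic case for all but finitely many members, leaving infinitely many triangles whose unfolding has dense orbit in a component of a stratum.

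For the family I would take triangles with one angle equal to $\frac\pi3$, say $T$ with angles $\left(\frac\pi3,\frac{p\pi}{q},\frac{(2q-3p)\pi}{3q}\right)$ for $\frac pq$ ranging over rationals in $(0,\frac23)$, deleting the finitely many values for which $T$ is a lattice triangle or $M_T$ falls into one of the known small affine invariant submanifolds. Unfolding six reflected copies of $T$ about the $\frac\pi3$-vertex presents $M_T$ as the unfolding of an explicit equilateral, $\bZ/3$-symmetric hexagon; this makes a preferred collection of cylinders and saddle connections on $M_T$ visible, and these are the input to the degeneration arguments.

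The heart of the matter is to prove $\cM_T$ has full rank. The idea is to collapse one of the cylinders made visible above, passing to a point of $\partial\cM_T$ in a stratum of lower complexity, and to induct. After the collapse one lands --- up to the marked points it creates --- on the unfolding of a simpler billiard table, and one invokes the inductive hypothesis, or in the base cases Theorem~\ref{T:billiards} (when the limit is, up to marked points, the unfolding of a polygon all of whose angles are multiples of $\frac\pi3$), to conclude that the boundary affine invariant submanifold has full rank. Since the collapse lowers the genus by one, a contrapositive argument forces $\cM_T$ to have full rank too: if the rank of $\cM_T$ were below the genus, the boundary piece would inherit the deficiency, contradicting what was just shown.

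The main obstacle is exactly this transfer of full rank across the degeneration. Orbit closures are not upper semicontinuous, so one cannot take a naive limit of the $\cM_T$; the transfer must be carried out using the explicit cylinder geometry of $M_T$ together with the classification, ruling out every intermediate-rank possibility for $\cM_T$ one at a time. The paper's device of remembering the marked points produced by a degeneration is precisely what makes this work: the marked points retain enough of the collapsed geometry that the inductive step detects the largest possible rank. Carrying it out requires arranging the family so that a suitable cylinder is unobstructed in $M_T$ and so that the boundary surfaces that appear are, after forgetting marked points and passing to finite covers, unfoldings of generic polygons with angles in $\frac\pi3\bZ$, which is where Theorem~\ref{T:billiards} is used.
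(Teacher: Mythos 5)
Your proposal takes a genuinely different route from the paper, but it has a gap that, as written, I do not see how to close.

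The paper does not prove Theorem~\ref{T:triangles} by degeneration. Instead it realizes the unfolding of a triangle with angles $(\tfrac{q_1}{k}\pi,\tfrac{q_2}{k}\pi,\tfrac{q_3}{k}\pi)$ as a cyclic cover $y^k = \prod(z-z_i)^{q_i}$ of $\bP^1$, decomposes $H^1$ into eigenspaces $H^1_a$ for the deck group, and uses Filip's result that the splitting of $H^1$ into $p(T\cM)$ plus its Galois conjugates plus the rest is a splitting of variations of Hodge structure. The full rank statement then becomes an arithmetic question about a subgroup $A\subset(\bZ/k)^*$, which is answered by showing a concrete second fundamental form $B_\omega(\omega_a,\omega_{2-a})$ is nonzero (Proposition~\ref{P:neq0}) and then choosing $k$ prime and the $q_i$ so that $a(2-a)^{-1}$ is a primitive root mod $k$, forcing $A=(\bZ/k)^*$. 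Degenerations of the unfolding itself play no role; they appear only inside the proof of Theorem~\ref{T:main}, which is invoked as a black box.

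The gap in your plan is the ``transfer of full rank across the degeneration,'' and the issue is not merely technical but directional. You collapse a cylinder in $M_T$, pass to a boundary affine invariant submanifold in genus $g-1$, show (say via Theorem~\ref{T:billiards}) that this boundary piece has full rank in genus $g-1$, and then want to conclude $\cM_T$ has full rank in genus $g$. But this implication does not hold: if $\rank\cM_T = g-1 < g$, the boundary piece can perfectly well have rank $g-1$, which is already full rank in its ambient genus. Full rank downstairs imposes no lower bound on the rank upstairs beyond the trivial one. (The paper's Lemma~\ref{L:stillfullrank} establishes the \emph{opposite} implication, that full rank passes to the boundary, and even that requires real work -- see the cautionary Figure~\ref{F:RelToAbs}.) Your ``contrapositive'' therefore does not go through. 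Separately, the claim that collapsing a cylinder in $M_T$ lands, up to marked points and finite covers, on the unfolding of a polygon with all angles in $\frac\pi3\bZ$ is asserted without justification and is not true for a generic choice of cylinder; the $\bZ/3$-symmetry of the hexagonal presentation does not by itself force the boundary surface to carry that structure. Both of these would need to be repaired before the degeneration route could be made to work, and the second fundamental form computation in the paper is precisely the device that avoids having to repair them.
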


Theorem \ref{T:triangles} is in contrast to the fact that unfoldings of triangles have many properties in common with translation surfaces with \emph{closed} $GL(2,\bR)$ orbits. For example, their Jacobians have a factor with real (and even complex) multiplication, and the difference of any two zeros of the Abelian differential is torsion in the Jacobian, compare to \cite{M, M2}.  

The greatest common denominator $k$ of the angles divided by $\pi$ plays a prominent role in the study of the unfolding of a rational polygon. The methods used to prove Theorem \ref{T:triangles} show that at least 74 percent of the 1436 non-isosceles triangles with $k$ odd and less than  $50$  unfold to translation surfaces with dense orbit. See Figure \ref{L:list} for a list when $k$ is odd and at most 25. When $k$ is even the unfolding has an involution, and in some cases when  this involution is hyperelliptic  the methods can show the orbit is dense in the hyperelliptic locus. 

The proof of Theorem \ref{T:triangles} also uses \ann{A: Typo found by R4 fixed.} work of Filip on variation of Hodge structures, and often gives non-trivial lower bounds for the rank of the orbit closure of the unfolding even in instances where the methods do not show the orbit closure has full rank. (See Theorem \ref{T:unstab}.) 
Since closed orbits are rank 1 orbit closures, our methods are  applicable to the open problem of  classifying obtuse triangles that unfold to translation surfaces with closed orbits \cite{H2, KS, Pu}.

\bold{A new result on cylinder deformations.} Every translation surface has infinitely many cylinders \cite{MasurClosed}, and the cylinder deformations introduced in \cite{Wcyl} can be used to study  affine invariant submanifolds. A cylinder on a translation surface in an affine invariant submanifold $\cM$ is called $\cM$-free if the result of stretching or shearing this cylinder remains in $\cM$. The following is a key ingredient in the proof of Theorem \ref{T:main}. It says that the only affine invariant submanifolds that are unconstrained in terms of cylinder deformations are connected components of strata.   

\begin{thm}\label{T:free}
Suppose that $\cM$ is an affine invariant submanifold for which every cylinder on every surface in $\cM$ is $\cM$-free. Then $\cM$ is a connected component of a stratum. 
\end{thm}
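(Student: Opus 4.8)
We aim to show that at one (equivalently, every) surface $(X,\omega)\in\cM$ the tangent space $T_{(X,\omega)}\cM\subseteq H^1(X,\Sigma;\bC)$ equals all of $H^1(X,\Sigma;\bC)$, where $\Sigma$ is the zero set of $\omega$; since $\cM$ is cut out in period coordinates by a fixed linear subspace, this forces $\cM$ to be open in its stratum, hence --- being closed and connected --- a connected component. Since surfaces with a periodic direction are dense in $\cM$, after acting by an element of $GL(2,\bR)$ we may take $(X,\omega)$ horizontally periodic, and with a little more care we arrange that it is periodic in two transverse directions. Denote the cylinders in these two directions by $C_1,\dots,C_n$ and $C_1',\dots,C_m'$, with core curves $\gamma_i,\gamma_j'$ and circumferences $c_i,c_j'$.

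The hypothesis feeds directly into the Cylinder Deformation Theorem of \cite{Wcyl}: since every cylinder is $\cM$-free, for each cylinder $C$ (on $(X,\omega)$ or on any other surface of $\cM$) the derivative $\sigma_C\in H^1(X,\Sigma;\bR)$ of the shearing deformation lies in $T\cM$, and then the whole complex line $\bC\,\sigma_C$ lies in $T\cM$ by $\bC$-linearity of the tangent space. The basic computation is that $p(\sigma_C)\in H^1(X;\bC)$ is a nonzero multiple of the Poincar\'e dual of $[\gamma_C]$ --- for a closed curve $\delta$ one has $\sigma_C(\delta)=c_C\langle\delta,\gamma_C\rangle$ --- while for a relative arc $\delta$ joining two zeros the value $\sigma_C(\delta)$ records, up to the factor $c_C$, the signed fraction of the height of $C$ that $\delta$ traverses.

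It therefore suffices to establish two spanning statements. The \emph{absolute} statement is that the core curves $[\gamma_C]$, as $C$ ranges over all cylinders on all surfaces of $\cM$, span $H_1(X;\bR)$, so that $p(T\cM)=H^1(X;\bC)$ and $\cM$ has full rank; the idea is that a class orthogonal under the intersection form to all horizontal and all vertical core curves of a suitable doubly periodic surface must vanish, because there such core curves together span $H_1(X;\bR)$, and then $GL(2,\bR)$-invariance together with density of periodic directions finishes the job. The \emph{relative} statement is that the images of the $\sigma_C$ in $H^1(X,\Sigma;\bC)/p^*H^1(X;\bC)$ span this quotient; equivalently, for any two distinct zeros $P,Q$ one can find a surface in $\cM$ carrying a cylinder with $P$ on one of its two boundary components and $Q$ on the other, whose $\sigma_C$ then witnesses the ``rel'' displacement of $P$ against $Q$. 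To produce such cylinders one shrinks free cylinders to reach $\partial\cM$, invokes the known structure of $\partial\cM$, and tracks which zeros coalesce. Combining the two statements gives $T\cM=H^1(X,\Sigma;\bC)$, which is the theorem. (The exclusion of the hyperelliptic loci is consistent: there a cylinder swapped with a \emph{distinct} cylinder by the involution is not $\cM$-free, so the hypothesis fails.)

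I expect the main obstacle to be the relative spanning statement: unlike the shearing classes $\sigma_C$ themselves, a surface in $\cM$ with a cylinder whose two boundary components carry two \emph{prescribed} zeros cannot be read off of a single generic surface, and constructing these requires the degeneration technology --- shrinking free cylinders to the boundary and following how zeros (and the resulting marked points) merge --- most naturally packaged as an induction on the complexity of $\cM$. A secondary difficulty is that full rank (the absolute statement) must be obtained here by a direct argument, since Theorem~\ref{T:main}, which would otherwise make this automatic, depends on the present result.
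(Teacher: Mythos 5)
The sketch correctly identifies the target (show $T_{(X,\omega)}\cM = H^1(X,\Sigma;\bC)$ by finding enough cylinder twists), but the two spanning claims are precisely where the content of the theorem lives, and neither is substantiated. The ``absolute'' claim --- that a suitable doubly periodic surface exists on which horizontal and vertical core curves together span $H_1(X;\bR)$ --- is not something one gets for free: a horizontally periodic surface in $\cM$ may have as few as $\rank(\cM)$ horizontal cylinders, so their core curves span only a small isotropic subspace; the same holds in the vertical direction, and nothing forces the two to be transverse or jointly spanning. Producing a surface with \emph{maximally many} horizontal cylinders and verifying that this number is $g+s-1$ is exactly the crux of the argument, and ``after a little more care we arrange periodicity in two transverse directions'' hides all of it. The ``relative'' claim is likewise left as a pointer to degeneration technology, which you acknowledge is the sticking point.

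The paper's actual proof takes a different, self-contained route that avoids degenerations entirely. It takes $(X,\omega)\in\cM$ with the maximum number $|V|$ of horizontal cylinders and encodes the combinatorics in a directed graph $\Gamma$ (vertices are cylinders, edges are horizontal saddle connections). Since every cylinder is free, the twist space has dimension $|V|$, and maximality forces the twist space to equal the cylinder preserving space. For each embedded directed loop in $\Gamma$ one twists to create a vertical cylinder crossing exactly that loop; stretching it (allowed since it too is free) gives a tangent vector which is in the cylinder preserving space iff it vanishes on all horizontal saddle connections, which it does not. Independence of these classes gives $\dim L(\Gamma) \le \rank\cM \le g$, and the Euler-characteristic identity $\dim L(\Gamma) = |E|-|V|+1$ (Lemma~\ref{L:digraph}) with $|E|=2g-2+s$ then yields $|V|\ge g+s-1$. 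With that many free disjoint horizontal cylinders, the twists already contain $\ker p$ and project to a $g$-dimensional isotropic subspace; combined with symplecticity of $p(T\cM)$ (Avila--Eskin--M\"oller) this forces $T\cM = H^1(X,\Sigma)$. In short: your reduction is sound, but the existence statement you defer is the theorem, and the paper proves it by a digraph/counting argument, not by degeneration.
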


This implies in particular that if a surface has no pairs of parallel cylinders then its orbit is dense. 

%
%


\bold{Theorems \ref{T:main} and \ref{T:free} are false for quadratic differentials.} For any connected component $\cQ$ of a stratum of quadratic differentials, consider the locus $\tilde{\cQ}$ of  Abelian differentials obtained via double covers of all quadratic differentials in $\cQ$. In analogy with  Theorems \ref{T:main} and \ref{T:free}, one can ask the following questions. 

\begin{enumerate}[label=(\Alph*)]
\item Is every affine invariant submanifold $\cM\subset \tilde{\cQ}$ with $\rank\cM= \rank\tilde{\cQ}$ equal to $\tilde{\cQ}$?
\item Suppose  $\cM\subset \tilde{\cQ}$ is an affine invariant submanifold, and every cylinder deformation of every surface in $\cM$ that remains in $\tilde{\cQ}$ in fact remains in $\cM$.  Must $\cM= \tilde{\cQ}$?
\end{enumerate}

%
%

Both questions have negative answers.  The first  will be addressed in forthcoming work of Eskin-McMullen-Mukamel-Wright, which gives examples of unfoldings of polygons with unexpected orbit closures. The existence of  hyperelliptic square-tiled surfaces with exactly two cylinders (which are homologous) in any cylinder direction implies a negative answer to the second question \cite[Remark 3.1]{MY}.


\bold{Relation to previous results.} McMullen classified orbit closures of translation surfaces in genus 2 \cite{Mc5}. (One open problem remains concerning square-tiled surfaces.) Apisa classified higher rank (rank larger than 1) orbit closures in hyperelliptic connected components of strata \cite{Apisa}. In  hyperelliptic connected components of strata Theorem \ref{T:main} follows from\ann{A: Typo found by R2 and R4 fixed.} their work. In  strata of Abelian differentials with only one zero, Theorem \ref{T:main} is immediate from the definition of rank. 

Rank 2 affine invariant submanifolds in strata of genus 3 Abelian differentials with 2 zeros were classified in \cite{AN}, and so Theorem \ref{T:main} completes the classification of higher rank  affine invariant submanifolds in these strata. Higher rank affine invariant submanifolds of genus 3 Abelian differentials with only one zero were classified in \cite{NW, ANW}. We hope Theorem \ref{T:main} can play a  role in ongoing efforts to classify higher rank affine invariant submanifolds in the remaining strata in genus 3. 

Theorems \ref{T:main} and \ref{T:end} were claimed in the special case of strata of Abelian differentials with $2g-2$ simple zeros  in \cite{M6}, however there was a gap in the proof.\footnote{According to M\"oller, the surjectivity claim in the second paragraph of the proof of \cite[Theorem 4.1]{M6} is not justified.}

Athreya-Eskin-Zorich studied unfoldings of right angled polygons, and in particular showed that the generic such unfolding has dense orbit in the hyperelliptic locus \cite{AEZ:right}. Their methods do not apply to polygons whose angles are not all multiples of $\frac\pi2$, but also give stronger conclusions. 

In general, unfoldings of triangles may have rank 1 orbit closure \cite{V}. Moreover, at first glance it may seem plausible that all unfoldings of triangles have rank 1 orbit closure. To apply Theorem \ref{T:main} to unfoldings of triangles, we rely on the connection between $GL(2,\bR)$ orbit closures and Jacobians with extra endomorphisms, due to McMullen, M\"oller, and Filip \cite{Mc, M, Fi2}. We were inspired by McMullen's proof that the locus of eigenforms for real multiplication is not $GL(2, \bR)$ invariant in genus 3, and we  use  Filip's recent results on the variation of Hodge structure over an affine invariant submanifold. These results of Filip were a key component of his proof that affine invariant submanifolds are algebraic varieties \cite{Fi1}.  

New orbit closures related to unfoldings of quadrilaterals are given in \cite{MMW} and in forthcoming work of Eskin-McMullen-Mukamel-Wright. 


\bold{Outline of the paper.}  Section \ref{S:background} recalls necessary background on cylinder deformations and affine invariant submanifolds.

Theorem \ref{T:main} asserts that  an affine invariant submanifold $\cM$ of full rank that is not equal to a connected component of a stratum must be the hyperelliptic locus. We assume by induction that this result has been proven for all strata of surfaces with smaller genus or same genus but with fewer zeros. The base case is genus 2. 

Section \ref{S:prep} prepares for the proof of Theorem \ref{T:main} by using Theorem \ref{T:free} to show that there is a collection of homologous cylinders on a surface $(X, \omega)$ in $\cM$ such that not all these cylinders are free.  This collection is later shown to consist of exactly two homologous cylinders, so for simplicity we will assume this now. We think of this pair of cylinders as a certificate that $\cM$ is not a connected component of a stratum.

Section \ref{S:main} completes the proof of Theorem \ref{T:main}. The two homologous cylinders divide the surface into two halves, and  a degeneration supported in each  gives rise to two affine invariant submanifolds $\cM_1, \cM_2$ that lie in the boundary of $\cM$ described in  \cite{MirWri}.  It is not obvious that these $\cM_i$ are full rank (see Figure \ref{F:RelToAbs}), but we are able to verify this and moreover show that neither is a non-hyperelliptic connected component of  a stratum. The inductive hypothesis then gives that the $\cM_i$ are hyperelliptic,  and we conclude by combining the hyperelliptic involutions on the two degenerations to create a hyperelliptic involution on $(X, \omega)\in \cM$. 

Both the proof that the $\cM_i$ are not non-hyperelliptic connected component of strata as well as the step which combines the hyperelliptic involutions make use of results of Apisa and Apisa-Wright on marked points. Some technical difficulties complicate the proof of Theorem \ref{T:main}, so that the above  rough outline of the idea of the proof is not completely accurate. 

Section \ref{S:free} proves  Theorem \ref{T:free} by making use of deformations arising from\ann{A: Typo found by R2 fixed.} cylinders on different surfaces to obtain horizontally periodic  surfaces with maximally many horizontal cylinders.  

Section \ref{S:billiards} proves that for any tuple of rational angles there is an affine invariant submanifold which is equal to the orbit closure of the unfolding of a generic polygon with those angles, and establishes Theorem \ref{T:billiards}. Section \ref{S:deriv} makes use of variation of Hodge structures to prove Theorem \ref{T:triangles}. 


\bold{Open problems.} In  Section \ref{S:billiards}, we associate an affine invariant submanifold\ann{A: Typo found by R4 fixed.} $\cM(\bm\theta)$ to any tuple $\bm\theta=(\theta_1, \ldots, \theta_n)$ of angles of a rational polygon; this can be defined as the orbit closure of the unfolding of the generic polygon with these angles. One long term goal is to compute these $\cM(\bm\theta)$ for all $\bm\theta$. 

\begin{quest}
If $n=4$, are there infinitely many $\cM(\bm\theta)$ such that $\bm\theta$ has more than two distinct angles and $\cM(\bm\theta)$ is neither a connected component of a stratum nor a locus $\tilde{\cQ}$ of double covers of all quadratic differentials in a given stratum? If $n>4$, are there any at all?
\end{quest}\ann{R4: Question 1.6. ... is neither a connected component of a stratum nor one of the loci $\tilde{\cQ}$ described above.
It would be better to repeat explicitly, which loci do you mean. You described above various different loci.\\A: Included definition of $\tilde{\cQ}$.}

%
%
%

In relation to Theorem \ref{T:free} and Question B (the analogue of Theorem \ref{T:free} for quadratic differentials) we have the following. 


\begin{quest} 
Suppose that $\cM$ is a higher rank affine invariant submanifold, and any pair of $\cM$-parallel cylinders on any $(X,\omega)\in \cM$ are homologous.  Must $\cM$ be a connected component of a stratum or a hyperelliptic locus? 
\end{quest}\ann{R4: Question 1.7. Suppose that $\cM$ is a higher rank affine invariant sub- manifold, and any pair...
I suggest to be formal and to insert ``in $\tilde{\cQ}$".\\A: We do not wish to assume $\cM\subset \tilde{\cQ}$.}


\begin{quest}
Suppose  $\cM\subset \tilde{\cQ}$ is a higher rank affine invariant submanifold, and every cylinder deformation of every surface in $\cM$ that remains in $\tilde{\cQ}$ in fact remains in $\cM$. Must $\cM= \tilde{\cQ}$?
\end{quest}

\bold{Acknowledgements.} We thank Paul Apisa, Alex Eskin, Jeremy Kahn, Carlos Matheus, Curtis McMullen, Ronen Mukamel, Duc-Manh Nguyen, Barak Weiss, and Anton Zorich  for helpful conversations. We thank the referees for their helpful  comments.\ann{A: Added thanks to referees.} %
The work of MM was partially supported by NSF and Simons grants.
The work of AW was partially supported by a Clay Research Fellowship.

\section{Background and notation}\label{S:background}

\bold{Definitions.} The stratum $\cH(k_1, \ldots, k_s)$ is defined to be the set of translation surfaces $(X, \omega)$, where $\omega$ has $s$ zeros, of orders $k_1, \ldots, k_s$. Connected components of strata are classified in \cite{KZ:comps}. We allow marked points, which are considered to be zeros of order zero. However, unless otherwise indicated, surfaces in this paper will not have marked points.

An affine invariant submanifold is a properly immersed submanifold of a stratum whose image is given in local period coordinates near any point by a finite union of real linear subspaces.  
 \ann{R1: Define affine invariant\\A: We added the definition.}

The rank of an affine invariant submanifold $\cM$ is defined as 
$$\rank(\cM)=\frac12 \dim_\bC p(T_{(X, \omega)}\cM).$$ Here $(X, \omega)$ is any surface in $\cM$, and $T_{(X, \omega)}\cM\subset H^1(X, \Sigma)$ is the tangent space to $\cM$ at $(X, \omega)$. As usual, $\Sigma$ denotes the collection of  marked points and  zeros of $\omega$, and $p:H^1(X, \Sigma)\to H^1(X)$ is the usual map from relative to absolute cohomology with complex coefficients. Rank is an integer because a result of Avila-Eskin-M\"oller gives that $p(T_{(X, \omega)}\cM)$ is symplectic \cite{AEM}. Note that $\cM$ has full rank if and only if $p(T_{(X, \omega)}\cM)=H^1(X)$. 

\bold{Cylinder deformations.} A cylinder on a translation surface is the image of an isometric embedding of $\bR/(c\bZ) \times (0,h)$ that does not have marked points in its interior and is not a proper subset of any other cylinder. We call $c$ the circumference, $h$  the height, and $h/c$  the modulus. 

Let $\cM$ be an affine invariant submanifold, and consider a surface $(X,\omega)\in \cM$. Two cylinders on $(X,\omega)$ are said to be $\cM$-parallel if they are parallel and remain so on any sufficiently nearby surface in $\cM$. The ratio of circumferences of two $\cM$-parallel cylinders remains constant on all nearby surfaces in $\cM$. (See \cite{LNW} for details on the case when $\cM$ has self-crossings.) Indeed, if $\alpha_1, \alpha_2$ are the core curves and $c$ is the ratio of circumferences, then $\int_{\alpha_1} \omega = c\int_{\alpha_2} \omega$ is one of the linear equations that locally define $\cM$. 

Define $$u_t=\left(\begin{array}{cc} 1&t\\0&1\end{array}\right), \quad \quad a_t=\left(\begin{array}{cc}1&0\\0& e^t\end{array}\right).$$ 
Given a cylinder $C$ on a translation surface $(X, \omega)$ with a choice of orientation for the core curve,  the cylinder shear (or twist) $u_t^C(X, \omega)$ is defined to be the result of rotating $(X, \omega)$ so that $C$ becomes horizontal with core curve oriented in the positive real direction, applying the matrix $u_t$ only to $C$, and then applying the inverse rotation. The cylinder stretch $a_t^C(X, \omega)$ is  defined similarly. Using the Poincare duality isomorphism between $H^1(X, \Sigma)$ and $H_1(X\setminus \Sigma)$, these deformations are dual to scalar multiples of the core curve of $C$ \cite[Section 4]{MirWri}.

A collection of parallel cylinders is called consistently oriented if an orientation for each of their core curves is chosen and the integral of $\omega$ over any two of these curves are positive multiples of each other. If $\cC=\{C_1, \ldots, C_k\}$ is a collection of parallel consistently oriented cylinders, we define $u_t^\cC(X,\omega)= u_t^{C_1}  \cdots u_t^{C_k}(X,\omega)$ and $a_t^\cC(X,\omega)= a_t^{C_1}  \cdots a_t^{C_k}(X,\omega)$. The main result of \cite{Wcyl} is the following. 

\begin{thm}[Cylinder Deformation Theorem]\label{T:CDT}
If $\cC$ is an equivalence class of $\cM$-parallel cylinders on $(X,\omega)$, then $u_t^\cC  a_s^\cC(X,\omega) \in\cM$ for all $s,t\in \bR$. 
\end{thm}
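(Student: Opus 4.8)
The plan is to reduce the assertion to an infinitesimal statement about $T_{(X,\omega)}\cM$ and then to invoke the ergodic theory of the $GL(2,\bR)$-action on $\cM$. After rotating, assume the cylinders $C_1,\dots,C_n$ of $\cC$ are horizontal, with core curves $\gamma_1,\dots,\gamma_n$ and heights $h_1,\dots,h_n$, and put
$$\rho_\cC \;=\; \sum_{i=1}^{n} h_i\,\mathrm{PD}[\gamma_i]\ \in\ H^1(X,\Sigma;\bR)\subset H^1(X,\Sigma;\bC),$$
where $\mathrm{PD}:H_1(X\setminus\Sigma)\to H^1(X,\Sigma)$ is Poincar\'e duality. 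Computing periods and using the description of cylinder deformations recalled above, one finds that the derivative of $t\mapsto u_t^\cC(X,\omega)$ at $t=0$ is $\rho_\cC$ and the derivative of $s\mapsto a_s^\cC(X,\omega)$ at $s=0$ is $i\rho_\cC$; hence the tangent directions to the two-parameter family $\{u_t^\cC a_s^\cC(X,\omega)\}$ span $\bR\rho_\cC\oplus\bR(i\rho_\cC)=\bC\rho_\cC$. Since $\cM$ is closed and $T_{(X,\omega)}\cM$ is a complex subspace of $H^1(X,\Sigma;\bC)$, it suffices to prove that $\rho_\cC\in T_{(X,\omega)}\cM$: complex-linearity of the tangent space then gives $\bC\rho_\cC\subseteq T_{(X,\omega)}\cM$, and a standard open--closed argument in the parameters $(t,s)$ upgrades this to $u_t^\cC a_s^\cC(X,\omega)\in\cM$ for all $t,s\in\bR$ --- here one uses that the deformation is supported on $\cC$ and acts on homology by Dehn twists along the disjoint curves $\gamma_i$, so that the linear relations $\int_{\gamma_i}\omega=c_i\int_{\gamma_1}\omega$ defining the equivalence class $\cC$ persist along the orbit.

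For the easy half of the infinitesimal claim, recall that $\Im(\omega)\in T_{(X,\omega)}\cM$, since this class is the derivative at $t=0$ of the full horocycle flow $u_t(X,\omega)$, which lies in $\cM$. Reduce now to the case that $(X,\omega)$ is horizontally periodic with the cylinders of $\cC$ among its horizontal cylinders --- by approximating $(X,\omega)$ within $\cM$ by horizontally periodic surfaces on which a perturbation of $\cC$ is still an equivalence class of horizontal cylinders, and using that the partition into equivalence classes is locally constant on $\cM$ --- and let $\cC=\cC_1,\cC_2,\dots,\cC_k$ be the equivalence classes of horizontal cylinders. Then the same period computation gives $\Im(\omega)=\sum_{j=1}^{k}\rho_{\cC_j}$, so $\sum_{j=1}^{k}\rho_{\cC_j}\in T_{(X,\omega)}\cM$. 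It remains to separate this sum, that is, to show that $\rho_{\cC_1}\in T_{(X,\omega)}\cM$ by itself.

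The separation is the heart of the matter, and it is the step I expect to be the main obstacle; it genuinely requires the ergodic theory of the $GL(2,\bR)$-action --- the recurrence and measure-classification results of Eskin--Mirzakhani and Eskin--Mirzakhani--Mohammadi, together with the non-divergence estimates of Eskin--Masur. The approach I would pursue is to realize each $\rho_{\cC_j}$ as a tangent vector at $(X,\omega)$ by a limiting argument: flow $(X,\omega)$ for a long time along a suitable one-parameter subgroup of $GL(2,\bR)$, bring the result back near $(X,\omega)$ within $\cM$ by a change of marking, and extract from the rescaled displacement --- read in period coordinates --- a limiting tangent vector lying in the span of the horizontal cylinder classes $\{\mathrm{PD}[\gamma_C]\}$. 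The linear relations defining $\cM$ constrain which combinations of these classes can occur as such limits, and the crux is to choose the flow times so that the limit is exactly a nonzero multiple of a single $\rho_{\cC_j}$, rather than some other admissible combination such as $\sum_j\rho_{\cC_j}$. Constructing this limiting sequence --- so that it isolates one equivalence class --- is where the difficulty lies; by comparison, the reduction to horizontally periodic surfaces is a routine technical point. Once $\rho_{\cC_1}\in T_{(X,\omega)}\cM$ is established, the reduction of the first paragraph completes the proof.
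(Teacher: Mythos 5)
Your infinitesimal reduction is sound and matches the actual approach: the twist and stretch derivatives are $\rho_\cC$ and $i\rho_\cC$, so by complex-linearity of $T_{(X,\omega)}\cM$ it suffices to show $\rho_\cC\in T_{(X,\omega)}\cM$; and on a horizontally periodic surface $\Im(\omega)=\sum_j\rho_{\cC_j}$ lies in $T_{(X,\omega)}\cM$ since it is the derivative of the full horocycle flow. The difficulty, as you correctly flag, is the separation step: isolating a single $\rho_{\cC_j}$ from this sum. But here the proposal has a genuine gap, and the tools you name to fill it are not the right ones. You assert that the isolation ``genuinely requires the ergodic theory of the $GL(2,\bR)$-action --- the recurrence and measure-classification results of Eskin--Mirzakhani and Eskin--Mirzakhani--Mohammadi, together with the non-divergence estimates of Eskin--Masur.'' The paper states explicitly that the key ingredient in the proof from \cite{Wcyl} is instead the Smillie--Weiss theorem (Theorem \ref{T:SW}), that every closed horocycle-invariant subset of a stratum contains a horizontally periodic surface; this is a soft recurrence statement built on Minsky--Weiss nondivergence, not the deep measure classification of \cite{EM,EMM}. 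Since the theorem is stated for an affine invariant submanifold $\cM$ given a priori (locally linear in period coordinates), the orbit-closure theorem of \cite{EMM} plays no role in its proof.

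More to the point, even granted a heavy toolkit, your separation step is only a gesture, not an argument: you describe flowing, recentering via a change of marking, and rescaling, but offer no reason why the resulting limit should be a single $\rho_{\cC_1}$ rather than some other admissible combination such as the full sum, and you yourself say this is ``where the difficulty lies.'' The mechanism in \cite{Wcyl} works on a horizontally periodic surface by analyzing the horocycle flow, which simultaneously twists each horizontal cylinder at a rate governed by its modulus; the moduli of cylinders inside a single $\cM$-parallel class are locked in fixed ratio, while the moduli of distinct classes can be varied independently inside $\cM$, and this independence together with the closedness of $\cM$ is what lets one disentangle the individual $\rho_{\cC_j}$'s. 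Identifying that mechanism --- rather than appealing generically to ``ergodic theory of $GL(2,\bR)$'' --- is the substance of the theorem, and it is absent from the proposal.
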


In particular, the derivative $u^\cC$ of $u^\cC_t(X,\omega)$ lies in $T_{(X,\omega)}\cM$. 
A key ingredient  in\ann{A: Typo found by R2 and R4 fixed.} the proof is the following result of \cite{SW2}, which builds upon the work of Minsky-Weiss\ann{A: Typo found by R2 fixed.} \cite{MinW}. 

\begin{thm}[Smillie-Weiss]\label{T:SW}
Every closed, horocycle flow invariant subset of a stratum contains a horizontally periodic surface. 
\end{thm}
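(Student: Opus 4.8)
The plan is to combine two inputs: the Minsky--Weiss quantitative nondivergence theorem for the horocycle flow on a stratum, and the elementary observation that $u_t$ fixes the horizontal foliation pointwise, so that it preserves every horizontal saddle connection together with its holonomy and, more generally, preserves the decomposition of a surface into horizontal cylinders together with the minimal components of its horizontal foliation. Recall that a surface is horizontally periodic precisely when it has no such minimal component, i.e.\ it is a union of horizontal cylinders.

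First I would show that the given closed, horocycle-invariant set $Z\subseteq\cH$ contains a compact horocycle-minimal subset $M$. Here Minsky--Weiss does the work: a forward horocycle orbit either spends a positive proportion of its time in a fixed compact subset of $\cH$, or carries a saddle connection that stays short along the entire orbit --- and such a saddle connection must be horizontal, since otherwise its holonomy $(x,y)$ would satisfy $\|(x+ty,y)\|\to\infty$, hence it is shared by every surface in the orbit closure. In the first alternative, tightness of the empirical measures $\frac{1}{T}\int_0^T (u_t)_*\delta_x\,dt$ together with a Zorn's-lemma argument (the relevant closed invariant sets all meet one fixed compact set, so descending chains have nonempty intersection) produce a compact minimal $M\subseteq Z$. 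In the second alternative every surface of $Z$ has a horizontal saddle connection; I would collapse it --- an operation equivariant for the horocycle flow and landing in a stratum of strictly smaller dimension --- and conclude this case by induction on $\dim\cH$, regluing the saddle connection at the end. The base case is genus one, where the statement is classical (Hedlund: every closed invariant set for the horocycle flow on the modular surface contains a periodic orbit).

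Then I would show that every surface in $M$ is horizontally periodic. Since $M$ is compact, $\operatorname{sys}$ is bounded below on it. If some $(X,\omega)\in M$ had a minimal component $Y$ in its horizontal foliation, then minimality of the straightline flow on $Y$ would produce a separatrix in $Y$ returning arbitrarily close, in the transverse (vertical) direction, to a singularity; straightening such a near-return gives a saddle connection of $(X,\omega)$ with vertical holonomy $\varepsilon_n\to 0$. Shearing by a suitable $u_{t_n}$ converts its holonomy into one of length comparable to $\varepsilon_n$, so $\operatorname{sys}(u_{t_n}(X,\omega))\to 0$ while $u_{t_n}(X,\omega)\in M$, contradicting the lower bound. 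Hence every surface in $M$ is horizontally periodic, and since $\emptyset\neq M\subseteq Z$, the theorem follows.

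The hard part will be the first step: extracting a compact minimal set inside a noncompact stratum, and making the collapsing induction genuinely go through --- in particular verifying that the exceptional alternative of the nondivergence theorem forces a horizontal saddle connection on \emph{every} surface of the invariant set under consideration, and that collapsing it is compatible with the horocycle flow and with the bookkeeping of marked points. The straightening lemma in the second step (turning a near-return of a dense leaf into an honest saddle connection on the same surface) and the uniform length bound on the collapsed saddle connection are the main secondary points.
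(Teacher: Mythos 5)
This theorem is not proved in the paper---it is quoted from Smillie--Weiss [SW2]---so the comparison must be against that source. Your second step is the right one and is essentially the heart of the Smillie--Weiss argument: take a separatrix of a minimal horizontal component back near a singularity, straighten the near-return to get a saddle connection with vertical holonomy $\varepsilon_n\to 0$ and horizontal holonomy $x_n$, then shear by $u_{t_n}$ with $t_n\approx -x_n/\varepsilon_n$ to produce a saddle connection of length $\approx\varepsilon_n$, contradicting a positive lower bound on the systole over the minimal set.

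Your first step, however, has two genuine gaps, both of which you yourself flag at the end. First, the ``second alternative'' (collapse a persistently short horizontal saddle connection, induct on $\dim\cH$, reglue) is not the route Smillie--Weiss take and does not go through as written: the operation ``collapse the short horizontal saddle connection'' is not a well-defined map on a general closed invariant set (different surfaces in $Z$ can carry different short horizontal saddle connections, a single surface can carry several, and collapsing different ones lands in different boundary strata); the image need not be closed and invariant; and even granting all that, the regluing step must produce a horizontally periodic surface that actually lies in $Z$, not merely in some stratum. Smillie--Weiss instead handle short horizontal saddle connections within the nondivergence machinery itself (roughly: the orbit still recurs to a region where no \emph{other} saddle connection is short), so no dimension-reduction induction is needed. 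Second, the Zorn argument as described produces a minimal set $M\subseteq Z$ (descending chains of closed invariant sets each meeting a fixed compact $K$ have nonempty intersection), but it does not make $M$ compact: the support of an invariant probability measure is closed but need not be compact, and intersections of descending chains need not be either. Since your step two needs $\inf_M \operatorname{sys}>0$, which you extract from compactness of $M$, this is the crux. Establishing $\inf_M\operatorname{sys}>0$ (equivalently, that $M$ lies in a compact part of the stratum) requires a more serious use of quantitative nondivergence than tightness plus Zorn; it is precisely what the Minsky--Weiss estimate is for in [SW2].
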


%
%

\bold{Marked points.}  Degenerating a surface without marked points may produce a surface with marked points. Because of this, the study of orbit closures of surfaces with marked points cannot be separated from the case with no marked points. 

Let $\cM$ be an affine invariant submanifold of a stratum $\cH$. Let $\cH^{n*}$ be the stratum of surfaces in $\cM$ together with a collection of $n$ distinct marked points, none of them equal to a zero of positive order\ann{A: Typo found by R2 fixed.}, and let $\pi:\cH^{n*}\to \cH$ be the natural forgetful map.  

We define an $n$-point marking over $\cM$ to be an affine invariant submanifold $\cN$ of the stratum $\cH^{*n}$ such that $\pi(\cN)$ is equal to a dense subset of $\cM$, equivalently, $\pi(\cN)$ is equal to $\cM$ minus a finite, possibly empty, union of smaller dimensional affine invariant submanifolds. 

The proof of Theorem \ref{T:main} uses the following result of Apisa  \cite{Apisa}.\ann{A: Typo found by R2 fixed.}

\begin{thm}[Apisa]\label{T:apisa}
Suppose that $\cH$ is a non-hyperelliptic connected component of a stratum. Any $n$-point marking $\cN$ over $\cH$ is given by $n$ unconstrained points, i.e., $\dim \cN = \dim \cH+n$. 
\end{thm}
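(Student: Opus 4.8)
The plan is to reduce the statement to a local-linear-algebra computation about the tangent space of $\cN$ inside $H^1(X,\Sigma \cup P)$, where $P$ is the set of $n$ marked points, combined with a dynamical/connectedness input to rule out the marked points being constrained. Write $\pi^* T\cM \subseteq T\cN \subseteq H^1(X,\Sigma\cup P)$; since $\dim\cN \geq \dim\cM$ always, the content is the upper bound $\dim\cN \leq \dim\cM + n$, i.e. that the ``extra'' relative periods corresponding to the marked points are unconstrained. First I would set up the exact sequence relating $H^1(X,\Sigma\cup P)$ to $H^1(X,\Sigma)$: the kernel of the restriction is spanned by the $n$ classes dual to small loops / paths joining each new marked point to $\Sigma$, so the marked directions form an $n$-dimensional subspace $W$, and the claim is exactly that $T\cN$ surjects onto the full quotient and meets $W$ in all of $W$ — equivalently, $T\cN = \pi^*T\cM \oplus W$.

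The key steps, in order. (1) Use the Cylinder Deformation Theorem~\ref{T:CDT}: because $\cH$ is a connected component of a stratum, \emph{every} cylinder on every surface is free, and cylinder shears/stretches of a cylinder $C$ passing through (or adjacent to) a marked point move that marked point relative to the rest of the surface; comparing the cylinder deformation of $(X,\omega)\in\cN$ with the corresponding deformation of $\pi(X,\omega)\in\cM$ shows that certain relative periods attached to $P$ must actually be free in $\cN$. (2) Use the Smillie--Weiss theorem~\ref{T:SW} (as in the proof of Theorem~\ref{T:free}) to pass to a horizontally periodic surface with many horizontal cylinders, so that the marked points can be arranged to lie on cylinders whose deformations are independent; this lets one ``turn on'' all $n$ marked directions. (3) Run an induction on $n$: a $1$-point marking over a stratum component must be unconstrained because any constraint would have to be compatible with moving the point freely around each free cylinder it meets and across the whole (connected) surface, forcing the constraint to be trivial; then peel off one marked point at a time, checking that forgetting a point sends an $n$-point marking to an $(n-1)$-point marking over the same $\cM$.

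The main obstacle I expect is step (3), specifically showing a \emph{single} marked point over a connected component of a stratum is genuinely unconstrained — this is where one must genuinely use that the ambient locus is an \emph{entire} stratum component (not merely full rank), since for a general affine invariant submanifold a marked point can be constrained (e.g.\ forced to be a Weierstrass point, or torsion relative to the zeros). Concretely one has to show: if $f$ is a linear functional on $H^1(X,\Sigma\cup P)$ vanishing on $T\cN$ but not on $W$, then translating the marked point along core curves of the (many, free) horizontal cylinders it can be placed on, together with the connectedness of the surface and the transitivity of such moves, forces $f|_W = 0$ — a contradiction. A secondary technical point is handling the ``finite union of smaller affine invariant submanifolds'' allowed in the definition of $n$-point marking, i.e. making sure the argument takes place over the dense open part of $\cM$ where $\pi$ is a submersion with the expected fiber; this is routine but must be stated carefully. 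Everything else (the exact sequences, the dimension count, the forgetful map being a submersion generically) is bookkeeping.
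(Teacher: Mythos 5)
This statement is not proved in the paper at all: it is quoted as an external result, with a citation to Apisa's paper \cite{Apisa}, where the proof occupies an entire article. So there is no ``paper's own proof'' to compare against; what needs assessing is whether your sketch could plausibly be filled in.

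There is a genuine gap, and it is precisely at the point you flag as the main obstacle. Your step~(3) -- showing a single marked point over a connected component of a stratum is unconstrained -- is the whole theorem, and the argument you sketch for it (slide the marked point along core curves of free cylinders, use connectedness and ``transitivity of such moves'' to kill any linear functional $f$ vanishing on $T\cN$ but not on $W$) never invokes the hypothesis that $\cH$ is \emph{non-hyperelliptic}. Every cylinder on every surface in a hyperelliptic component of a stratum is also free, so your cylinder-shear and horizontal-periodicity setup applies verbatim there; yet in hyperelliptic components the conclusion is \emph{false} (Theorem~\ref{T:marked}: a marked point can be constrained to be a fixed point of the hyperelliptic involution, or one of a pair swapped by it). An argument that cannot see the difference between the two cases cannot be correct. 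The missing content is exactly the mechanism by which non-hyperellipticity rules out a ``periodic-point'' constraint on the marked point, and that is where Apisa's actual proof does its work (via a careful analysis of how constrained marked points behave under cylinder collapse and degeneration to the boundary, and how such constraints would force a global symmetry of the surface). Sliding the point around free cylinders only shows the marked direction is not \emph{pointwise} frozen; it does not rule out a constraint of the form ``$P$ lies on a fixed divisor determined by $\Sigma$,'' which is compatible with every individual cylinder shear.

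Two smaller points. First, your induction peeling off one marked point at a time is sound as bookkeeping (forgetting a point does send an $n$-point marking to an $(n-1)$-point marking over the same $\cM$, after discarding a smaller-dimensional locus), but it only reduces to the $n=1$ case; it does not help prove it. Second, the Smillie--Weiss step does not obviously help as stated: you need the marked point to interact with the horizontally periodic structure in a controlled way, and making the underlying surface horizontally periodic says nothing about where the marked point sits relative to the cylinders or whether the markings you get after the deformation still lie in $\cN$. Overall the sketch correctly identifies the framework ($T\cN$ inside $H^1(X,\Sigma\cup P)$, the $n$-dimensional subspace $W$, reduction to $n=1$) but has no argument for the step that carries the theorem.
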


The following extension of this result, established by Apisa \cite{Apisa2} for hyperelliptic connected components of strata and in general in forthcoming work, states that there are no non-trivial point markings over the hyperelliptic locus.   

\begin{thm}[Apisa, Apisa-Wright]\label{T:marked}
Suppose that $\cN$ is an $n$-point marking over the hyperelliptic locus $\cM$ in some stratum. Then there are non-negative integers $f,w,$ and $v$ such that $n=f+w+2v$  and 
$\cN$ is the set of all $(X,\omega, S)$ where $|S|=n$, at least $w$ points of $S$ are fixed by the hyperelliptic involution, and at least $v$ pairs of points of $S$ are exchanged by the hyperelliptic involution. 
\end{thm}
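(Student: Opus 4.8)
The plan is to argue by induction on $n$, the case $n=0$ (where $\cN=\cM$ and $f=w=v=0$) being trivial. Assume the statement for $(n-1)$-point markings over any hyperelliptic locus, and let $\cN$ be an $n$-point marking over the hyperelliptic locus $\cM$, with $\tau$ the hyperelliptic involution, which is canonically defined on each $(X,\omega)\in\cM$. Forgetting one of the marked points, say $p$, gives a dominant map from $\cN$ to an $(n-1)$-point marking $\cN_0$ over $\cM$; by the inductive hypothesis $\cN_0$ is the locus associated to a triple $(f_0,w_0,v_0)$ with $n-1=f_0+w_0+2v_0$, so it suffices to classify $\cN$ as a ``$1$-point marking over $\cN_0$''. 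Here I would first observe that the kernel of the map $T_{(X,\omega,S)}\cN\to T\cN_0$ is contained in the kernel of $H^1(X,\Sigma\cup S)\to H^1(X,\Sigma\cup(S\setminus\{p\}))$, a two-real-dimensional space carrying the standard $GL(2,\bR)$-representation, and hence is either $0$ or all of it. If it is everything, then $p$ is unconstrained and $\cN$ is the locus of $(f_0+1,w_0,v_0)$, as desired. Otherwise $\dim\cN=\dim\cN_0$; since affine invariant submanifolds are algebraic varieties \cite{Fi1}, the generic fiber is finite, so $p$ is a periodic point of the generic surface of $\cN_0$, meaning its position takes only finitely many values as $(X,\omega,S\setminus\{p\})$ varies in $\cN_0$.

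The heart of the argument is then a classification of the periodic points of a generic surface in $\cN_0$, and I expect this to be the main obstacle: it is where one uses the work of Apisa and Apisa--Wright on marked points, together with the Cylinder Deformation Theorem and Filip's results on torsion and variation of Hodge structure (the hyperelliptic-component case being \cite{Apisa2}). One must show that adding marked points to a hyperelliptic surface creates no periodic points beyond the Weierstrass points, the zeros of $\omega$ (which in $\cH^{hyp}(g-1,g-1)$ form a single $\tau$-orbit of size two), and the images $\tau(p_i)$ of the $f_0$ free marked points $p_i$ of $S\setminus\{p\}$. Granting this, and using that $p$ must be distinct from every point of $S\setminus\{p\}$ and from every zero of positive order, the only possibilities are that $p$ is a Weierstrass point that is neither already marked nor a zero of $\omega$ --- which realizes the locus of $(f_0,w_0+1,v_0)$ --- or that $p=\tau(p_i)$ for some free $p_i$, which turns $\{p_i,p\}$ into a $\tau$-swapped pair and realizes the locus of $(f_0-1,w_0,v_0+1)$; note that $\tau$ of a Weierstrass point, or of a point lying in an existing swapped pair, would coincide with an already-marked point and is therefore excluded. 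In every case $\cN$ has the asserted form with $n=f+w+2v$.

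For the converse I would check that, for any non-negative integers $f,w,v$ with $f+w+2v=n$, the set described in the statement really is an $n$-point marking over $\cM$: it is $GL(2,\bR)$-invariant and, since $\tau$ acts on $H^1(X,\Sigma)$ with $\tau^*\omega=-\omega$, it is cut out in period coordinates by the real-linear equations saying that the chosen marked points are $\tau$-fixed and the chosen pairs are $\tau$-exchanged; it is a properly immersed submanifold; and its image under $\pi$ is all of $\cM$ minus lower-dimensional affine invariant submanifolds, obtained by discarding the surfaces where a Weierstrass point degenerates onto a zero, a swapped pair collides, and so on. The one genuinely difficult ingredient is the periodic-points input above: ruling out unexpected periodic points on hyperelliptic surfaces that already carry marked points, and verifying that no such point can be forced to collide with an existing marked point or a zero of $\omega$.
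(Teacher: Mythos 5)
The paper does not prove this statement at all: it is quoted from Apisa \cite{Apisa2} (for hyperelliptic connected components of strata) and from then-forthcoming Apisa--Wright work (for the general hyperelliptic locus). So there is no paper proof to compare yours against, and the task is to assess your argument on its own terms.

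Your inductive scaffolding is sensible and matches the standard reduction in this theory: forget one marked point $p$ to land in an $(n-1)$-point marking $\cN_0$ over $\cM$, observe that $\ker\bigl(T\cN\to T\cN_0\bigr)$ sits inside the two-real-dimensional kernel of $H^1(X,\Sigma\cup S)\to H^1(X,\Sigma\cup(S\setminus\{p\}))$, and use that this kernel carries the standard $GL(2,\bR)$-representation and is therefore trivial or everything. That dichotomy (free versus periodic/constrained) is correct. But the argument has a genuine gap exactly where you flag it. The assertion that the only constrained positions for $p$ over a generic $(X,\omega,S\setminus\{p\})\in\cN_0$ are the unmarked Weierstrass points and the $\tau$-images of already-marked free points is not something you can ``grant'': it \emph{is} the theorem. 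That classification of periodic points on hyperelliptic translation surfaces (with marked points already present) is precisely the hard content of Apisa's thesis and of the Apisa--Wright follow-up, and proving it requires substantially more than the reduction you set up --- in Apisa's work it involves degeneration arguments, the Cylinder Deformation Theorem, and careful analysis of configurations of cylinders and saddle connections, and it is far from a routine step. Your converse paragraph (that each candidate $(f,w,v)$ locus is an affine invariant submanifold with dense image in $\cM$) is fine modulo checking that the conditions ``$p$ is $\tau$-fixed'' and ``$p_1=\tau(p_2)$'' are real-linear in period coordinates and define a properly immersed submanifold, which is standard. So: correct outline, but the proof is not complete --- it reduces the statement to a classification of constrained marked points that is itself equivalent to (and as hard as) the theorem, and that classification is left unproven.
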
 

Note $\dim \cN = \dim \cH+f+v$, and $f$ of the points of $S$ are unconstrained.

\section{Preparation for the proof of Theorem \ref{T:main}}\label{S:prep}

The results of this section and the next  assume Theorem \ref{T:free}, which is proven in Section \ref{S:free}.

\ann{R1: Make clear that proofs depend on Theorem 1.5 which will be proven at end\\A: We added a sentence saying we are using Theorem 1.5, and saying where to find the proof of Theorem 1.5.}
\begin{lem}\label{L:hom}
If $\cM$ has full rank and $(X,\omega)\in \cM$, then two cylinders on $(X, \omega)$ are $\cM$-parallel if and only if they are homologous. 
\end{lem}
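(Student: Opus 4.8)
The plan is to prove the two implications of the biconditional separately. The direction ``homologous $\Rightarrow$ $\cM$-parallel'' holds with no hypothesis on the rank and follows from soft arguments, so all of the content of the lemma sits in the converse, where full rank is used in an essential way.

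For the converse I would argue as follows. Suppose $C_1,C_2$ are $\cM$-parallel, and orient their core curves $\alpha_1,\alpha_2$ so that the two cylinders are consistently oriented. By the discussion of cylinder deformations in Section \ref{S:background}, the ratio $c>0$ of circumferences is constant on $\cM$ and $\int_{\alpha_1}\omega'=c\int_{\alpha_2}\omega'$ is among the real-linear equations cutting out $\cM$ near $(X,\omega)$; differentiating, $v(\alpha_1)=c\,v(\alpha_2)$ for every $v\in T_{(X,\omega)}\cM$. Since $\alpha_1$ and $\alpha_2$ are \emph{absolute} homology classes, $v(\alpha_i)$ depends only on $p(v)$, so $\phi\bigl([\alpha_1]-c[\alpha_2]\bigr)=0$ for every $\phi\in p\bigl(T_{(X,\omega)}\cM\bigr)$. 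Full rank means $p\bigl(T_{(X,\omega)}\cM\bigr)=H^1(X;\bC)$, which pairs nondegenerately with $H_1(X;\bC)$, so $[\alpha_1]=c[\alpha_2]$ in $H_1(X;\bC)$. Finally, $[\alpha_1]$ and $[\alpha_2]$ are integral classes, nonzero because their $\omega$-periods are the nonzero holonomy vectors of the cylinders, and in fact primitive: a cylinder on a translation surface is nonseparating (otherwise its core curve would be nullhomologous), so joining the two ends of a cross-cut of $C_i$ by an arc in the complement of $C_i$ produces a loop with intersection number $\pm1$ with $\alpha_i$. Two positively proportional primitive integral classes are equal, so $c=1$ and $[\alpha_1]=[\alpha_2]$; that is, $C_1$ and $C_2$ are homologous.

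For the remaining direction, if $C_1$ and $C_2$ are homologous then, after a suitable choice of orientations, $[\alpha_1]=[\alpha_2]$ in $H_1(X;\bZ)$; transporting integral homology by the Gauss--Manin connection, this equality and the cylinders themselves persist on nearby surfaces of the stratum, so $\int_{\alpha_1}\omega'=\int_{\alpha_2}\omega'$ there and the two cylinders remain parallel, hence are $\cM$-parallel. The proof is short, so there is no serious obstacle; the only steps needing care are the reduction of $\cM$-parallelism to the linear relation among periods and the use of full rank to promote that relation from $H^1(X,\Sigma)$ to $H^1(X)$ — precisely the step that fails for general affine invariant submanifolds — together with the (standard) primitivity of cylinder core curves.
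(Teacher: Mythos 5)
Your argument is correct and follows the same route as the paper's: use the $\cM$-parallel constraint $\int_{\alpha_1}\omega = c\int_{\alpha_2}\omega$, observe it is an equation on absolute periods, and invoke full rank to conclude $[\alpha_1]=c[\alpha_2]$. You supply one detail the paper's terse proof leaves implicit — namely that $c=1$ because core curves of cylinders are primitive integral classes — which is a worthwhile clarification but not a different method.
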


\ann{R2: Proof of Lemma 3.1. Consider mentioning that the other direction is trivial.\\A: We added a remark that the other direction is trivial.}
\begin{proof}
It is trivial that homologous cylinders are $\cM$-parallel (for any affine invariant submanifold $\cM$). 

If $\alpha_1$ and $\alpha_2$ are core curves of $\cM$-parallel cylinders, then the equation $\int_{\alpha_1} \omega = c\int_{\alpha_2} \omega$ must hold locally on $\cM$, where $c$ is the ratio of circumferences \cite[
Lemma 4.7]{Wcyl}. However, no non-trivial linear equations on absolute periods hold for a full rank affine invariant submanifold, so $\alpha_1$ and $\alpha_2$ must be homologous. 
\end{proof}

\begin{lem}\label{L:genus2}
Theorem \ref{T:main} holds in genus 2.
\end{lem}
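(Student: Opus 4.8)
The plan is to reduce the statement directly to McMullen's classification of $GL(2,\bR)$ orbit closures in genus $2$ \cite{Mc5}. First I would unwind what full rank means here: in genus $2$ the space $H^1(X)$ has complex dimension $4$, and by \cite{AEM} the image $p(T_{(X,\omega)}\cM)$ is a symplectic subspace of it, so $\cM$ has full rank precisely when $p(T_{(X,\omega)}\cM)=H^1(X)$, i.e. the absolute periods are locally unconstrained on $\cM$. There are exactly two strata in genus $2$, namely $\cH(2)$ and $\cH(1,1)$, and both are connected. So it suffices to show that the only full rank affine invariant submanifolds in genus $2$ are $\cH(2)$ and $\cH(1,1)$ themselves, each of which is a connected component of a stratum (and, incidentally, coincides with its own hyperelliptic locus, since every genus $2$ Riemann surface is hyperelliptic).

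Next I would invoke McMullen's classification. Since affine invariant submanifolds and orbit closures coincide, it is enough to inspect the list of orbit closures in genus $2$: besides the whole strata $\cH(2)$ and $\cH(1,1)$, a proper orbit closure is either a closed orbit (a Teichm\"uller curve, including the ones coming from square-tiled surfaces) or an eigenform locus $\cE_D$ for real multiplication by a quadratic order, the latter occurring inside $\cH(1,1)$. In each of these cases the absolute cohomology carries a genuine constraint: a closed orbit has $\dim_\bC p(T_{(X,\omega)}\cM)=2$, and on $\cE_D$ the projection of the tangent space to $H^1(X)$ lands in one of the two $2$-dimensional eigenspaces of the real multiplication. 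Hence every proper orbit closure in genus $2$ has rank $1$, not $2$.

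Combining the two previous paragraphs, any full rank affine invariant submanifold in genus $2$ must be all of $\cH(2)$ or all of $\cH(1,1)$, which is exactly the conclusion of Theorem \ref{T:main} in this case. I do not expect a genuine obstacle here, since the mathematical content is entirely contained in \cite{Mc5}; the only thing requiring care is the routine bookkeeping that matches each entry of McMullen's list against the rank computation above, together with the standard identification of affine invariant submanifolds with $GL(2,\bR)$ orbit closures.
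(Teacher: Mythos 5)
Your argument is correct, and the paper explicitly acknowledges it: the paper notes that Lemma \ref{L:genus2} ``is a consequence of \cite{Mc5}'' before choosing to write out a different proof. The route you take -- unwinding full rank to the condition $p(T_{(X,\omega)}\cM)=H^1(X)$, then checking that every proper orbit closure on McMullen's list (Teichm\"uller curves and the eigenform loci $\cE_D$) has rank $1$ -- is sound and self-contained modulo \cite{Mc5}. The paper instead deduces the lemma from Theorem \ref{T:free} together with Lemma \ref{L:hom}: a genus $2$ translation surface never has a pair of homologous cylinders, so by Lemma \ref{L:hom} no two cylinders are $\cM$-parallel, hence by the Cylinder Deformation Theorem every cylinder is $\cM$-free, and Theorem \ref{T:free} forces $\cM$ to be a full stratum. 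The trade-off: your argument imports the full strength of McMullen's genus $2$ classification, a substantial external theorem, while the paper's argument is essentially one line once Theorem \ref{T:free} is in place, stays entirely within the cylinder-deformation machinery developed in the paper, and serves as a clean illustration of the technique that drives the general inductive proof. Both are valid; the paper's choice is about internal economy and exposition rather than necessity.
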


This is a consequence of \cite{Mc5}, and also follows from Theorem \ref{T:free} as follows.
 
\begin{proof}
A genus 2 translation surface never has a pair of  homologous cylinders, so the Cylinder Deformation Theorem and Lemma \ref{L:hom} imply that every cylinder on every surface  is free. 
\end{proof}

\begin{rem}
Similarly, Theorem \ref{T:main} follows from Theorem \ref{T:free} in strata where the Abelian differentials have two zeros, both of odd order, because such Abelian differentials may not have homologous cylinders.
\end{rem}\ann{R1: Can you explain Remark 3.3\\A: We added a phrase of extra explanation, however we prefer not to get into details since this is just an offhand remark that is not used in the paper. (However we did provide a detailed explanation to you in our response to your initial comments.)}

\ann{R4: Proposition 3.4.
You prove this Proposition conditionally to Theorem 1.5. I suggest to state this explicitly (and other assumptions, if there are further conditional assumptions.) I realize, that it is mentioned in the ``outline of the paper", but still, it would make easier to follow the logic of the proof step-by-step.\\A: We clarified this with a sentence at the beginning of this section.}
\begin{prop}\label{P:setup}
Assume $\cM$ is full rank but not a connected component of a stratum. 
Then there exists $(X,\omega)\in \cM$ such that: 
\begin{enumerate}
\item $(X,\omega)$ has a collection $\cC=\{C_1, \ldots, C_k\}$ of horizontal homologous cylinders, such that the heights of the cylinders in $\cC$ are locally constrained in $\cM$ to satisfy a non-trivial linear equation, and such that $\cC$ includes any cylinder homologous to  $C_1$; 
\item $(X,\omega)$ is horizontally periodic, and the core curves of the horizontal cylinders span a Lagrangian subspace of $H_1(X)$;
\item Every connected component of the complement of $\cup_{i=1}^k C_i$ contains a horizontal cylinder that is not homologous to any other  cylinder. 
\end{enumerate}
\end{prop}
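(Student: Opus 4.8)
By Theorem~\ref{T:free}, since $\cM$ is not a connected component of a stratum some surface in $\cM$ carries a cylinder that is not $\cM$-free; after rotating take it to be horizontal, and let $\cC$ be its equivalence class of $\cM$-parallel cylinders. By Lemma~\ref{L:hom} the members of $\cC$ are exactly the cylinders homologous to it, and by the Cylinder Deformation Theorem (Theorem~\ref{T:CDT}) one has $|\cC|\ge 2$ (if $\cC=\{C\}$ then $a_t^C(X,\omega)\in\cM$ for all $t$, so $C$ would be free). Writing $u^{C_i}$ for the infinitesimal shear of $C_i$, the Cylinder Deformation Theorem gives $\sum_i u^{C_i}\in T_{(X,\omega)}\cM$ whereas non-freeness gives $u^{C_i}\notin T_{(X,\omega)}\cM$ for some $i$; since $T_{(X,\omega)}\cM$ is complex linear and the infinitesimal stretch of $C_i$ is a positive multiple of $\sqrt{-1}\,u^{C_i}$, the standard analysis of cylinder deformations in \cite{Wcyl} then yields that the heights of the cylinders in $\cC$ are locally constrained on $\cM$ by a non-trivial linear equation.

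The plan is to promote this configuration to a horizontally periodic surface with Lagrangian horizontal span while retaining the height constraint. The essential point is that one should propagate not the non-freeness of the cylinder --- an \emph{open} condition, which a limit can destroy --- but the linear equation on heights: since the height of a horizontal cylinder is a real linear functional in period coordinates, the assertion that a fixed real combination of the heights of the cylinders in a homology class is locally constant on $\cM$ is equivalent to a fixed covector annihilating the tangent space to $\cM$, hence is a \emph{closed} condition. Accordingly, let $Z\subseteq\cM$ be the closure of the set of surfaces carrying a homology class of horizontal cylinders whose heights satisfy a non-trivial linear equation holding locally on $\cM$. Then $Z$ is non-empty by the first paragraph, closed by construction, and invariant under the horocycle flow, since shearing preserves horizontal cylinders, their homology classes and their heights and carries the relevant linear equation on a surface to the relevant linear equation on its shear. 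By Theorem~\ref{T:SW}, together with its standard strengthening --- proved by the same horocycle-flow maximization used for Theorem~\ref{T:free} --- that a closed horocycle-invariant subset of a stratum contains a horizontally periodic surface whose horizontal core curves span a Lagrangian subspace of $H_1$, the set $Z$ contains such a surface $(X,\omega)$. Since $(X,\omega)$ is a limit of surfaces of the above kind in which no cylinder degenerates (the limit remains in the stratum), and the unit-normalized coefficient vectors of the corresponding height equations subconverge to a non-zero vector, the closedness just noted gives that $(X,\omega)$ itself carries a homology class of horizontal cylinders whose heights obey a non-trivial linear equation on $\cM$; enlarging that class to contain every homologous cylinder gives a collection $\cC$ with $|\cC|\ge 2$, which is (1), while (2) holds by construction.

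For (3), one additionally chooses $((X,\omega),\cC)$ as above subject to a minimality condition, for instance so as to minimize first the number of horizontal cylinders of $(X,\omega)$ and then the number of them contained in a homology class of size at least two. If some connected component $S$ of the complement of $\bigcup_{C_i\in\cC}C_i$ had the property that every horizontal cylinder of $S$ is homologous to some other cylinder of $X$, one would degenerate $S$ --- by a cylinder deformation, which lies in $T_{(X,\omega)}\cM$ by the Cylinder Deformation Theorem, of the cylinders of $S$ together with cylinders of $\cC$ on which the height equation is supported, and passage to an appropriate limit --- to reach a surface again lying in $Z$, horizontally periodic with Lagrangian span and carrying a homology class of horizontal cylinders with a non-trivial height constraint, but with strictly fewer horizontal cylinders; this contradicts the choice, so every component of the complement contains a horizontal cylinder homologous to no other cylinder of $X$, establishing (3).

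The heart of the matter is the promotion step, which works precisely because the invariant that survives limits is the (closed) linear equation on heights rather than the (open) non-freeness of a particular cylinder; I expect the most delicate point to be making precise the degeneration in the minimality argument for (3) and verifying that it stays inside $Z$.
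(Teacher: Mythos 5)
Your first paragraph — deducing from Theorem~\ref{T:free} a surface with a non-free horizontal cylinder and extracting from this a non-trivial linear constraint on heights of the homology class of $\cM$-parallel cylinders — agrees with the paper's starting point. But the promotion step and the argument for part (3) both diverge from the paper's proof and both contain genuine gaps.

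\textbf{The promotion to a Lagrangian-span surface.} You invoke a ``standard strengthening'' of Smillie--Weiss asserting that \emph{any} closed horocycle-invariant subset of a stratum contains a horizontally periodic surface whose horizontal core curves span a Lagrangian. No such statement is true: the horocycle orbit closure of a horizontally periodic surface whose horizontal core curves span an isotropic subspace of dimension $< g$ is a torus of twist parameters consisting entirely of surfaces with exactly the same (too small) span. The Lagrangian span in the paper comes specifically from $\cM$ having rank $g$ via Lemma~\ref{L:section8} (i.e., \cite[Section 8]{Wcyl}), and that lemma produces a \emph{path in $\cM$} along which a prescribed set $\cD$ of cylinders persists and remains horizontal. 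It uses the cylinder deformation theorem inside $\cM$, not just horocycle invariance; applying it to your set $Z$ would require knowing that all the deformations stay in $Z$, which is exactly what is not clear.

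Second and relatedly: your appeal to ``closedness'' to conclude that the Smillie--Weiss limit itself lies in $Z_0$ (not merely in $Z=\overline{Z_0}$) is not justified. Along a limit within the stratum, cylinders in the relevant homology class $\cC$ can merge (a horizontal saddle connection separating $C_i$ from $C_j$ disappears in the limit), and then the height equation $\sum a_i h_i = \text{const}$ need not descend to a non-trivial equation on the heights of the merged class unless the coefficients happen to match. So ``a homology class of horizontal cylinders whose heights satisfy a non-trivial linear equation on $\cM$'' is not a closed condition as a function of the surface, even if one restricts to limits within the stratum. The paper circumvents this entirely: the path of Lemma~\ref{L:section8} keeps all of $\cD\supseteq\cC'$ persisting as \emph{distinct} cylinders, so the Poincar\'e duals of their core curves (equivalently, the non-freeness of $C_1$) parallel transport along the path, and one re-derives the height constraint at the endpoint via \cite[Corollary~3.4]{Wcyl}. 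This is a genuinely necessary step, not a technicality your closure argument elides.

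\textbf{Part (3).} Your minimality-and-degeneration argument does not match the paper's and does not work as written. ``Degenerate $S$'' is undefined; if it means a boundary limit in the sense of \cite{MirWri}, that changes the stratum and leaves the setting of the proposition entirely, and if it means a deformation inside the stratum it is unclear that it ends in $Z$ or remains horizontally periodic with Lagrangian span. Minimizing the number of horizontal cylinders also runs against Lemma~\ref{L:section8}, which produces such surfaces precisely by \emph{increasing} the cylinder count. The paper's argument for (3) is direct: (i) if some component of $X\setminus\bigcup C_i$ contained no cylinder, the top of some $C_i$ would be glued to the bottom of some $C_j$, producing a positive-genus subsurface with no horizontal cylinders, contradicting that the horizontal core curves span a Lagrangian; (ii) to get a cylinder $D_i$ not homologous to another, one chooses an ``innermost'' pair of homologous cylinders $E_1,E_2$ in the relevant component and repeats the same topological argument inside. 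You will need something like this --- in particular, the Lagrangian condition is used again here, not just for (2).

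In short: your instinct that non-freeness is open and needs to be propagated carefully is correct, but the repair is Lemma~\ref{L:section8} (controlled persistence along a path in $\cM$) plus parallel transport, not a closure/limit argument, and the proof of (3) needs the topological argument from the Lagrangian condition rather than minimality.
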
 

 In (1), non-trivial means that not all coefficients are zero. 

\begin{rem}\label{R1:notation}
The following notation will be used throughout this section and the next. Recall that a cylinder is an isometric embedding of $\bR/(c\bZ) \times (0,h)$. This always extends to a map from $\bR/(c\bZ) \times [0,h]$ to the surface. If the cylinder is horizontal, the map can be chosen to be locally of the form $(x+c\bZ,y) \mapsto x+iy$, where $\omega=d(x+iy)$. The top of a horizontal cylinder is defined to be the image of $\bR/(c\bZ) \times \{h\}$ and the bottom is defined to be the image of $\bR/(c\bZ) \times \{0\}$.

Given any collection of $k$ pairwise homologous disjoint simple closed curves on a surface, none of which is null-homologous, removing these curves gives $k$ connected components, each with exactly two boundary circles.

We will index the $C_i$  so that the connected component of the complement of $\cup_{i=1}^k C_i$ that shares a\ann{A: Typo found by R2 fixed.} boundary saddle connection with the top of $C_i$ also shares a boundary saddle connection with the bottom of $C_{i+1}$. \ann{R1: can't say the top of a cylinder $C_i$ share boundary with boundary of more than one other $C_j$? Is this ruled out by being homologous? Please explain\\ A: We added a sentence of explanation (in the previous paragraph).}
We will adopt the convention that $C_0=C_k$ and $C_{k+1}=C_1$. For each $i=1,\ldots, k$, we will pick once and for all one horizontal cylinder $D_i$  in the connected component of the complement of $\cup_{i=1}^k C_i$ that shares boundary with the top of $C_i$, such that $D_i$ is not homologous to any other cylinder. 
\end{rem}


\begin{lem}\label{L:section8}
Let $(X',\omega')$ be a surface in an affine invariant submanifold $\cM$ and let $\cD$ be the set of horizontal cylinders on $(X',\omega')$. Then there is a path in $\cM$ from $(X', \omega')$ to a surface $(X, \omega)$ such that all the cylinders in $\cD$ persist and remain horizontal along this path, and such that $(X,\omega)$ is horizontally periodic and the core curves of the horizontal cylinders span a subspace of $H_1(X)$ of dimension at least $\rank(\cM)$. 
\end{lem}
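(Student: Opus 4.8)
\textbf{Proof plan for Lemma \ref{L:section8}.}

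The plan is to run a greedy procedure, repeatedly applying the Cylinder Deformation Theorem (Theorem \ref{T:CDT}) together with the Smillie--Weiss theorem (Theorem \ref{T:SW}) to push $(X',\omega')$ to surfaces with more and more horizontal cylinders, while never destroying the horizontal cylinders we already have. First I would observe that the hypothesis ``the cylinders in $\cD$ persist and remain horizontal along the path'' can be arranged by only ever deforming using cylinder shears $u_t^\cC$ and cylinder stretches $a_t^\cC$ for $\cM$-parallel equivalence classes $\cC$ that are \emph{horizontal}: by Theorem \ref{T:CDT} these stay in $\cM$, and since the core curves are horizontal the deformation does not rotate any other horizontal cylinder off the horizontal. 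So the whole path will be built out of such horizontal cylinder deformations, and along any such path all currently-horizontal cylinders persist and stay horizontal.

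Next I would set up the induction. Let $V(Y,\eta)\subset H_1(Y)$ denote the span of the core curves of the horizontal cylinders of $(Y,\eta)$; this is an isotropic subspace (homologous-or-disjoint horizontal core curves have zero algebraic intersection), so $\dim V(Y,\eta)\le g$, and in fact we want to push it up to dimension at least $\rank(\cM)$. Suppose we have reached a surface $(Y,\eta)\in\cM$, obtained from $(X',\omega')$ by horizontal cylinder deformations, with $\dim V(Y,\eta) = d < \rank(\cM)$. I claim we can find a further such surface with strictly larger $d$. Consider the orbit closure of $(Y,\eta)$ under the horocycle flow $u_t$ applied to the whole surface (the standard unipotent, not a cylinder shear) — more precisely, take a minimal closed $u_t$-invariant subset of $\overline{\{u_t (Y,\eta)\}}\subset \cM$; by Theorem \ref{T:SW} it contains a horizontally periodic surface $(Z,\zeta)$. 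The point is that $(Z,\zeta)$ lies in the horizontal-horocycle orbit closure of $(Y,\eta)$, hence is a limit of $u_t(Y,\eta)$, and one checks (this is the content of the cylinder-deformation machinery in \cite[Section 4]{MirWri} and \cite{Wcyl}, as used in the proof of the Cylinder Deformation Theorem) that such a limit can be reached by a path of horizontal cylinder deformations followed by a limit in which the cylinders of $(Y,\eta)$ persist; so all horizontal cylinders of $(Y,\eta)$ survive into $(Z,\zeta)$, giving $V(Y,\eta)\subseteq V(Z,\zeta)$ after the natural identification along the path. It remains to see $\dim V(Z,\zeta) > d$ \emph{unless} already $d\ge \rank(\cM)$. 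If $V(Z,\zeta) = V(Y,\eta)$ had dimension $d$, then the horizontal foliation of $(Z,\zeta)$ would be completely periodic with core curves spanning only a $d$-dimensional space; but $(Z,\zeta)\in\cM$, and a standard argument (Theorem \ref{T:CDT} again: the shears of the horizontal classes of cylinders on $(Z,\zeta)$ all lie in $T_{(Z,\zeta)}\cM$, and these shears are dual to the core curves) shows that the span of the horizontal core curves of a horizontally periodic surface in $\cM$ always has dimension at least $\rank(\cM)$ — indeed $p$ of the span of these shear vectors is a subspace of $p(T_{(Z,\zeta)}\cM)$ on which the symplectic form vanishes, and completeness forces it to be Lagrangian inside $p(T_{(Z,\zeta)}\cM)$, of dimension $\rank(\cM)$. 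Hence $\dim V(Z,\zeta)\ge \rank(\cM) > d$ when $d<\rank(\cM)$, strict improvement.

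So the procedure terminates (the dimension is bounded by $g$) at a horizontally periodic $(X,\omega)\in\cM$, reached from $(X',\omega')$ by a finite concatenation of horizontal cylinder deformation paths, with $\dim V(X,\omega)\ge \rank(\cM)$, and along the entire path every cylinder of $\cD$ persisted and stayed horizontal. The main obstacle, and the place where I expect to spend the most care, is the assertion that the horizontally periodic surface produced by Smillie--Weiss can be reached from $(Y,\eta)$ by horizontal cylinder deformations under which the old horizontal cylinders persist — i.e., that taking a limit inside the horocycle orbit closure does not collapse any of the cylinders we are trying to preserve. This requires identifying which degenerations can occur as such limits; one handles it by choosing the minimal set carefully and by noting that any horizontal saddle connection of $(Y,\eta)$ that would have to shrink to zero would give a linear relation forcing a cylinder among $\cD$ to have zero height, contradicting that $(Y,\eta)$ itself is already horizontally periodic in the relevant sub-part — this is exactly the kind of bookkeeping carried out in \cite{Wcyl} and \cite{MirWri}, which I would invoke rather than reprove.
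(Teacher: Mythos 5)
Your overall strategy — iterate Smillie--Weiss, keeping track of the span of horizontal core curves, using only horizontal cylinder deformations so that $\cD$ persists — is the right framework, and the observation that horocycle flow preserves heights and circumferences of horizontal cylinders (so that they survive into Smillie--Weiss limits) is correct. But the improvement step on which the whole induction rests is false.

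You assert that for \emph{any} horizontally periodic $(Z,\zeta)\in\cM$, the span of its horizontal core curves in $H_1(Z)$ has dimension at least $\rank(\cM)$, arguing that $p$ of the span of the horizontal shear vectors is isotropic in $p(T_{(Z,\zeta)}\cM)$ and that ``completeness forces it to be Lagrangian.'' Neither step is right: an isotropic subspace of a symplectic vector space is not automatically Lagrangian, and horizontal periodicity imposes no such constraint. A concrete counterexample is any one-cylinder surface in $\cH(2)$: it is horizontally periodic, its single core curve spans a $1$-dimensional subspace of $H_1$, but $\rank(\cH(2))=2$. More generally one-cylinder surfaces exist in strata of arbitrary rank. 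So after one application of Smillie--Weiss your invariant $\dim V$ may well fail to increase, and the induction stalls with no contradiction available.

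What actually makes the iteration work — and what \cite[Section 8]{Wcyl} does, as summarized by the lemma quoted in Section \ref{S:free} of this paper — is to iterate on the \emph{number} of horizontal cylinders rather than on the dimension of the span of core curves. One shows (i) the cylinder preserving space of a horizontally periodic surface in $\cM$ has codimension at most $\rank(\cM)$ in $T\cM$, and (ii) if the twist space is a proper subspace of the cylinder preserving space, then a suitable cylinder stretch followed by Smillie--Weiss produces a horizontally periodic surface in $\cM$ with strictly more horizontal cylinders, with the previous cylinders persisting. Iterating (ii) must terminate since the number of cylinders is bounded, and once the twist space equals the cylinder preserving space, a dimension count using (i) gives that the horizontal core curves span a subspace of dimension at least $\rank(\cM)$. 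Your proposal replaces this two-step dichotomy with a single false claim; to repair it you would essentially have to reintroduce the twist-space/cylinder-preserving-space comparison.
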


\begin{proof}
See \cite[Section 8]{Wcyl}. 
\end{proof}

\begin{proof}[Proof of Proposition \ref{P:setup}.]
By Theorem \ref{T:free}, there is a surface $(X',\omega')\in \cM$ with a cylinder $C_1$ that is not free. Let $\cC'$ be the set of cylinders  on $(X', \omega')$ homologous to $C_1$. 

Rotating the surface, we can assume the cylinders of $\cC'$ are horizontal. Let $(X, \omega)$ denote a surface as in Lemma  \ref{L:section8}, and let $\cC$ be the set of all\ann{A: Typo found by R2 fixed.} cylinders on $(X,\omega)$ homologous to cylinders that persist from cylinders in $\cC'$ on $(X', \omega')$. 

We first claim that not every cylinder in $\cC$ is free. Otherwise, for each cylinder $C'$ in $\cC'$, the corresponding cylinder $C$ in $\cC$ is free. Hence the Poincare dual of the core curve of $C$ is in $T_{(X,\omega)}\cM$. Parallel transporting this along the path to $(X', \omega')$ gives that the Poincare dual of the core curve of $C'$ is in $T_{(X',\omega')}\cM$, and hence that $C'$ is free.  Thus, because not every cylinder in $\cC'$ is free, not every cylinder in $\cC$ is free. 

Because the cylinders of $\cC$ are not all free, \cite[Corollary 3.4]{Wcyl} gives that some linear equation on the moduli of the cylinders of $\cC$ holds on all surfaces sufficiently close to $(X,\omega)$. See also \cite[Lemma 4.6]{MirWri}. Since the cylinders of $\cC$ all have equal circumference, this gives a linear equation on the heights. 

Since $\cM$ has full rank, this establishes the first two statements of the proposition. We will now show that there is a cylinder in every connected component of the complement of $\cup_{i=1}^k C_i$. Otherwise, because $(X,\omega)$ is horizontally periodic, the top of a cylinder of $\cC$, say $C_1$, must be glued to the bottom of another cylinder of $\cC$, say $C_2$, as in Figure \ref{F:C1C2}. \ann{R1: why should this imply all of top of $C_1$ glued to all of bottom $C_2$?\\ A: Added a phrase of explanation.}
\ann{R2: Proof of Proposition 3.4, last sentence: This is not obvious, so consider actually spelling this out. For example, you can use the following argument ...\\ A: We added the argument.}

\begin{figure}[h!]
\includegraphics[width=0.4\linewidth]{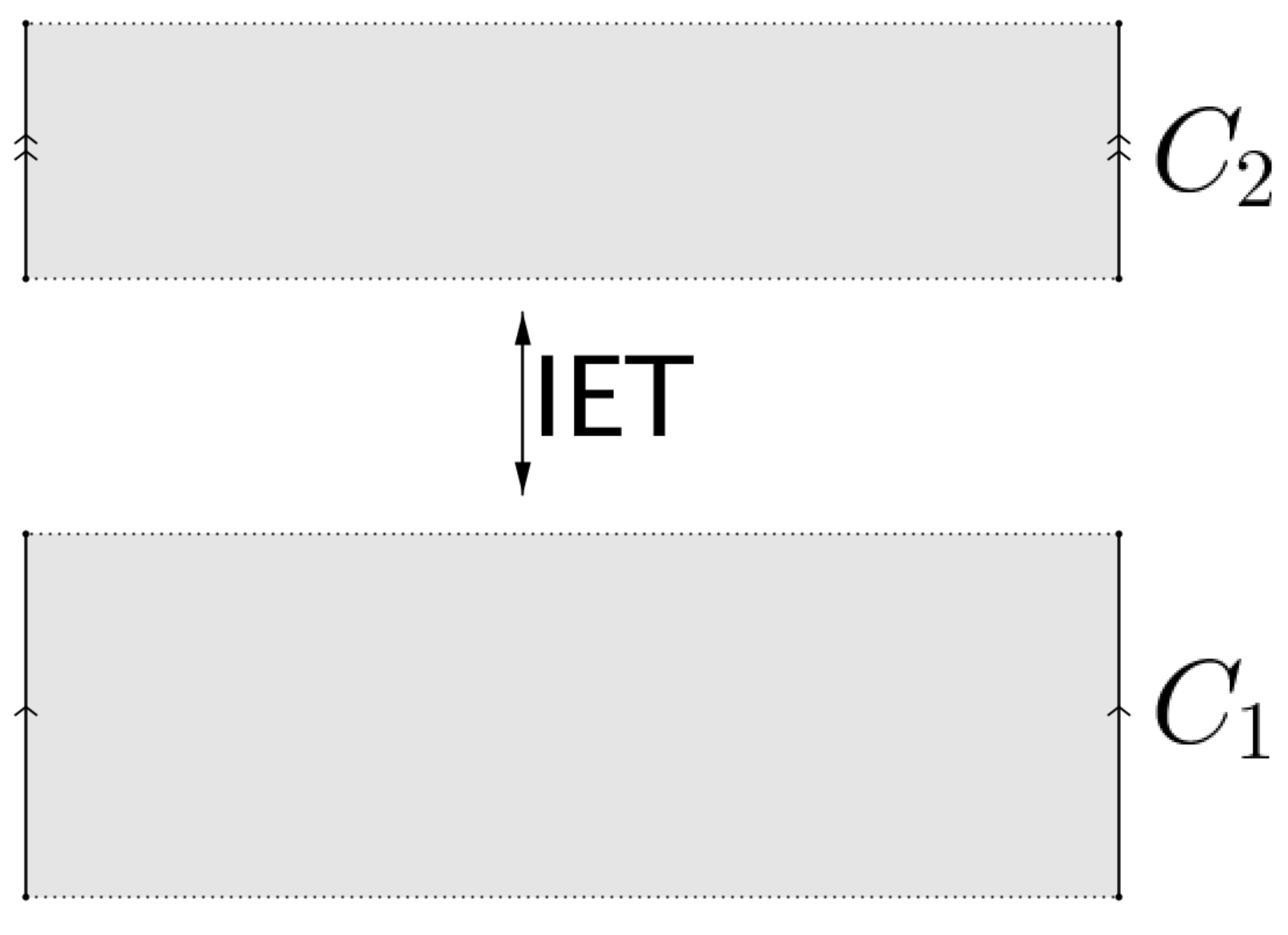}
\caption{Proof of Proposition \ref{P:setup}.}
\label{F:C1C2}
\end{figure}

The region above a core curve of $C_1$ and below a core curve of $C_2$ is a subsurface of positive genus, which contradicts the fact that the core curves of the horizontal cylinders span a Lagrangian subspace.\ann{A: Typo found by R4 fixed.}

Thus there is a horizontal cylinder $D_i$ in each connected component. 

 We must now show that we can pick $D_i$ to be not homologous to any other horizontal cylinder.
If the given connected component of the complement of $\cup_{i=1}^k C_i$ does not contain homologous cylinders, we are done. Otherwise, pick $E_1, E_2$ homologous cylinders in this connected component, such that the connected component of the complement of $E_1 \cup E_2$ that does not contain the $C_i$ does not contain any pair of homologous horizontal cylinders. (That is, pick $E_1$ and $E_2$ to be ``innermost", or ``as close together as possible".) The above argument shows that the top of $E_1$ cannot be glued as in Figure \ref{F:C1C2} to the bottom of $E_2$ or vice versa, and hence we can find a cylinder $D_i$ in the connected component of the complement of $E_1 \cup E_2$ that does not contain the $C_i$. This $D_i$ cannot be homologous to any other horizontal cylinder. 
\end{proof}

\section{Proof of Theorem \ref{T:main}}\label{S:main}

We will prove Theorem \ref{T:main} by induction on genus and the dimension of the stratum. The base case is given by Lemma \ref{L:genus2}. 
 \ann{R1: Make clear that proofs depend on Theorem 1.5 which will be proven at end. \\ A: We added a remark saying that we are using Theorem 1.5 to the beginning of the previous section to clarify this. Since it is impossible to read Section 4 without looking at Section 3, we hope this suffices.}

That is, we will consider a full rank affine invariant submanifold $\cM$ of genus $g>2$ surfaces, and we will assume $\cM$ is not a connected component of a stratum. By induction, we assume the following.  

\begin{ass}\label{A0}
Theorem \ref{T:main} is true in genus at least 2 and strictly less than $g$, as well as in all strata of genus $g$ surfaces where the Abelian differentials have strictly fewer zeros than those in $\cM$.\ann{R2: Consider highlighting the inductive hypothesis as you did with Assumptions 1 and 2.\\A: We highlighted the inductive hypothesis as suggested.}
\end{ass}

 Our goal is to prove that $\cM$ is  a hyperelliptic locus. Throughout the proof, we will refer to the surface $(X,\omega)\in \cM$ produced by Proposition \ref{P:setup} and the notation given in Remark \ref{R1:notation}.\ann{A: Fixed broken reference to remark.} 
We will assume that $(X,\omega)$ has two additional properties. 

\begin{ass}\label{A1}
The relative cohomology class of $\omega$ does not sit in any proper subspace of $T_{(X,\omega)}\cM$ defined over $\overline{\bQ} \cap \bR$. 
\end{ass}

\begin{ass}\label{A2}
Each $D_i$ contains a vertical saddle connection. 
\end{ass}

The first assumption can be arranged by generically perturbing the real parts of the period coordinates of $(X,\omega)$, and the second assumption, which is made solely for concreteness of exposition, can be arranged by additionally performing cylinder deformations. \ann{R2:  ``$(X_i, \omega_i, S_i)$ is obtained vertically  squashing  each $D_i$."  Consider rephrasing this so as to emphasize that not all of the $D_i$ are being collapsed at once:  for each $i$, $(X_i, \omega_i, S_i)$ is obtained vertically  squashing  $D_i$.\\A: We made this clarification as suggested.}\ann{R2: This argument is not sufficiently justified. For a surface in $\cH(1,1)$ with three cylinders, it is possible to make everything line up just so, so as to collapse the large cylinder and get two homologous cylinders that were not previously homologous. ...\\A: We clarified this as suggested.}

Let $(X_i, \omega_i, S_i)$ denote the limit of $a_t^{D_i}(X,\omega)$ as $t\to -\infty$ in the partial compactification of strata discussed in \cite{MirWri}. This ``what you see is what you get" partial compactification can be described formally by taking limits in Deligne-Mumford with all zeros marked, and then deleting zero area components and separating nodes. It should be contrasted with larger compactifications, see for example \cite{Many} and the references therein. 

In other words, for each $i$, the surface $(X_i, \omega_i, S_i)$ is obtained by vertically ``squashing"  $D_i$, i.e. letting the height go to zero while keeping the circumference constant. Because we have assumed that each $D_i$ contains a vertical saddle connection, this results in the collision of zeros. 

Let $(X_i, \omega_i)$ be the same limit surface with marked points forgotten. On $(X_i, \omega_i, S_i)$ all the cylinders in $\cC$, as well as all $D_j, j\neq i$, persist and have the same area and circumference as on $(X, \omega)$. Note also that squashing $D_i$ does not create new horizontal cylinders, because squashing $D_i$ does not change the horizontal straight line flow outside of $D_i$. Hence, because of Assumption \ref{A1}, squashing $D_i$ does not create any new cylinders homologous to those in $\cC$.


\begin{figure}[h]
\includegraphics[width=0.5\linewidth]{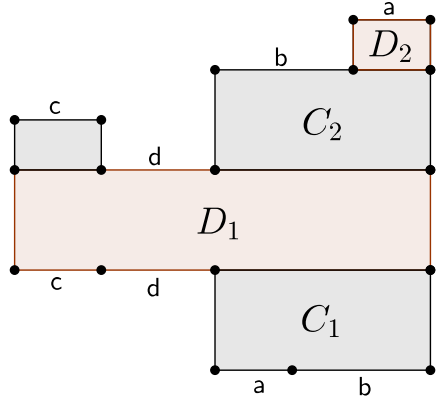}
\caption{In this example, if $D_1$ is squashed, the surface becomes disconnected. (One connected component is the  horizontal cylinder bounded above by $c$, and the other component  is $C_1\cup C_2 \cup D_2.$)  If either $D_1$ or $D_2$ are squashed then  $C_1$ and $C_2$ ``merge" to form a larger cylinder.}
\label{F:Disconnect}
\end{figure}

\begin{lem}\label{L:component}
On each $(X_i, \omega_i)$ there is a unique component that contains all the cylinders in $\cC$ as well as all $D_j, j\neq i$. 
\end{lem}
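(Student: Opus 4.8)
\textbf{Proof proposal for Lemma \ref{L:component}.}

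The plan is to understand the topology of $(X_i,\omega_i)$ by tracking which boundary saddle connections get glued when $D_i$ is squashed, and then to use the Lagrangian-span property from Proposition \ref{P:setup}(2) to rule out the surface splitting off the part carrying $\cC$. First I would fix attention on the component $Y$ of $(X_i,\omega_i)$ that contains the cylinder $C_1$. Squashing $D_i$ only affects the portion of the surface inside $D_i$ and its boundary saddle connections; outside a neighbourhood of $\overline{D_i}$ nothing changes, and in particular every cylinder in $\cC$ and every $D_j$ with $j\neq i$ persists with the same circumference and area (as noted just before the lemma). So the real content is: after the squash, does the single component $Y$ still contain \emph{all} of $\cC$ and all the $D_j$, $j\neq i$?

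The key observation is that on $(X,\omega)$ the cylinders $C_1,\ldots,C_k$ are pairwise homologous and cut $(X,\omega)$ into $k$ connected pieces, each sharing one boundary circle with the top of some $C_i$ and the other with the bottom of $C_{i+1}$ (Remark \ref{R1:notation}); thus $C_1,\ldots,C_k$ together with these pieces are arranged in a single ``cyclic chain,'' so $(X,\omega)\setminus \cup C_j$ is connected through the $D_j$'s and the other pieces. After squashing $D_i$ — which lives in exactly one of these $k$ pieces, the one sharing boundary with the top of $C_i$ — the only way connectivity could be lost is if squashing $D_i$ disconnects that piece into a part attached to the top of $C_i$ and a part attached to the bottom of $C_{i+1}$. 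Even if that happens, both resulting parts remain attached to the chain of the $C_j$'s, so $C_1,\ldots,C_k$ and all $D_j$ ($j\neq i$, which lie in the other $k-1$ pieces, untouched) stay in one component. The one genuinely dangerous scenario is the one illustrated in Figure \ref{F:Disconnect}: squashing $D_i$ could cause $C_i$ and $C_{i+1}$ to ``merge'' and the top of the merged cylinder could glue directly to its own bottom, pinching off a separate component. I would rule this out exactly as in the proof of Proposition \ref{P:setup}: if the top of (the merged) cylinder were glued to its own bottom along a full boundary circle, the region between two core curves would be a closed subsurface of positive genus containing a horizontal cylinder not in the horizontal-core-curve span, contradicting that the horizontal core curves span a Lagrangian — and squashing $D_i$ creates no new horizontal cylinders and (by Assumption \ref{A1}) no new cylinders homologous to those in $\cC$, so the horizontal cylinder data and its span are unchanged.

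Concretely, the steps in order: (i) record that on $(X_i,\omega_i)$ every $C_j$ and every $D_j$ ($j\neq i$) persists, and that the set of horizontal cylinders other than $D_i$ is unchanged with unchanged core-curve homology classes, so they still span a Lagrangian of $H_1$; (ii) using Remark \ref{R1:notation}, observe the cyclic chain structure and conclude that removing $\cup C_j$ from $(X_i,\omega_i)$ leaves at most $k$ pieces arranged cyclically, hence everything is connected \emph{unless} two consecutive $C_j$'s become identified/merged and then glued to form a closed positive-genus subsurface; (iii) exclude that last possibility by the positive-genus-versus-Lagrangian argument of Proposition \ref{P:setup}; (iv) conclude there is a single component $Y$ containing $C_1$, hence containing the whole chain $C_1,\ldots,C_k$ and every $D_j$, $j\neq i$, and that uniqueness is automatic since any component containing all of $\cC$ must contain $C_1$. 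I expect step (ii)–(iii), pinning down precisely which gluings can occur when the saddle connections inside $D_i$ collapse and then invoking the Lagrangian obstruction for the merged-cylinder case, to be the main obstacle; the rest is bookkeeping with the partial compactification of \cite{MirWri}.
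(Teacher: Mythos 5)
Your overall strategy — tracking the component of $(X_i,\omega_i)$ containing $C_1$ and arguing everything relevant stays attached — is the right one, and the observation that all $D_j$ with $j\neq i$ lie in pieces untouched by squashing $D_i$ is essentially the key point. But the detour in steps (ii)–(iii) is where the argument goes astray. The ``dangerous scenario'' you introduce (the merged $C_i\cup C_{i+1}$ gluing top-to-bottom and pinching off) is not a threat to the lemma and in fact cannot occur: the top of the merged cylinder is the old top of $C_{i+1}$ and the bottom is the old bottom of $C_i$, and the part of the surface between them on the side \emph{away} from $D_i$ contains the cylinders $D_{i+1},\dots,D_{i-1}$, all of positive height and unaffected by squashing $D_i$, so that side cannot collapse. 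This is also not what Figure \ref{F:Disconnect} depicts: there, the piece that splits off is a separate horizontal cylinder sitting inside the region containing $D_1$, while $C_1\cup C_2\cup D_2$ remains one component — exactly as the lemma claims. Worse, the argument you propose to exclude the scenario would not be valid: the Lagrangian property of horizontal core curves is a statement about $(X,\omega)$ from Proposition \ref{P:setup}(2), and it does not transfer automatically to $(X_i,\omega_i)$. When the genus drops under degeneration, a Lagrangian of $H_1(X_i)$ has smaller dimension, so ``the horizontal cylinder data and its span are unchanged'' is not a safe assertion.

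The paper's proof avoids all of this with a cleaner decomposition: instead of removing all of $\cup C_j$, remove only the two core curves of $C_i$ and $C_{i+1}$. Since these are homologous, $(X,\omega)$ minus the two curves has exactly two components, and the one \emph{not} containing $D_i$ already contains every $D_j$ with $j\neq i$, every $C_j$ with $j\neq i,i+1$, and half of each of $C_i$ and $C_{i+1}$. That component is a connected set disjoint from $D_i$; squashing $D_i$ is supported inside the other component, so this one survives as a connected set in $(X_i,\omega_i)$. No Lagrangian argument, no merging case analysis, and no need to say anything about the piece containing $D_i$.
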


\begin{proof}
$(X, \omega)$ minus the  core curves of $C_i$ and $C_{i+1}$ has two connected components, one of which contains all $D_j, j\neq i$, as well as all $C_j, j\neq i, i+1$ and ``half" of the two cylinders $C_i$ and $C_{i+1}$ that were\ann{A: Typo found by R2 fixed.} cut in two by the removal of their core curves. Thus all the  cylinders in question other\ann{R1: what is domain that has two sides labeled $c$? Is that cylinder one of the $C_i$? It seems not. Is that region why the squashing $D_1$ disconnects? Please explain.\\A: We added some explanation to the caption.}
\ann{R1: why doesn't this figure contradict end of proof of 3.4. Namely above $C_1$ and below $C_2$ is $D_1$ and something else.\\A: We did not make any change in response to this comment (but we did explain in our response to report R1).}
\ann{R1: Why should $C_i$ and $C_{i+1}$ lie exactly over each other. \\A: We did not make any change in response to this comment (but we did explain in our response to report R1).} than $D_i$ are contained in a connected set disjoint from $D_i$, and so will remain in a connected set after squashing $D_i$.  
\end{proof}

Let $\Sigma$ denote the set of zeros of $\omega$, and let $V_i\subset H_1(X, \Sigma)$ denote the\ann{A: Typo found by R3 fixed.} kernel of the induced map on relative homology of the natural collapse map from $X$ to $X_i$ (see \cite[Section 9]{MirWri} for a formal definition).  Note  that $V_i$ always includes all vertical saddle connections in $D_i$.\ann{A: Typo found by R2 fixed.} (In Figure \ref{F:Disconnect}, if $D_1$ is squashed, $V_1$\ann{A: Typo found by R2 fixed.} additionally includes the saddle connection labelled $d$. Indeed, $(X_1,\omega_1)$ consists of one component corresponding to the horizontal cylinder below $c$, and one component corresponding to $C_1\cup C_2 \cup D_2$, glued together at a point, and the collapse map sends $d$ to this point. Note that the two components may be separated by ``ungluing" at the point, and the map remains well defined on relative cohomology.) \ann{R3: why is $d$ in kernel? It is not crushed\\ A: We added some explanation in the parentheses. See \cite[Section 9]{MirWri} for more details, which has been updated in November and is now more readable.}

The tangent space to the stratum of $(X_i, \omega_i, S_i)$ can be naturally identified with $\Ann(V_i)\subset H^1(X, \Sigma)$, the space of cohomology  classes that are zero on all relative homology classes in $V_i$ \cite[Section 9]{MirWri}. The space $\Ann(V_i)\cap T_{(X, \omega)}\cM$ can be thought of as the space of deformations of $(X, \omega)$ that stay in $\cM$ and do not change the part of the surface that is  crushed by squashing $D_i$. This space should be the tangent space to the component of the boundary of $\cM$ that contains $(X_i, \omega_i, S_i)$, and  this has been verified after projecting to each connected component \cite[Theorem 2.9]{MirWri}.\ann{A: Updated reference (issue noticed by R2).}  

\begin{lem}\label{L:boundary}
Let $\pi_i$ be the map that forgets all but the component of $(X_i, \omega_i)$ given by Lemma \ref{L:component}. Then $\pi_i(X_i, \omega_i, S_i)$ is contained in an affine invariant submanifold $\cM_i^*$ whose tangent space at $\pi_i(X_i, \omega_i, S_i)$ is given by $(\pi_i)_*(\Ann(V_i)\cap T_{(X,\omega)}\cM)$.\ann{R2: If $\pi_i$ is a map at the level of surfaces and $\Ann(V_i) \cap T_{(X,\omega)}\cM$ lies in cohomology, shouldn't $\pi_i$ act by pull-back $(\pi_i)^*$ and not push-forward? I realize that as written, it exactly agrees with [MW], but I found this point confusing.\\A: Added a definition of $(\pi_i)_*$. We apologize that the notation may be confusing; it is now too late to change it in [MW], and at least in this paper we'd like to have consistent notaiton with [MW].}

$\pi_i(X_i, \omega_i)$ is contained in an affine invariant submanifold $\cM_i$ whose tangent space at $\pi_i(X_i, \omega_i)$ is given by the image of $T_{\pi_i(X_i,\omega_i, S_i)}\cM_i^*$ under the forgetful map. 
\end{lem}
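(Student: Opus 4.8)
\medskip
\noindent\textbf{Proof plan.}
Assuming the first statement, which produces the affine invariant submanifold $\cM_i^*$, the plan for the second is as follows. Let $\phi$ be the map that forgets the marked points $S_i$, regarded as a $GL(2,\bR)$-equivariant map from the stratum containing the connected surface $\pi_i(X_i,\omega_i,S_i)$ to the stratum containing $\pi_i(X_i,\omega_i)$. On period coordinates $\phi$ is induced by the natural restriction map $\phi_*$ on relative cohomology that forgets the points of $S_i$, and this map is surjective (its kernel records the positions of the points of $S_i$). I would set $\cM_i:=\overline{\phi(\cM_i^*)}$ and prove (i) that $\cM_i$ is an affine invariant submanifold and (ii) that $T_{\pi_i(X_i,\omega_i)}\cM_i=\phi_*\big(T_{\pi_i(X_i,\omega_i,S_i)}\cM_i^*\big)$, which is the claimed description. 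For (i) the plan is to push forward the affine $SL(2,\bR)$-invariant probability measure $\nu$ carried by $\cM_i^*$: since $\phi$ is area-preserving and equivariant, $\phi_*\nu$ is an $SL(2,\bR)$-invariant ergodic probability measure, hence affine by the measure classification of \cite{EM,EMM}, with support $\overline{\phi(\cM_i^*)}=\cM_i$; this identifies $\cM_i$ as an affine invariant submanifold. (Equivalently one may cite the resulting general fact that a forgetful image of an affine invariant submanifold is an affine invariant submanifold, which is part of the theory of point markings behind Theorems \ref{T:apisa} and \ref{T:marked}.)

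For (ii), write $W^*:=T_{\pi_i(X_i,\omega_i,S_i)}\cM_i^*$. Near $\pi_i(X_i,\omega_i,S_i)$ the submanifold $\cM_i^*$ coincides in period coordinates with the linear subspace $W^*$, and $\phi$ acts there as the restriction of the linear surjection $\phi_*$; hence $\phi$ carries this local model onto a neighborhood of $\pi_i(X_i,\omega_i)$ inside the linear subspace $\phi_*(W^*)$. Thus $\phi(\cM_i^*)$, and a fortiori $\cM_i$, contains a submanifold through $\pi_i(X_i,\omega_i)$ tangent to $\phi_*(W^*)$, which gives $\phi_*(W^*)\subseteq T_{\pi_i(X_i,\omega_i)}\cM_i$. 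For the reverse inclusion one must check that near $\pi_i(X_i,\omega_i)$ the set $\phi(\cM_i^*)$ is exactly this one linear sheet: then its closure $\cM_i$ is that sheet near $\pi_i(X_i,\omega_i)$ and $T_{\pi_i(X_i,\omega_i)}\cM_i\subseteq\phi_*(W^*)$. The key is that $S_i$ is canonically determined by the degeneration---it is the set of images of the zeros of $\omega$ that collided when $D_i$ was squashed---so every surface of $\phi(\cM_i^*)$ near $\pi_i(X_i,\omega_i)$ carries a canonical marked-point configuration forced to be $S_i$; together with the genericity of $(X,\omega)$ from Assumption \ref{A1}, which descends to $(X_i,\omega_i,S_i)$ and forces $\phi$ to be a submersion onto $\cM_i$ at this point, this rules out extra sheets and any drop of dimension.

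I expect this last transversality point to be the main obstacle: showing that forgetting $S_i$ does not lower the dimension below $\dim\phi_*(W^*)$ and does not produce additional local sheets of $\phi(\cM_i^*)$ through $\pi_i(X_i,\omega_i)$. This is where Assumption \ref{A1}, the explicit description of the boundary affine invariant submanifold $\cM_i^*$ and its tangent space from \cite[Theorem 2.9]{MirWri}, and the point-marking results of Apisa \cite{Apisa} and Apisa--Wright \cite{Apisa2} all come in. Once it is known that $\phi\colon\cM_i^*\to\cM_i$ is a submersion with differential $\phi_*$ near $\pi_i(X_i,\omega_i,S_i)$, both inclusions in (ii) follow formally from the equivariance of $\phi$ and the linearity of period coordinates.
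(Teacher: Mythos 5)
The paper's proof of this lemma is two sentences long and consists entirely of citations: the first statement is an application of \cite[Theorem 2.9]{MirWri}, and the second is obtained from the first via \cite[Lemma 6]{LMW}. Your proposal takes a genuinely different route. You accept the first statement as a black box (where the paper actually cites \cite{MirWri}, so that part of your write-up would need the same reference), and then you attempt a from-scratch proof of the second half, which is in effect an attempt to reprove the cited \cite[Lemma 6]{LMW} in this particular situation.

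The measure-theoretic part of your argument is sound in outline and is close in spirit to what underlies the cited lemma: pushing forward the affine $SL(2,\bR)$-invariant ergodic measure carried by $\cM_i^*$ under the equivariant forgetful map $\phi$, and invoking Eskin--Mirzakhani(--Mohammadi) to recognize $\overline{\phi(\cM_i^*)}$ as an affine invariant submanifold. Likewise, the inclusion $\phi_*(W^*)\subseteq T_{\pi_i(X_i,\omega_i)}\cM_i$ is correctly obtained from the linearity of $\phi$ in period coordinates.

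Where your proposal has a genuine gap is the reverse inclusion, which you flag yourself as ``the main obstacle.'' Two remarks on how you frame it. First, the worry about ``extra sheets'' is somewhat misplaced: affine invariant submanifolds in this paper are explicitly allowed to have self-crossings (see the reference to \cite{LNW} in Section 2), and the tangent-space claim is local to one branch, so multiplicity of sheets is not in itself fatal. What you really need to rule out is a dimension jump in passing to the closure: the non-properness of $\phi$ means that $\overline{\phi(\cM_i^*)}$ could in principle pick up accumulation from far-away parts of $\cM_i^*$, and an argument is needed that the affine invariant submanifold produced by the measure classification nevertheless has dimension exactly $\dim\phi_*(W^*)$. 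Second, your proposed fix via ``the point-marking results of Apisa \cite{Apisa} and Apisa--Wright \cite{Apisa2}'' is circular at this stage of the paper: Theorems \ref{T:apisa} and \ref{T:marked} describe marked-point loci \emph{over a known} $\cM_i$, whereas here $\cM_i$ is exactly what is being constructed, and its identity is only determined later (in Lemma \ref{L:Mihyp}). The correct input is the general statement \cite[Lemma 6]{LMW}, which is proved without any such classification, and which the paper simply cites.
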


If $\Sigma_i$ is the set of zeros of $\omega_i$, the forgetful map is the restriction map from linear functionals on $H_1(X, \Sigma_i \cup S_i)$ to\ann{A: Typo found by R2 fixed.} linear functionals on $H_1(X, \Sigma_i)\subset H_1(X, \Sigma_i \cup S_i).$ One may define $(\pi_i)_*: \Ann(V_i) \to H^1(X_i, S_i)$ to be the inverse of the isomorphism $\pi_i^*: H^1(X_i, S_i)\to \Ann(V_i)\subset H^1(X, \Sigma)$.

\begin{proof}
The first statement is an application of \cite[Theorem 2.9]{MirWri}\ann{A: Updated reference.}. This second statement is a corollary of the first by \cite[Lemma 6]{LMW}.  
\end{proof}

\begin{lem}\label{L:stillfullrank}
Each $\cM_i$ has full rank. 
\end{lem}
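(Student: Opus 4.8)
The plan is to show that the image of $T_{\pi_i(X_i,\omega_i)}\cM_i$ in $H^1(X_i)$ is all of $H^1(X_i)$, equivalently that the absolute periods are unconstrained on $\cM_i$. First I would set up the linear algebra relating the cohomology of the degenerate surface to that of $(X,\omega)$: the collapse map $X \to X_i$ identifies $H^1(X_i, S_i)$ with $\Ann(V_i) \subset H^1(X, \Sigma)$, and under this identification the absolute cohomology $H^1(X_i)$ of the surviving component corresponds to the quotient of $\Ann(V_i)\cap p^{-1}(\text{stuff})$ by the span of relative classes; more precisely, since $V_i$ is spanned by vanishing cycles (saddle connections crushed by squashing $D_i$), passing from $(X,\omega)$ to $(X_i,\omega_i)$ only changes relative cohomology and the absolute cohomology $H^1(X_i)$ is a symplectic subquotient of $H^1(X)$ — concretely $H^1(X_i) \cong W^\perp/W$ where $W \subset H_1(X)$ is the (isotropic) span of the core curves of the cylinders $C_j$ that get merged or the homology classes that die, intersected appropriately with the component picked out by $\pi_i$. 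I would make this identification precise using \cite[Section 9]{MirWri}.

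The key step is then a diagram chase. Since $\cM$ has full rank, $p(T_{(X,\omega)}\cM) = H^1(X)$. I want to deduce that the analogous projection for $\cM_i$ surjects onto $H^1(X_i)$. The tangent space $T\cM_i$ is obtained from $T\cM$ in two stages (Lemma \ref{L:boundary}): first intersect with $\Ann(V_i)$ and push forward via $(\pi_i)_*$ to get $T\cM_i^*$, then apply the forgetful map. So I need: the image of $p\big(\Ann(V_i)\cap T_{(X,\omega)}\cM\big)$ in $H^1(X_i)$ is everything. Because $T_{(X,\omega)}\cM$ is a $GL(2,\bR)$-invariant (hence $p$-coisotropic, and by Avila–Eskin–Möller $p(T\cM)$ symplectic) subspace containing the vanishing cycles' annihilator data, the constraint $\Ann(V_i)$ cuts down the relative part but — crucially — does not cut down the absolute part beyond what is forced by quotienting $H^1(X)$ down to $H^1(X_i)$. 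The point is that $V_i$ consists of relative cycles (vertical saddle connections in $D_i$ together with any further classes forced by a disconnection, as in the discussion around Figure \ref{F:Disconnect}), and the homology classes that become trivial in $X_i$ are precisely the core curves being merged, which are isotropic. For a full-rank $\cM$ one can always find, for any target class in $H^1(X_i)$, a tangent vector in $T_{(X,\omega)}\cM$ projecting to a lift of it; then correct it by an element of the (relative) kernel of $p$ to land in $\Ann(V_i)$ — this correction is possible because $\Ann(V_i) \supset \ker p$ restricted suitably, or at worst because the failure lives in a space on which $p$ is already zero. After restricting to the component via $\pi_i$ and forgetting the marked points $S_i$ (which only affects relative, not absolute, cohomology), surjectivity onto $H^1(X_i)$ persists.

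The main obstacle I anticipate is exactly the interaction with the disconnection phenomenon and the restriction to a single component $\pi_i$: it is genuinely \emph{not} automatic that projecting the boundary tangent space to one component preserves full rank — the authors flag this themselves ("It is not obvious that these $\cM_i$ are full rank (see Figure \ref{F:RelToAbs})"). The danger is that some absolute periods of the surviving component might be tied, inside $\cM$, to periods supported on the component that gets discarded or crushed, so that after forgetting it an absolute constraint appears. Ruling this out is where I expect to spend most of the effort: I would argue that any linear relation among absolute periods of $\pi_i(X_i,\omega_i)$ holding on $\cM_i$ pulls back to a linear relation among absolute periods on $\cM$ — using that $\Ann(V_i)\cap T\cM$ is genuinely the full tangent space to the boundary stratum (\cite[Theorem 2.9]{MirWri}) and that classes in $H^1(X)$ supported away from $D_i$ and on the $\pi_i$-component inject into $H^1(X_i)$ — and such a relation contradicts the full rank of $\cM$. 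A clean way to organize this is: $H^1(\text{$\pi_i$-component of }X_i)$ is a symplectic summand of $H^1(X)$ (the other summand coming from the crushed/discarded pieces and the vanishing cycles being isotropic), $p(T_{(X,\omega)}\cM)=H^1(X)$, and $p(T\cM)$ respects this symplectic splitting because $T\cM$ does; hence the projection to the summand is onto, which is what full rank of $\cM_i$ asserts.
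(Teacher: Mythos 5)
Your proposal identifies the right target — surjectivity of $p$ restricted to $(\pi_i)_*(\Ann(V_i) \cap T\cM)$ onto $H^1(X_i)$ — and correctly anticipates the danger (that intersecting with $\Ann(V_i)$ might kill absolute directions, cf.\ Figure \ref{F:RelToAbs}). But the argument you give to rule out the danger has two genuine gaps, and the paper's proof is quite different from your sketch.

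First, the correction step is not justified. You write that, given $v \in T_{(X,\omega)}\cM$ projecting to a lift of a target class in $H^1(X_i)$, one can correct it ``by an element of the (relative) kernel of $p$ to land in $\Ann(V_i)$,'' and that this is possible ``because $\Ann(V_i) \supset \ker p$ restricted suitably, or at worst because the failure lives in a space on which $p$ is already zero.'' Two problems: (a) it is not true that $\ker p \subset \Ann(V_i)$; more importantly, (b) the correcting element must itself lie in $T_{(X,\omega)}\cM$, otherwise $v+w$ leaves the tangent space and nothing is gained. Finding $w \in T_{(X,\omega)}\cM$ that cancels $v|_{V_i}$ while not disturbing the absolute part is exactly where a real argument is needed, and in general such $w$ need not exist. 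The paper produces it concretely: $w$ is a multiple of $u^{D_i}$, the twist in the cylinder $D_i$, which is available in $T_{(X,\omega)}\cM$ because $D_i$ is free (Proposition \ref{P:setup}(3) together with Lemma \ref{L:hom}); and Assumption \ref{A1} is what guarantees that $v|_{V_i}$ is a scalar multiple of $u^{D_i}|_{V_i}$, so that a single scalar correction suffices. Your sketch never invokes $u^{D_i}$ or Assumption \ref{A1}, and these are precisely the ingredients that make the correction work.

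Second, the ``clean way to organize this'' at the end is the naive argument the paper explicitly warns against. $H^1(X_i)$ is in general a symplectic \emph{subquotient} of $H^1(X)$ (when the genus drops), not a summand, so there is no symplectic splitting of $H^1(X)$ with $H^1(X_i)$ as a factor; and the claim that ``$p(T\cM)$ respects this symplectic splitting because $T\cM$ does'' is unsupported. If this argument were sound it would apply outside the affine-invariant context as well, but Figure \ref{F:RelToAbs} shows the conclusion is false there, which is exactly the remark the authors make immediately after stating the lemma.

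The paper's actual proof exhibits an explicit Lagrangian inside $p\bigl((\pi_i)_*(\Ann(V_i)\cap T_{(X,\omega)}\cM)\bigr)$, split into two cases according to whether the genus of $(X_i,\omega_i)$ drops. It uses the $g-1$ horizontal cylinders $F_1,\ldots,F_{g-1}$ disjoint from $D_i$ supplied by Proposition \ref{P:setup}(2): the twists $u^{\cF_j}$ already give an isotropic subspace of dimension $g-1$, and when the genus does not drop this is completed to a Lagrangian by the corrected class $v + c\,u^{D_i}$ described above. That explicit construction, not a general diagram chase, is what sidesteps the cautionary example.
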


\begin{figure}[h!]
\includegraphics[width=\linewidth]{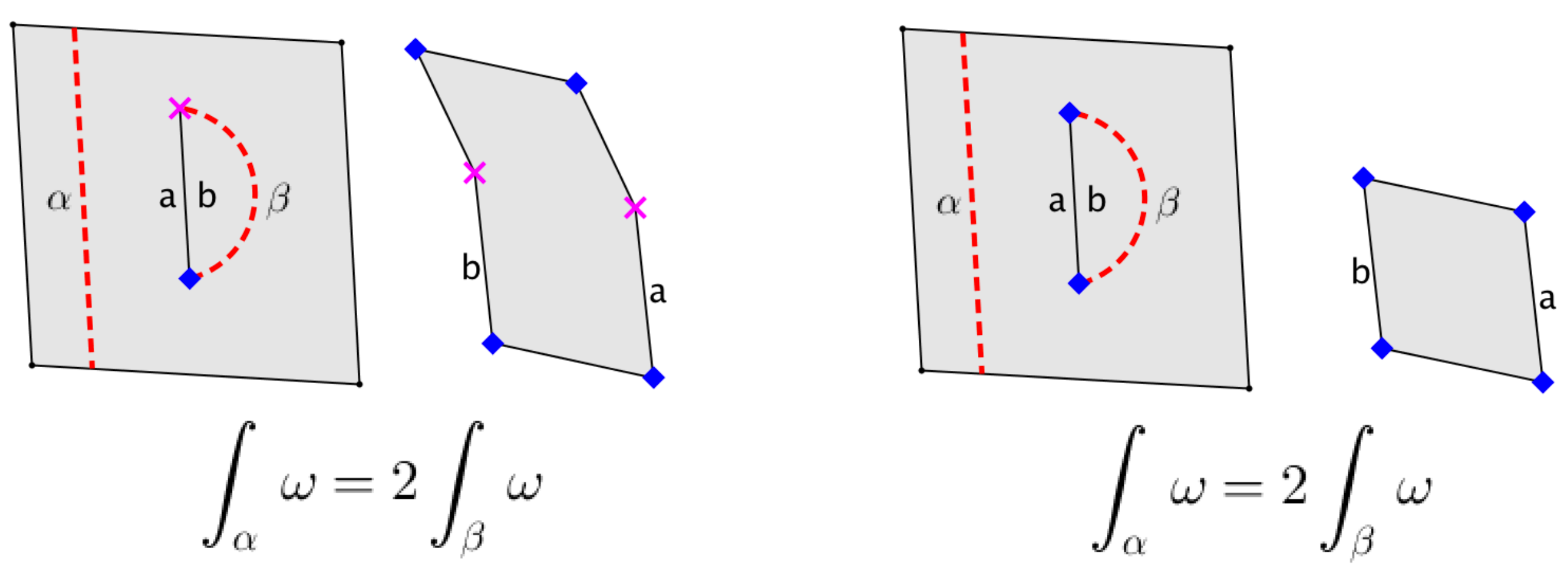}
\caption{Left: A surface in $\cH(1,1)$ with an equation on relative periods. Right: A degeneration of the surface to $\cH(2)$.} \label{F:RelToAbs}
\end{figure}

\begin{rem}
The  cautionary example illustrated in Figure \ref{F:RelToAbs} shows that outside of the context of affine invariant submanifolds, linear equations on periods that do not restrict absolute periods can in fact restrict absolute periods after a degeneration.
\end{rem}

\begin{figure}[h]
\includegraphics[width=0.5\linewidth]{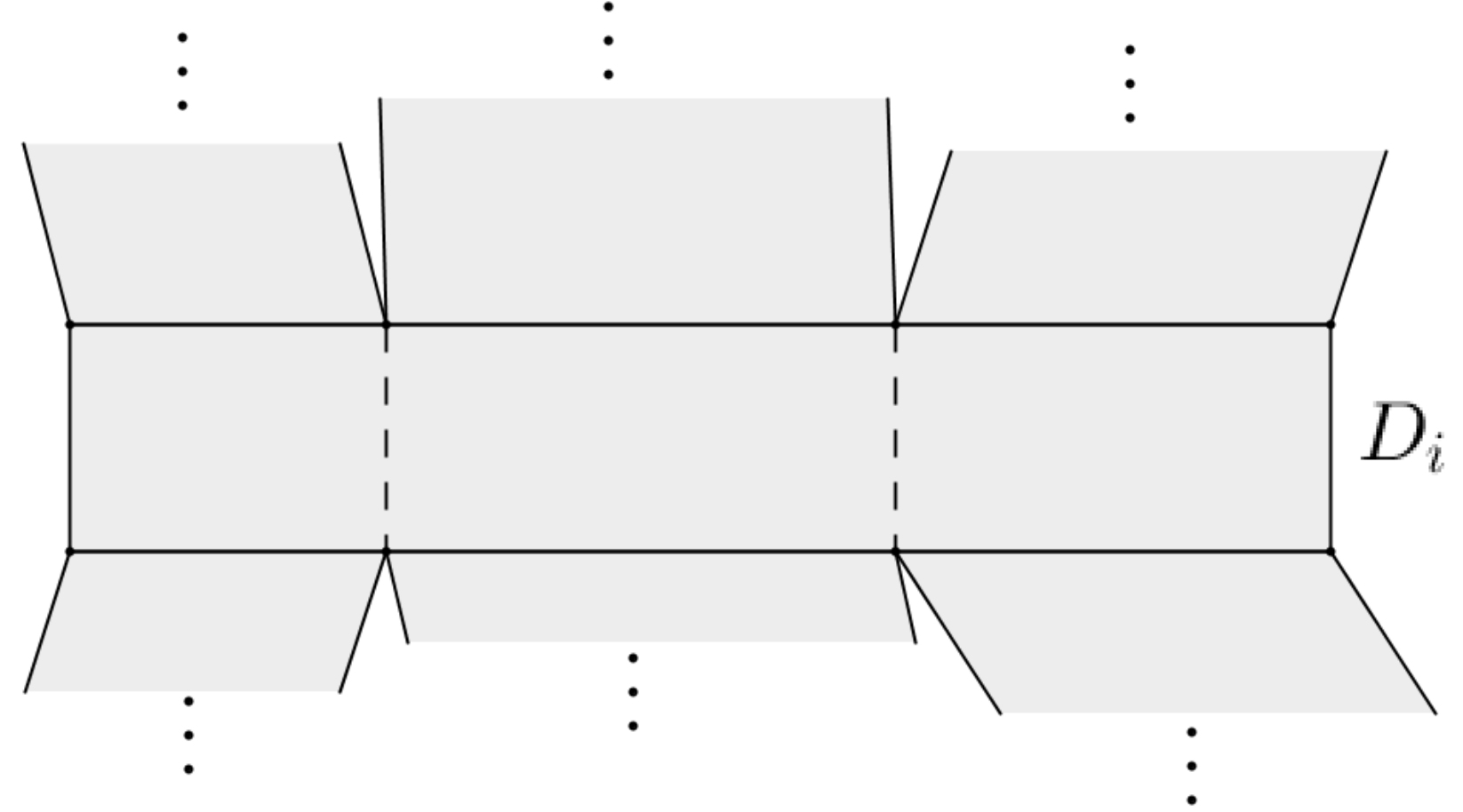}
\caption{This figure illustrates one situation that is avoided by Assumption \ref{A1}. In this highly ungeneric picture, the cylinders above $D_i$ have exactly the same circumferences as those below $D_i$. If $D_i$ is vertically squashed, the genus of the degeneration is 2 less than that of the original surface.} 
\label{F:Ungeneric}
\end{figure}

\begin{proof}
By  the second statement of  Proposition \ref{P:setup},  there are  horizontal cylinders $F_1, \ldots, F_{g-1}$  so that $F_1, \ldots, F_{g-1}, D_i$ are disjoint and have core curves that are linearly independent in homology. Let $\cF_j$ denote the set of all cylinders homologous to $F_j$. (The example in Figure \ref{F:Disconnect} shows that it is not always possible to assume  $\cF_j=\{F_j\}$.)

We consider the twists $u^{\cF_j}, j=1,\ldots, g-1$ and their projections via the map $\pi_i:X\to X_i$. The $u^{\cF_j}$ are symplectically orthogonal, and remain so after applying $\pi_i$, because they are Poincare dual to disjoint curves. Furthermore, the core curves of the $F_j$\ann{R2: ``the core curve of the $F_i$ are homologically".  Should this be  ``the core curve of the $F_j$ are homologically"? ($j$ vs $i$)\\A: Yes; we corrected corrected this, at several places in the proof.} are homologically independent on $(X_i, \omega_i)$ since by Assumption \ref{A1} their periods are  linearly independent over $\bQ$. Hence the $\pi_i(u^{\cF_j}), j=1, \ldots, g-1$ give an isotropic subspace of dimension $g-1$ in the absolute cohomology of $(X_i, \omega_i)$.
   
If $(X_i,\omega_i)$ has genus at most $g-1$, then this gives a Lagrangian subspace, and so $\cM_i$ has full rank.

So suppose the genus of $(X_i, \omega_i)$ is $g$. Hence no zero appears on both the top and bottom of $D_i$, 
and $\Ann(V_i)$ is spanned by the vertical saddle connections in $D_i$.

Because $\cM$ has full rank, there is a real cohomology class $v\in T_{(X,\omega)}\cM$ that is nonzero on the core curve of $D_i$ and that is zero on the core curves of the $F_j, j=1, \ldots, g-1$. By Assumption \ref{A1}\ann{A: Typo found by R2 fixed.}, the vertical saddle connections\ann{R2:  ``connections in $D$ must all give rise".  Please change to  ``connection in $D_i$ must all give rise". ($D$ vs $D_i$.)\\A: Fixed.} in $D_i$ must all give rise to parallel saddle connections on the surface obtained by adding a small real multiple of $v$ to the period coordinates of $(X,\omega)$. Hence we may add to $v$ a multiple of $u^{D_i}$ to assume  that $v$ is zero on all vertical saddle connections in $D_i$, and conclude that   $v\in \Ann(V_i)\cap T_{(X,\omega)}\cM$.

To show that $\cM_i$ has full rank, it now suffices to show that  the twists in $\cF_j$ together with $v$ spans a Lagrangian subspace.  

Let $\alpha_j, j=1, \ldots, g$ be the core curves of $F_1, \ldots, F_{g-1}, D_i$, and pick $\beta_j$ so $\{\alpha_j, \beta_j\}$ is a symplectic basis of $H_1(X, \bZ)$. The symplectic pairing on cohomology classes $v,w \in H^1(X, \bR)$ is given by 
$$ \sum_{j=1}^g v(\alpha_j)w(\beta_j)- v(\beta_j)w(\alpha_j).$$
The twist in $\cF_j$ is nonzero  on $\beta_j$ and zero on all other elements of the basis. The  deformation $v$ is zero on all $\alpha_j, j<g$, so $v$ is symplectically orthogonal to the twist in each $\cF_j$. 
\end{proof}

\begin{lem}
Each $\cM_i$ is contained in a stratum of surfaces of genus at least 2. 
\end{lem}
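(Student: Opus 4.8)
\emph{Plan.}
Every component of $(X_i,\omega_i)$ has positive area, hence carries a nonzero holomorphic one-form and so has genus at least $1$; thus genus $0$ is impossible and it remains only to show that $\pi_i(X_i,\omega_i)$ is not a torus. The idea is to exhibit two \emph{disjoint} essential simple closed curves on $\pi_i(X_i,\omega_i)$ that are \emph{not} homologous to each other; this cannot occur on a torus, since there any two disjoint essential simple closed curves are isotopic.

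\emph{Steps.}
First, $k\ge 2$: Proposition \ref{P:setup}(1) gives a non-trivial linear relation among the heights of the $k$ cylinders of $\cC$, which is impossible when $k=1$ because a cylinder has positive height. Consequently, after $D_i$ is squashed, the component $\pi_i(X_i,\omega_i)$ supplied by Lemma \ref{L:component} contains a horizontal cylinder $C$ arising from $\cC$ --- the persisting image of $C_{i+2}$ if $k\ge 3$, or the cylinder into which $C_i$ and $C_{i+1}$ merge if $k=2$ --- whose core curve represents the common homology class $\gamma$ of the core curves of $\cC$, and it also contains $D_j$ for every $j\ne i$ (and there is at least one such $j$). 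The core curves of $C$ and $D_j$ lie in disjoint parts of the surface, hence are disjoint, and they are not homologous: on $(X,\omega)$ the cylinder $D_j$ is not homologous to any other horizontal cylinder by the choice in Remark \ref{R1:notation}, and, as observed just before Lemma \ref{L:component}, squashing $D_i$ does not make any cylinder newly homologous to those in $\cC$, so on $(X_i,\omega_i)$ the class of the core curve of $D_j$ is still not $\pm\gamma$. Since a torus cannot carry two disjoint non-homologous essential simple closed curves, $\pi_i(X_i,\omega_i)$ has genus at least $2$.

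\emph{Main difficulty, and an alternative.}
The only delicate point is the bookkeeping needed to be sure that both a cylinder from $\cC$ and some $D_j$ with $j\ne i$ genuinely survive on the component $\pi_i(X_i,\omega_i)$, and that the squashing does not render their core curves homologous; this is precisely what Proposition \ref{P:setup}, Lemma \ref{L:component}, and the observations preceding it are arranged to provide. One can also sidestep the torus argument: by Lemma \ref{L:stillfullrank}, $\cM_i$ has full rank, so the genus of its surfaces equals $\rank(\cM_i)$, and the proof of Lemma \ref{L:stillfullrank} already exhibits an isotropic subspace of dimension $g-1$ in the absolute cohomology of $\pi_i(X_i,\omega_i)$; as $g>2$, this forces the genus to be at least $g-1\ge 2$.
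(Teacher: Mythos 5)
Your main argument is essentially the paper's: exhibit two disjoint cylinders on $\pi_i(X_i,\omega_i)$ whose core curves are not homologous (impossible on a torus), and note genus~$0$ is ruled out since the surface carries a nonzero Abelian differential. The paper's proof is just the one line ``$D_{i+1}$ and a cylinder from $\cC$ are disjoint on $\pi_i(X_i,\omega_i)$, and their core curves have different holonomy by Assumption~\ref{A1}.''

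One caution about how you justify the non-homology. You cite the observation before Lemma~\ref{L:component} as saying ``squashing $D_i$ does not make any cylinder newly homologous to those in $\cC$,'' but what the paper actually observes there is that squashing $D_i$ does not \emph{create new cylinders} homologous to those in $\cC$ --- a statement about cylinders that were not present on $(X,\omega)$, not about pre-existing cylinders like $D_j$ acquiring a new homology relation on the degenerate surface. Since the genus can drop under the degeneration, the collapse map has nontrivial kernel on absolute homology, so it is not automatic that non-homologous curves on $(X,\omega)$ remain non-homologous on $(X_i,\omega_i)$. The clean and direct fix is exactly the paper's: the core curves of $D_j$ and of a $\cC$-cylinder have different holonomy (their circumferences are $\bQ$-linearly independent by Assumption~\ref{A1}, and circumferences persist under the squashing), and homologous curves must have equal holonomy. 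Your alternative route via Lemma~\ref{L:stillfullrank} is also plausible but requires care that the isotropic subspace exhibited there actually lives in the cohomology of the single component $\pi_i(X_i,\omega_i)$ rather than all of $(X_i,\omega_i)$, since the $F_j$ need not all lie on that component.
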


\begin{proof}
 $D_{i+1}$ and a cylinder from $\cC$ are disjoint on $\pi_i(X_i,\omega_i)$, and their core curves  have different holonomy by Assumption \ref{A1}. 
\end{proof}

\begin{lem}\label{L:Mihyp}
Each $\cM_i$ is a hyperelliptic locus or a hyperelliptic connected component of a stratum. 
\end{lem}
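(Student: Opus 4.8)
The plan is to apply the inductive hypothesis, Assumption \ref{A0}, to each $\cM_i$. By Lemma \ref{L:stillfullrank} each $\cM_i$ has full rank, and by the previous lemma each $\cM_i$ is contained in a stratum of surfaces of genus at least $2$. To invoke Assumption \ref{A0} I must check that the relevant stratum is "smaller" in the induction, i.e.\ either it has genus strictly less than $g$, or it has genus $g$ but its Abelian differentials have strictly fewer zeros than those in $\cM$. In the first case of the proof of Lemma \ref{L:stillfullrank}, the genus of $(X_i,\omega_i)$ is at most $g-1$, so Assumption \ref{A0} applies immediately and gives that $\cM_i$ is a connected component of a stratum or a hyperelliptic locus. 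In the second case, where $(X_i,\omega_i)$ still has genus $g$, I need to argue that squashing $D_i$ (which, by Assumption \ref{A2}, forces a collision of zeros and hence strictly reduces the number of zeros) lands $\cM_i$ in a stratum with strictly fewer zeros than the stratum containing $\cM$; then Assumption \ref{A0} again gives the conclusion.

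First I would separate the two cases exactly as in the proof of Lemma \ref{L:stillfullrank}: either $\mathrm{genus}(X_i,\omega_i) \le g-1$ or $\mathrm{genus}(X_i,\omega_i) = g$. In the genus-drop case, $\cM_i$ is a full rank affine invariant submanifold in a genus $< g$ stratum (of genus at least $2$ by the previous lemma), so Assumption \ref{A0} directly yields that $\cM_i$ is either a connected component of a stratum or a hyperelliptic locus. In that situation there is nothing more to prove: a connected component of a stratum is in particular "a hyperelliptic connected component of a stratum" in the degenerate reading, but if it is \emph{not} hyperelliptic we still record it as allowed by the statement of this lemma — wait, the lemma asserts it \emph{is} hyperelliptic, so actually I need more. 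Let me reconsider: the statement I must prove is that each $\cM_i$ is a hyperelliptic locus or a hyperelliptic connected component of a stratum, so I cannot allow a non-hyperelliptic connected component of a stratum. Therefore the argument must, in \emph{both} cases, additionally rule out that $\cM_i$ is a non-hyperelliptic connected component of a stratum.

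So the real content is twofold: (a) apply Assumption \ref{A0} to conclude $\cM_i$ is a connected component of a stratum or a hyperelliptic locus, handling the index-bookkeeping (genus drop, or genus $g$ with strictly fewer zeros because of the zero collision forced by Assumption \ref{A2}); and (b) exclude the possibility that $\cM_i$ is a \emph{non}-hyperelliptic connected component of a stratum. For step (b) the key point is that $\cM_i$ arises as (the image under $\pi_i$ of) a boundary degeneration of $\cM$, and crucially it carries marked points: the surface $(X_i,\omega_i,S_i)$ has marked points $S_i$, and $\pi_i(X_i,\omega_i,S_i)$ defines a point marking over $\cM_i$ (in the sense of the definition preceding Theorem \ref{T:apisa}). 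If $\cM_i$ were a non-hyperelliptic connected component of a stratum, Theorem \ref{T:apisa} (Apisa) would force this point marking to be by unconstrained points, i.e.\ $\dim$ of the marked stratum $= \dim \cM_i + n$. But the marked points $S_i$ are constrained: they are the images of the collision of zeros from squashing $D_i$, and the height equation from Proposition \ref{P:setup}(1) together with Assumption \ref{A1} forces a genuine constraint on where these marked points can sit (they are forced to be the limit locations of the collided zeros, and the relative periods of $\omega$ near $D_i$ are not free in $\cM$ — otherwise $C_1$ would be free, contradicting the choice in Proposition \ref{P:setup}). This contradiction with Theorem \ref{T:apisa} rules out the non-hyperelliptic possibility, leaving only the hyperelliptic locus or a hyperelliptic connected component of a stratum.

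The main obstacle I expect is step (b): making precise that the marked points $S_i$ on $\pi_i(X_i,\omega_i,S_i)$ are genuinely constrained, i.e.\ that the point marking $\pi_i(X_i,\omega_i,S_i)$ over $\cM_i$ is \emph{not} by unconstrained points. This requires carefully comparing $\dim \cM_i^*$ (which equals $\dim (\Ann(V_i) \cap T_{(X,\omega)}\cM)$ by Lemma \ref{L:boundary}) with $\dim \cM_i + n$, and showing the former is strictly smaller using the non-trivial height equation of Proposition \ref{P:setup}(1) and the non-freeness of the cylinders in $\cC$. The bookkeeping with the collapse map, $V_i$, and the forgetful map from $\cM_i^*$ to $\cM_i$ is delicate, and one must be careful that the constraint does not get "used up" by the projection $\pi_i$ — here Assumption \ref{A1} and the fact that $\cC$ includes \emph{every} cylinder homologous to $C_1$ (Proposition \ref{P:setup}(1)) are what guarantee the constraint survives in the component singled out by Lemma \ref{L:component}.
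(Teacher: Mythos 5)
Your overall strategy matches the paper's: use Lemma \ref{L:stillfullrank}, the previous lemma, and Assumption \ref{A0} to reduce to ruling out that $\cM_i$ is a \emph{non}-hyperelliptic connected component of a stratum, then invoke Theorem \ref{T:apisa} via constrained marked points. But the execution of the crucial step (b) has a genuine gap: you claim the marked points $S_i$ are always constrained, and this is simply not true. Your supporting reasoning is also off the mark --- you say ``the relative periods of $\omega$ near $D_i$ are not free in $\cM$ --- otherwise $C_1$ would be free,'' but $D_i$ \emph{is} $\cM$-free (Proposition \ref{P:setup}(3) chose $D_i$ to be homologous to no other cylinder, so by Lemma \ref{L:hom} it forms a singleton $\cM$-parallel class, hence is free by the Cylinder Deformation Theorem); the non-freeness is a constraint on the heights of the cylinders in $\cC$, not on the periods near $D_i$, and it does not automatically transfer to a constraint on $S_i$.

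The missing idea is a case analysis on what happens to the cylinders of $\cC$ when marked points are forgotten. The cylinders of $\cC$ persist on $(X_i,\omega_i,S_i)$, but on $(X_i,\omega_i)$ some of them may ``merge'' across forgotten marked points. If none merge, the height equation from Proposition \ref{P:setup}(1) descends directly to $\cM_i$, so $\cM_i$ is not a stratum at all and you never need Apisa's theorem. If $C_i$ and $C_{i+1}$ merge, consider the relative class Poincar\'e dual to $\gamma_i-\gamma_{i+1}$. If this class lies in $T\cM_i^*$, then the height equation can only involve the sum of the heights of $C_i$ and $C_{i+1}$, so again it survives to $\cM_i$ and $\cM_i$ is not a stratum. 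Only in the remaining case --- the dual of $\gamma_i-\gamma_{i+1}$ not in $T\cM_i^*$ --- can one conclude the separating marked points are constrained (moving them all upward at unit speed would otherwise realize exactly that deformation), and this is where Theorem \ref{T:apisa} is invoked. Your proposal tries to push every branch through the last one; in the first two branches the marked points may genuinely be unconstrained and your argument would fail, even though the lemma is still true because $\cM_i$ is not a stratum for a different reason.
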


\begin{proof}
By the inductive hypothesis and Lemma \ref{L:stillfullrank}, it suffices to show that each $\cM_i$ is not a connected component of a non-hyperelliptic stratum.  

A cylinder on an unmarked surface that contains a marked point in its interior is considered to be two cylinders on the marked surface. Hence the cylinders of $\cC$ remain well defined on $(X_i, \omega_i, S_i)$.  However it is possible that some cylinders in $\cC$ ``merge" on $(X_i, \omega_i)$, as in Figure \ref{F:Disconnect}, because the marked points  are forgotten. 

If none of the cylinders in $\cC$ merge on $(X_i, \omega_i)$,  then because their moduli must remain related,  $\cM_i$ cannot be a stratum. 

If to the contrary $C_i$ and $C_{i+1}$ merge to create a single larger cylinder on $(X_i, \omega_i)$, then there must be marked points separating $C_i$ and $C_{i+1}$ on $(X_i, \omega_i, S_i)$.  

The heights of the cylinders in $\cC$ on $(X_i, \omega_i, S_i)$ satisfy some linear equation.  Let $\gamma_i$ be the core curve of $C_i$. Consider the relative cohomology class Poincare dual to $\gamma_i-\gamma_{i+1}$, which is a multiple of the derivative of a deformation which increases the height of $C_i$ and decreases the height of $C_{i+1}$. We now consider two cases. 

In the first case, suppose the dual of $\gamma_i-\gamma_{i+1}$ is in $T\cM_i^*$. In this case the equation on the heights of the cylinders in $\cC$ on $(X_i, \omega_i, S_i)$ depends only on the sum of the heights of $C_{i}$ and $C_{i+1}$ and not on the individual heights of $C_i$ and $C_{i+1}$, so when the marked points are forgotten there is still a linear equation that holds on the heights. This shows $\cM_i$ is not connected component of a stratum. 

In the second case, suppose the dual of $\gamma_i-\gamma_{i+1}$ is not in  $T\cM_i^*$. This implies that the marked points separating $C_i$ from $C_{i+1}$ are not all unconstrained, since if they were all unconstrained then moving them all upwards at a constant rate would realize the deformation dual to $\gamma_i-\gamma_{i+1}$. So by Theorem \ref{T:apisa}  we conclude that $\cM_i$ is not a non-hyperelliptic connected component of a stratum. 
\end{proof}

\begin{lem}\label{L:hypinv}
If  a translation surface $(Y,\eta)$ is hyperelliptic, then each cylinder is homologous to at most one other. Homologous pairs of cylinders are exchanged by the hyperelliptic involution. 
\end{lem}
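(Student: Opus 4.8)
\textbf{Proof proposal for Lemma \ref{L:hypinv}.}

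The plan is to work directly with the hyperelliptic involution $\tau$ on $(Y,\eta)$, which satisfies $\tau^*\eta = -\eta$ and acts as $-\mathrm{Id}$ on $H_1(Y)$. First I would observe that $\tau$ is an isometry of the flat metric (since $|\eta|$ is $\tau$-invariant), so for any cylinder $C$ with core curve $\gamma$, the image $\tau(C)$ is again a cylinder, with core curve $\tau(\gamma)$, and $[\tau(\gamma)] = -[\gamma]$ in $H_1(Y)$. Since cylinders are embedded open subsets and $\tau$ is an involution, either $\tau(C) = C$ or $\tau(C)\cap C = \emptyset$. The key point is to rule out $\tau(C) = C$ for a cylinder of positive height: if $\tau$ preserved $C$, then $\tau$ would restrict to an orientation-preserving isometric involution of the flat cylinder $\mathbb{R}/(c\mathbb{Z})\times(0,h)$ reversing the direction of $\eta$; such an involution of a cylinder fixes no point of the interior (it acts as $(x,y)\mapsto(x_0 - x, y)$ up to the core direction, which is fixed-point-free on the open cylinder), yet a hyperelliptic involution on a surface of genus $\geq 1$ fixes $2g+2$ points, and one can check by an Euler characteristic / local count (each fixed point is a ramification point of the degree-two quotient map) that not all of them can avoid the complement of a single cylinder unless the cylinder has zero area — I would instead give the cleaner argument that $\tau$ restricted to $C$ reversing $\eta$ forces $\tau$ to swap the top and bottom boundary of $C$, hence $\tau(C)=C$ is impossible once we also track orientation of the core curve together with the fact $[\tau\gamma]=-[\gamma]$: the core curve would be sent to a homologous-up-to-sign curve inside the same cylinder, which is freely homotopic to $\gamma^{\pm 1}$, and combining with $\tau^*\eta=-\eta$ pins down that $\tau$ acts on $C$ by a glide reflection swapping the two boundary components, contradicting that an involution of a cylinder that swaps its ends and reverses the transverse direction is not an isometry of the given flat structure unless it has a fixed circle, not a cylinder. (I will streamline this to the shortest correct version in the write-up.)

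Granting $\tau(C)\neq C$, set $C' = \tau(C)$, a distinct cylinder with $[\gamma'] = -[\gamma]$, where $\gamma'$ is the core curve of $C'$ oriented as $\tau(\gamma)$. Reversing the orientation of $\gamma'$, we get that $C$ and $C'$ are homologous cylinders, and they are exchanged by $\tau$. This already shows every cylinder is homologous to at least the cylinder $\tau(C)\neq C$ (equality in homology after orientation choice) and that homologous pairs arising this way are $\tau$-exchanged; it remains to show $C$ is homologous to \emph{at most} one other cylinder. Suppose $C_1$ is another cylinder with $[\gamma_1] = [\gamma]$ in $H_1(Y)$. Then $[\gamma_1] = [\gamma] = -[\tau\gamma_1]$, so $[\gamma_1 + \tau\gamma_1] = 0$; since $\gamma$ is a primitive non-separating class (a core curve of a cylinder on a translation surface is non-separating and primitive), the curves $\gamma_1, \tau\gamma_1, \gamma$ all represent the same primitive homology class up to sign. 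The main obstacle is the combinatorial/topological step that two disjoint homologous core curves of cylinders, together with a third homologous one, force a contradiction: I would use that cutting $Y$ along two disjoint homologous simple closed curves (none null-homologous) disconnects it into two pieces each with two boundary circles (exactly as invoked in Remark \ref{R1:notation}), and a third curve in the same homology class must lie in one of these two pieces and be homologous within it to a boundary component, which bounds together with one of the first two curves, forcing that piece to be a cylinder; iterating, if $C$ were homologous to two distinct other cylinders we would exhibit a separating configuration incompatible with all core curves representing the same primitive class — equivalently, the subsurface swept out becomes a finite union of cylinders whose homology classes cannot all coincide unless there are only two of them.

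The cleanest route for that last step, which I expect to be where the real work lies, is: if $C, C_1, C_2$ are pairwise homologous cylinders with disjoint core curves $\gamma, \gamma_1, \gamma_2$, then removing $\gamma \cup \gamma_1 \cup \gamma_2$ yields three components each with exactly two boundary circles (as in Remark \ref{R1:notation}), and on a hyperelliptic surface $\tau$ permutes these three curves; but $\tau$ reverses homology and $[\gamma]=[\gamma_1]=[\gamma_2]$, so $\tau$ cannot fix any $\gamma_i$ (that would force $[\gamma_i]=-[\gamma_i]=0$), hence $\tau$ acts as a fixed-point-free involution on the three-element set $\{\gamma,\gamma_1,\gamma_2\}$ — impossible since $3$ is odd. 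Therefore a cylinder is homologous to at most one other, completing the proof; and when it is homologous to exactly one other, the pair is $\tau$-exchanged by the first part. I would present the argument in this order: (i) $\tau$ is a flat isometry acting by $-\mathrm{Id}$ on $H_1$; (ii) no positive-height cylinder is $\tau$-invariant; (iii) hence $C$ and $\tau(C)$ are a homologous $\tau$-exchanged pair; (iv) the odd-cardinality parity argument bounds the size of a homology class of cylinders by $2$.
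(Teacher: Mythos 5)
Your part (ii) --- that no positive-height cylinder can be $\tau$-invariant --- is false, and this is where the proposal breaks. If $\tau(C)=C$, then on $C\cong\mathbb{R}/c\mathbb{Z}\times(0,h)$ the involution acts as the rotation $(x,y)\mapsto(x_0-x,\,h-y)$: this is orientation-preserving (not a glide reflection), satisfies $\tau^*\eta=-\eta$, is a flat isometry of the cylinder, and has exactly two fixed points, on the middle circle $y=h/2$. Such cylinders genuinely occur --- on a one-cylinder surface in $\mathcal{H}(2)$ the hyperelliptic involution preserves the unique cylinder and rotates it by $\pi$. Consequently (iii) is also false as stated (a cylinder need not be homologous to any other), and, more seriously, the punchline parity argument in (iv) collapses: you claim $\tau$ cannot fix any $\gamma_i$ because that ``would force $[\gamma_i]=-[\gamma_i]=0$.'' But if $\tau$ fixes $\gamma_i$ as an unoriented curve while reversing its orientation --- which is exactly what happens when $\tau$ preserves the cylinder $C_i$ --- then $[\tau\gamma_i]=-[\gamma_i]$ is precisely what $\tau=-\mathrm{Id}$ on $H_1$ demands, and there is no contradiction. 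So a priori $\tau$ could fix one of the three core curves and swap the other two, and the ``odd cardinality'' step does not fire.

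The paper's argument, though terse, works at the level of the complementary pieces rather than the core curves. Cut $Y$ along the $k$ core curves to get components $R_1,\dots,R_k$, each with two boundary circles. Each $R_i$ must contain a zero of $\eta$ (otherwise two adjacent cylinders would merge into a single larger cylinder), and so has genus at least $1$; hence it carries a symplectic pair $\alpha,\beta$ with $\alpha\cdot\beta=1$. Since $\tau(\alpha)\cdot\beta=-\alpha\cdot\beta\neq0$ while curves lying in disjoint components have zero intersection number, $\tau$ must preserve each $R_i$. Next, because $\tau$ preserves the orientation of $R_i$, if it also fixed a boundary circle $\gamma_j$ of $R_i$ setwise it would preserve the induced boundary orientation of $\gamma_j$, forcing $[\gamma_j]=-[\gamma_j]$; so $\tau$ must swap the two boundary circles of each $R_i$. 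For $k\geq3$, chasing the cyclic adjacency gives $\gamma_{i-1}=\tau(\gamma_i)=\gamma_{i+1}$, which is impossible. Your instinct --- exploit $\tau=-\mathrm{Id}$ on $H_1$ together with the decomposition along core curves --- is the right one, but the constraint that forces $k\leq 2$ is that each component $R_i$ is $\tau$-invariant with its boundary circles swapped, not that $\tau$ cannot fix a $\gamma_i$.
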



\begin{proof}
Say $\cC$ is a collection of homologous cylinders. Let $\iota$ be the hyperelliptic involution. Because $\iota$ acts as $-1$ on homology, each of the components of $(Y,\eta)$ minus the core curves of $\cC$ must be preserved, and $\iota$ must interchange the two boundary components. 
\end{proof}

\begin{lem}\label{L:L2}
The number $k$ of cylinders  in $\cC$ is  2. 
\end{lem}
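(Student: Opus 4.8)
I will establish $k\ge 2$ and $k\le 2$. For the lower bound, suppose $k=1$, so $\cC=\{C_1\}$. By part~(1) of Proposition~\ref{P:setup}, $\cC$ contains every cylinder homologous to $C_1$, so $C_1$ is homologous to no other cylinder, and hence by Lemma~\ref{L:hom} it is $\cM$-parallel to no other cylinder. Then $\{C_1\}$ is an equivalence class of $\cM$-parallel cylinders, and the Cylinder Deformation Theorem (Theorem~\ref{T:CDT}) gives $a_s^{C_1}(X,\omega)\in\cM$ for all $s$; thus $C_1$ is free and its height is unconstrained in $\cM$, contradicting the nontrivial linear equation on heights in Proposition~\ref{P:setup}~(1). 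So $k\ge 2$, and the substance of the lemma is the bound $k\le 2$.

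For the upper bound I would work with the degenerations $(X_i,\omega_i)$, using the dichotomy established in the proof of Lemma~\ref{L:Mihyp}: for each $i$, squashing $D_i$ either merges no two cylinders of $\cC$ on $(X_i,\omega_i)$, or else merges exactly $C_i$ and $C_{i+1}$ into a single cylinder; and by Assumption~\ref{A1} it never creates a cylinder homologous to those in $\cC$. By Lemma~\ref{L:component} all the cylinders of $\cC$ survive in the component $\pi_i(X_i,\omega_i)$, which is hyperelliptic by Lemma~\ref{L:Mihyp}. Suppose first that for some $i$ no merging occurs. Then $C_1,\dots,C_k$ are $k$ distinct pairwise homologous cylinders on the hyperelliptic surface $\pi_i(X_i,\omega_i)$ (the collapse map preserves the homology relations among the $C_j$), so by Lemma~\ref{L:hypinv}, which says that on a hyperelliptic surface each cylinder is homologous to at most one other, we get $k\le 2$.

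It remains to treat the case in which, for every $i$, squashing $D_i$ merges $C_i$ with $C_{i+1}$. Then on each $\pi_i(X_i,\omega_i)$ the cylinders homologous to $C_1$ are precisely the merged cylinder $C_i\cup C_{i+1}$ together with the $k-2$ cylinders $C_j$, $j\ne i,i+1$ --- a list of $k-1$ pairwise homologous cylinders --- so Lemma~\ref{L:hypinv} forces $k\le 3$. To rule out $k=3$ I would use areas. Squashing $D_i$ collapses only $D_i$, so it changes no area of a cylinder of $\cC$; and since nothing of positive area survives between $C_i$ and $C_{i+1}$ once they have merged, the merged cylinder is $C_i$ glued to $C_{i+1}$ along boundary saddle connections, whence $\Area(C_i\cup C_{i+1})=\Area(C_i)+\Area(C_{i+1})$. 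By Lemma~\ref{L:hypinv} the hyperelliptic involution of $\pi_i(X_i,\omega_i)$, being an isometry, interchanges the pair $\{C_i\cup C_{i+1},\,C_{i+2}\}$, so these have equal area. Writing $A_j=\Area(C_j)$ on $(X,\omega)$ and reading off the three identities for $i=1,2,3$ with indices mod $3$ gives $A_i+A_{i+1}=A_{i+2}$; summing yields $A_1+A_2+A_3=0$, impossible since areas are positive. Hence $k\le 2$, and with $k\ge 2$ we conclude $k=2$.

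The step I expect to be the main obstacle is the area identity $\Area(C_i\cup C_{i+1})=\Area(C_i)+\Area(C_{i+1})$: one must verify that, once $C_i$ and $C_{i+1}$ have merged into one cylinder, the part of $(X,\omega)$ lying between them other than $D_i$ (which is collapsed) contributes no area to the result --- any such piece must be pinched off into a component discarded by $\pi_i$, as in Figure~\ref{F:Disconnect}. Once that is pinned down, the rest of the argument is elementary.
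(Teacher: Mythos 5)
Your argument is correct and matches the paper's own proof in all essentials: both establish $k\le 3$ via the hyperelliptic cylinder bound of Lemma~\ref{L:hypinv} applied to the degenerations $(X_i,\omega_i)$, and both rule out $k=3$ via areas (the paper picks WLOG an index with $\Area(C_1\cup C_2)>\Area(C_3)$ and contradicts the isometry property directly, while you derive $A_i+A_{i+1}=A_{i+2}$ for all $i$ and sum --- the two are equivalent). Your explicit verification of $k\ge 2$ is a small, sound addition that the paper leaves implicit in the setup of Proposition~\ref{P:setup}, and the area-additivity concern you flag is real but resolves as you describe: the merged cylinder is a disjoint union of $C_i$, $C_{i+1}$, and a measure-zero set of boundary points.
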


\begin{proof}
Lemmas \ref{L:Mihyp} and \ref{L:hypinv} imply that $(X_i, \omega_i)$ has at most two homologous cylinders. Since at most two cylinders (namely, $C_i$ and $C_{i+1}$) can merge on $(X_i, \omega_i)$, this shows $k\leq 3$. Furthermore, for this same reason exactly two cylinders must always merge on $(X_i, \omega_i)$ for each $i$. 

Suppose $k=3$, and suppose without loss of generality that  the area of $C_1\cup C_2$ is strictly larger than the area of $C_3$. On $(X_i, \omega_i)$, the cylinders $C_1$ and $C_2$ merge to give a single cylinder homologous to $C_3$ but of larger area. This contradicts Lemmas \ref{L:Mihyp} and \ref{L:hypinv}, so we conclude that $k=2$. \ann{R2: Proof of Lemma 4.10, Second Paragraph: This is too brief because it gives the impression that there are two assumptions in this paragraph and they cannot be made simultaneously: 1) $A(C_1 \cup C_2) > A(C_3)$, 2) $C_1$ and $C_2$ merge. Consider adding the following sentence at the end of the  first paragraph to justify that it is always possible to renumber so that the area assumption holds: Furthermore, for this same reason exactly two cylinders must always merge on $(X_i, \omega_i)$.\\A: Clarified as suggested.}
\end{proof}

\begin{lem}\label{L:nofreepoints}
The set of marked points on every surface in $\cM_i^*$, $i=1,2$, is\ann{A: Typo found by R2 fixed.} preserved by the hyperelliptic involution.
\end{lem}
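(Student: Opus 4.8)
By Lemma~\ref{L:Mihyp} the affine invariant submanifold $\cM_i$ is a hyperelliptic locus or a hyperelliptic connected component of a stratum; let $\iota$ denote the hyperelliptic involution of a generic surface in $\cM_i$. Since $\cM_i$ is constructed in Lemma~\ref{L:boundary} so that the forgetful map $\pi_i$ carries $\cM_i^*$ onto a dense subset of $\cM_i$, the manifold $\cM_i^*$ is an $n$-point marking over $\cM_i$ with $n=|S_i|$. Applying Theorem~\ref{T:marked} produces non-negative integers $f,w,v$ with $n=f+w+2v$ such that $\cM_i^*$ is the locus of all $(Y,\eta,S)$ with $|S|=n$, at least $w$ of the points of $S$ fixed by $\iota$, and at least $v$ of the pairs of points of $S$ exchanged by $\iota$. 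If $f=0$, then on any such surface the $w+2v=n$ involution--symmetric marked points exhaust $S$, so $S$ is $\iota$-invariant and the lemma follows. It therefore suffices to prove $f=0$, i.e.\ that $\cM_i^*$ has no free (unconstrained) marked point.

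The first substantive step is to locate $S_i$. By Lemma~\ref{L:L2} exactly the two cylinders $C_1$ and $C_2$ of $\cC$ merge, on $(X_i,\omega_i)$, into a single cylinder $\widehat C$; on $(X_i,\omega_i,S_i)$ they are separated by the points of $S_i$ lying on the ``middle circle'' of $\widehat C$, the image under the collapse of the common boundary of $C_1$ and $C_2$. Since squashing $D_i$ does not alter the horizontal straight-line flow outside $D_i$, and, by Assumption~\ref{A1}, does not create any further coincidences of zeros of $\omega$, every point of $S_i$ lies on this middle circle. Now suppose $q\in S_i$ were free. Then on $\cM_i^*$ the relative cohomology class dual to a short arc issuing from $q$ lies in the complex span of $T_{\pi_i(X_i,\omega_i,S_i)}\cM_i^*$, and pulling it back by $\pi_i^*$, using the identification $T\cM_i^*=(\pi_i)_*\bigl(\Ann(V_i)\cap T_{(X,\omega)}\cM\bigr)$ from Lemma~\ref{L:boundary}, gives a class $\phi\in\Ann(V_i)\cap T_{(X,\omega)}\cM$ which is the derivative of a deformation, lying in $\cM$, that moves only the zeros of $\omega$ collapsed to $q$ and fixes the rest of $(X,\omega)$, in particular every cylinder of $\cC$ and every $D_j$. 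Because $q$ lies on the middle circle of $\widehat C$, freeing $q$ frees the position of that point relative to the two sub-cylinders $C_1$ and $C_2$ of $\widehat C$; but by Proposition~\ref{P:setup} the heights of $C_1$ and $C_2$ obey a nontrivial linear equation in $\cM$, and the pullback of this equation to $\cM_i^*$ is incompatible with $\phi$, a contradiction. Hence $f=0$.

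I expect the main obstacle to be the second step: showing that $S_i$ consists \emph{only} of points on the middle circle of $\widehat C$ --- and not, say, of points produced by some other collision of zeros or by a disconnection as in Figure~\ref{F:Disconnect} --- requires careful bookkeeping with the collapse map $\pi_i$, the subspace $V_i\subset H_1(X,\Sigma)$, and the description of $T\cM_i^*$, and it is precisely here that Assumption~\ref{A1} is used to rule out accidental coincidences. Once the location of $S_i$ is understood, the obstruction to a free point is essentially the height constraint of Proposition~\ref{P:setup} transported through the degeneration; an alternative way to conclude, avoiding a delicate analysis of what ``moving $q$'' does to this constraint, is to slide a free $q$ along the middle circle onto another point of $S_i$ or onto a zero of $\omega_i$ and then use closedness of the affine invariant submanifold $\cM_i^*$ to produce a boundary surface on which two singular points collide, contradicting either Assumption~\ref{A1} or the structure of $\cM_i^*$ already obtained.
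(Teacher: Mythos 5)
The paper's proof takes a completely different route from yours. It reduces to showing there are no unconstrained marked points (same first step, via Theorem~\ref{T:marked}), and then uses a commutativity-of-degenerations argument: squashing $D_1$ (remembering marked points) and then $D_2$ (forgetting new ones) produces the same boundary affine invariant submanifold as squashing in the opposite order; since $\cM_2$ is hyperelliptic, the second order cannot produce unconstrained marked points (they would have to be invariant under the involution), so neither can $\cM_1^*$. This deliberately avoids any analysis of exactly where the marked points $S_i$ sit.

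Your direct argument has a genuine gap at its central step. You argue that a free marked point $q$ on the middle circle of $\widehat C = C_1 \cup C_2$ would produce a deformation in $\cM$ incompatible with the height constraint of Proposition~\ref{P:setup}. But this does not work in the case where the relative cohomology class dual to $\gamma_1 - \gamma_2$ already lies in $T\cM_i^*$ --- precisely the ``first case'' in the proof of Lemma~\ref{L:Mihyp}. There, the linear equation on the heights of $C_1$ and $C_2$ depends only on the sum $h_1 + h_2$, and moving a point on the middle circle changes $h_1$ and $h_2$ by opposite amounts, leaving the constraint untouched. So in this case the deformation dual to moving $q$ is perfectly compatible with the height constraint, your contradiction evaporates, and the lemma remains unproved. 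This is exactly the case that forces the paper to bring in the hyperellipticity of the \emph{other} degeneration $\cM_2$: no local argument at $(X_i,\omega_i,S_i)$ alone suffices. Your alternative suggestion (sliding $q$ until it collides with another singular point) also doesn't deliver a contradiction, since collisions of marked points with zeros are permitted in the partial compactification of \cite{MirWri} and produce legitimate boundary surfaces, not absurdities.

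Beyond that, the preliminary claim that $S_i$ consists \emph{only} of points on the middle circle of $\widehat C$ is asserted rather than proved --- you flag it as the main obstacle, but it is not the main obstacle; the case analysis above is. Even if you could pin down $S_i$ exactly, the height-constraint argument fails in the first case. I recommend studying the paper's order-exchange argument, which is both shorter and avoids both difficulties.
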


\begin{proof}
By Theorem \ref{T:marked}, it suffices to show that $\cM_i^*$ does not have unconstrained marked points. 

Suppose to the contrary that  $\cM_1^*$ has unconstrained marked points. Then so must the boundary affine invariant submanifold obtained by squashing $D_2$, forgetting marked points, and then squashing $D_1$ and remembering marked points (and in each case applying the projection to the component containing the cylinders in $\cC$). However, since $\cM_2$ is hyperelliptic, the second step cannot produce unconstrained marked points, because the set of marked points must be invariant under the hyperelliptic involution. 

Squashing $D_1$ and remembering marked points, and then squashing $D_2$ and forgetting new marked points, gives the same affine invariant submanifold as doing it in the other order. This is true since the boundary affine invariant submanifolds are locally the set of limits of degenerations \cite[Sections 7, 9]{MirWri},\ann{R2:  It seems that Section 9 is the more appropriate reference.\\A: Added Section 9 to the reference.} and in both cases\ann{A: Typo found by R2 fixed.} the set of limits is equal to the result of squashing both $D_1$ and $D_2$, projecting to a connected component, and forgetting the marked points on the $D_2$ side of $C_1\cup C_2$.
\end{proof}

\begin{lem}\label{L:subsetofhyp}
$\cM$ is contained in the hyperelliptic locus. 
\end{lem}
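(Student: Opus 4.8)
The plan is to assemble a hyperelliptic involution on $(X,\omega)$ by gluing together the hyperelliptic involutions on the two degenerations $(X_1,\omega_1,S_1)$ and $(X_2,\omega_2,S_2)$ supplied by Lemmas~\ref{L:Mihyp}, \ref{L:L2}, and \ref{L:nofreepoints}. By Lemma~\ref{L:L2} we have $\cC=\{C_1,C_2\}$, and removing the core curves of $C_1$ and $C_2$ splits $(X,\omega)$ into two pieces: one piece $P_1$ contains $D_1$ (together with half of each of $C_1,C_2$) and the other piece $P_2$ contains $D_2$. Squashing $D_2$ and projecting collapses $P_2$ and identifies $\pi_1(X_1,\omega_1)$ with (a surface built from) $P_1$; symmetrically for $\pi_2(X_2,\omega_2)$ and $P_2$. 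So I want to transport the hyperelliptic involution $\iota_i$ on $\pi_i(X_i,\omega_i)$ back to an involution of the corresponding half of $(X,\omega)$ and check that these two half-involutions agree along $C_1\cup C_2$, hence patch to a global involution $\iota$ of $X$ acting as $-1$ on $H^1(X)$.

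The key steps, in order, would be: (1) Record that on $(X_i,\omega_i,S_i)$ the involution $\iota_i$ (the hyperelliptic involution of the hyperelliptic locus or component $\cM_i$, pulled back to the marked model via Lemma~\ref{L:nofreepoints}, which guarantees the marked point set is $\iota_i$-invariant) fixes the unique component produced by Lemma~\ref{L:component} and, by Lemma~\ref{L:hypinv}, exchanges the merged cylinder $C_i\cup C_{i+1}$'s two boundary circles and preserves each component of the complement of that cylinder's core curve. (2) Use the fact that the collapse map $P_i \to \pi_i(X_i,\omega_i)$ is, away from the (finitely many) collapsed saddle connections in $D_i$, an isomorphism of translation surfaces with boundary; pull $\iota_i$ back through this to get an isometric involution $\tilde\iota_i$ of $P_i$ (extending over the collapsed arcs by continuity, since collapsing $D_i$ only contracts a subsurface of $P_{3-i}$, not of $P_i$). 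Here one uses that the cylinders of $\cC$ persist with the same circumference and area on $(X_i,\omega_i)$, so $P_i$ really does sit inside $\pi_i(X_i,\omega_i)$ as the complement of the collapsed region. (3) Check that $\tilde\iota_1$ and $\tilde\iota_2$ agree on the overlap $C_1\cup C_2$: both act on each $C_j$ as the unique orientation-reversing isometric involution of the cylinder that swaps its two boundary circles (equivalently, rotation by $\pi$), because $\iota_i$ acts as $-1$ on homology and hence must reverse the core curve of $C_j$; the only ambiguity is a twist, which is pinned down by requiring the fixed points / the action on the saddle connections bounding $C_j$ to match, and this matching can be arranged (or is forced) since $C_j$ meets both $P_1$ and $P_2$ and $\tilde\iota_i$ is already determined on the part of $\partial C_j$ lying in $P_i$. (4) Conclude that $\tilde\iota_1\cup\tilde\iota_2$ glues to a well-defined isometric involution $\iota$ of $(X,\omega)$ with $\iota^*\omega=-\omega$, hence $\iota$ acts as $-1$ on $H^1(X)$; since $\dim H^1(X)=2g$ and $g\geq 2$, such an involution is the hyperelliptic involution, so $X$ is hyperelliptic and $(X,\omega)$ lies in a hyperelliptic locus. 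As $(X,\omega)$ was an arbitrary-enough surface in $\cM$ (the conditions of Proposition~\ref{P:setup} and Assumptions~\ref{A1}, \ref{A2} hold on a dense subset), $\cM$ is contained in the hyperelliptic locus.

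The main obstacle I expect is step~(3): ensuring the two half-involutions are \emph{compatible} along $C_1\cup C_2$ rather than differing by a nontrivial twist or by swapping $C_1\leftrightarrow C_2$ in incompatible ways. The involutions $\iota_1,\iota_2$ are produced independently on the two degenerations, and a priori nothing forces the twist parameter of $\tilde\iota_1$ restricted to $C_j$ to equal that of $\tilde\iota_2$. To handle this I would argue that $\tilde\iota_i$ is already fully determined on a neighborhood of $\partial C_j \cap P_i$ (the saddle connections on the $P_i$-side of $C_j$), and an isometric involution of a cylinder swapping its boundary circles is rigid once its action on one boundary component is fixed — so the restrictions of $\tilde\iota_1$ and $\tilde\iota_2$ to $C_j$ are each determined by genuinely overlapping boundary data, and one must check these data are consistent. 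Consistency should follow from the fact that both come from the single surface $(X,\omega)$: the combinatorics of which saddle connections bound $C_j$ on each side, together with the action of each $\iota_i$ on homology being $-1$, leaves no freedom. A secondary point to watch is that the hyperelliptic involution on $\cM_i$ as an abstract locus descends to the specific surface $(X_i,\omega_i)$ compatibly with the marked-point structure — but this is exactly what Lemma~\ref{L:nofreepoints} together with Theorem~\ref{T:marked} delivers.
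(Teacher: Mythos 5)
Your overall strategy is the same as the paper's: glue the hyperelliptic involutions from the two degenerations $\pi_1(X_1,\omega_1,S_1)$ and $\pi_2(X_2,\omega_2,S_2)$ along $\overline{C_1\cup C_2}$, and conclude density from generic choice of $(X,\omega)$. (Minor note: your indexing is off in the sentence ``Squashing $D_2$ \dots identifies $\pi_1(X_1,\omega_1)$ with \dots $P_1$''; $(X_1,\omega_1)$ is the result of squashing $D_1$, so the surviving component contains $D_2$, not $D_1$.)

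But the crux of the argument — your step (3) — has a genuine gap that you yourself flag but do not close. You claim that the two pulled-back involutions restricted to $C_j$ are ``each determined by genuinely overlapping boundary data,'' so that agreement is forced. This is not the case: $\tilde\iota_1$ is controlled on the side of $C_1\cup C_2$ adjacent to the $D_2$-component, while $\tilde\iota_2$ is controlled on the side adjacent to the $D_1$-component, i.e.\ on \emph{disjoint} sets of boundary saddle connections. Also, since (by Lemma~\ref{L:hypinv}) each involution must \emph{exchange} $C_1$ and $C_2$ rather than preserve $C_j$, the picture of ``rotation by $\pi$ on $C_j$'' does not apply; the relevant map is $\overline{C_1}\to\overline{C_2}$, and the ambiguity is a twist between two a priori unrelated parametrizations. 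You do not give any mechanism that actually kills this twist freedom.

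The paper resolves exactly this point using Assumption~\ref{A1}: the periods being $\overline{\bQ}$-generic forces the saddle connections on the bottom of $C_2$ to have pairwise distinct lengths. Any involution of $\overline{C_1\cup C_2}$ swapping $C_1$ and $C_2$ and preserving the zero set must send each top saddle connection of $C_1$ to a bottom saddle connection of $C_2$ of the same length, and distinctness of the lengths makes this assignment (hence the entire involution, including the twist) unique. This uniqueness is what lets one conclude $\tau_1|_{\overline{C_1\cup C_2}}=\tau_2|_{\overline{C_1\cup C_2}}$. Without invoking Assumption~\ref{A1} in this role, the gluing step does not go through. Your concluding step (using density of the orbit of $(X,\omega)$, via \cite[Corollary 1.3]{Wfield} and Assumption~\ref{A1}) is essentially as in the paper and is fine.
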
\ann{R2: Figure 4.4: Why include the left side of Figure 4.4? It is not clear to me why it is relevant to Remark 4.13.\\ A: It illustrates how we discovered the example on the right side, and may help some readers to understand it.}

\begin{figure}[h]
\includegraphics[width=\linewidth]{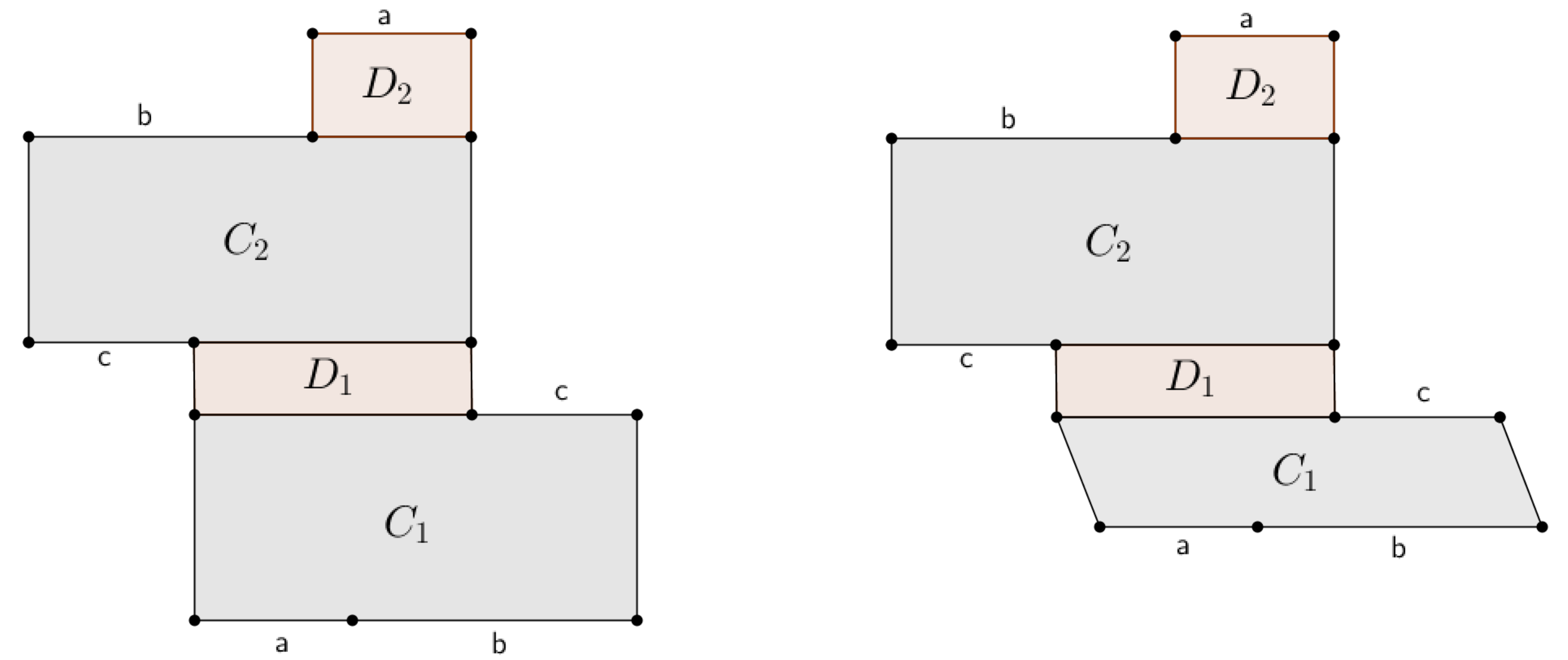}
\caption{Left: A hyperelliptic surface. Right: deforming $C_1$ gives a surface without an involution.}
\label{F:AlmostHyp}
\end{figure}

\begin{rem}
The cautionary example illustrated on the right of Figure \ref{F:AlmostHyp} shows that outside of the context of affine invariant submanifolds, a non-hyperelliptic surface with cylinders $D_1, D_2$ may become hyperelliptic after either $D_i$ is squashed and marked points are forgotten. 
\end{rem}

\ann{R2:  ``Thus, $C_1 \cup C_2$ has at most one involution...  This assertion does not seem like a completely obvious consequence of Assumption 4.1. As a more concrete alternative, why not use the same argument from the previous proof. 
\\A: We added some explanation to make it clear how it follows from Assumption 4.1. (What you suggested also works, but we prefer to minimize changes to the proofs at this point.)}
\begin{proof}
By Lemmas \ref{L:Mihyp} and \ref{L:nofreepoints},  both $\pi_1(X_1, \omega_1, S_1)$ and $\pi_2(X_2, \omega_2, S_2)$ have involutions $\tau_i$ respecting marked points. 

For $i=1,2$, let $R_i$ be $\overline{C_1\cup C_2}$, union the connected component of $X\setminus (C_1\cup C_2)$ that contains $D_{i+1}$.  $\tau_i$ induces an involution on $R_i$ that preserves zeros of $\omega$. 

By  Assumption \ref{A1},  the saddle connections on the bottom of $C_2$ have distinct lengths. Since any involution of $\overline{C_1 \cup C_2}$ must send each saddle connection on the top of $C_1$ to a saddle connection on the bottom of $C_2$ of the same length, and since there is a unique such saddle connection on the bottom of $C_2$, $\overline{C_1 \cup C_2}$ has at most one involution exchanging $C_1$ and $C_2$ and preserving the set of zeros of $\omega$. Hence $\tau_1|_{R_1}$ and $\tau_2|_{R_2}$ agree on $R_1\cap R_2=\overline{C_1 \cup C_2}$, and we can glue these involutions to get an involution $\tau$ on $(X,\omega)$.  

 $\tau$ is a hyperelliptic  involution since $X/\tau$ has genus 0. Indeed, $X/\tau$ can be obtained  by gluing together the two genus 0 surfaces obtained as the quotient of $R_i$ by $\tau_i|_{R_i}$.
  
 By \cite[Corollary 1.3]{Wfield} and Assumption \ref{A1}, $(X,\omega)$ has dense orbit in $\cM$. Hence every surface in $\cM$ is hyperelliptic.  
\end{proof}

\begin{lem}
$\cM$ is equal to the hyperelliptic locus in the ambient stratum. 
\end{lem}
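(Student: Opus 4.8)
The plan is to upgrade the inclusion $\cM\subseteq\cH$ from Lemma~\ref{L:subsetofhyp}, where $\cH$ is the hyperelliptic locus in the ambient stratum, to an equality by comparing tangent spaces at the surface $(X,\omega)$ constructed above, with $\tau$ its hyperelliptic involution. Recall that $T_{(X,\omega)}\cH$ is the $(-1)$-eigenspace $H^1_-(X,\Sigma;\bC)$ of $\tau^*$ on $H^1(X,\Sigma;\bC)$, and that $\cH$ is connected. Since $T_{(X,\omega)}\cM\subseteq T_{(X,\omega)}\cH$ automatically, it suffices to establish the reverse inclusion, which forces $\dim\cM=\dim\cH$ and hence $\cM=\cH$.

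To exhibit the missing tangent directions I would, for each $i\in\{1,2\}$ (so $k=2$ by Lemma~\ref{L:L2}), compare the boundary degenerations of $\cM$ and of $\cH$ obtained by squashing $D_i$. By Lemma~\ref{L:boundary}, squashing $D_i$ on $(X,\omega)$ places $\pi_i(X_i,\omega_i,S_i)$ in a boundary affine invariant submanifold $\cM_i^*$ with tangent space $(\pi_i)_*(\Ann(V_i)\cap T_{(X,\omega)}\cM)$; applying the same geometric squashing to $(X,\omega)$ regarded as a point of $\cH$ produces a boundary affine invariant submanifold $\cH_i^*$ containing the \emph{same} marked surface, with tangent space $(\pi_i)_*(\Ann(V_i)\cap H^1_-(X,\Sigma))$, and $\cM_i^*\subseteq\cH_i^*$ since $\cM\subseteq\cH$. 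The crux is to show $\cM_i^*=\cH_i^*$. Forgetting marked points, $\cM_i$ is the full hyperelliptic locus in the stratum of $(X_i,\omega_i)$ by Lemma~\ref{L:Mihyp}, Lemma~\ref{L:stillfullrank}, and the inductive hypothesis (Assumption~\ref{A0}); and $\cH_i$ is a hyperelliptic locus in that same stratum, hence contained in $\cM_i$, and containing $\cM_i$ because $\cM_i^*\subseteq\cH_i^*$, so $\cH_i=\cM_i$. Moreover the marked points of both $\cM_i^*$ and $\cH_i^*$ are precisely the involution-invariant collection created by collapsing $D_i$, so neither marking has free points --- for $\cM_i^*$ this is Lemma~\ref{L:nofreepoints}, and the same argument applies to $\cH_i^*$. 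Theorem~\ref{T:marked} then identifies both markings with the unique marking matching the configuration of $\pi_i(X_i,\omega_i,S_i)$, so $\cM_i^*=\cH_i^*$, and therefore $\Ann(V_i)\cap T_{(X,\omega)}\cM=\Ann(V_i)\cap H^1_-(X,\Sigma)$ for $i=1,2$.

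Finally, each collapse map $X\to X_i$ is $\tau$-equivariant, so $V_i$ and hence $\Ann(V_i)$ are $\tau$-invariant, whence $\Ann(V_i)\cap H^1_-(X,\Sigma)=\Ann(V_i)^-$. Thus $T_{(X,\omega)}\cM$ contains $\Ann(V_1)^-+\Ann(V_2)^-=(\Ann(V_1)+\Ann(V_2))^-=\Ann(V_1\cap V_2)^-$. Since $D_1$ and $D_2$ lie in different connected components of $X\setminus(C_1\cup C_2)$ (Remark~\ref{R1:notation} with $k=2$) and squashing $D_1$ and $D_2$ collides disjoint sets of zeros, $V_1\cap V_2=0$, so $\Ann(V_1\cap V_2)^-=H^1_-(X,\Sigma)=T_{(X,\omega)}\cH$, which is the inclusion we wanted. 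I expect the main obstacle to be the identification $\cM_i^*=\cH_i^*$ --- in particular verifying that $\cH_i^*$ has no free marked points and that its underlying hyperelliptic locus $\cH_i$ really does fill out $\cM_i$ --- and, secondarily, carrying the whole argument through the case where the genus of $X_i$ drops, in which $V_i$ is larger but still supported away from $D_j$ for $j\neq i$.
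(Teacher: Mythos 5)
Your first step — squashing each $D_i$, using Lemma~\ref{L:boundary} together with Lemma~\ref{L:Mihyp} and Lemma~\ref{L:nofreepoints} to identify $\Ann(V_i)\cap T_{(X,\omega)}\cM$ with $\Ann(V_i)^-$, the $(-1)$-eigenspace of $\tau^*$ inside $\Ann(V_i)$ — is the same mechanism as the paper's first paragraph. (You do not actually need the detour through $\cH_i^*$, which forces you to also establish that boundaries of the hyperelliptic locus $\cH$ are hyperelliptic with no free marked points; the paper avoids this by working directly with $T\cM_i^*$ being the $(-1)$-eigenspace and noting that any one-sided $(-1)$-eigenvector pulls back into $T\cM$, which already gives the direction $T_{(X,\omega)}\cH\subseteq T_{(X,\omega)}\cM$ you need.)

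Where you genuinely diverge is the final step, and that is where there is a gap. You want to conclude by the annihilator identity $\Ann(V_1)^-+\Ann(V_2)^-=\Ann(V_1\cap V_2)^-$ together with $V_1\cap V_2=0$. But $V_i$ is \emph{not} in general just the span of the vertical saddle connections in $D_i$: as the paper points out around Figure~\ref{F:Disconnect}, when squashing $D_i$ disconnects the surface, $V_i$ picks up extra classes (the saddle connection $d$ in that figure), and when the genus drops $V_i$ is larger still. You acknowledge this at the end but do not resolve it; the one-line justification ``squashing $D_1$ and $D_2$ collides disjoint sets of zeros'' does not control these extra classes, and there is no argument ruling out overlap between them. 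The paper sidesteps this entirely by an explicit geometric surjectivity argument: any nearby hyperelliptic surface is produced by applying a $GL(2,\bR)$-element to match $\overline{C_1\cup C_2}$, then a deformation supported on one side, then one supported on the other side, so that $\Re\omega$, $\Im\omega$, and the one-sided deformations manifestly span $T\cH$. Until you prove $V_1\cap V_2=0$ (or replace that claim with the paper's geometric decomposition), the proposal is incomplete.
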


\begin{proof}
It remains only to show that $\cM$ is not properly contained in the hyperelliptic locus. Let $v\in H^1(X, \Sigma)$  be supported on one component of $X$ minus the core curves of $C_1$ and $C_2$ and suppose $v$ is contained in the  -1\ann{Typo found by R2 fixed.} eigenspace for the hyperelliptic involution. By Lemma \ref{L:Mihyp} and Lemma \ref{L:boundary}, $v \in T\cM$.
\ann{R2: Proof of Lemma 4.14, Paragraph 2:  there is an affine map with derivative 1 taking $\overline{C_1 \cup C_2}$ on $g(X, \omega)$ to $\overline{C_1 \cup C_2}$ on $(X', \omega')$ and that maps zeros to zeros.  This is certainly true for the open sets $C_1 \cup C_2$. However, it is not clear that this extends to the boundaries. Consider clarifying this. ... \\A: Added explanation in parentheses.}

Consider a generic hyperelliptic surface $(X,\omega)\in \cM$\ann{R2:   ``surface $(X, \omega)$ with". Consider changing to  ``surface $(X, \omega) \in \cM$ with"  for emphasis because the term generic might be misleading here.\\A: Done.} with a pair of homologous cylinders $C_1$ and $C_2$, and let $(X', \omega')$ be any nearby hyperelliptic surface. Then $(X', \omega')$ can be obtained from $(X,\omega)$ as follows. First, apply $g\in GL(2,\bR)$ to get $g(X,\omega)$, where $g$ is chosen so there is an affine map with derivative 1 taking $\overline{C_1\cup C_2}$ on $g(X,\omega)$ to $\overline{C_1\cup C_2}$ on $(X', \omega')$ and that maps zeros to zeros. (Note that since $(X,\omega)$ is generic, $C_1$ and $C_2$ are simple cylinders.) Then,  apply a deformation supported on one side of $g(X,\omega)\setminus (C_1\cup C_2)$, and then a deformation supported on the other side, to obtain $(X', \omega')$. 

This shows that the deformations from the first paragraph, together with $\Re(\omega)$ and $\Im(\omega)$, span the tangent space to the hyperelliptic locus.  
\end{proof}

This concludes the proof of Theorem \ref{T:main}.

\section{Proof of Theorem \ref{T:free}}\label{S:free}

A digraph is a pair $\Gamma=(V,E)$, where $V$ is a finite set of vertices, and $E$ is a finite multi-set of ordered pairs $(x,y)\in V\times V$. An element $(x,y)\in E$ is called a directed edge from $x$ to $y$. We say that $x$ is the tail of $(x,y)$ and that $y$ is the tip. A directed graph is allowed to have multiple edges between two vertices or from a vertex to itself.

A directed path in a digraph is a nonempty sequence of edges such that the tip of each is the tail of the next. The path is said to go from the tail of the first edge to the tip of the last. A directed loop is a directed path from a point to itself. 

A digraph $\Gamma(V,E)$ is called strongly connected if, for all $x,y\in V$, there is a directed path from $x$ to $y$ and a directed path from $y$ to $x$.  

A directed loop is said to be embedded if it passes through each edge and each vertex at most once. Let $\bQ^E$ denote the vector space with basis $E$. Each directed loop gives an element of $\bQ^E$ by taking the sum of edges that appear in the loop. Define the loop space $L(\Gamma)\subset \bQ^E$ to be the span of all directed embedded loops in  $\bQ^E$.

\begin{lem}\label{L:digraph}
If $\Gamma$ is strongly connected, then $\dim L(\Gamma)=|E|-|V|+1$. 
\end{lem}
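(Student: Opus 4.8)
The plan is to identify $L(\Gamma)$ with the kernel of the incidence map of $\Gamma$ and then compute that kernel's dimension. Regard each vertex $v\in V$ as the corresponding basis vector of $\bQ^V$, and let $\partial\colon\bQ^E\to\bQ^V$ be the linear map sending a directed edge $(x,y)$ to $y-x$. Summing $\partial$ over the edges of a directed loop telescopes to $0$, so every embedded directed loop, and hence all of $L(\Gamma)$, lies in $\ker\partial$. The work will be to prove the reverse inclusion $\ker\partial\subseteq L(\Gamma)$ using strong connectivity, after which $\dim L(\Gamma)=\dim\ker\partial=|E|-\rank\partial$ and it remains only to compute $\rank\partial$.

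First I would record a flow-decomposition statement valid for an arbitrary digraph: if $f\in\ker\partial$ satisfies $f(e)\ge0$ for all $e\in E$, then $f$ is a nonnegative rational combination of elements of $\bQ^E$ coming from embedded directed loops. Indeed, if $f\neq 0$, pick an edge of positive weight and repeatedly follow an outgoing edge of positive weight; this is always possible because $\partial f=0$ says that at each vertex the total weight entering equals the total weight leaving. As soon as a vertex repeats one extracts an embedded directed loop $C$ on all of whose edges $f$ is strictly positive; subtracting the positive multiple $\bigl(\min_{e\in C}f(e)\bigr)C$ keeps $f$ in $\ker\partial$ and nonnegative while strictly shrinking its support, so one inducts on the size of the support.

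Next I would use strong connectivity to produce a single circulation that is positive on every edge. For each directed edge $e=(x,y)$, strong connectivity gives a directed path from $y$ to $x$; a shortest such path is simple and cannot contain $e$, so adjoining $e$ yields an embedded directed loop $\ell_e$ through $e$ (when $x=y$ take $\ell_e=\{e\}$). Put $h=\sum_{e\in E}\ell_e\in\bQ^E$; then $h\in\ker\partial$, $h\ge0$ and $h(e)\ge1$ for every $e$. Now, given an arbitrary $f\in\ker\partial$, choose a rational $N\ge\max_e|f(e)|$, so that $f+Nh\ge0$. Both $f+Nh$ and $Nh$ are nonnegative elements of $\ker\partial$, hence lie in $L(\Gamma)$ by the previous step, and therefore $f=(f+Nh)-Nh\in L(\Gamma)$. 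This proves $\ker\partial\subseteq L(\Gamma)$, so $L(\Gamma)=\ker\partial$.

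Finally I would compute $\rank\partial$. Its image lies in the hyperplane $H=\{\sum_v c_v v:\sum_v c_v=0\}$, of dimension $|V|-1$; and for any two vertices $x,y$, a directed path from $x$ to $y$ (which exists by strong connectivity) maps under $\partial$, telescoping, to $y-x$. Fixing one vertex $x_0$, the vectors $v-x_0$ for $v\neq x_0$ are a basis of $H$, so the image of $\partial$ is exactly $H$ and $\rank\partial=|V|-1$. Hence $\dim L(\Gamma)=|E|-(|V|-1)=|E|-|V|+1$. The step I expect to be the crux is the inclusion $\ker\partial\subseteq L(\Gamma)$: flow decomposition only handles circulations that are everywhere nonnegative, and strong connectivity enters precisely to furnish the strictly positive circulation $h$ that reduces the general case to that one. (Connectivity of the underlying graph is also what forces the image of $\partial$ to be all of $H$; without it the formula genuinely fails, e.g.\ for two parallel edges joining two vertices, where $L(\Gamma)=0$ but $|E|-|V|+1=1$.)
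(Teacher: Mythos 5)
Your proof is correct, but it follows a genuinely different route from the paper. The paper argues by induction on $|V|$: it picks an embedded directed loop $\ell$, contracts it to a point to get a smaller strongly connected digraph $\Gamma'$, and establishes a short exact sequence $0\to \bQ\cdot\ell\to L(\Gamma)\to L(\Gamma')\to 0$; the dimension count then drops out from $|V'|=|V|-k+1$, $|E'|=|E|-k$ where $k$ is the length of $\ell$. Your argument instead identifies $L(\Gamma)$ outright with the kernel of the incidence map $\partial\colon\bQ^E\to\bQ^V$. The inclusion $L(\Gamma)\subseteq\ker\partial$ is telescoping; the reverse inclusion you obtain via flow decomposition for nonnegative circulations, reduced to from the general case by adding a large multiple of the everywhere-positive circulation $h=\sum_e\ell_e$ built from strong connectivity; and then $\rank\partial=|V|-1$ follows because strong connectivity makes the image exactly the zero-sum hyperplane. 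Your version is essentially the classical ``cycle space equals kernel of the incidence matrix'' proof transported to the directed setting, and it has the virtue of exhibiting $L(\Gamma)$ concretely (as all rational circulations) rather than only computing its dimension; it also makes visible the two distinct roles strong connectivity plays. The paper's contraction argument is shorter on the page and sidesteps flow decomposition, at the cost of leaving implicit a couple of small verifications (e.g.\ that the kernel of $L(\Gamma)\to L(\Gamma')$ is precisely $\bQ\cdot\ell$, and that embedded loops in $\Gamma'$ lift). Both are valid; the paper cites \cite[Theorems 10.1.3--10.1.4]{digraphs} precisely because this is a known result with several proofs.
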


Compare to \cite[Theorem 10.1.4]{digraphs}; since we haven't been able to find a precise reference, we provide a proof. 

\ann{R2: For undirected graphs, this is a well-known result readily available on Wikipedia. See
circuit rank and cycle space.
However, it is admittedly much more challenging to  find a reference for strongly
connected digraphs. Consider adding the following:
Digraphs: Theory, Algorithms and Applications by Jorgen Bang-Jensen and
Gregory Gutin, pg. 547 Theorems 10.1.3 and 10.1.4.\\ A: Reference  added.}

\begin{proof}
We induct on $|V|$. If $|V|=1$, the result is obvious. So assume $|V|>1$. 

Let $\ell$ be an embedded directed loop in $\Gamma$, and let $\Gamma'=(V', E')$ be the digraph resulting from collapsing $\ell$ to a point. One can check that $\Gamma'$ is strongly connected. 

\begin{figure}[h]
\includegraphics[width=0.7\linewidth]{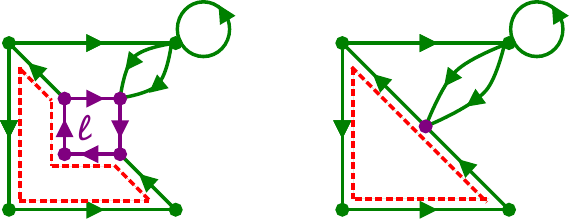}
\caption{Left: A digraph $\Gamma$. Right: Contracting a loop $\ell$ in $\Gamma$ gives $\Gamma'$. The dashed line on the right is an embedded directed loop; and on the left, its  lift.}
\label{F:CylDigraph}
\end{figure}

An embedded loop in $\Gamma$ does not necessarily map to an embedded loop in $\Gamma'$, because it may pass through a vertex more than once. However, the image of an embedded loop will  map to a sum of embedded loops in  $L(\Gamma')$. Hence we get a map $L(\Gamma)\to L(\Gamma')$.

Each embedded loop in $\Gamma'$ lifts to an embedded directed loop in $\Gamma$, so in fact there is a short exact sequence 
$$0\to \bQ \cdot \ell \to L(\Gamma)\to L(\Gamma')\to 0.$$

If $\ell$ has $k$ edges, then $|V'|=|V|-k+1$ and $|E'|=|E|-k$, and so the result follows by induction.  
\end{proof}  
  
Let $(X,\omega)$ be a horizontally periodic translation surface. We define its cylinder digraph to be the digraph with one vertex for each horizontal cylinder, and one directed edge for each horizontal saddle connection. The directed edge associated to a horizontal saddle connection goes from the cylinder below the saddle connection to the cylinder above it.  The cylinder digraph is strongly connected. 

\begin{figure}[h]
\includegraphics[width=0.7\linewidth]{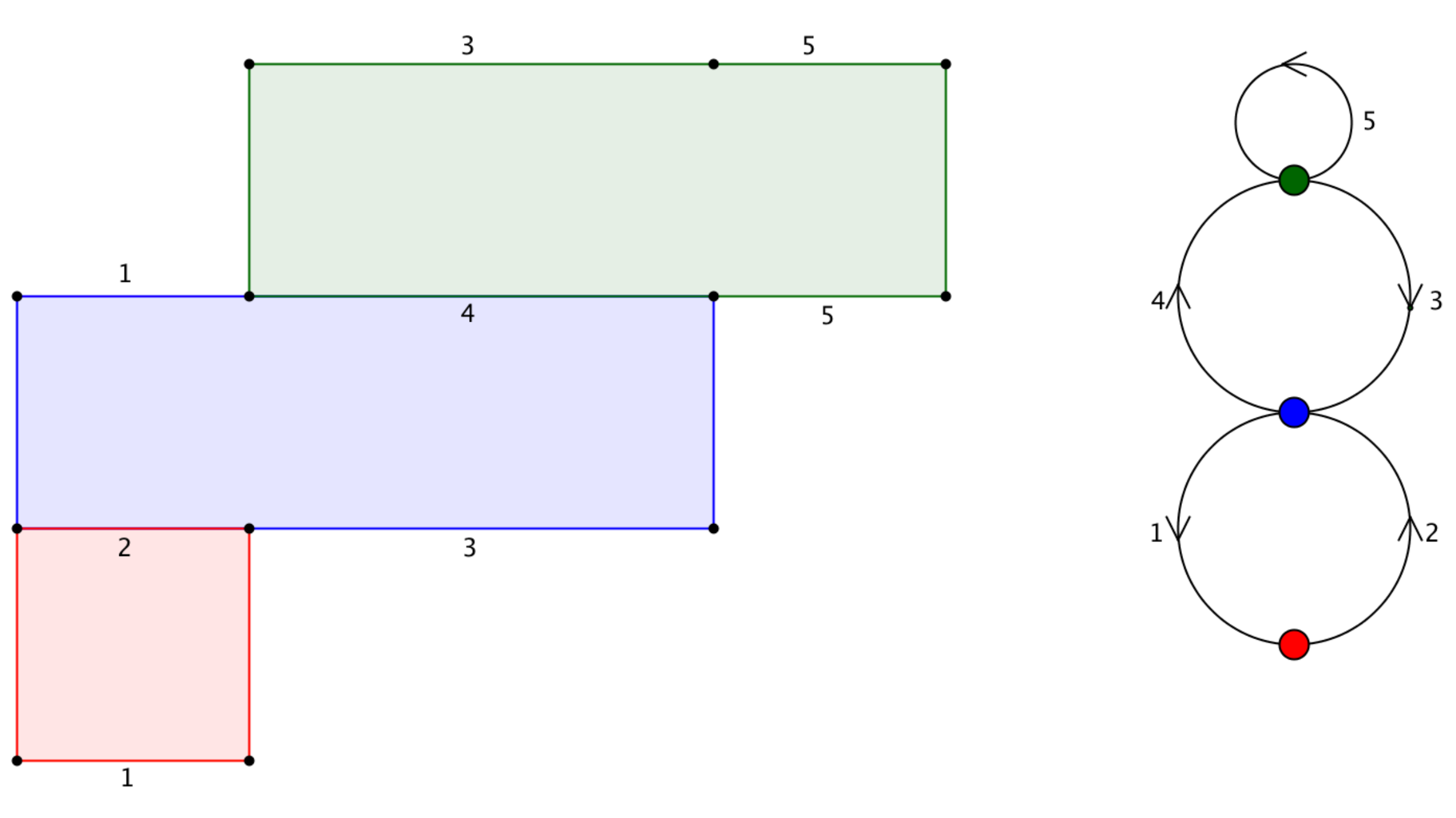}
\caption{A horizontally periodic surface and its cylinder digraph. The colors and labels are for illustrational purposes only.}
\label{F:CylDigraph}
\end{figure}


Given a horizontally periodic surface $(X,\omega)\in \cM$,  the cylinder preserving space is defined in \cite{Wcyl} to be the subspace of $T\cM$ that is zero on core curves of cylinders, and the twist space of $(X,\omega)$ is the subspace of $T\cM$ that is zero on all horizontal saddle connections. 
The following summarizes \cite[Lemma 8.6, Corollary 8.11]{Wcyl}. 

\begin{lem}
Let $(X,\omega)\in \cM$ be horizontally periodic. The cylinder preserving space  has codimension at most $\rank \cM $ in $T\cM$. 
If the twist space is not equal to the cylinder preserving space, then there is a horizontally periodic surface in $\cM$ with more horizontal cylinders than $(X,\omega)$. 
\end{lem}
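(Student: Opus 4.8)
\textbf{The codimension bound.} This is symplectic linear algebra. Let $\gamma_1,\dots,\gamma_n$ be the core curves of the horizontal cylinders of $(X,\omega)$ and consider the evaluation map
\[
\Phi\colon T_{(X,\omega)}\cM\longrightarrow\bC^{\,n},\qquad \Phi(v)=\bigl(v(\gamma_1),\dots,v(\gamma_n)\bigr).
\]
By definition $\ker\Phi$ is the cylinder preserving space, so it is enough to bound $\dim\Phi\bigl(T_{(X,\omega)}\cM\bigr)$ by $\rank\cM$. Since the $\gamma_i$ are absolute cycles, $\Phi$ factors through $p\colon T_{(X,\omega)}\cM\to H^1(X)$, hence through $U:=p\bigl(T_{(X,\omega)}\cM\bigr)$, which by the theorem of Avila--Eskin--M\"oller \cite{AEM} is symplectic of dimension $2\rank\cM$. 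Using Poincar\'e duality, the image of $\Phi$ is identified with the image in $U$, under the symplectic projection along $U^{\perp}$, of the span $W$ of the $\gamma_i$. Since the cylinders are disjoint, the $\gamma_i$ have pairwise vanishing intersection numbers, so $W$ is isotropic; the plan is to show that its projection into $U$ is again isotropic for the symplectic form on $U$, and hence of dimension at most $\rank\cM$. To see this isotropy I would use that $\Re\omega$, $\Im\omega$ and---by the Cylinder Deformation Theorem (Theorem~\ref{T:CDT})---all the class twists $u^{\cC}$ lie in $T_{(X,\omega)}\cM$, together with the facts that each $u^{\cC}$ is Poincar\'e dual to a positive combination of horizontal core curves and that $\Im\omega$ vanishes on every horizontal core curve and saddle connection; these relations pin down the pairings among the projected curves.

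\textbf{Producing more horizontal cylinders.} Suppose the twist space is strictly smaller than the cylinder preserving space. Both spaces are defined over $\bR$, so there is a real class $v\in T_{(X,\omega)}\cM\cap H^1(X,\Sigma;\bR)$ that is cylinder preserving but not a twist: $v(\gamma_i)=0$ for every horizontal core curve $\gamma_i$, while $v(\delta)\ne0$ for some horizontal saddle connection $\delta$. Deforming $(X,\omega)$ in the direction $v$ leaves the horizontal measured foliation combinatorially unchanged and preserves every horizontal cylinder together with its circumference, but changes the lengths of some horizontal saddle connections; pushing $v$ in the appropriate direction drives the length of $\delta$ to $0$. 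I would then pass to a limit in the partial compactification and apply the Smillie--Weiss theorem (Theorem~\ref{T:SW}) to the horocycle orbit closure of these surfaces---which remains inside the closed set $\cM$---to obtain a horizontally periodic surface $(X'',\omega'')\in\cM$; along this procedure the original cylinders persist, while the collapse of $\delta$ forces at least one new horizontal cylinder to appear, so $(X'',\omega'')$ has strictly more horizontal cylinders than $(X,\omega)$.

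\textbf{Main obstacle.} For the first assertion the crux is the isotropy of the projection of $W$ into $U$: the elementary bound by $\dim W$ is useless, and only the symplectic structure of $p\bigl(T_{(X,\omega)}\cM\bigr)$ together with the horizontal geometry---above all the membership of the class twists in $T_{(X,\omega)}\cM$---forces the bound $\rank\cM$. For the second assertion the delicate step is the limiting argument: one must exclude that the Smillie--Weiss limit merely has the same number of horizontal cylinders in a different configuration, and this is precisely where one uses that $v$ moves a saddle connection that is not eliminated by any combination of twists. Both assertions are established in \cite[Lemma~8.6, Corollary~8.11]{Wcyl}.
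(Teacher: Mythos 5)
The paper's own ``proof'' of this lemma is simply the citation to \cite[Lemma~8.6, Corollary~8.11]{Wcyl}, which you also give, so at that level your proposal matches. But the two sketches you attach have real gaps.

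For the codimension bound, you correctly reduce the problem to showing that the projection $\pi(W)$ of the span $W$ of the core curves into $U=p(T_{(X,\omega)}\cM)$ is isotropic, and you correctly observe that $W$ being isotropic in $H^1(X)$ does not by itself give this. But the ingredients you name (``$\Re\omega$, $\Im\omega$ and the class twists pin down the pairings'') do not supply the isotropy; $\Re\omega$ and $\Im\omega$ play no role here. The missing idea is the constancy of circumference ratios for $\cM$-parallel cylinders: the linear equation $\int_{\alpha_1}\omega = c\int_{\alpha_2}\omega$ holds identically on $\cM$, so for cylinders $C_i$ in a single $\cM$-equivalence class $\cC$ the evaluation functionals $v\mapsto v(\gamma_i)$ on $T_{(X,\omega)}\cM$ are all positive scalar multiples of one another, hence all proportional to $v\mapsto\langle p(v),\,p(u^{\cC})\rangle$. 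It follows that $\pi(W)$ is spanned by the finitely many classes $p(u^{\cC})$, one per equivalence class, and these are pairwise symplectically orthogonal because the core curves are disjoint. Without this proportionality there is no reason for the projections of the individual $\gamma_i$ to stay isotropic.

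For the second assertion your plan would fail as stated. Deforming along the \emph{real} class $v$ leaves $\Im\omega$ fixed, so the surface remains horizontally periodic with the same horizontal decomposition at every time, and Smillie--Weiss has nothing to improve; and pinching $\delta$ to length zero exits the stratum, so the limit in the partial compactification is not a surface in $\cM$, whereas the lemma produces a horizontally periodic surface \emph{in} $\cM$. The correct deformation is in the imaginary direction $iv$: since $v(\gamma_i)=0$ for all core curves, $iv$ preserves the periods of all $\gamma_i$ (the horizontal cylinders persist with the same circumferences), while $\delta$ acquires a nonzero imaginary period and so is no longer horizontal; the nearby surface is therefore no longer horizontally periodic but still carries all the original cylinders. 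One then applies Smillie--Weiss to the horocycle-orbit closure of that nearby surface inside $\cM$, and checks that the resulting horizontally periodic surface retains the original cylinders and has at least one more. Your sketch substitutes a degeneration for this deformation, which is a different (and here, inapplicable) move.
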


\begin{lem}\label{L:CPTwist}
Let $\cM$ be an affine invariant submanifold as in Theorem \ref{T:free}. Let $(X,\omega)\in \cM$ be horizontally periodic in $\cM$, and let $\Gamma$ be the cylinder digraph. If the cylinder preserving space is equal to the twist space, then $\dim L(\Gamma)  \leq \rank \cM$.
\end{lem}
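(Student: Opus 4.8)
The plan is to identify the codimension of the twist space in $T\cM$ with $\dim L(\Gamma)$, and then to invoke the hypothesis together with the preceding lemma. Write $C_1,\dots,C_n$ for the horizontal cylinders, with core curves $\gamma_1,\dots,\gamma_n$, and $\sigma_1,\dots,\sigma_m$ for the horizontal saddle connections, so that $\Gamma$ has $n$ vertices and $m$ edges; by Lemma \ref{L:digraph}, $\dim L(\Gamma)=m-n+1$.

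First I would describe the twist space. Since every cylinder is free, the cylinder shears $u^{C_i}$ lie in $T\cM$; each is Poincar\'e dual to a scalar multiple of $\gamma_i$ and so vanishes on every horizontal saddle connection, whence $\operatorname{span}_{\bC}\{u^{C_i}\colon 1\le i\le n\}$ is contained in the twist space. Conversely, writing $W\subseteq H_1(X,\Sigma;\bC)$ for the span of the classes of the horizontal saddle connections, a computation with the long exact sequence of the triple $\bigl(X,\ X\setminus\bigcup_i\gamma_i,\ \Sigma\bigr)$ — in which $H_*\bigl(X,\ X\setminus\bigcup_i\gamma_i\bigr)$ is computed by excision and the Thom isomorphism — gives $\dim W=m-n+1$, shows $W$ has codimension $n$ in $H_1(X,\Sigma;\bC)$, and identifies the annihilator of $W$ in $H^1(X,\Sigma;\bC)$ with $\operatorname{span}_{\bC}\{u^{C_i}\}$. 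Since the twist space is by definition $T\cM\cap\operatorname{Ann}(W)$ and we have just seen $\operatorname{Ann}(W)\subseteq T\cM$, the twist space is exactly $\operatorname{Ann}(W)$, of dimension $n$.

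Next consider evaluation on the horizontal saddle connections, $\Phi\colon T\cM\to\bC^m$, $\Phi(v)=(v(\sigma_1),\dots,v(\sigma_m))$. Its kernel is the twist space, so $\operatorname{codim}(\text{twist space})=\dim\Phi(T\cM)$. For each $C_i$ the top and the bottom of $C_i$ are each homologous to $\gamma_i$, so $\sum_{\sigma\ \text{on top of}\ C_i}v(\sigma)=\sum_{\sigma\ \text{on bottom of}\ C_i}v(\sigma)$ for every $v\in T\cM$; thus $\Phi(v)$ is always a circulation on $\Gamma$, and $\Phi(T\cM)$ is contained in the circulation space, which has dimension $m-n+1=\dim L(\Gamma)$. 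The heart of the argument is to prove that $\Phi(T\cM)$ is \emph{all} of the circulation space — equivalently, that for each embedded directed loop in $\Gamma$ some tangent vector in $T\cM$ realizes the associated circulation on the horizontal saddle connections. Granting this, $\operatorname{codim}(\text{twist space}\subseteq T\cM)=\dim L(\Gamma)$; since by hypothesis the twist space equals the cylinder preserving space, and the latter has codimension at most $\rank\cM$ in $T\cM$ by \cite[Lemma 8.6]{Wcyl}, we conclude $\dim L(\Gamma)\le\rank\cM$.

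The main obstacle is exactly the surjectivity of $\Phi$ onto the circulation space. The natural deformation attached to an embedded directed loop of $\Gamma$ is the Poincar\'e dual of a simple closed curve passing once (vertically) through the cylinders of the loop; one must show that this class, after subtracting a suitable combination of cylinder shears, lies in $T\cM$. This is where one expects to use both the freeness of every cylinder and the hypothesis that the twist space equals the cylinder preserving space — the latter guaranteeing, via \cite[Corollary 8.11]{Wcyl}, that the mechanism producing horizontally periodic surfaces with more horizontal cylinders is exhausted at $(X,\omega)$, so that no circulation can be "missing".
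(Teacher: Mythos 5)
Your overall framework is close to the paper's, but your proposal has a genuine gap at precisely the step you flag as ``the main obstacle,'' and the way you sketch that step suggests you would not actually close it. Let me be concrete.

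You correctly set up $\Phi\colon T\cM\to\bC^m$ by evaluation on horizontal saddle connections, with kernel the twist space, and correctly observe that $\Phi(T\cM)$ lands in the circulation space of dimension $m-n+1=\dim L(\Gamma)$. But this only gives $\operatorname{codim}(\text{twist space})\le\dim L(\Gamma)$, which is the \emph{wrong direction}: to conclude $\dim L(\Gamma)\le\rank\cM$ from the hypothesis twist $=$ cylinder-preserving and the bound $\operatorname{codim}(\text{cylinder preserving})\le\rank\cM$, you need $\operatorname{codim}(\text{twist space})\ge\dim L(\Gamma)$. That is exactly the surjectivity (or at least a lower bound on the image) of $\Phi$, and your proposal does not prove it — it only names it, sketches a Poincar\'e-dual class that ``should'' lie in $T\cM$ after subtracting shears, and then speculates that the hypothesis twist $=$ cylinder-preserving is what guarantees it.

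That speculation is the wrong instinct. The paper obtains the needed lower bound using \emph{only} the freeness of every cylinder, in two moves for each embedded directed loop $\ell$ of $\Gamma$: first twist the horizontal cylinders — each such twist stays in $\cM$ by freeness — until there is an honest vertical cylinder running through exactly the cylinders and saddle connections of $\ell$; then stretch that vertical cylinder — again a deformation staying in $\cM$ by freeness. The derivative of the stretch is a class in $T\cM$ that is a nonzero constant on the horizontal saddle connections of $\ell$ and zero on all the others, hence lies outside the twist space, and the classes from different embedded loops are visibly linearly independent. This produces a subspace $L\subset T\cM$ of dimension $\dim L(\Gamma)$ meeting the twist space trivially, which is exactly the lower bound you need; the hypothesis that twist equals cylinder-preserving is invoked only afterwards, to transfer the bound $\operatorname{codim}(\text{cylinder preserving})\le\rank\cM$ to the twist space. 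Your computation of the twist space as $\operatorname{Ann}(W)$ is fine but not needed; the real work is the construction above, which you gesture at but do not carry out, and which you misattribute to the wrong hypothesis.
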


\begin{proof}
For each embedded simple loop $\ell$ in the cylinder digraph, we may twist the horizontal cylinders of $(X,\omega)$ to obtain a surface with a vertical cylinder that passes through exactly the saddle connections and cylinders of $\ell$. Stretching this vertical cylinder gives an element of $T\cM$ which is equal to a nonzero constant on the horizontal saddle connections of $\ell$, and is zero on on all other horizontal saddle connections. The cohomology classes obtained in this way from different loops $\ell$ are linearly independent, so their span $L\subset T\cM$ has dimension $\dim L(\Gamma)$. 

Since the cylinder preserving space has codimension at most $\rank \cM$, any subspace of $T\cM$ of dimension strictly greater than $\rank \cM$ must have non-trivial intersection with it. Thus, if $\dim L(\Gamma)>\rank \cM$, some nonzero element of $L$ is contained in the cylinder preserving space. 

Since every nonzero element of $L$ is nonzero on some horizontal saddle connection, no nonzero element of $L$ is in the twist space. We conclude that if $\dim L(\Gamma)>\rank \cM$ the cylinder preserving space is strictly larger than the twist space. 
\end{proof}

\begin{lem}\label{L:freetostrat}
Suppose $\cM$ is an affine invariant submanifold of genus $g$ Abelian differentials with $s$ distinct zeros. If $\cM$ contains a surface with $g+s-1$ horizontal free cylinders, then $\cM$ is a connected component of a stratum. 
\end{lem}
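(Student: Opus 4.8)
The plan is to count dimensions. Let $(X,\omega)\in\cM$ be horizontally periodic with $N$ horizontal cylinders, all free, and suppose $N=g+s-1$. Since each cylinder $C_j$ is free, the Poincaré dual of its core curve lies in $T_{(X,\omega)}\cM$, and so does the stretch $a^{C_j}$; more importantly the twist space of $(X,\omega)$ contains the span of these $N$ duals. First I would record that the core curves of the $N$ horizontal cylinders span a subspace of $H_1(X)$ of dimension at least $\rank\cM=g$ (by Lemma \ref{L:section8}, or rather because $(X,\omega)$ is chosen horizontally periodic with this property). In fact, because the cylinders are free, the absolute cohomology classes dual to the core curves — which span an isotropic subspace of $p(T_{(X,\omega)}\cM)$ — must span a Lagrangian, so exactly $g$ of the $N$ core curves are homologically independent, and the remaining $N-g=s-1$ relations among the core curves in $H_1(X)$ are genuine.

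Next I would pass to relative cohomology and the twist space $\mathrm{Tw}$. The twist space is $\mathrm{Ann}$ of the horizontal saddle connections inside $T_{(X,\omega)}\cM$; since the cylinders are all free, $\mathrm{Tw}$ contains the $N$ duals of core curves, and one checks these are linearly independent in $H^1(X,\Sigma)$ because the cylinders are disjoint (Poincaré dual to disjoint curves). So $\dim\mathrm{Tw}\ge N=g+s-1$. On the other hand, the key general estimate (the displayed Lemma just before this one, from \cite{Wcyl}) says the cylinder preserving space has codimension at most $\rank\cM=g$ in $T_{(X,\omega)}\cM$; combined with $\dim T_{(X,\omega)}\cM \le 2g + (s-1)$ (the dimension of the stratum is $2g+s-1$, and $\cM$ sits inside it) this forces the twist space to have dimension at most $(2g+s-1)-g=g+s-1$. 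Hence $\dim\mathrm{Tw}=g+s-1=\dim T_{(X,\omega)}\cM - g$, so the twist space equals the cylinder preserving space, and also $\dim T_{(X,\omega)}\cM=2g+s-1$, i.e. $\cM$ is (a component of) the whole stratum.

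Actually the cleanest route is: the $N=g+s-1$ core-curve duals together with $\Re(\omega),\Im(\omega)$ already give $g+s+1$ elements of $T_{(X,\omega)}\cM$; I would argue these span, by checking they are linearly independent and counting against $\dim\mathcal H = 2g+s-1$... but that count is short by $g-1$, so this naive version does not close. The honest obstruction, and the step I expect to be the crux, is to produce \emph{enough} extra tangent vectors beyond the free twists: one must invoke that the cylinder preserving space has codimension $\le\rank\cM$ (giving $g$ "non-twisting" directions) \emph{and} that the twist space being as large as possible combined with Lemma \ref{L:digraph} / Lemma \ref{L:CPTwist} pins down the loop space of the cylinder digraph, forcing $|E|-|V|+1=\dim L(\Gamma)$ to be large enough that the horizontal saddle connections impose no further constraints. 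So the plan is: (i) show all $N$ free twists lie in $\mathrm{Tw}$ and are independent; (ii) use the codimension bound to get $g$ independent classes in $T_{(X,\omega)}\cM$ that are nonzero on some core curve; (iii) combine via the symplectic form (as in the proof of Lemma \ref{L:stillfullrank}) to see $\dim T_{(X,\omega)}\cM \ge 2g+s-1$; (iv) conclude equality, so $T_{(X,\omega)}\cM$ is the full tangent space to the stratum at $(X,\omega)$, hence $\cM$ is an open (and closed, being an affine invariant submanifold) subset, i.e. a connected component of the stratum. The main obstacle is step (ii)–(iii): matching the $s-1$ homological relations among core curves with $s-1$ "relative-only" tangent directions coming from the free stretches/twists so that the dimension count is exactly right, which is where Lemma \ref{L:digraph} on the rank of the loop space of a strongly connected digraph is doing the real work.
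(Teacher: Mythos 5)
Your opening observations are on the right track: the $g+s-1$ free twists give linearly independent elements of $T_{(X,\omega)}\cM\subset H^1(X,\Sigma)$, and their image under $p:H^1(X,\Sigma)\to H^1(X)$ is isotropic (disjoint curves) and, by the dimension count $(g+s-1)-\dim\ker p=(g+s-1)-(s-1)=g$, spans a Lagrangian. But then the proposal goes off course. You correctly notice that $g+s+1$ classes are short of $2g+s-1$ by $g-1$, and you try to close the gap by appealing to the digraph machinery (Lemma \ref{L:digraph}, Lemma \ref{L:CPTwist}) and to a "codimension of the cylinder preserving space" bound to manufacture $g$ extra tangent directions. That machinery is used to \emph{prove Theorem \ref{T:free}} (it produces a horizontally periodic surface with many free cylinders), not to prove this lemma, and the route you sketch does not actually yield the missing $g-1$ dimensions.

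The missing ingredient is the Avila--Eskin--M\"oller theorem quoted in Section \ref{S:background}: $p(T_{(X,\omega)}\cM)$ is a \emph{symplectic} subspace of $H^1(X)$. Once you know it is symplectic and contains the Lagrangian image of the twists, it must be all of $H^1(X)$; in particular $\cM$ has full rank. Then the second half of the argument is another dimension count on the same span of twists: since the span has dimension $g+s-1$ and its $p$-image has dimension $g$, its intersection with $\ker p$ has dimension $s-1=\dim\ker p$, so $\ker p\subset T_{(X,\omega)}\cM$. Combining $p(T_{(X,\omega)}\cM)=H^1(X)$ with $\ker p\subset T_{(X,\omega)}\cM$ gives $T_{(X,\omega)}\cM=H^1(X,\Sigma)$, which is the whole tangent space of the stratum, so $\cM$ is a connected component of the stratum. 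No extra tangent vectors, no digraph, and no appeal to the codimension-of-cylinder-preserving-space estimate are needed; the entire proof is the two sentences of linear algebra above, with the symplecticity of $p(T\cM)$ doing the work you were trying to do by other means.
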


\begin{proof}
The twists in the cylinders span a subspace of $T\cM$ of dimension $g+s-1$ whose image under $p$ is isotropic.\ann{R3: What does isotropic mean here?\\ A: As usual, it means that the symplectic pairing of any two vectors in the subspace is zero.} Since $\ker(p)$ has dimension $s-1$, the image under $p$ has dimension $g$. Hence $\cM$ has full rank and $p(T\cM)=H^1(X)$. 

The intersection of the span of these twists with $\ker(p)$ has dimension $s-1$, which is the dimension of $\ker(p)$.  Hence $\ker(p)\subset T\cM$. We conclude that $T\cM=H^1(X,\Sigma)$.  
\end{proof}

\ann{R3: Proof of Theorem 1.5. You form digraph. You need horizontally periodic to do that. Why does hypothesis of Theorem 1.5 assume that?\\A:  Theorem 1.5 doesn't assume anything about horizontally periodic surfaces (because the required horizontally periodic surfaces are produced in the proof).}
\begin{proof}[Proof of Theorem \ref{T:free}]
Consider a surface $(X,\omega)\in \cM$ with the maximum number of horizontal cylinders. We consider the cylinder digraph $(V,E)$. Suppose $X$ has genus $g$ and $\omega$ has $s$ distinct zeros, so $|E|=2g-2+s$. 

Since every cylinder is free, the dimension of the twist space is equal to $|V|$. By Lemma \ref{L:digraph}, $|V|= |E|-\dim L(\Gamma)+1$. By Lemma \ref{L:CPTwist},  $|V|\geq  |E|-g+1=g-1+s$, since rank is always at most genus. The result now follows from Lemma \ref{L:freetostrat}.
\end{proof}

\section{Unfoldings of polygons}\label{S:billiards}

In this section, we give some foundational results on affine invariant submanifolds associated to rational polygons and prove Theorem \ref{T:billiards}.

Fix  numbers $\theta_1, \ldots, \theta_n\in (0, \pi)\cup (\pi, 2\pi)$  such that $\theta_i/\pi\in\bQ$  for each $i$ and $\sum_{i=1}^n\theta_i= (n-2)\pi$. Let $\bm\theta=(\theta_1, \ldots, \theta_n)$. Define $k$ to be the least common denominator of $\theta_i/\pi, i=1, \ldots, n$, and set $q_i=k\theta_i/\pi$. For each tuple $\mathbf{z}=(z_1, \ldots, z_n)\in \bC^n$ of distinct complex numbers, define $X_{\mathbf{z}}$ to be the normalization of the plane algebraic curve 
$$y^k=\prod_{i=1}^n (z-z_i)^{q_i}.$$
This Riemann surface  has an automorphism $T$ given by $T(y,z)=(\xi y, z)$, where $\xi=\exp(2 \pi I/k)$.\ann{R3: $2\pi I/k$ should be $2\pi i/k$ \\A: We will try to change all the square roots of $-1$ to upper case $I$ to be consistent. (We don't want to use lowercase $i$, because we use that frequently as an index.)} This automorphism generates the deck\ann{R4: Why ``deck" is written with capital ``D''?\\A: We changed it to lowercase throughout the paper.} group for the  covering map $(y,z)\mapsto z$. We will refer to $X_{\mathbf{z}}$ as a cyclic cover of $\bP^1$. 
Let $\Sigma_\mathbf{z}\subset X_{\mathbf{z}}$ be the union of the preimages of each $z_i$ such that $q_i$ does not divide $k$. 

For $\ell\in \bZ/k=\{0, 1, \ldots, k-1\}$, define $H^{1,0}_\ell(X_{\mathbf{z}}), H^{1}_\ell(X_{\mathbf{z}})$ and $H^{1}_\ell(X_{\mathbf{z}}, \Sigma_\mathbf{z})$ to be the $\xi^\ell$-eigenspaces for the induced action of $T$ on $H^{1,0}(X_{\mathbf{z}}), H^{1}(X_{\mathbf{z}})$ and $H^{1}(X_{\mathbf{z}}, \Sigma_\mathbf{z})$ respectively. 

\ann{R2: Lemma 6.1: Consider adding a reference for context. For example, Lemma 2.6 from the second author's paper on Schwarz triangle mappings proves this when $n = 4$.}\ann{R3: Does Lemma 6.1 need a proof?}\ann{R4: To my mind, Lemma 6.1 merits a reference. I was once told that it is proved in various versions in [four sources]
\\A: We added a reference. One problem is that there are so many possible references that it is hard to pick one; if any of you wish to suggest a single additional reference that is distinguished in some way we would be happy to include it in addition to or instead of the reference we gave.
}

Let $\{x\}\in [0,1)$ denote the fractional part of $x\in \bR$, and let $\lfloor x \rfloor=x-\{x\}$ denote the greatest integer less than or equal to $x$. For $\ell\in \bZ/k$, define $$t_i(\ell)=\left\{ \frac{\ell q_i}k\right\}\quad\text{\and}\quad t(\ell)=\sum_{i=1}^n \left\{ \frac{\ell q_i}k\right\}.$$

\begin{lem}\label{L:eigendim}
$H^{1,0}_{-\ell}(X_{\mathbf{z}})$ is spanned by $$   p(z)\prod_{i=1}^n (z-z_i)^{-t_i(\ell)} dz $$ 
where $p(z)$ is a polynomial of degree at most $t(\ell)-2$. In particular, $H^{1,0}_{\ell}(X_{\mathbf{z}})$ has dimension $t(-\ell)-1$.
\end{lem}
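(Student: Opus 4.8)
The plan is to compute the $\xi^{-\ell}$-eigenspace of $H^{1,0}(X_{\mathbf z})$ directly, by exhibiting an explicit basis of meromorphic differentials on the plane curve $y^k=\prod_i(z-z_i)^{q_i}$ and checking (a) that they are eigenvectors with the right eigenvalue, (b) that they are holomorphic on the normalization, and (c) that they are linearly independent and span. First I would observe that on the affine curve the function $y^{-1}$ transforms by $\xi^{-1}$ under $T$, so $y^{-\ell}$ (more precisely the product $\prod_i(z-z_i)^{-t_i(\ell)}$, which agrees with a suitable power of $y$ up to the $k$-th power of a rational function, hence up to pullback of a meromorphic function on $\CP$) spans the $\xi^{-\ell}$-part of the function field over $\bC(z)$; multiplying by $p(z)\,dz$ with $p\in\bC(z)$ keeps us in that eigenspace, since $dz$ and $p(z)$ are $T$-invariant. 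So every element of $H^{1,0}_{-\ell}$ has the stated shape for some rational $p$, and the only question is which $p$ give \emph{holomorphic} differentials.

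The main computation is the local analysis at the ramification points. Near a point $z_i$, a local coordinate $w$ on the normalization satisfies $w^{k/\gcd(k,q_i)}\sim (z-z_i)$ up to units, and one writes $z-z_i$, $dz$, and $\prod_j(z-z_j)^{-t_j(\ell)}$ in terms of $w$; the order of vanishing of the candidate differential at the preimage(s) of $z_i$ comes out to be a function of $t_i(\ell)$ that is $\ge 0$ precisely because $t_i(\ell)=\{\ell q_i/k\}\in[0,1)$ — this is exactly why the fractional part, and not $\ell q_i/k$ itself, appears in the exponent. (The choice of $\Sigma_{\mathbf z}$ as the preimages of the $z_i$ with $q_i\nmid k$ is the set of points where the local picture is genuinely ramified; at the others $t_i(\ell)$ may still be nonzero but the differential is automatically regular.) One also checks the point at infinity: the constraint $\sum_j t_j(\ell)=t(\ell)$ together with $\sum q_i=(n-2)k$ forces the behaviour of $p(z)\prod(z-z_j)^{-t_j(\ell)}dz$ at $z=\infty$ to be holomorphic iff $\deg p\le t(\ell)-2$. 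Assembling these local conditions: holomorphy away from $\infty$ forces $p$ to be a polynomial (no poles allowed at the finite $z_i$ once the fractional-part exponents are in place), and holomorphy at $\infty$ forces $\deg p\le t(\ell)-2$. Hence $H^{1,0}_{-\ell}$ is exactly the image of the space of such polynomials, which has dimension $\max(t(\ell)-1,0)$; one notes $t(\ell)\ge 1$ whenever this eigenspace is nonzero, giving dimension $t(\ell)-1$, and re-indexing $\ell\mapsto -\ell$ yields $\dim H^{1,0}_\ell = t(-\ell)-1$ as stated.

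I would organize the write-up as: (1) identify the eigenspace of the function field and reduce to the form $p(z)\prod(z-z_i)^{-t_i(\ell)}dz$; (2) a lemma giving, for each $i$, the valuation of this differential at the points over $z_i$ in terms of $t_i(\ell)$ and $\gcd(k,q_i)$; (3) the analogous valuation at $\infty$ in terms of $\deg p$ and $t(\ell)$; (4) collect the inequalities and count dimensions. The routine local-coordinate computations in steps (2)–(3) I would state as displayed formulas without belaboring them.

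The step I expect to be the main obstacle is step (2): getting the ramification bookkeeping exactly right when $\gcd(k,q_i)>1$, so that there are several preimages of $z_i$ and one must verify the differential is holomorphic at each, and confirming that the bound is \emph{sharp} (i.e. that $p$ really is allowed to be any polynomial of degree up to $t(\ell)-2$ and no cancellation forces a smaller space). This is where an off-by-one error or a missed divisibility condition would slip in, and it is the reason several referees asked for a reference — the cleanest route may be to cite one of the standard treatments of eigenforms on cyclic covers for the valuation computation and only verify the global degree count at $\infty$ in detail.
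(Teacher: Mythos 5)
Your approach is correct and is precisely the standard argument; the paper itself does not write out a proof but instead cites \cite[Lemma 2.6]{W1} (noting the $n=4$ case there is identical to the general case), and the argument in that reference and its relatives is exactly the one you sketch: identify the $\xi^{-\ell}$-isotypic part of the meromorphic function field via $y^{-\ell}$, then do the local valuation count at the ramified points $z_i$ and at $\infty$ to pin down which rational $p(z)$ yield holomorphic differentials. Your valuation bookkeeping checks out: at a point over $z_i$ with ramification index $e_i=k/\gcd(k,q_i)$ the candidate form has order $e_i(1-t_i(\ell))-1 > -1$, hence $\ge 0$, and any pole of $p$ would destroy this; at $\infty$ the identity $\sum_j\lfloor \ell q_j/k\rfloor = \ell(n-2)-t(\ell)$ (which uses $\sum q_i=(n-2)k$ and also shows $t(\ell)\in\bZ$) converts the regularity condition into $\deg p\le t(\ell)-2$.

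Two minor points worth tightening in a write-up. First, the dimension count $t(\ell)-1$ silently assumes $t(\ell)\ge 1$; when $t(\ell)=0$ the formula gives $-1$ rather than $0$, so you should state the lemma for $t(\ell)\ge 1$ (which is how the paper later uses it, via the condition $t(-a)>1$ for a nonzero eigenspace). Second, your parenthetical remark about $\Sigma_{\mathbf z}$ is orthogonal to the lemma — the statement concerns $H^{1,0}(X_{\mathbf z})$, not relative cohomology, so $\Sigma_{\mathbf z}$ plays no role here and the aside about ``genuinely ramified'' points versus regularity is a distraction best deleted.
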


This lemma is standard and can be found in many sources. One reference that uses the same notation is  \cite[Lemma 2.6]{W1} (for the case $n=4$, which is identical to the general case). 

Here $\prod_{i=1}^n (z-z_i)^{-t_i(\ell)}$ is  shorthand for $y^{-\ell} \prod_{i=1}^n (z-z_i)^{\lfloor \frac{\ell q_i}k \rfloor}$.  
Since $H^{1}_\ell(X_{\mathbf{z}})$ is the direct sum of $H^{1,0}_\ell(X_{\mathbf{z}})$ and the complex conjugate of $H^{1,0}_{-\ell}(X_{\mathbf{z}})$, we also have the following. 

\begin{cor}\label{C:dim}
$H^{1}_\ell(X_{\mathbf{z}})$ has dimension $t(\ell)+t(-\ell)-2$.
\end{cor}

Lemma \ref{L:eigendim} gives in particular  that $$\omega_{\mathbf{z}}=\prod_{i=1}^n (z-z_i)^{\frac{\theta_i}{\pi}-1} dz\in H^{1,0}_1(X_\mathbf{z}). $$  

Let $\cM_{cyc}(\bm\theta)$ denote the set of all $(X_{\mathbf{z}}, \omega_{\mathbf{z}})$ for $n$-tuples $\mathbf{z}$ of distinct complex numbers. Define $\cM(\bm\theta)$ to be the smallest affine invariant submanifold containing $\cM_{cyc}(\bm\theta)$. \ann{R2: The definition of $\cM_{cyc}(\theta)$ should be clarified. How are the $n$-tuples $z$ constrained by the $\theta$? The dependence should be clarified because it becomes an issue in the proof of Lemma 6.3. In Line 4 of the proof, it says  ``an open set of $\bR^n$".  However, if $n = 3$, then if the 3-tuples z are subject to the constraints of $\theta$, then the line would read  ``an open set of a subspace $\bR \subset \bR^3$"  in that case.\\ A: There is no restriction on $z_i$ in the definition of $\cM_{cyc}$, other that they be distinct.}

Recall that the set of polygons with given angles is a smooth manifold with a well defined Lebesgue measure class. 

\begin{lem}\label{C:MP}
For almost every $n$-gon $P$ with interior angles $\theta_1, \ldots, \theta_n$, the unfolding of $P$ has $GL(2,\bR)$ orbit closure $\cM(\bm\theta)$.
\end{lem}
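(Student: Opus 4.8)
The plan is to combine the classical identification of a polygon's unfolding with a cyclic cover, the countability of affine invariant submanifolds, and a ``totally real slice'' argument. First, recall from the Katok--Zemlyakov construction (equivalently, via the Schwarz--Christoffel map) that the unfolding of an $n$-gon $P$ with interior angles $\theta_1,\ldots,\theta_n$ in cyclic order is a cyclic cover of $\bP^1$ branched over the vertices, and is $GL(2,\bR)$-equivalent to $(X_{\mathbf z},\omega_{\mathbf z})$, where $\mathbf z=(z_1<\cdots<z_n)$ is the real tuple of Schwarz--Christoffel preimages of the vertices. In particular every such unfolding lies in $\cM(\bm\theta)$, so its $GL(2,\bR)$ orbit closure is an affine invariant submanifold contained in $\cM(\bm\theta)$. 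Write $\mathrm{unf}$ for the unfolding map from the (smooth) space of polygons with angles $\bm\theta$ into the stratum; since on each connected component of its domain the translation structure on the unfolding depends affinely on the edge vectors of $P$ in period coordinates, the map $\mathrm{unf}$ is real-analytic.

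The crux is the following claim: no proper affine invariant submanifold $\cN$ of $\cM(\bm\theta)$ contains the image under $\mathrm{unf}$ of any nonempty open subset of the polygon space. Suppose one did. Since $\cN$ is closed and $GL(2,\bR)$-invariant, it then contains a nonempty open subset $\Omega$ of the real locus $\cR=\{(X_{\mathbf z},\omega_{\mathbf z}):z_1,\ldots,z_n\in\bR\text{ distinct}\}\subset\cM_{cyc}(\bm\theta)$. Now $\cR$ is the fixed locus of the anti-holomorphic involution of $\cM_{cyc}(\bm\theta)$ induced by $\mathbf z\mapsto\bar{\mathbf z}$, hence at a generic point it is a totally real submanifold of half the real dimension of $\cM_{cyc}(\bm\theta)$; consequently $\bC\cdot T_{(X_{\mathbf z},\omega_{\mathbf z})}\cR=T_{(X_{\mathbf z},\omega_{\mathbf z})}\cM_{cyc}(\bm\theta)$ at generic points of $\Omega$. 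On the other hand, the tangent space to any affine invariant submanifold is a \emph{complex} subspace in period coordinates, because multiplication by $i$ is realized by the rotation in $SO(2)\subset GL(2,\bR)$. Thus $T\cN\supseteq T\cR$ at such points forces $T\cN\supseteq T\cM_{cyc}(\bm\theta)$ there, so $\cN$ contains $\cM_{cyc}(\bm\theta)$ in a neighborhood of $\Omega$. Since $\cM_{cyc}(\bm\theta)$ is connected (it is parametrized by the connected configuration space of $n$ points in $\bC$) and both $\cN$ and $\cM_{cyc}(\bm\theta)$ are real-analytic, it follows that $\cN\supseteq\cM_{cyc}(\bm\theta)$, and hence $\cN\supseteq\cM(\bm\theta)$ by the minimality in the definition of $\cM(\bm\theta)$, contradicting that $\cN$ is proper.

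Given the claim, the lemma follows by a routine measure-theoretic argument. By Eskin--Mirzakhani--Mohammadi (together with Filip's algebraicity theorem) there are only countably many affine invariant submanifolds in the ambient stratum; let $\cN_1,\cN_2,\ldots$ enumerate the proper ones contained in $\cM(\bm\theta)$. For a polygon $P$, the orbit closure of $\mathrm{unf}(P)$ is an affine invariant submanifold contained in $\cM(\bm\theta)$, and it equals $\cM(\bm\theta)$ unless $\mathrm{unf}(P)\in\cN_j$ for some $j$; so the set of polygons whose unfolding fails to have orbit closure $\cM(\bm\theta)$ is contained in $\bigcup_j\mathrm{unf}^{-1}(\cN_j)$. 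Each $\mathrm{unf}^{-1}(\cN_j)$ is a real-analytic subset of the polygon space, because $\mathrm{unf}$ is real-analytic and $\cN_j$ is a real-analytic subvariety of the stratum. If $\mathrm{unf}^{-1}(\cN_j)$ had positive measure it would, by real-analyticity, contain a whole connected component of the polygon space, so $\mathrm{unf}$ of that component would be a nonempty open subset of the unfolding locus contained in $\cN_j$, contradicting the claim. Hence each $\mathrm{unf}^{-1}(\cN_j)$ has measure zero, and so does their countable union.

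I expect the main obstacle to be the careful verification of the claim: one must check that the unfoldings sweep out, up to the $GL(2,\bR)$ action, an open subset of the totally real, half-dimensional fixed locus $\cR$ of the conjugation involution on the connected manifold $\cM_{cyc}(\bm\theta)$, so that the complex span of their tangent directions is all of $T\cM_{cyc}(\bm\theta)$; combined with the complex-linearity of tangent spaces to affine invariant submanifolds, this is what forces any affine invariant submanifold meeting the unfolding locus in an open set to absorb all of $\cM_{cyc}(\bm\theta)$, and then all of $\cM(\bm\theta)$. The identification of the unfolding with $(X_{\mathbf z},\omega_{\mathbf z})$ --- in particular the bookkeeping of which vertices contribute marked points versus genuine zeros of $\omega$, which underlies the very definitions of $\cM_{cyc}(\bm\theta)$ and $\Sigma_{\mathbf z}$ --- is classical but deserves to be stated precisely.
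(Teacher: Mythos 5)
Your proposal is correct and takes essentially the same route as the paper: identify the unfoldings with an open subset of the real slice $\mathbf{z}\in\bR^n$ inside $\cM_{cyc}(\bm\theta)$, show that no proper affine invariant submanifold can contain an open piece of that slice (the paper does this via holomorphic continuation of linear period equations from $\bR^n$ to $\bC^n$, you via the $\bC$-linearity of tangent spaces to affine invariant submanifolds together with the totally-real property of $\cR$ --- equivalent restatements of the same fact), and conclude by the countability of affine invariant submanifolds. Your final measure-zero step just spells out what the paper leaves implicit.
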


\begin{proof}
Let $\cP$ be a connected component of the space of $n$-gons with angles $\theta_1, \ldots, \theta_n$. The theory of Schwarz-Christoffel mappings gives that the set of unfoldings of\ann{A:Fixed typo found by R2.} polygons in $\cP$ is  equal to the set of $(X_{\mathbf{z}}, \omega_{\mathbf{z}})$ where the $z_i$ range over an open subset of $\bR^n$, see for example \cite{DT}.

Recall that a holomorphic function on a connected open subset of $\bC^n$ that vanishes on a nonempty open subset of $\bR^n\subset \bC^n$ must be identically zero. The relative periods of $\omega_{\mathbf{z}}$ are holomorphic functions of the $z_i$, so any linear equation that holds on the periods of all unfoldings of polygons in $\cP$ also holds for all $(X_{\mathbf{z}}, \omega_{\mathbf{z}})$.  We conclude that $\cM(\bm\theta)$ is the smallest affine invariant submanifold containing all unfoldings of polygons in $\cP$.

There are only countably many affine invariant submanifolds \cite{EMM, Wfield}, so in particular there are only countably many proper affine invariant submanifolds of $\cM(\bm\theta)$. Any $P\in \cP$ not in their preimage has unfolding with orbit closure  equal to $\cM(\bm\theta)$. 
\end{proof}

$\cM_{cyc}(\bm\theta)$ is locally defined by the $\xi$ eigenspace for $T$:

\begin{lem}
Every $(X_\mathbf{z}, \Sigma_\mathbf{z})$ in $\cM_{cyc}(\bm\theta)$ has a neighbourhood in $\cM_{cyc}(\bm\theta)$ whose image under the period coordinate map $(X', \omega')\mapsto H^1(X_\mathbf{z}, \Sigma_\mathbf{z})$ is an open set in  $H^1_1(X_\mathbf{z}, \Sigma_\mathbf{z})$.
\end{lem}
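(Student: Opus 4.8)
The plan is to show that the family $\cM_{cyc}(\bm\theta)$, viewed in period coordinates, is locally cut out precisely by the condition that the period cocycle lies in the $\xi$-eigenspace $H^1_1(X_\mathbf{z},\Sigma_\mathbf{z})$. First I would recall that a neighbourhood of $\mathbf{z}$ in the space of $n$-tuples of distinct complex numbers is an open subset of $\bC^n$, and that the assignment $\mathbf{z}\mapsto (X_\mathbf{z},\omega_\mathbf{z})$ is holomorphic, so the induced map to period coordinates $H^1(X_\mathbf{z},\Sigma_\mathbf{z})$ is holomorphic. Since $\omega_\mathbf{z}\in H^{1,0}_1(X_\mathbf{z})$ and, more relevantly, the relative class of $\omega_\mathbf{z}$ lies in $H^1_1(X_\mathbf{z},\Sigma_\mathbf{z})$ (the automorphism $T$ acts on the differential $\omega_\mathbf{z}=\prod (z-z_i)^{\theta_i/\pi-1}dz$ by $\xi$, and this is compatible with the marked points in $\Sigma_\mathbf{z}$, which $T$ permutes), the image of the period map is contained in $H^1_1(X_\mathbf{z},\Sigma_\mathbf{z})$. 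So the inclusion of the image into the eigenspace is immediate; the content is the reverse, i.e.\ that the image is open in $H^1_1(X_\mathbf{z},\Sigma_\mathbf{z})$.

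For openness it suffices to check the derivative of the period map at $\mathbf{z}$ is surjective onto $H^1_1(X_\mathbf{z},\Sigma_\mathbf{z})$, equivalently (since source and target are both holomorphic and the map is holomorphic) that $\dim_\bC H^1_1(X_\mathbf{z},\Sigma_\mathbf{z}) = n$, the complex dimension of the parameter space, and that the differential is injective. The injectivity is essentially that distinct configurations $\mathbf{z}$ (up to the automorphisms that preserve the construction) give non-isomorphic marked translation surfaces, together with the fact that moving a single branch point $z_i$ changes the relative periods — one can see this directly because the residue/local behaviour of $\omega_\mathbf{z}$ near the preimages of $z_i$ varies. For the dimension count, I would use Corollary~\ref{C:dim} together with the analogous relative statement: $H^1_1(X_\mathbf{z},\Sigma_\mathbf{z})$ surjects onto $H^1_1(X_\mathbf{z})$ with kernel the $\xi$-eigenspace of the relative part $\ker(H^1(X_\mathbf{z},\Sigma_\mathbf{z})\to H^1(X_\mathbf{z}))$, which is spanned by the classes dual to the marked points (one fewer than $|\Sigma_\mathbf{z}|$ per puncture orbit handled with the $T$-action), and the contributions can be read off from the $t_i(\ell)$. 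Assembling these, one gets $\dim_\bC H^1_1(X_\mathbf{z},\Sigma_\mathbf{z})=n$, matching the $n$ complex parameters $z_1,\dots,z_n$.

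I expect the main obstacle to be the bookkeeping in the dimension count: one must correctly account for which preimages of the $z_i$ actually lie in $\Sigma_\mathbf{z}$ (only those with $q_i\nmid k$), how $T$ acts on those fibres, and how the relative-to-absolute exact sequence interacts with taking $\xi$-eigenspaces, so that the total comes out to exactly $n$ rather than off by the number of "trivial" punctures. A clean way to sidestep some of this is to argue dimensions on both sides agree by exhibiting the period map as a local biholomorphism onto its image using that $\cM_{cyc}(\bm\theta)$ is already known to be a manifold of dimension $n$ (it is parametrised by $\mathbf{z}\in\bC^n$ modulo a finite group), and then noting the image is a complex-linear subspace contained in $H^1_1(X_\mathbf{z},\Sigma_\mathbf{z})$ of the same dimension as the latter by the eigenspace computation; equality of a subspace with an ambient space of the same finite dimension forces openness. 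I would present the argument in that order: holomorphy and containment in the eigenspace first, then the dimension identity via Corollary~\ref{C:dim} and its relative refinement, then conclude.
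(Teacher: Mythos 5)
Your approach — compute the image directly, show it lands in the eigenspace, and then match dimensions — is genuinely different from the paper's. The paper instead observes that $\xi^{-1}T^*$ fixes $(X_\mathbf{z},\omega_\mathbf{z})$, interprets $\xi^{-1}T$ as a finite-order mapping class (with zeros marked), and notes that a component of the preimage of $\cM_{cyc}(\bm\theta)$ in the Teichm\"uller space of the stratum is a component of the fixed locus of this mapping class. Since a mapping class acts linearly in period coordinates, its fixed locus is literally an open set in the fixed subspace $H^1_1(X_\mathbf{z},\Sigma_\mathbf{z})$, and the lemma falls out with no dimension bookkeeping at all. This is worth knowing: the linearity is not an accident to be verified by counting, it is structural.

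As written, your argument has a real gap plus a concrete error. The error: the map $\mathbf{z}\mapsto(X_\mathbf{z},\omega_\mathbf{z})$ is \emph{not} injective modulo a finite group. The one-complex-parameter group of translations $z\mapsto z+\mu$ carries $\mathbf{z}$ to $\mathbf{z}+(\mu,\dots,\mu)$ and produces an isomorphic marked translation surface, so the period map from $\bC^n$ has at least a one-dimensional kernel in its differential, and $\dim\cM_{cyc}(\bm\theta)=n-1$, not $n$; correspondingly the target identity you need is $\dim_\bC H^1_1(X_\mathbf{z},\Sigma_\mathbf{z})=n-1$. (You half-acknowledge this when you speak of ``automorphisms that preserve the construction'' but then assert $n$ twice.) The gap: you never actually carry out the dimension computation for $H^1_1(X_\mathbf{z},\Sigma_\mathbf{z})$ — it requires tracking the $T$-action on the finite set $\Sigma_\mathbf{z}$ (including the fiber over $\infty$, which your sketch ignores) in the relative-to-absolute exact sequence, and you flag this yourself as ``the main obstacle.'' Your proposed shortcut also has a hole: you claim the image is a \emph{complex-linear} subspace of $H^1_1$, but the image of a holomorphic family is not linear for free; that linearity is precisely what the mapping-class fixed-locus argument supplies, and without it you only know the image is an analytic subset of $H^1_1$, which is not enough to conclude openness from a dimension match alone unless you also establish that the map is a submersion onto $H^1_1$. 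So either carry the dimension count through carefully (fixing the $n\to n-1$ slip) and check surjectivity of the differential onto $H^1_1$, or switch to the paper's fixed-locus argument, which avoids all of this.
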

\ann{R2: Lemma 6.4 and Corollary 6.5: I feel that these could use more justification. For example, why is the dimension of the open set in Lemma 6.4 equal to the dimension of $H^1_1(X_z,\Sigma_z)$?\\A: Added a proof.}
We thank Curtis McMullen for helpful discussions regarding this lemma. Compare to \cite[Corollary 6.8]{McM:braid}.
\begin{proof}
Note that $\xi^{-1} T^*$ fixes $(X_\mathbf{z}, \omega_\mathbf{z})$. We can consider $\xi^{-1} T$ as an element of the mapping class group of the surface with zeros of the Abelian differential marked. In the Teichm\"uller space of the stratum (the set of surfaces in the stratum, equipped with an isotopy class of homeomorphisms from a fixed translation surface taking zeros to zeros), a connected component of the pre-image of  $\cM_{cyc}(\bm\theta)$ is given by a connected component of the set of fixed points of $\xi^{-1} T$, which is evidently linear because the action of this mapping class is of course linear in period coordinates. 
\end{proof}

\begin{cor}\label{C:eigenandconjugate}
The tangent space to $\cM(\bm\theta)$ at a point of $\cM_{cyc}(\bm\theta)$ contains $H^1_1(X_\mathbf{z}, \Sigma_\mathbf{z})+H^1_{-1}(X_\mathbf{z}, \Sigma_\mathbf{z})$.
\end{cor}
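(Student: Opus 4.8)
The plan is to deduce this directly from the preceding Lemma \ref{L:eigendim}, Corollary \ref{C:dim}, and the lemma immediately above (local linearity of $\cM_{cyc}(\bm\theta)$ along $H^1_1$), together with the fact that the tangent space of any affine invariant submanifold is defined over $\bR$ and is invariant under complex conjugation of period coordinates. First I would note that, by the lemma just proved, the tangent space $T\cM(\bm\theta)$ at a point $(X_\mathbf{z},\omega_\mathbf{z})\in\cM_{cyc}(\bm\theta)$ contains the tangent space to $\cM_{cyc}(\bm\theta)$ itself, which is all of $H^1_1(X_\mathbf{z},\Sigma_\mathbf{z})$. So the only thing left to show is that $T\cM(\bm\theta)$ also contains the complex-conjugate eigenspace $H^1_{-1}(X_\mathbf{z},\Sigma_\mathbf{z})$.

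The key point is that complex conjugation on $H^1(X_\mathbf{z},\Sigma_\mathbf{z};\bC)$ — which is the operation $v\mapsto\bar v$ coming from the real structure $H^1(X_\mathbf{z},\Sigma_\mathbf{z};\bR)$ — sends the $\xi^\ell$-eigenspace of $T^*$ to the $\bar\xi^\ell=\xi^{-\ell}$-eigenspace, since $T$ acts by an integral (hence real) matrix on homology and cohomology. Thus $\overline{H^1_1(X_\mathbf{z},\Sigma_\mathbf{z})}=H^1_{-1}(X_\mathbf{z},\Sigma_\mathbf{z})$. Now I would invoke the fact that for any affine invariant submanifold $\cM$, the tangent space $T\cM$ is a \emph{real} linear subspace: it is the complexification of a real subspace of $H^1(X,\Sigma;\bR)$ (or at least, it is invariant under the conjugation $v\mapsto\bar v$), a standard consequence of the definition of affine invariant submanifold via real linear equations on period coordinates. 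Since $T_{(X_\mathbf{z},\omega_\mathbf{z})}\cM(\bm\theta)\supset H^1_1(X_\mathbf{z},\Sigma_\mathbf{z})$, applying conjugation gives $T_{(X_\mathbf{z},\omega_\mathbf{z})}\cM(\bm\theta)\supset\overline{H^1_1(X_\mathbf{z},\Sigma_\mathbf{z})}=H^1_{-1}(X_\mathbf{z},\Sigma_\mathbf{z})$ as well. Adding the two inclusions yields $T\cM(\bm\theta)\supset H^1_1(X_\mathbf{z},\Sigma_\mathbf{z})+H^1_{-1}(X_\mathbf{z},\Sigma_\mathbf{z})$, as claimed.

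I expect the only subtle point — and hence the main thing to get right — is the justification that $T\cM(\bm\theta)$ is conjugation-invariant. One has to be slightly careful because the period coordinates themselves are $\bC$-valued and the "real linear subspace" in the definition of affine invariant submanifold is a subspace of $H^1(X,\Sigma;\bC)$ cut out by equations with real coefficients; equivalently $T\cM$ is the $\bC$-span of $T\cM\cap H^1(X,\Sigma;\bR)$. Conjugation fixes $H^1(X,\Sigma;\bR)$ pointwise and is $\bC$-antilinear, so it preserves any such span. This is genuinely a standard fact (it underlies, e.g., the statement that $\Re(\omega),\Im(\omega)\in T\cM$), so the corollary follows with essentially no further computation; the content is entirely in the lemma preceding it.
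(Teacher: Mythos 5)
Your proof is correct and is essentially the argument the paper intends (the paper gives no explicit proof, treating the corollary as immediate from the preceding lemma). The two inputs you isolate — that $T_{(X_\mathbf{z},\omega_\mathbf{z})}\cM(\bm\theta)$ contains the period image of $\cM_{cyc}(\bm\theta)$, hence $H^1_1(X_\mathbf{z},\Sigma_\mathbf{z})$, and that the tangent space of an affine invariant submanifold is the complexification of a real subspace (so conjugation-invariant), combined with $\overline{H^1_\ell}=H^1_{-\ell}$ since $T^*$ is defined over $\bZ$ — are exactly the content.
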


Note that while eigenspaces of $T$ define subbundles of $H^1$ over $\cM_{cyc}(\bm\theta)$, they are not defined on $\cM \setminus \cM_{cyc}(\bm\theta)$. \ann{A: Added remark and changed statement of corollary slightly, to address the next comment of R2.}


\begin{cor}\label{C:billiardsfullrank}
If all $\theta_i$ are multiples of $\pi/3$, then $\cM(\bm\theta)$ is full rank. 
\end{cor}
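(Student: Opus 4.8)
The plan is to show that when all $\theta_i$ are multiples of $\pi/3$, i.e.\ $k \mid 3$, the absolute cohomology eigenspaces $H^1_1 \oplus H^1_{-1}$ already exhaust $H^1(X_{\mathbf z})$, so that Corollary \ref{C:eigenandconjugate} forces $p(T_{(X_{\mathbf z},\omega_{\mathbf z})}\cM(\bm\theta)) = H^1(X_{\mathbf z})$, which is exactly full rank. When $k=1$ there is nothing to prove since $X_{\mathbf z} = \bP^1$; when $k=3$, the eigenspace decomposition under the order-$3$ automorphism $T$ reads $H^1(X_{\mathbf z}) = H^1_0 \oplus H^1_1 \oplus H^1_2$, and $H^1_0$ is the pullback of $H^1(\bP^1) = 0$, so $H^1(X_{\mathbf z}) = H^1_1 \oplus H^1_{-1}$ automatically. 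Thus the whole content is the bookkeeping: confirm that $\cM(\bm\theta)$ even makes sense here (the $\theta_i$ with $k\mid 3$ are still valid polygon angles, or at least $\cM_{cyc}(\bm\theta)$ is nonempty and genuinely a locus of translation surfaces of positive genus when $k=3$), and that $H^1_0(X_{\mathbf z})$ vanishes.

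First I would dispose of $k=1$: then every $q_i = \theta_i/\pi$ is an integer, $y^k = \prod (z-z_i)^{q_i}$ defines $X_{\mathbf z} = \bP^1$ (the curve is rational), and $\omega_{\mathbf z}$ is a differential on $\bP^1$; the orbit closure lives in a stratum of genus-$0$... but genus-$0$ strata are empty for translation surfaces, so in fact this case does not occur among polygon angles with $\sum \theta_i = (n-2)\pi$ unless degenerate. Cleaner: simply observe $\rank \cM(\bm\theta) \le g$, and in the $k=1$ case $g=0$ so the statement is vacuous (full rank $=$ rank $=0=g$). The substantive case is $k=3$. Here $g = g(X_{\mathbf z}) \ge 1$ generically, and I would invoke the fact (e.g.\ Corollary \ref{C:dim}, or directly that $T$ acts freely away from $\Sigma_{\mathbf z}$ and the quotient is $\bP^1$) that $H^1_0(X_{\mathbf z}) \cong H^1(\bP^1) = 0$. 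Therefore $H^1(X_{\mathbf z}) = \bigoplus_{\ell \ne 0} H^1_\ell(X_{\mathbf z}) = H^1_1(X_{\mathbf z}) \oplus H^1_{-1}(X_{\mathbf z})$ since $\bZ/3 \setminus \{0\} = \{1,-1\}$.

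Now combine with the earlier results. By Corollary \ref{C:eigenandconjugate}, the tangent space $T_{(X_{\mathbf z}, \omega_{\mathbf z})}\cM(\bm\theta)$ contains $H^1_1(X_{\mathbf z}, \Sigma_{\mathbf z}) + H^1_{-1}(X_{\mathbf z}, \Sigma_{\mathbf z})$; applying the relative-to-absolute projection $p$, and noting $p$ intertwines the $T$-actions so that $p(H^1_\ell(X_{\mathbf z},\Sigma_{\mathbf z})) = H^1_\ell(X_{\mathbf z})$, we get
$$ p(T_{(X_{\mathbf z},\omega_{\mathbf z})}\cM(\bm\theta)) \supseteq H^1_1(X_{\mathbf z}) + H^1_{-1}(X_{\mathbf z}) = H^1(X_{\mathbf z}). $$
Hence $\cM(\bm\theta)$ has full rank. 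The same projection argument handles $k=1$ trivially since there $H^1(X_{\mathbf z})=0$.

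The main obstacle — really the only point requiring care — is the vanishing $H^1_0(X_{\mathbf z}) = 0$, i.e.\ that the $T$-invariant part of the cohomology is pulled back from the quotient $\bP^1$ and hence zero. This is standard for cyclic covers (the invariants of $H^1$ of a cover under the deck group equal $H^1$ of the base, at least over $\bQ$ and hence over $\bC$), but one should be slightly careful because $T$ does not act freely — it is ramified over $\Sigma_{\mathbf z}$ and possibly over other $z_i$ with $q_i \mid k$. Still, transfer/averaging shows $H^1(X_{\mathbf z})^T \hookrightarrow H^1$ maps isomorphically onto (a summand identified with) $H^1(X_{\mathbf z}/T) = H^1(\bP^1) = 0$, so the conclusion stands; alternatively one reads it off Lemma \ref{L:eigendim} and Corollary \ref{C:dim} with $\ell = 0$, where $t(0) = 0$ gives $\dim H^1_0 = t(0)+t(0)-2$, which must be interpreted as $0$ since dimensions are non-negative — cleanest is the direct geometric argument. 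Everything else is immediate from Corollary \ref{C:eigenandconjugate} and the equivariance of $p$.
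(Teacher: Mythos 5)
Your proposal is correct and takes essentially the same approach as the paper. The paper's one-line proof observes that $T^*$ has order $3$ and fixes no nonzero Abelian differentials, so $H^{1,0}_0 = 0$ and hence $H^1(X_{\mathbf z}) = H^1_1 \oplus H^1_{-1}$, after which Corollary \ref{C:eigenandconjugate} gives full rank; you reach the same conclusion $H^1_0(X_{\mathbf z}) = 0$ by identifying the $T$-invariant part with $H^1(\bP^1) = 0$ via transfer, which is an equivalent justification. One small note: the $k=1$ digression is unnecessary, since $\theta_i/\pi \in (0,1)\cup(1,2)$ rules out integer values, so under the hypothesis every $\theta_i/\pi$ has denominator exactly $3$ and $k=3$ always.
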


\begin{proof}
Since $T^*$ has order 3 and does not fix\ann{A: Typo found by R2 fixed.} any nonzero Abelian differentials,  $H^1(X_\mathbf{z})$ is the direct sum of the $\xi$ and $\xi^{-1}=\xi^2$ eigenspaces. 
\end{proof}

\begin{lem}\label{L:nothyp}
Suppose $k$ is odd and either $n>4$ or $\bm\theta$ has more than two distinct angles. Then $\cM(\bm\theta)$ is not contained in the hyperelliptic locus.
\end{lem}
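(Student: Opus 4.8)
The plan is to argue by contradiction, exploiting the rigid $\bZ/k$-symmetry $T$ of the cyclic covers. Suppose $\cM(\bm\theta)$ is contained in the hyperelliptic locus. Since $\cM_{cyc}(\bm\theta)\subseteq\cM(\bm\theta)$ and the hyperelliptic locus is closed, \emph{every} surface $(X_{\mathbf{z}},\omega_{\mathbf{z}})$ with the $z_i$ distinct is then hyperelliptic, and a Riemann--Hurwitz computation shows that under the hypotheses its genus is at least $2$. Hence the hyperelliptic involution $\iota_{\mathbf{z}}$ of $X_{\mathbf{z}}$ is unique, so central in $\operatorname{Aut}(X_{\mathbf{z}})$, so it commutes with $T$ and descends to a holomorphic involution $\overline{\iota}$ of $\bP^1=X_{\mathbf{z}}/\langle T\rangle$. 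This $\overline{\iota}$ is not the identity: otherwise $\iota_{\mathbf{z}}$ would cover the identity of the base of the Galois cover $X_{\mathbf{z}}\to\bP^1$ and hence lie in $\langle T\rangle$, impossible since $\iota_{\mathbf{z}}$ has order $2$ and $k$ is odd. So $\overline{\iota}$ is a nontrivial involution of $\bP^1$ permuting the branch set of $X_{\mathbf{z}}\to\bP^1$, which is exactly $\{z_1,\dots,z_n\}$ (no $q_i$ is a multiple of $k$, and the local monodromy at $\infty$ is $-\sum q_i\equiv 0\pmod k$); let $\sigma$ be the resulting involution of $\{1,\dots,n\}$. Since $\iota_{\mathbf{z}}$ commutes with $T$ and $\overline{\iota}$ preserves orientation, $\sigma$ respects the monodromy of the cover: $q_{\sigma(i)}\equiv q_i\pmod k$ for all $i$. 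Finally, a nontrivial involution of $\bP^1$ has exactly two fixed points, so $\sigma$ fixes at most two of the indices.

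For $n\geq 5$ this is already a contradiction, using only the geometry of $\bP^1$. A nontrivial involution of $\bP^1$ is determined by its two fixed points, so $\overline\iota$ transposes at least $n-2\geq 3$ of the $z_i$, hence — the number of transposed points being even — at least four of them, i.e.\ at least two disjoint transposed pairs. An involution of $\bP^1$ transposing two prescribed disjoint pairs of points is unique, so requiring it in addition to transpose a third pair among the $z_i$, or to fix a further $z_i$ (one of these necessarily occurs when $n\geq 5$), is a nonempty proper closed condition on $\mathbf{z}$. Thus for generic $\mathbf{z}$ no such $\overline{\iota}$ exists, so $X_{\mathbf{z}}$ is not hyperelliptic, contradicting that every $X_{\mathbf{z}}$ is.

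For $n\leq 4$ — so, by hypothesis, $\bm\theta$ has at least three distinct angles — I would combine the congruences $q_{\sigma(i)}\equiv q_i\pmod k$ with the relation $\sum_{i=1}^n q_i=(n-2)k$ and the oddness of $k$. When $n=3$, all $\theta_i\in(0,\pi)$ so $0<q_i<k$ and $q_1+q_2+q_3=k$; three distinct angles make the $q_i$ pairwise distinct, and then $q_{\sigma(i)}\equiv q_i\pmod k$ forces $\sigma=\mathrm{id}$, i.e.\ $\overline{\iota}=\mathrm{id}$, which is impossible. When $n=4$, so $\sum q_i=2k$: if $\sigma$ fixes two indices then, as in the previous paragraph, a generic $\mathbf{z}$ admits no involution of $\bP^1$ realizing it; otherwise $\sigma=(ab)(cd)$ with $q_a\equiv q_b$ and $q_c\equiv q_d\pmod k$. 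If both congruences are equalities, then $\theta_a=\theta_b$ and $\theta_c=\theta_d$, so $\bm\theta$ has at most two distinct angles, contrary to hypothesis. If instead, say, $q_a=q_b+k$, then $\sum q_i=2k$ reads $2q_b+q_c+q_d+k=2k$, forcing $q_c+q_d=k-2q_b$ to be odd; but $q_c\equiv q_d\pmod k$ gives either $q_c=q_d$ (so $q_c+q_d$ is even) or $q_c=q_d+k$ (so $q_b+q_d=0$), both impossible. Every case is excluded, so $\cM(\bm\theta)$ cannot be contained in the hyperelliptic locus.

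The step I expect to be most delicate is the $n\leq 4$ analysis: tracking the interplay between the combinatorial type of $\sigma$, the monodromy congruences, the angle-sum relation and the parity of $k$, and making the ``for generic $\mathbf{z}$'' reductions rigorous — the closedness of the hyperelliptic locus being what lets one pass freely between ``every $\mathbf{z}$'' and ``a generic $\mathbf{z}$''. Verifying that the genus is at least $2$ under the hypotheses, which underpins the uniqueness and centrality of the hyperelliptic involution, is a secondary point to be careful about.
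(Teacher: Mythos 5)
Your proof is correct and follows the same overall structure as the paper's: assume a hyperelliptic involution exists, use the oddness of $k$ and the centrality of the hyperelliptic involution to descend it to an involution $\overline\iota$ of $\bP^1 = X_{\mathbf z}/\langle T\rangle$, and then derive constraints on how $\overline\iota$ permutes the branch points $z_1,\dots,z_n$. The main difference is how you extract the constraint. The paper observes that $\overline\iota$ is a \emph{local isometry} of the flat cone metric on $\bP^1$ (the ``pillowcase double'' of $P$), so it must preserve cone angles exactly, giving $\theta_{\sigma(i)} = \theta_i$ on the nose; combined with its two assertions about which involutions exist generically in $\cM_{0,n}$ this immediately forces repeated angles for $n=3,4$ and no involution at all for $n>4$. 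You instead use the purely topological monodromy constraint $q_{\sigma(i)}\equiv q_i \pmod k$, which is weaker (it allows $|q_a - q_b| = k$ for obtuse angles), and consequently you need the extra parity-of-$k$ argument in the $n=4$ case to close that loophole. Both routes work. Your writeup is considerably more explicit: the paper compresses the $n\le 4$ conclusion and the $n\ge 5$ genericity count into two unproved sentences, whereas you spell out the fixed-point counting, the dimension count for involutions of $\bP^1$, and the full $\sigma$-type casework. The genus-$\ge 2$ step you flag as ``secondary'' is indeed true under the stated hypotheses (for $n=3$ the only way to hit genus $1$ is the divisor triple $(k/2,k/3,k/6)$, which requires $6\mid k$; for $n\ge 4$ and $k$ odd, $2g-2 = (n-2)k - \sum\gcd(q_i,k) \ge k(2n-6)/3 > 0$), but it does deserve the explicit verification you indicate.
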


\begin{proof}
Suppose $(X, \omega) \in \cM_{cyc}(\bm\theta)$, and $\tau$ is a hyperelliptic involution on $X$. 
The assumption that $k$ is odd guarantees that $\langle T\rangle$ does not contain an involution, and so in particular it does not contain $\tau$. 
Since $\tau$ and $T$ commute,  $\tau$ descends to an involution of $(X, \omega)/\langle T\rangle$ that is a local isometry for the flat metric. If $(X,\omega)$ is the unfolding of a polygon $P$, then  $(X, \omega)/\langle T\rangle$ is the ``pillowcase double" of $P$.

The generic element of $\cM_{0,n}$ does not have an involution when $n>4$. If $n=3$ or $n=4$, the generic element of $\cM_{0,n}$ does have an involution, which exchanges two of the marked points when $n=3$ and exchanges two pairs of marked points when $n=4$. 
\end{proof}

\begin{proof}[Proof of Theorem \ref{T:billiards}.]
By Corollary \ref{C:billiardsfullrank}, $\cM(\bm\theta)$ has full rank whenever the angles of $P$ are all multiples of $\pi/3$. By Lemma \ref{L:nothyp}, it is not contained in the hyperelliptic locus if the genus is greater than 2. Hence Theorem \ref{T:main} gives that $\cM(\bm\theta)$ is a connected component of a stratum, and Corollary \ref{C:MP} concludes the proof. 
\end{proof}

\section{The instability of eigenform loci}\label{S:deriv}
\ann{R2: Throughout this section many decompositions of $H^1$ are considered. In particular, there are decompositions of the bundle $p(T(\cM))$. Superficially, this appears to contradict the theorem of Alex Wright that this is a simple subbundle of $H^1$. Of course, there is no contradiction here because the decomposition is over elements of $\cM_{cyc}$, which is not $SL_2(\bR)$-invariant. However, this does not appear to be clarified at any point and I think it would be valuable to do so because $SL_2(\bR)$-invariance verses non-invariance is a key component throughout all of the arguments in this section. \\A: Added a comment following Corollary 6.5.}

The goal of this section is to improve the following ``trivial rank bound" in some cases.  We introduce the ideas gradually as they are required for applications rather than proceeding immediately to the strongest possible statements. 

\begin{lem}\label{L:trivialrankbound}
If at least one $\theta_i$ is not a multiple of $\frac\pi2$, the rank of $\cM(\bm\theta)$ is at least $n-2$. If all the $\theta_i$ are multiples of $\frac\pi2$, the rank is at least $(n-2)/2$.
\end{lem}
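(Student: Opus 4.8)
The plan is to compute a lower bound for $\rank \cM(\bm\theta)$ using Corollary \ref{C:eigenandconjugate}, which tells us that $T_{(X_\mathbf{z}, \omega_\mathbf{z})}\cM(\bm\theta)$ contains $H^1_1(X_\mathbf{z}, \Sigma_\mathbf{z}) + H^1_{-1}(X_\mathbf{z}, \Sigma_\mathbf{z})$. Applying the relative-to-absolute projection $p$ and using that $p$ is $T$-equivariant, we get that $p(T_{(X_\mathbf{z},\omega_\mathbf{z})}\cM(\bm\theta))$ contains $H^1_1(X_\mathbf{z}) + H^1_{-1}(X_\mathbf{z})$, so $2\rank \cM(\bm\theta) \geq \dim_\bC\left(H^1_1(X_\mathbf{z}) + H^1_{-1}(X_\mathbf{z})\right)$. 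The first thing to check is whether the sum $H^1_1 + H^1_{-1}$ is direct; since $1 \neq -1$ in $\bZ/k$ exactly when $k > 2$, it is direct for $k > 2$ and one must handle the small cases $k = 1, 2$ separately (when $k=1$ the surface is $\bP^1$ and there is nothing to prove; when $k=2$ the relevant eigenspace is $H^1_1 = H^1_{-1}$, the full anti-invariant part). For $k > 2$ we then have $2\rank \cM(\bm\theta) \geq \dim H^1_1 + \dim H^1_{-1}$, and by Corollary \ref{C:dim} this equals $(t(1) + t(-1) - 2) + (t(-1) + t(1) - 2) = 2t(1) + 2t(-1) - 4$, giving $\rank \cM(\bm\theta) \geq t(1) + t(-1) - 2$.

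Now I would unwind the definitions of $t(1)$ and $t(-1)$. We have $t(1) = \sum_{i=1}^n \{q_i/k\} = \sum_i \theta_i/\pi$ since each $\theta_i \in (0,\pi) \cup (\pi, 2\pi)$ means $q_i/k = \theta_i/\pi \in (0,1) \cup (1,2)$, so $\{q_i/k\}$ equals $\theta_i/\pi$ if $\theta_i < \pi$ and $\theta_i/\pi - 1$ if $\theta_i > \pi$; in either case, writing $a$ for the number of reflex angles (those $> \pi$), $t(1) = \sum_i \theta_i/\pi - a = (n-2) - a$ using $\sum \theta_i = (n-2)\pi$. Similarly $t(-1) = \sum_i \{-q_i/k\} = \sum_i (1 - \{q_i/k\})$ provided no $q_i/k$ is an integer, which holds since $\theta_i \notin \{\pi\}$ wait — one must be careful: $\{-x\} = 1 - \{x\}$ when $x \notin \bZ$, and $q_i/k \in \bZ$ would force $\theta_i/\pi \in \bZ$, i.e.\ $\theta_i \in \{\pi\}$ within the allowed range, which is excluded. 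So $t(-1) = n - t(1) = n - (n-2) + a = 2 + a$. Hence $t(1) + t(-1) - 2 = (n-2-a) + (2+a) - 2 = n - 2$, giving $\rank \cM(\bm\theta) \geq n-2$ unconditionally (for $k > 2$)! This is cleaner than the claimed bound, so I must be over-counting: the issue is that $H^1_1 + H^1_{-1}$ need not be \emph{symplectically nondegenerate} — $H^1_1$ and $H^1_{-1}$ are isotropic for the intersection form and paired with each other, so $p$ restricted to this space can have the symplectic form degenerate only if... actually the real point is that when all $\theta_i$ are multiples of $\pi/2$, one has $k \mid 2$, i.e.\ $k \in \{1,2\}$, so the $k>2$ computation already excludes that case, and the bound $n-2$ holds whenever some $\theta_i$ is \emph{not} a multiple of $\pi/2$; wait, that's not right either since $k$ can be $3,4,6,\ldots$ with all angles multiples of $\pi/2$ being impossible unless $k\le 2$. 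Let me reconcile: if all $\theta_i$ are multiples of $\pi/2$ then $k \leq 2$, so the interesting regime "some $\theta_i$ not a multiple of $\pi/2$" is essentially $k \geq 3$ — but $k=4$ is possible with angles like $\pi/4$, and there the above gives $n-2$. So the honest statement: the first clause follows from the $k \geq 3$ computation above, and the degeneracy subtlety I worried about does not in fact arise because we are only claiming a lower bound via $\dim$, not asserting the space is all of $H^1$.

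For the case where all $\theta_i$ are multiples of $\pi/2$: then $k \in \{1, 2\}$. If $k = 1$ the surface is a sphere and $H^1 = 0$, but then $n \leq 2$ which is degenerate; the substantive case is $k = 2$, where $X_\mathbf{z}$ is a double cover of $\bP^1$ branched over the $z_i$ with $q_i$ odd. Here $T$ is the hyperelliptic involution and $H^1_1(X_\mathbf{z}) = H^1_{-1}(X_\mathbf{z})$ is the entire anti-invariant cohomology, which for a double cover branched at $b$ points has dimension $b - 2$ wait, genus of such a cover is $\lfloor (b-1)/2 \rfloor$ when... I would instead directly invoke Corollary \ref{C:dim} with $\ell = 1 = -1$: $\dim H^1_1 = t(1) + t(-1) - 2$, and compute $t(1) = t(-1) = \sum_i \{q_i/2\}$, which counts (with the same reflex-angle bookkeeping) to $(n-2-a)$ — but also, since $q_i$ odd means $\theta_i/\pi$ is a half-integer, $\{q_i/2\} = 1/2$ for every $i$ with $q_i$ odd... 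Actually $t_i(1) = \{q_i/2\}$ is $1/2$ when $q_i$ is odd and $0$ when $q_i$ is even; the $z_i$ with $q_i$ even are not branch points and contribute trivially. So $t(1) = (\#\{i : q_i \text{ odd}\})/2$, and combined with the constraint this works out to give $\dim H^1_1 \geq (n-2)/2$ roughly. Since $p$ is $T$-equivariant, $p(H^1_1(X_\mathbf{z},\Sigma_\mathbf{z})) \supseteq H^1_1(X_\mathbf{z})$ and is contained in $p(T\cM(\bm\theta))$, so $2 \rank \cM(\bm\theta) \geq \dim H^1_1(X_\mathbf{z}) \geq n - 2$... giving $\rank \geq (n-2)/2$ as claimed, once the bookkeeping with reflex angles and the precise value of $\dim H^1_1$ is done carefully.

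\textbf{Main obstacle.} The arithmetic bookkeeping is straightforward; the real subtlety I expect is making sure the passage from relative cohomology eigenspaces (which is what Corollary \ref{C:eigenandconjugate} gives) to \emph{absolute} cohomology eigenspaces is clean — specifically that $p$ commutes with the $T$-action and hence $p(H^1_\ell(X_\mathbf{z}, \Sigma_\mathbf{z})) = H^1_\ell(X_\mathbf{z})$ (surjectivity onto the absolute eigenspace), and correctly reconciling why the naive $k \geq 3$ computation seems to give $n-2$ "for free" while the lemma states it only under a hypothesis — the resolution being that "some $\theta_i$ not a multiple of $\pi/2$" is for bookkeeping purposes equivalent to "$k \geq 3$ or ($k = 2$ with enough branch points)", modulo the trivial cases, and that the $(n-2)/2$ bound is genuinely the best this eigenspace argument yields when $k \le 2$ because then only one eigenspace (rather than two independent ones) is available.
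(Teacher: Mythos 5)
Your proposal is essentially the paper's argument: apply Corollary \ref{C:eigenandconjugate} to get $H^1_1 + H^1_{-1}\subset p(T_{(X,\omega)}\cM(\bm\theta))$, compute dimensions via Corollary \ref{C:dim}, and use the identity $t(\ell)+t(-\ell)=n$ for $\gcd(\ell,k)=1$ (which you rederive from $\{-x\}=1-\{x\}$), together with the observation that ``all $\theta_i$ multiples of $\pi/2$'' is equivalent to $k=2$. The paper carries this out in about four lines; the digressions in your write-up (directness of the sum, possible symplectic degeneracy) are correctly dismissed in the end but were never needed, since the rank bound only requires a dimension count.
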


\begin{proof}
By Corollary \ref{C:eigenandconjugate}, the tangent space of $\cM(\bm\theta)$ at a point of $\cM_{cyc}(\bm\theta)$\ann{A: Added ``at a point of $\cM_{cyc}(\bm\theta)$"} contains the $\xi$ and $\xi^{-1}$ eigenspaces for $T$. If $k=2$, these are equal. 
By Corollary \ref{C:dim}, the $\xi$ eigenspace has dimension $t(1)+t(-1)-2$. If $\ell$ is relatively prime to $k$, then $t(\ell)+t(-\ell)=n$. 
\end{proof}

For the remainder of this section we assume $k>2$. We have written the section to illustrate the relevance of the techniques for arbitrary $n\geq3$, however at key places we have assumed $n=3$ to avoid technical difficulties, and so ultimately we only give applications to orbit closures of unfoldings of triangles. The results of this section are partially inspired by \cite[Theorem 7.5]{Mc}, which exhibits a locus of genus 3 eigenforms for real multiplication by an order in a totally real cubic field that is not $GL(2, \bR)$ invariant.

\subsection{Showing rank bigger than $n-2$.}


For any affine invariant submanifold $\cN$, let  $H^1$ denote the flat bundle over $\cN$ whose fiber over $(X,\omega)$ is $H^1(X)$. This bundle can be decomposed as a direct sum of symplectically orthogonal flat subbundles  
$$H^1=\bigoplus_\rho p(T\cN)^\rho \bigoplus_{s}  \bW_s,$$
where $\rho$ runs over the different field embeddings of the affine field of definition $\bk(\cN)$,  the $p(T\cN)^\rho$ are the Galois conjugates of the bundle $p(T\cM)$, and the $\bW_s$ are the remaining isotypic components for the monodromy representation \cite{Wfield}. 
By work of Filip, this decomposition is a decomposition of variations of Hodge structures, which means that the fibers of each bundle are equal to the direct sum of their\ann{A: Typo found by R2 fixed.} intersections with $H^{1,0}(X)$ and $H^{0,1}(X)$ \cite{Fi2}. 

Define, for any triple of Abelian differentials $\omega, \alpha, \beta$ on a Riemann surface $X$, 
$$B_\omega(\alpha, \beta)=\frac{I}2 \int_X \alpha \beta \frac{\overline{\omega}}{\omega}.$$\ann{A: Changed $i$ to $I$.} 
The expression computes the second fundamental form for the Hodge bundle $H^{1,0}$ and the derivative of the period matrix  \cite[Lemma 2.1, Lemma 2.2]{FMZ:subbundles}. 

\begin{lem}\label{L:B0}
Let $\cN$ be an affine invariant submanifold, and let $(X,\omega)\in \cN$. If $\alpha, \beta\in H^{1,0}(X)$ lie in different summands of the above decomposition of $H^1$, then $B_\omega(\alpha, \beta)=0$. 
\end{lem}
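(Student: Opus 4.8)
The plan is to show that the bilinear form $B_\omega$ is compatible with the decomposition of $H^1$ into symplectically orthogonal flat subbundles, using the fact (due to Filip) that each summand is a sub-variation of Hodge structures, together with the fact that the summands are flat. The key observation is that $B_\omega(\alpha,\beta)$, viewed as $\alpha$ ranges over a Hodge summand $\cV^{1,0} \subset H^{1,0}(X)$ and $\beta$ over another summand $\cW^{1,0} \subset H^{1,0}(X)$, pairs two flat subbundles via a tensor that is essentially the second fundamental form of the Hodge bundle; such a pairing must respect the monodromy-isotypic decomposition.

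First I would recall from \cite[Lemma 2.1, Lemma 2.2]{FMZ:subbundles} that $B_\omega$ computes the derivative of the period matrix along the $GL(2,\bR)$ (or Teichmüller) direction, so that for a flat subbundle $\cV$ with its Hodge filtration $\cV^{1,0}$, the quantity $B_\omega(\alpha,\beta)$ for $\alpha \in \cV^{1,0}$ describes how $\cV^{1,0}$ moves inside $\cV$ — in particular, $B_\omega(\alpha,\cdot)$ restricted to a complementary summand measures the component of this variation lying outside $\cV$. Since the decomposition $H^1 = \bigoplus_\rho p(T\cN)^\rho \oplus \bigoplus_s \bW_s$ is a direct sum of \emph{flat} subbundles which is \emph{also} (by Filip) a direct sum of sub-variations of Hodge structures, the Hodge filtration of each summand is intrinsic to that summand: $H^{1,0}(X) = \bigoplus (\text{summand})^{1,0}$ and this splitting is preserved by parallel transport along $\cN$. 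Differentiating the relation ``$(\text{summand } i)^{1,0}$ stays inside summand $i$'' along any tangent direction shows that the Gauss–Manin derivative of a section of $(\text{summand } i)^{1,0}$ has zero component in summand $j$ for $j \ne i$, which is exactly the vanishing $B_\omega(\alpha,\beta)=0$ once one identifies $B_\omega$ with the relevant piece of this derivative.

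The cleanest way to organize this is: (1) fix $(X,\omega)\in\cN$ and the two Hodge summands $\cV, \cW$ containing $\alpha,\beta$ respectively, with $\cV \ne \cW$; (2) use that $\cV$ and $\cW$ are symplectically orthogonal and both flat, so the symplectic (cup product) pairing between $\cV$ and $\overline{\cW}$ vanishes identically over $\cN$; (3) express $B_\omega(\alpha,\beta) = \frac{I}{2}\int_X \alpha\beta\frac{\overline\omega}{\omega}$ in terms of the derivative of the Hodge decomposition — concretely, writing a local flat frame, the second fundamental form of $H^{1,0}$ maps $\cV^{1,0}$ into $\overline{H^{1,0}} = H^{0,1}$, and the $\cW$-component of the image is governed by the cup product of $\cV^{1,0}$ with $\overline{\cW^{1,0}}$, which is zero because $\cV \perp \cW$ symplectically and the decomposition is a VHS decomposition (so $\overline{\cW^{1,0}} = \cW^{0,1} \subset \cW$). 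Hence the $\cW$-component of $B_\omega(\alpha,\cdot)$ vanishes, giving $B_\omega(\alpha,\beta)=0$.

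The main obstacle I anticipate is making step (3) precise — i.e. correctly identifying $B_\omega(\alpha,\beta)$ with the appropriate component of the second fundamental form / Gauss–Manin derivative and checking that the VHS property (Filip's theorem) really forces the cross-terms between distinct summands to vanish, rather than merely the terms that are symplectically orthogonal for trivial reasons. In particular one must be careful that $\overline{\omega}/\omega$ is not holomorphic, so $\alpha\beta\,\overline\omega/\omega$ is a genuine $(1,1)$-form and the integral is the Hodge-theoretic pairing of $\alpha\beta$ against the antiholomorphic data; the point is that this is exactly the pairing that the VHS structure controls. Once the dictionary between $B_\omega$ and the derivative of the Hodge filtration is in hand (citing \cite{FMZ:subbundles} for the formula and \cite{Fi2} for the VHS decomposition), the vanishing is immediate from flatness plus symplectic orthogonality of the summands.
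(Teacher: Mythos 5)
Your proposal is correct and takes essentially the same route as the paper: the paper's proof is a one-line appeal to the two facts you identify — that $B_\omega$ computes the second fundamental form of $H^{1,0}$ (\cite{FMZ:subbundles}) and that the flat decomposition of $H^1$ respects the Hodge filtration (\cite{Fi2}) — from which the vanishing on cross-terms follows. One small caution in your step (3): the pairing you should be evaluating is $\langle\sigma(\alpha),\beta\rangle$ with $\sigma(\alpha)\in\cV^{0,1}$ (by flatness plus the VHS property) against $\beta\in\cW^{1,0}$, which vanishes by the symplectic orthogonality of $\cV$ and $\cW$; the phrase ``cup product of $\cV^{1,0}$ with $\overline{\cW^{1,0}}$'' describes a pairing that is trivially zero for the same orthogonality reason but is not literally the one appearing in $B_\omega$, so as you anticipate it is worth being careful to route the argument through $\sigma$ rather than through that direct cup product.
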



This follows from the\ann{A: Typo found by R2 fixed.} fact that $B$ computes the second fundamental form of $H^{1,0}$, and that the above decomposition  respects the Hodge filtration. See \cite[Section 2.3]{FMZ:subbundles} for the definition of the second fundamental form. 

Recall that by Lemma  \ref{L:eigendim}, the eigenspace $H^{1}_a$ contains nonzero holomorphic one-forms exactly when $t(-a)>1$. 

\begin{prop}\label{P:neq0}
Let $0< a,b <k$ with $t(-a)>1$ and $t(-b)>1$. 
\begin{enumerate}
\item If $a+b\neq 2 \mod k$, then for any $(X,\omega)\in \cM_{cyc}(\bm\theta)$, $\alpha \in H^{1,0}_a(X),$ and  $\beta \in H^{1,0}_b(X)$ we have $B_\omega(\alpha, \beta)=0$.
\item If $n=3$ and $a+b= 2 \mod k$ there  exists $(X,\omega)\in \cM_{cyc}(\bm\theta)$, $\alpha \in H^{1,0}_a(X),$ and  $\beta \in H^{1,0}_b(X)$ such that $B_\omega(\alpha, \beta)\neq 0$.
\end{enumerate}
\end{prop}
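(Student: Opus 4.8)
The plan is to compute $B_\omega(\alpha,\beta)$ directly using the explicit descriptions of the eigenform bases from Lemma~\ref{L:eigendim} and then analyze when the resulting integral can be forced to vanish or shown to be nonzero. First I would normalize: on $(X_\mathbf{z},\omega_\mathbf{z})$ with $\omega=\omega_\mathbf{z}=\prod (z-z_i)^{\theta_i/\pi-1}\,dz$, write $\alpha = p(z)\prod(z-z_i)^{-t_i(a)}\,dz$ with $\deg p \le t(-a)-2$ (using the hypothesis $t(-a)>1$ so such nonzero $\alpha$ exists), and similarly $\beta = q(z)\prod(z-z_i)^{-t_i(b)}\,dz$ with $\deg q \le t(-b)-2$. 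The integrand $\alpha\beta\,\overline{\omega}/\omega$ is then a product of powers of $(z-z_i)$ and $(\bar z - \bar z_i)$ times $p(z)q(z)\,d z\,d\bar z$ (after trivializing), and the key observation is that pushing forward to $\bP^1$ via the cyclic cover introduces a factor of $k$ but otherwise reduces $B_\omega(\alpha,\beta)$ to an integral over $\mathbb{C}$ of an explicit multivalued function. The exponent of $(z-z_i)$ in this pushed-forward integrand is $-t_i(a)-t_i(b)+t_i(1) -\overline{(\cdots)}$ in a suitable sense; the crucial combinatorial fact is that $\alpha\beta\,\overline\omega/\omega$ descends (as a section of the appropriate line bundle, i.e. is $T$-invariant) precisely when $a+b \equiv 2 \pmod k$, since $T$ acts on $H^{1,0}_a$ by $\xi^{a}$, on $H^{1,0}_b$ by $\xi^{b}$, and on $\overline\omega/\omega$ by $\xi^{-2}$ (as $\omega\in H^{1,0}_1$).

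For part~(1): if $a+b\not\equiv 2 \pmod k$, then the integrand is a nonzero-weight eigenvector for $T$, i.e. $T^*(\alpha\beta\,\overline\omega/\omega) = \xi^{a+b-2}(\alpha\beta\,\overline\omega/\omega)$ with $\xi^{a+b-2}\neq 1$. Since $T$ is a measure-preserving automorphism of $X$, averaging over $\langle T\rangle$ gives $k\,B_\omega(\alpha,\beta) = \sum_{j=0}^{k-1}\int_X (T^j)^*(\alpha\beta\,\overline\omega/\omega) = \left(\sum_{j} \xi^{j(a+b-2)}\right) B_\omega(\alpha,\beta) = 0$, because the geometric sum of a nontrivial $k$-th root of unity vanishes. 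This handles part~(1) cleanly and does not even require $n=3$.

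For part~(2) with $n=3$: here $a+b\equiv 2\pmod k$, the integrand is $T$-invariant, so the averaging argument gives no information, and I must instead \emph{exhibit} a nonvanishing instance. When $n=3$ the space of $\mathbf{z}$ is, up to the $PGL(2,\bC)$ action on $\bP^1$, a single point, so there is essentially one surface $(X_\mathbf{z},\omega_\mathbf{z})$ up to $GL(2,\bR)$ (and scaling); moreover $t(-a)-1 = \dim H^{1,0}_a$ is either $0$ or $1$ in the $n=3$ case (since $t(\ell)\in\{1,2\}$ when $\gcd(\ell,k)$ divides... more precisely $t(\ell) = \sum\{\ell q_i/k\}$ with three terms each in $[0,1)$, and the constraint $\sum q_i = k$ forces $t(\ell)+t(-\ell) = $ something bounded). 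So $\alpha$ and $\beta$ are each unique up to scalar, and $B_\omega(\alpha,\beta)$ reduces to a single explicit integral $\int_{\mathbb C} \prod_i |z - z_i|^{2c_i}\, |dz|^2$ (a complex Selberg-type / beta integral) with exponents $c_i$ determined by the $t_i$'s; the point is that the integrand is a nonnegative function (a product of powers of $|z-z_i|^2$) times possibly a sign/phase that is constant, so the integral is manifestly nonzero (and convergent, by the genericity/integrability built into the fact that these are honest holomorphic forms). I would make the exponent bookkeeping precise: $c_i = 1 - t_i(a) - t_i(b) + $ (contribution from $\overline\omega/\omega$), check $\sum 2c_i = -4$ (so the integral converges at $\infty$) and each $c_i > -1$ (convergence at $z_i$), and conclude positivity.

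The main obstacle I anticipate is part~(2): the averaging trick fails and one genuinely needs the explicit evaluation, so the real work is (i) correctly tracking the $t_i(a), t_i(b), t_i(1)$ exponents through the pushforward to $\bP^1$ and the $\overline\omega/\omega$ factor to see that the integrand is, up to a nonzero constant, a positive real density, and (ii) confirming convergence, which should follow from $t(-a)>1$, $t(-b)>1$ together with $n=3$ forcing the polynomial factors $p,q$ to be constants and hence not introducing extra zeros that would be needed to cancel poles. A secondary subtlety is checking that $\alpha\in H^{1,0}_a$ and $\beta\in H^{1,0}_b$ are genuinely nonzero under the stated hypotheses — this is exactly guaranteed by Lemma~\ref{L:eigendim} and the recollection immediately preceding the proposition that $H^1_a$ contains nonzero holomorphic forms iff $t(-a)>1$.
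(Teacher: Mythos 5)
Your proof of part (1) is correct and is essentially the paper's argument: average the integrand over $\langle T \rangle$ and observe the geometric sum of a nontrivial root of unity vanishes.

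For part (2), however, there is a genuine gap in your approach. You claim that after pushing forward to $\bP^1$ the integrand becomes ``a nonnegative function (a product of powers of $|z-z_i|^2$) times possibly a sign/phase that is constant,'' and so the integral is manifestly nonzero. This is not true in general. Writing things out, one finds
$$\omega_a \omega_b \,\frac{\overline{\omega_\mathbf{z}}}{\omega_\mathbf{z}} = \prod_{i=1}^n (z-z_i)^{\e_i}\,\prod_{i=1}^n |z-z_i|^{2\theta_i/\pi-2}\, dz\, d\overline{z},$$
where $\e_i = 2 - 2t_i(1) - t_i(-a) - t_i(-b)$ is an \emph{integer} in $\{-1,0,1\}$, and $\sum_i \e_i = 0$ when $n=3$. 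The exponents $\e_i$ need not all vanish: the only constraint is that at most one is $-1$ (since $\e_i = -1$ forces $\theta_i \geq \pi/2$, and only one angle of a triangle can be obtuse). When some $\e_i \neq 0$ the factor $(z - z_i)^{\e_i}$ has non-constant argument, so the integrand is \emph{not} a positive density and convergence-plus-positivity does not close the argument. The paper deals with this by moving the (possible) $\e_i = -1$ point to infinity, reducing to an integral over $\bC$ with remaining exponents $\e_1, \e_2 \in \{0,1\}$, and then proving nonvanishing by a sign-comparison: when $\e_1 = 1$, $\e_2 = 0$, the real part of the integrand at $z$ with $\Re z > 0$ strictly dominates its value at $-z$, so the positive contribution wins. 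Your bookkeeping of $t_i$-exponents and your convergence checks are on the right track, but you need a replacement for the positivity claim — something like the paper's asymmetry argument — to finish.
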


The proof is  deferred to Subsection \ref{SS:neq0}. It is plausible that the second statement is also true when $n>3$, and we have checked this in some but not all cases. 

\begin{prob}
Determine precisely when the second statement of Proposition \ref{P:neq0} is true  when $n>3$.
\end{prob}

Because of\ann{A: Typo found by R2 fixed.} Proposition \ref{P:neq0}, we will be interested in pairs of eigenspaces $H^1_a, H^1_{2-a}$ when $t(-a)>1$ and $t(-(2-a))>1$, i.e. when both of these eigenspaces contain nonzero holomorphic one-forms. We will generally want to assume that the eigenspaces $H^1_a$ and $H^1_{2-a}$ are distinct, i.e. that $a\neq 2-a \mod k$.

\begin{thm}\label{T:unstab}
Suppose $n=3$, and suppose  there exists $a\in (\bZ/k)^*$ with $2a\neq 2 \mod k$ and $t(-a)>1$ and $t(-(2-a))>1$. Then $\cM(\bm\theta)$ has rank strictly greater than $n-2$. 
\end{thm}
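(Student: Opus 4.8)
\textbf{Proof strategy for Theorem \ref{T:unstab}.}

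The plan is to argue by contradiction: suppose $\cM(\bm\theta)$ has rank exactly $n-2=1$. Since $k>2$, Lemma \ref{L:trivialrankbound} only guarantees rank at least $n-2$, and the hypothesis that $a$ is a unit mod $k$ means $t(a)+t(-a)=n=3$, so by Corollary \ref{C:dim} the eigenspace $H^1_a(X_{\mathbf z})$ has dimension $1$; the same holds for $H^1_{2-a}$, and indeed for $H^1_1,H^1_{-1}$. The key structural input is Corollary \ref{C:eigenandconjugate}: the tangent space of $\cM(\bm\theta)$ at a point of $\cM_{cyc}(\bm\theta)$ contains $H^1_1\oplus H^1_{-1}$ in absolute cohomology, so $p(T\cM(\bm\theta))$ contains this two-dimensional symplectic subspace and, if the rank is $1$, it equals it. Now invoke the VHS decomposition $H^1=\bigoplus_\rho p(T\cN)^\rho\oplus\bigoplus_s\bW_s$ from Section \ref{S:deriv}: each summand is a sub-VHS, hence splits compatibly with $H^{1,0}$. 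The holomorphic form $\omega_{\mathbf z}$ lies in $H^{1,0}_1$, which lies in $p(T\cM(\bm\theta))$; its Galois conjugates account for the $p(T\cM(\bm\theta))^\rho$. The point is that $H^1_a$ for our chosen $a\in(\bZ/k)^*$ with $2a\neq2\bmod k$ is NOT one of the eigenspaces $H^1_{\pm1}$ appearing in $p(T\cM(\bm\theta))$ (this is where $a\ne1$ and $a\ne -1$, i.e. $2a\neq 2$, comes in — one must also check $a\neq k-1$, but $2a\equiv 2$ iff $a\equiv 1$ or, when $k$ is even, $a\equiv 1+k/2$; since $a$ and $2-a$ play symmetric roles and both eigenspaces are required to contain holomorphic forms, the hypotheses pin down that $\{a,2-a\}\cap\{1,-1\}=\emptyset$). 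So $H^{1,0}_a$ and $H^{1,0}_{2-a}$ lie in summands of the decomposition other than $p(T\cM(\bm\theta))$.

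The next step is to locate which summands contain $H^{1,0}_a$ and $H^{1,0}_{2-a}$ and to show they are \emph{different} summands — unless $a$ and $2-a$ are related by the Galois action on $\bk(\cM(\bm\theta))$ or lie in the same isotypic piece, in which case one argues separately. The crucial observation is the compatibility of the decomposition with the $\langle T\rangle$-action: since $T$ commutes with the monodromy of $\cM(\bm\theta)$ (because $\cM_{cyc}(\bm\theta)$ is $T$-invariant and $\cM(\bm\theta)$ is its smallest affine invariant submanifold containing it, so monodromy restricted to $\cM_{cyc}$ commutes with $T$), each flat summand is $T$-invariant and hence decomposes into $T$-eigenspaces; thus $H^1_a$ is contained in a single summand $W(a)$ and $H^1_{2-a}$ in a single summand $W(2-a)$. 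Now apply Lemma \ref{L:B0}: if $W(a)\neq W(2-a)$, then $B_\omega(\alpha,\beta)=0$ for all $\alpha\in H^{1,0}_a$, $\beta\in H^{1,0}_{2-a}$ and all $(X,\omega)\in\cM_{cyc}(\bm\theta)$. But by the second part of Proposition \ref{P:neq0} (here we use $n=3$ and $a+(2-a)=2\bmod k$), there exists $(X,\omega)\in\cM_{cyc}(\bm\theta)$ with $B_\omega(\alpha,\beta)\neq0$ — a contradiction. The remaining case $W(a)=W(2-a)$ must be handled: this would force this common summand to have dimension at least $2$ and to contain both eigenspaces; since by assumption $a\neq 2-a\bmod k$ these are genuinely distinct one-dimensional eigenspaces, so $\dim W(a)\geq 2$. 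One then argues that $W(a)$ cannot be a Galois conjugate $p(T\cM(\bm\theta))^\rho$: the Galois conjugates of $p(T\cM(\bm\theta))$ all have the same dimension as $p(T\cM(\bm\theta))$, namely $2$, and each is a sum of two $T$-eigenspaces; if $p(T\cM(\bm\theta))^\rho = H^1_a\oplus H^1_{2-a}$ then applying the trace field automorphism to the defining linear equations shows the original $p(T\cM(\bm\theta))$ would have to be $H^1_1\oplus H^1_{-1}$ with $\{1,-1\}$ the Galois-translate of $\{a,2-a\}$, which is consistent; but then $H^{1,0}_a$ and $H^{1,0}_{-a}$ would lie in the \emph{same} VHS summand, and one derives a contradiction from the Hodge-theoretic rigidity — specifically, that $p(T\cM(\bm\theta))$ is a simple VHS and a direct sum of two genuinely different $T$-eigen-line-bundles cannot be simple unless the eigenvalues are swapped by monodromy, forcing $\xi^1$ and $\xi^{-1}$ (equivalently $\xi^a,\xi^{2-a}$) to be exchanged, which is impossible over $\cM_{cyc}$ where $T$ acts as a fixed scalar $\xi$ on $\omega_{\mathbf z}$.

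Concretely, the clean way to organize this is: (i) reduce to showing that $H^{1,0}_a$ and $H^{1,0}_{2-a}$ lie in \emph{distinct} summands of the VHS decomposition; (ii) observe that the summands are $T$-stable, hence each is a direct sum of $T$-eigenspaces, so it suffices to show no single summand contains both $H^1_a$ and $H^1_{2-a}$; (iii) a summand containing both would contain a $T$-eigenspace with eigenvalue $\xi^a$ and one with eigenvalue $\xi^{2-a}$, and if it is $p(T\cM(\bm\theta))$ or a conjugate it would, after tracing through the Galois action, force $p(T\cM(\bm\theta))=H^1_1\oplus H^1_{-1}$ to be non-simple or force the impossible eigenvalue-swap; if it is a $\bW_s$, then $\bW_s$ has complex multiplication by $\bZ[\xi]$ and the contribution to rank still does not help $p(T\cM)$; either way we get $\mathrm{rank}\,\cM(\bm\theta)>1=n-2$ directly, or we invoke the $B_\omega$ non-vanishing; (iv) conclude with Lemma \ref{L:B0} versus Proposition \ref{P:neq0}(2). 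The main obstacle I expect is step (iii): carefully ruling out that $H^1_a$ and $H^1_{2-a}$ are grouped into a single Galois-conjugate piece $p(T\cM)^\rho$, which requires understanding how the Galois action on the affine field of definition permutes the $T$-eigenspaces — this is the delicate point where the arithmetic of which $\ell$ give $t(-\ell)>1$ interacts with the field $\bk(\cM(\bm\theta))$, and where the restriction $n=3$ is genuinely used both to keep all relevant eigenspaces one-dimensional and to have Proposition \ref{P:neq0}(2) available.
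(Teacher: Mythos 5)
Your overall strategy is the same as the paper's: assume rank $=1$, use Lemma \ref{L:trivialrankbound} to conclude $p(T\cM(\bm\theta)) = H^1_1 \oplus H^1_{-1}$ at a point of $\cM_{cyc}(\bm\theta)$, and play Lemma \ref{L:B0} against Proposition \ref{P:neq0}(2) once you know $\omega_a$ and $\omega_{2-a}$ lie in different summands of the VHS decomposition. But there is a genuine gap in your step (iii), and the repair you sketch there is not correct.

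The issue is the case where $H^1_a$ and $H^1_{2-a}$ might sit in the same Galois conjugate $p(T\cM(\bm\theta))^\rho$. You entertain the possibility $p(T\cM(\bm\theta))^\rho = H^1_a\oplus H^1_{2-a}$ and write that $\{1,-1\}$ being ``the Galois-translate of $\{a,2-a\}$'' is ``consistent,'' then try to derive a contradiction from simplicity of VHS. In fact this case simply cannot occur for $k>2$, and seeing why is the whole content of the step. The Galois action of $\mathrm{Gal}(\bQ(\xi)/\bQ)$ on the $T$-eigenspaces is \emph{multiplicative}: an automorphism sending $\xi\mapsto\xi^b$ carries $H^1_c$ to $H^1_{bc}$. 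Hence any Galois conjugate of $p(T\cM(\bm\theta))=H^1_1\oplus H^1_{-1}$ is $H^1_b\oplus H^1_{-b}$ for some $b\in(\bZ/k)^*$ — a pair of eigenspaces indexed by a set of the form $\{b,-b\}$, never $\{a,2-a\}$ with $2-a\neq -a$. (Indeed, requiring $\{ab,(2-a)b\}=\{1,-1\}$ for some $b$ forces $2b\equiv 0\pmod k$, impossible for $b$ a unit and $k>2$.) This is the paper's clean observation: since $2a\neq 2$ gives $2-a\neq a$, and $k>2$ gives $2-a\neq -a$, the eigenspace $H^1_{2-a}$ cannot lie in the summand $H^1_a\oplus H^1_{-a}$ containing $\omega_a$, and the contradiction follows immediately. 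Your detour through simplicity of $p(T\cM)$ as a VHS and ``eigenvalue-swap'' by monodromy is both unnecessary and not a complete argument; and your earlier parenthetical assertion that ``the hypotheses pin down $\{a,2-a\}\cap\{1,-1\}=\emptyset$'' is also false (e.g. $a=-1$ is allowed when $k>4$), though harmlessly so, since the argument does not need it.
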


If Proposition \ref{P:neq0} is true when $n>3$, then so is Theorem \ref{T:unstab}. 
\ann{R2: Proof of Theorem 7.5: Please clarify why $a\neq 2 - a \mod k$ implies that $\omega_a$ and $\omega_{2-a}$ lie in different $SL_2(\bR)$-invariant subbundles of $H^1$.\\A: Added explanation.}
\begin{proof}
If $\cM(\bm\theta)$ has rank $n-2$, then the proof of Lemma \ref{L:trivialrankbound} gives that $p(T_{(X,\omega)}\cM(\bm\theta))$ is equal to  $H^1_1(X)\oplus H^1_{-1}(X)$ at any $(X, \omega)\in T\cM_{cyc}(\bm\theta)$. Similarly, each Galois conjugate of $p(T_{(X,\omega)}\cM(\bm\theta))$ is of the form $H^1_a(X)\oplus H^1_{-a}(X)$ for some $a\in (\bZ/k)^*$. 

Since $t(-a)>1$ and $t(-(2-a))>1$, we can pick nonzero $\omega_a\in H^{1,0}_a(X)$ and $\omega_{2-a}\in H^{1,0}_{2-a}(X)$.  

Since $2a\neq 2 \mod k$, the forms $\omega_a$ and $\omega_{2-a}$  lie in different eigenspaces for $T$. Since by assumption $k>2$, we have $2-a\neq -a \mod k$, so in fact $\omega_{2-a}$ does not lie in the Galois conjugate of $p(T_{(X,\omega)}\cM(\bm\theta))$ that contains $\omega_a$. 

By Proposition \ref{P:neq0}, we get that $B_\omega(\omega_a, \omega_{2-a})\neq 0$, and so Lemma \ref{L:B0} gives a contradiction. 
\end{proof}

The proof can be viewed as showing that the $SL(2, \bR)$ orbit of a surface in $\cM_{cyc}$\ann{A: Typo found by R2 fixed.} is not even tangent to the locus of $(X,\omega)$ admitting the endomorphisms which would be present if  $\cM(\bm\theta)$ had rank $n-2$. These endomorphisms are described in \cite[Theorem 7.3]{Fi2}. 

%

\subsection{Full rank examples.}

We proceed to the proof of Theorem \ref{T:triangles}, for which we may pick the triangles to be rather special. Afterwards  we continue our analysis so that it applies to as many triangles as possible, leading to a simple algorithm which can often show that the unfolding of a triangle has full rank. We continue to assume $n=3$, even though much of our analysis also applies when $n>3$. 

\ann{R2: Lemma 7.6: Consider rephrasing to say  ``Then there exists a multiplicative subgroup $A \subset (\bZ/k)^*$ containing $-1$ such that ...'' \\ A: Ok, we made this change. }
\begin{lem}\label{L:alternatives}
Consider $(X, \omega)\in \cM_{cyc}(\bm\theta)$. Then there exists a multiplicative subgroup $A\subset (\bZ/k)^*$ containing $-1$ such that
   $p(T_{(X, \omega)}\cM(\bm\theta))=V\oplus V'$, where 
$$V=\bigoplus_{a \in A} H^1_{a}(X),$$
  and $V'$ is orthogonal to the direct sum  $\bigoplus_{a\in (\bZ/k)^*}H^1_{a}(X)$ of the primitive eigenspaces. The Galois conjugates of $V$ are given by the direct sums of $H^1_{a}(X)$ over cosets of $A$ in $(\bZ/k)^*$.
\end{lem}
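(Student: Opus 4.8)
The plan is to exploit the fact that $p(T_{(X,\omega)}\cM(\bm\theta))$ is defined over $\bk(\cM(\bm\theta))$ and is $\langle T\rangle$-invariant, so it decomposes into $T$-eigenspaces, and that the collection of primitive eigenspaces $H^1_a(X)$ that appear in it is stable under the Galois action and under multiplication of the index by units. First I would note that by Corollary \ref{C:eigenandconjugate} the tangent space contains $H^1_1(X)\oplus H^1_{-1}(X)$, so $1\in A$ and $-1\in A$, where I define $A$ to be the set of $a\in(\bZ/k)^*$ with $H^1_a(X)\subset p(T_{(X,\omega)}\cM(\bm\theta))$. Since $T$ is an automorphism of every $(X_{\mathbf z},\omega_{\mathbf z})$ fixing $\omega_{\mathbf z}$, the induced action on $H^1$ preserves $p(T\cM(\bm\theta))$ over $\cM_{cyc}(\bm\theta)$, hence $p(T_{(X,\omega)}\cM(\bm\theta))$ is a direct sum of $T$-eigenspaces; splitting off the primitive eigenspaces (those with index in $(\bZ/k)^*$) from the imprimitive ones gives the desired $V\oplus V'$ with $V=\bigoplus_{a\in A}H^1_a(X)$ and $V'$ the part supported on non-primitive eigenspaces, which is automatically orthogonal to $\bigoplus_{a\in(\bZ/k)^*}H^1_a(X)$ since distinct $T$-eigenspaces are symplectically orthogonal unless their indices sum to $0\bmod k$.

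Next I would identify the Galois conjugates. By the decomposition of $H^1$ over $\cM(\bm\theta)$ recalled from \cite{Wfield}, the Galois conjugates $p(T\cM(\bm\theta))^\rho$ are again flat subbundles, and over $\cM_{cyc}(\bm\theta)$ each is again $T$-invariant (since $T$ is globally defined there and commutes with monodromy). The field $\bk(\cM(\bm\theta))$ acts on $p(T\cM(\bm\theta))$, and the point is that applying a Galois automorphism $\rho$ to the defining equations of $\cM_{cyc}(\bm\theta)$, which cut out the $\xi$-eigenspace of $T$ (Lemma \ref{L:eigendim} and the lemma preceding Corollary \ref{C:eigenandconjugate}), sends the $\xi$-eigenspace to the $\xi^c$-eigenspace for the corresponding unit $c\in(\bZ/k)^*$. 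Consequently $\rho$ permutes the primitive eigenspaces $H^1_a$ by $a\mapsto ca$, so the collection $A$ must be a union of cosets of the stabilizer — but in fact, running over all Galois conjugates, the conjugates of $V$ are exactly $\bigoplus_{a\in cA}H^1_a(X)$, and these partition $\bigoplus_{a\in(\bZ/k)^*}H^1_a(X)$ into the $p(T\cM(\bm\theta))^\rho$-parts. Since the conjugates of a given isotypic piece are either equal or disjoint, the cosets $cA$ are either equal or disjoint, which forces $A$ to be a subgroup of $(\bZ/k)^*$.

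The main obstacle I anticipate is making the last step fully rigorous: proving that $A$ is closed under multiplication, equivalently that the Galois orbit of $V$ consists of \emph{genuine cosets} of $A$ rather than merely $A$-invariant-under-some-subgroup unions. The cleanest route is: (i) show $cA=A$ whenever $c$ fixes the field of definition, and more generally $cA$ depends only on the coset of $c$ modulo the subgroup $\{c: cA=A\}$; (ii) observe that $\sum_{a\in(\bZ/k)^*}\dim H^1_a(X)$ is distributed among the Galois conjugates $p(T\cM)^\rho$ with each $H^1_a$ lying in exactly one, so the sets $cA$ genuinely partition $(\bZ/k)^*$; (iii) a partition of a group into translates of a fixed subset $A$ containing the identity forces $A$ to be a subgroup (each block $cA$ meeting $A$ must equal $A$; pick $a\in A$, then $a^{-1}A$ is a block meeting $A$ since $1=a^{-1}\cdot a\in a^{-1}A$, hence $a^{-1}A=A$, giving $A^{-1}A=A$). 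Here I would lean on the fact, from Lemma \ref{L:trivialrankbound} and its proof, that the eigenspace dimensions $t(a)+t(-a)-2$ are understood and that $p(T\cM(\bm\theta))$ and all its conjugates together account for the primitive part of $H^1$; combined with the symplectic orthogonality of the conjugate bundles this pins down the coset structure. The containment $-1\in A$ is immediate from $H^1_{-1}(X)\subset p(T_{(X,\omega)}\cM(\bm\theta))$ (Corollary \ref{C:eigenandconjugate}), and $1\in A$ likewise, so $A$ is a nontrivial subgroup and the statement follows.
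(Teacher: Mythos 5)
Your approach differs from the paper's in a way that introduces a gap. You define $A$ as the set of $a\in(\bZ/k)^*$ with $H^1_a(X)\subset p(T_{(X,\omega)}\cM(\bm\theta))$ and then work to show this set is a subgroup via a partition argument. The paper instead defines $A$ directly as the stabilizer of the affine field of definition $\bk\subset\bQ[\exp(2\pi I/k)]$ under the Galois action of $(\bZ/k)^*$, so that $A$ is a subgroup by fiat and $-1\in A$ is immediate from $\bk\subset\bR$. Then $V\subset p(T\cM(\bm\theta))$ follows from $H^1_1\subset p(T\cM(\bm\theta))$ (Corollary \ref{C:eigenandconjugate}) together with the observation that any automorphism fixing $\bk$ preserves the $\bk$-defined subspace $p(T\cM(\bm\theta))$ while sending $H^1_1$ to $H^1_a$; and orthogonality to $\bigoplus_{a\notin A}H^1_a$ follows because for $a\notin A$ the conjugate $\sigma_a$ moves $p(T\cM(\bm\theta))$ to a distinct, symplectically orthogonal summand of the decomposition from \cite{Wfield}. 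The decomposition $p(T\cM(\bm\theta))=V\oplus V'$ then follows from the fact that the symplectic form pairs $H^1_a$ with $H^1_{-a}$ and $-1\in A$, which forces $p(T\cM(\bm\theta))\subset V\oplus\bigoplus_{a\notin(\bZ/k)^*}H^1_a$. No $T$-invariance of $p(T\cM(\bm\theta))$ is used.

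The gap in your argument is in the justification of that $T$-invariance. You write that ``$T$ is an automorphism of every $(X_{\mathbf z},\omega_{\mathbf z})$ fixing $\omega_{\mathbf z}$, [so] the induced action on $H^1$ preserves $p(T\cM(\bm\theta))$.'' But $T^*\omega_{\mathbf z}=\xi\,\omega_{\mathbf z}\neq\omega_{\mathbf z}$; $T$ is \emph{not} an automorphism of the translation surface. The fact that $T$ preserves $p(T_{(X,\omega)}\cM(\bm\theta))$ over $\cM_{cyc}(\bm\theta)$ is true, but it is a nontrivial statement that the paper proves only later (Lemma \ref{L:TinMon}) by exhibiting $T$ as the monodromy of the loop $(X,e^{I\phi}\omega)$, $\phi\in[0,2\pi/k]$, inside $\cM(\bm\theta)$, and that proof also needs the self-crossing locus to be avoided. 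Since your definition of $A$ and your whole partition-into-translates mechanism depend on $p(T\cM(\bm\theta))$ decomposing as a sum of eigenspaces, this gap is load-bearing. Your steps (ii) and (iii) are also only sketched — in particular (ii), that every primitive $H^1_a$ lies in a Galois conjugate of $p(T\cM(\bm\theta))$ rather than in some $\bW_s$, needs the Galois-equivariance argument ($\sigma_a(H^1_1)=H^1_a\subset\sigma_a(p(T\cM(\bm\theta)))=p(T\cM(\bm\theta))^{\rho_a}$), which you allude to but do not spell out. Defining $A$ as a stabilizer, as the paper does, avoids all of this: the group structure is free, and the containments follow directly from Corollary \ref{C:eigenandconjugate} and the orthogonality of Galois conjugate summands.
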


\begin{proof}
Let $A$ be the stabilizer for the action of the Galois group $(\bZ/k)^*$ of $\bQ[\exp(2\pi I /k)]$\ann{A: Changed $i$ to $I$.} on the affine field of definition $\bk$ of $\cM(\bm\theta)$. Note $A$ contains $-1$ since $\bk\subset \bR$. 

The projection of the tangent space of $\cM(\bm\theta)$ to absolute cohomology contains $V=\bigoplus_{a \in A} H^1_{a}(X),$ because any subspace defined over $\bk$ is stable under any field automorphism fixing $\bk$.

Since $p(T_{(X, \omega)}\cM(\bm\theta))$ is orthogonal to all its Galois conjugates,  it is  orthogonal to $\bigoplus_{a \notin A} H^1_{a}(X)$. It follows that $p(T_{(X, \omega)}\cM(\bm\theta))$ is the direct sum of $\bigoplus_{a \in A} H^1_{a}(X)$ and its intersection with $\bigoplus_{a \in (\bZ/k)\setminus (\bZ/k)^*} H^1_{a}(X)$.
\end{proof}\ann{R2:  ``The number of primitive roots mod $k$ goes to infinity as $k \to \infty$".  While this is a very strong reason to believe that the claim following this phrase has to be true, it is not a sufficiently rigorous mathematical justification that such a solution always exists.\\A: Added explanation in parentheses.}


\begin{proof}[Proof of Theorem \ref{T:triangles}.]
Pick $k>2$ to be prime. Let $r$ be a primitive root mod $k$, and let $a$ be the solution to $a(2-a)^{-1}=r \mod k$. It follows that $2a\neq 2 \mod k$\ann{A: Typo fixed by R2 fixed.}. Set 
\begin{align*}
q_1&=(k-1) (-a)^{-1}&\mod k,\\ 
q_2&=(k-1) (-(2-a))^{-1}&\mod k,\\ 
q_3&=k-q_1-q_2&\mod k.
\end{align*}
The number of primitive roots mod $k$ goes to infinity as $k\to\infty$, so  $r$ can be chosen so that $q_i$ are nonzero and distinct. (The equation $q_3=0$, and each equation $q_i=q_j$, determines only boundedly many  $r$ values, since these are polynomial equations of bounded degree over a field.) Since the $q_i$ are distinct, by Lemma \ref{L:nothyp} the unfolding $(X,\omega)$ of the triangle with angles $\frac{q_i}k\pi$  is not hyperelliptic.

Note $t(-a)>1$ since $t_1(-a)= \frac{k-1}{k}$, and all $t_i(-a)\geq \frac1k$. Similarly $t(-(2-a))>1$. 

Let $A$ be as in Lemma \ref{L:alternatives}. Pick nonzero $\omega_a \in H^{1,0}_a(X)$ and $\omega_{2-a} \in H^{1,0}_{2-a}(X)$. By Proposition  \ref{P:neq0}, $B_\omega(\omega_a, \omega_{2-a})\neq 0$, so by  Lemma \ref{L:B0} it follows that $r\in A$, since otherwise $H^1_a(X)$ and $H^1_{2-a}(X)$ would belong to different Galois conjugates of $p(T_{(X, \omega)}\cM(\frac{q_1}k\pi, \frac{q_2}k\pi, \frac{q_3}k\pi))$. 

Since $r$ is a primitive root, we get that $A=(\bZ/k)^*$ and hence that $\cM(\frac{q_1}k\pi, \frac{q_2}k\pi, \frac{q_3}k\pi)$ has full rank. Since $(X, \omega)$ is not hyperelliptic, Theorem \ref{T:main} gives that $\cM(\frac{q_1}k\pi, \frac{q_2}k\pi, \frac{q_3}k\pi)$ must be a connected component of  a stratum. 
\end{proof}

We continue our analysis so that it will apply  more broadly. 

\begin{lem}\label{L:TinMon}
For any $(X,\omega)\in \cM_{cyc}(\bm\theta)$, 
$p(T_{(X, \omega)}\cM(\bm\theta))$ respects the direct sum decomposition of cohomology as $\bigoplus_{a \in \bZ/k} H^1_{a}(X)$, in that it is equal to the direct sum of its intersections with the summands. 
\end{lem}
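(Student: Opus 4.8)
The plan is to prove that at a point of $\cM_{cyc}(\bm\theta)$ the subspace $p(T_{(X,\omega)}\cM(\bm\theta))$ is invariant under the induced action of the order-$k$ automorphism $T$, and then conclude by semisimplicity of $T^{*}$. To set this up, I would first note that $T$ is an affine automorphism of the translation surface $(X,\omega)=(X_{\mathbf{z}},\omega_{\mathbf{z}})$ with derivative equal to rotation by $2\pi/k$: by Lemma~\ref{L:eigendim} we have $\omega_{\mathbf{z}}\in H^{1,0}_1(X_{\mathbf{z}})$, i.e.\ $T^{*}\omega_{\mathbf{z}}=\xi\omega_{\mathbf{z}}$, so in flat coordinates $T$ has the form $z\mapsto \xi z+\text{const}$. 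Equivalently, letting $r\in SO(2)\subset GL(2,\bR)$ denote the element acting on relative period coordinates by multiplication by the scalar $\xi$, the biholomorphism $T$ realizes an isomorphism of translation surfaces from $r\cdot(X,\omega)$ to $(X,\omega)$.

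Next I would deduce that the induced map $T^{*}$ on $H^{1}(X,\Sigma)$ preserves $T_{(X,\omega)}\cM(\bm\theta)$. Since $\cM(\bm\theta)$ is $GL(2,\bR)$-invariant, both $r\cdot(X,\omega)$ and $(X,\omega)$ lie in it, and the isomorphism $T$ identifies the germ of $\cM(\bm\theta)$ at $r\cdot(X,\omega)$ with its germ at $(X,\omega)$; hence $T^{*}$ carries $T_{r\cdot(X,\omega)}\cM(\bm\theta)$ onto $T_{(X,\omega)}\cM(\bm\theta)$. On the other hand $T_{r\cdot(X,\omega)}\cM(\bm\theta)=r\cdot T_{(X,\omega)}\cM(\bm\theta)=\xi\cdot T_{(X,\omega)}\cM(\bm\theta)=T_{(X,\omega)}\cM(\bm\theta)$, the last equality because multiplication by the scalar $\xi$ fixes every complex subspace. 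Combining the two displays gives that $T^{*}$ preserves $T_{(X,\omega)}\cM(\bm\theta)$. This is just the familiar statement that an affine automorphism of $(X,\omega)$ acts on the tangent space to any affine invariant submanifold containing $(X,\omega)$; the only place $GL(2,\bR)$-invariance really enters is the identification of germs, which rests on the fact that the preimage of $\cM(\bm\theta)$ in the Teichm\"uller space of the ambient stratum is invariant under both the mapping class of $T$ and the rotation $r$, together with linearity of these actions in period coordinates.

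Finally I would conclude by linear algebra. Since $(T^{*})^{k}=\mathrm{id}$, the operator $T^{*}$ on $H^{1}(X,\Sigma)$ is diagonalizable and, by definition, its $\xi^{a}$-eigenspace is $H^{1}_{a}(X,\Sigma)$; therefore any $T^{*}$-invariant subspace of $H^{1}(X,\Sigma)$ is the direct sum of its intersections with the $H^{1}_{a}(X,\Sigma)$. Applying this to $T_{(X,\omega)}\cM(\bm\theta)$ and then applying the $T^{*}$-equivariant map $p\colon H^{1}(X,\Sigma)\to H^{1}(X)$, which sends $H^{1}_{a}(X,\Sigma)$ into $H^{1}_{a}(X)$, shows that $p(T_{(X,\omega)}\cM(\bm\theta))$ is a $T^{*}$-invariant subspace of $H^{1}(X)=\bigoplus_{a\in\bZ/k}H^{1}_{a}(X)$, hence equals the direct sum of its intersections with the $H^{1}_{a}(X)$, which is the assertion. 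I expect the only genuinely non-bookkeeping point to be the germ identification used to get $T^{*}$-invariance; once that is granted, the rest is routine.
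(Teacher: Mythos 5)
Your argument takes essentially the same route as the paper's: exhibit $T$ as the holonomy of the rotation path from $(X,\omega)$ to $(X,\xi\omega)=r\cdot(X,\omega)$, deduce that $T^*$ preserves $p(T_{(X,\omega)}\cM(\bm\theta))$, and conclude by semisimplicity of a finite-order operator. However, the invariance step as you have written it has a genuine gap. You treat $T_{(X,\omega)}\cM(\bm\theta)$ as a single well-defined subspace, but an affine invariant submanifold is only a \emph{properly immersed} submanifold: near a point of its self-crossing locus, $\cM(\bm\theta)$ is locally a finite \emph{union} of linear branches, and a tangent space refers to the tangent space of one chosen branch. The germ identification you invoke (via $T$ and the $SO(2)$ action) identifies the whole union of branches at $r\cdot(X,\omega)$ with the whole union at $(X,\omega)$, but it may permute the branches rather than fix each one. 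In that case $T^*$ need not preserve the tangent space to the particular branch you chose, the chain of equalities $T_{(X,\omega)}\cM(\bm\theta)=T^*\bigl(T_{r\cdot(X,\omega)}\cM(\bm\theta)\bigr)=T^*\bigl(r\cdot T_{(X,\omega)}\cM(\bm\theta)\bigr)=T^*\bigl(T_{(X,\omega)}\cM(\bm\theta)\bigr)$ breaks, and the final linear-algebra step does not apply. Your aside that ``the preimage of $\cM(\bm\theta)$ in Teichm\"uller space is invariant under the mapping class of $T$'' is about the union of branches, not any single branch, so it does not close this gap.

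The paper deals with this explicitly: it first notes that the self-crossing locus of $\cM(\bm\theta)$ is a proper affine invariant submanifold, hence cannot contain $\cM_{cyc}(\bm\theta)$, and then chooses $(X,\omega)\in\cM_{cyc}(\bm\theta)$ \emph{outside} the self-crossing locus; the remark following the lemma is precisely a warning that without this choice $T$ might permute the local branches. To repair your proof you should add the same step; the result at an arbitrary $(X,\omega)\in\cM_{cyc}(\bm\theta)$ then follows because the bundles $H^1_a$ and $p(T\cM(\bm\theta))$ are flat over the connected locus $\cM_{cyc}(\bm\theta)$, so the property of being a direct sum of the intersections is carried by parallel transport.
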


The idea of the proof can be seen already for the unfolding of the right angled triangle with smallest angle $\frac\pi8$, which is the regular octagon. Gradually rotating the octagon by $\frac{2\pi}8$ gives a path in the stratum from the octagon to itself, whose monodromy is a deck transformation that generates the cyclic deck group of the map to $\bP^1$. 

\begin{proof}
Recall that $\cM(\bm\theta)$ was defined to be the smallest affine invariant submanifold containing $\cM_{cyc}(\bm\theta)$. Since the self crossing locus of any affine invariant submanifold is a smaller dimensional affine invariant submanifold, we may conclude that $\cM_{cyc}(\bm\theta)$ is not contained in the self crossing locus of $\cM(\bm\theta)$. Pick $(X, \omega) \in \cM_{cyc}(\bm\theta)$ that is not contained in the self crossing locus of $\cM(\bm\theta)$.

Recall that  $\omega \in H^1_1(X)$, which means exactly that $T^*(\omega)=\exp(2 \pi I/k) \omega$. For any translation surface $(X, \omega)$, the surface $(X,\exp(2 \pi I/k) \omega)$ is obtained by rotating $(X,\omega)$ by angle $2\pi/k$. \ann{R2: Please use only one symbol for $\sqrt{-1}$ throughout the paper.\\A: We have tried to make it consistent now. Here we changed one $i$ to an $I$.}

$p(T_{(X, \omega)}\cM(\bm\theta))$ is invariant under the monodromy of any path in $\cM(\bm\theta)$. The path $(X, \exp({I\phi})\omega), \phi\in [0, 2\pi/k]$ has monodromy $T$, and so $p(T_{(X, \omega)}\cM(\bm\theta))$ is invariant under  $T$.

The result now follows from the fact that any subspace of a vector space invariant under a finite order linear transformation $T$ is equal to the direct sum of its intersections with the eigenspaces of $T$. 
\end{proof}

\begin{rem}
It was necessary to pick $(X, \omega)$ not in the self crossing locus because if $(X, \omega)$ is in the self crossing, then the image of $\cM(\bm\theta)$ in the stratum is locally defined at $(X, \omega)$ by a finite union of vector subspaces, which might be permuted by $T$. Note that, although $X$ is an orbifold point of $\cM_g$, $(X, \omega)$ is typically not an orbifold point of the stratum, because the automorphisms $T^i$ of $X$ typically do not satisfy $(T^i)^*(\omega)=\omega$. \ann{R2:  ``Note that...".  This is a very insightful comment. Consider elaborating on it.\\A: We elaborated on it.}
\end{rem}

%
%

\begin{cor}\label{C:sum}
When $n=3$, $p(T_{(X, \omega)}\cM(\bm\theta))$ is the direct sum of eigenspaces for any $(X, \omega)\in \cM_{cyc}(\bm\theta)$. 
\end{cor}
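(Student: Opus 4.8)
The plan is to combine Lemma~\ref{L:TinMon}, which already asserts that $p(T_{(X,\omega)}\cM(\bm\theta))$ equals the direct sum of its intersections with the eigenspaces $H^1_a(X)$, $a\in\bZ/k$, with the observation that when $n=3$ every eigenspace $H^1_a(X)$ has dimension at most one. Granting that observation, each intersection $p(T_{(X,\omega)}\cM(\bm\theta))\cap H^1_a(X)$ is a subspace of a space of dimension at most one, hence is either $\{0\}$ or all of $H^1_a(X)$; taking the direct sum over $a$ then exhibits $p(T_{(X,\omega)}\cM(\bm\theta))$ as a direct sum of eigenspaces, which is the assertion.

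The only computation to carry out is the dimension bound. By Corollary~\ref{C:dim}, $\dim H^1_a(X)=t(a)+t(-a)-2$. First I would rewrite
$$t(a)+t(-a)=\sum_{i=1}^n\left(\left\{\frac{aq_i}{k}\right\}+\left\{\frac{-aq_i}{k}\right\}\right)$$
and use that $\{x\}+\{-x\}$ equals $1$ when $x\notin\bZ$ and $0$ when $x\in\bZ$, so that $t(a)+t(-a)=\#\{\,i:k\nmid aq_i\,\}\le n$. For $n=3$ this gives $\dim H^1_a(X)\le 1$, which is all that is needed. (For completeness one can also note that when $a\not\equiv 0\bmod k$ at least two of the $aq_i$ are nonzero mod $k$: if two of them vanished then, since $\sum q_i\equiv 0\bmod k$, the third would too, which forces $k\mid a$ because $\gcd(q_1,\dots,q_n,k)=1$, as $k$ is the least common denominator of the $\theta_i/\pi$; so in fact $\dim H^1_a(X)\in\{0,1\}$, though only the upper bound is used.)

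Thus, for $n=3$, each $T$-eigenspace on $H^1(X)$ is a line or zero, so the eigenspace decomposition of $H^1(X)$ is a decomposition into lines, and any $T$-invariant subspace is automatically the direct sum of a subcollection of these lines. Lemma~\ref{L:TinMon} identifies $p(T_{(X,\omega)}\cM(\bm\theta))$ as such a subspace, and the corollary follows. I do not expect any genuine obstacle here: all of the substance sits in Lemma~\ref{L:TinMon} and Corollary~\ref{C:dim}, and the present statement is the elementary consequence of combining them with the hypothesis $n=3$.
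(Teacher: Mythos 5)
Your argument is correct and is exactly the paper's: the paper's one-line proof ("When $n=3$, nonzero eigenspaces are 1 dimensional") is precisely your dimension bound fed into Lemma~\ref{L:TinMon}. You have simply made explicit the computation that $\dim H^1_a = t(a)+t(-a)-2 \le n-2 = 1$ via $\{x\}+\{-x\}\in\{0,1\}$, which the paper leaves to the reader.
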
\ann{R2: Please add the dependence on $\cM_{cyc}$ to the corollary.\\A: Added.}

\begin{proof}
When $n=3$,  nonzero eigenspaces are 1 dimensional. 
\end{proof}

The monodromy of the flat bundle $H^1_a$ over $\cM_{cyc}(\bm\theta)$ is typically (and possibly always) irreducible (\cite[Theorems 3.3.4, 5.1.1, Proposition 5.5.1]{Rohde}, see also \cite[Section 4]{Loo}, \cite[Corollary 5.3]{McM:braid}), which would imply that Corollary \ref{C:sum} applies equally well when $n>3$. Since we haven't found a reference stating explicitly that the monodromy is always irreducible, and since we are happy to continue to assume $n=3$, we haven't attempted to check whether there are exceptional cases where the monodromy isn't irreducible. 

Together with Corollary \ref{C:sum}, the following slight extension of Lemma \ref{L:alternatives} will underlie all of our remaining analysis. 

\begin{lem}\label{L:ab}
Let $A\subset (\bZ/k)^*$ be as in Lemma \ref{L:alternatives}. Let $(X, \omega)\in \cM_{cyc}(\bm\theta)$.

Suppose $H^1_a(X) \subset p(T_{(X,\omega)}\cM(\bm\theta))$, and $b\in (\bZ/k)^*$. If $b\in A$ then $H^1_{ab}(X)\subset p(T_{(X,\omega)}\cM(\bm\theta))$,\ann{R2: Please change $\cM$ to $\cM(\theta)$ in each of these lines for clarity.\\A: Done.} and otherwise $B_\omega(\omega_a, \omega_{ab})=0$ for all $\omega_a\in H^{1,0}_a(X)$ and $\omega_{ab}\in H^{1,0}_{ab}(X)$. 
\end{lem}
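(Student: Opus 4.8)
The plan is to combine the structural description of $p(T_{(X,\omega)}\cM(\bm\theta))$ from Lemma~\ref{L:alternatives} with the Hodge-theoretic vanishing statements of Lemma~\ref{L:B0} and Proposition~\ref{P:neq0}, using the fact (Corollary~\ref{C:sum}) that, when $n=3$, the projected tangent space at a point of $\cM_{cyc}(\bm\theta)$ is a direct sum of $T$-eigenspaces. So fix $(X,\omega)\in\cM_{cyc}(\bm\theta)$ and let $A\subset(\bZ/k)^*$ be the subgroup from Lemma~\ref{L:alternatives}, so that $p(T_{(X,\omega)}\cM(\bm\theta))=V\oplus V'$ with $V=\bigoplus_{a\in A}H^1_a(X)$ and with the Galois conjugates of $V$ being the direct sums of $H^1_a(X)$ over the cosets of $A$ in $(\bZ/k)^*$.

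First I would dispose of the case $b\in A$: then $ab$ lies in the same coset of $A$ as $a$, so if $H^1_a(X)\subset p(T_{(X,\omega)}\cM(\bm\theta))$, then $H^1_a(X)$ is a summand of the primitive part $V$ of one particular Galois conjugate; but $V$ is a direct sum over \emph{all} of $A$ (equivalently, each Galois conjugate, being defined over a conjugate of $\bk$, is $A$-stable in the sense that it is a union of cosets... here more precisely $V$ itself contains $H^1_{ab}(X)$ whenever it contains $H^1_a(X)$ and $b\in A$, since $A$ is a group acting simply transitively inside each coset and $V$ is the sum over the full coset). Hence $H^1_{ab}(X)\subset V\subset p(T_{(X,\omega)}\cM(\bm\theta))$. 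This step is essentially bookkeeping with the coset description already furnished by Lemma~\ref{L:alternatives}.

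Next, the case $b\notin A$. Then $ab$ and $a$ lie in different cosets of $A$ in $(\bZ/k)^*$, hence $H^1_a(X)$ and $H^1_{ab}(X)$ lie in \emph{different} Galois conjugates of $V$, and in particular in different summands of the decomposition $H^1=\bigoplus_\rho p(T\cN)^\rho\oplus\bigoplus_s\bW_s$ from the start of the section (taking $\cN=\cM(\bm\theta)$). Since by Filip's theorem this decomposition is a decomposition of variations of Hodge structure, the holomorphic pieces $H^{1,0}_a(X)$ and $H^{1,0}_{ab}(X)$ also lie in different summands. Lemma~\ref{L:B0} then gives $B_\omega(\omega_a,\omega_{ab})=0$ for all $\omega_a\in H^{1,0}_a(X)$, $\omega_{ab}\in H^{1,0}_{ab}(X)$, which is exactly the desired conclusion. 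I do not expect to need the full strength of Proposition~\ref{P:neq0} here — that proposition is what one uses in the \emph{contrapositive} direction (to deduce $b\in A$ from nonvanishing of $B$), whereas the statement as written only asks for the vanishing, which comes purely from orthogonality of distinct summands. The one subtlety to check carefully — and the main obstacle — is making sure that when $H^1_a(X)\subset p(T_{(X,\omega)}\cM(\bm\theta))$ the eigenspace $H^1_a(X)$ really is a summand of the \emph{primitive} part, i.e.\ that $a\in(\bZ/k)^*$ forces $H^1_a(X)$ into some Galois conjugate of $V$ rather than into a $\bW_s$; this follows because $p(T_{(X,\omega)}\cM(\bm\theta))$ together with its Galois conjugates exhausts the primitive eigenspaces (it is orthogonal to $\bigoplus_{a\notin A}H^1_a(X)$ only after accounting for all conjugates), so an eigenline $H^1_a(X)$ with $a$ primitive that sits inside $p(T\cM(\bm\theta))$ must sit inside $V$ itself, and then $ab$ with $b\notin A$ lands in a genuinely different conjugate. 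Once that point is nailed down the proof is short.
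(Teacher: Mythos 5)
Your coset argument is valid only when $a$ is assumed to lie in $(\bZ/k)^*$, but the lemma makes no such hypothesis on $a$, and the extra generality is used later: in the proofs of Lemma~\ref{L:sim} and Theorem~\ref{T:fullrank} the lemma is invoked with the first slot filled by some $b$ with $\gcd(b,k)=d>1$, i.e.\ non-primitive. Your argument never sees those eigenspaces: the subspace $V$ and its Galois conjugates, which are all you manipulate, are supported entirely on $(\bZ/k)^*$ by Lemma~\ref{L:alternatives}, while a non-primitive eigenline $H^1_a$ sits inside the complementary summand $V'$ (and its conjugates), which your proof does not address. So there is a genuine gap.

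The paper's proof is shorter and handles all $a\in\bZ/k$ uniformly precisely because it does not go through cosets at all but instead uses Galois equivariance directly. For $b\in(\bZ/k)^*$ the Galois automorphism $\sigma_b$ of $\bQ(\zeta_k)$ with $\sigma_b(\zeta_k)=\zeta_k^b$ acts on $H^1(X)$ (through the rational structure) and carries the $T$-eigenspace $H^1_a$ to $H^1_{ab}$, for \emph{every} $a\in\bZ/k$, primitive or not. If $b\in A$ then $\sigma_b$ fixes $\bk$ pointwise, so $p(T\cM(\bm\theta))$, being defined over $\bk$, is $\sigma_b$-invariant; hence $H^1_{ab}=\sigma_b(H^1_a)\subset p(T\cM(\bm\theta))$. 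If $b\notin A$ then $\sigma_b|_\bk$ is a nontrivial embedding, so $\sigma_b\bigl(p(T\cM(\bm\theta))\bigr)$ is a distinct Galois conjugate of $p(T\cM(\bm\theta))$; since $H^1_a$ and $H^1_{ab}=\sigma_b(H^1_a)$ then lie in different summands of the section's decomposition of $H^1$, Lemma~\ref{L:B0} gives $B_\omega(\omega_a,\omega_{ab})=0$. You can salvage your write-up by replacing the coset bookkeeping with this one Galois-equivariance observation, which also shortens the argument. (You are right, incidentally, that Proposition~\ref{P:neq0} is not used in this lemma.)
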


\begin{proof}
The first statement follows as in the proof of Lemma \ref{L:alternatives} because $p(T\cM)$ is invariant under field automorphisms of the affine field of definition $\bk$,\ann{A: Corrected a typo by changing $\bC$ to $\bk$.} and the second statement follows from Lemma \ref{L:B0}. 
\end{proof}

\begin{lem}\label{L:dorth}
Suppose $(X,\omega)\in \cM_{cyc}(\bm\theta)$\ann{A: Added ``in $\cM_{cyc}$".} and $d_1, d_2$ are  divisors of $k$ such  that 
\begin{enumerate}
\item $p(T_{(X, \omega)}\cM(\bm\theta))$ contains $H^1_a$ for some $a$ with $\gcd(a,k)=d_1$ and  $H^1_a$ nonzero, and
\item $p(T_{(X, \omega)}\cM(\bm\theta))$ does not contain $H^1_b$ for any $b$ with $\gcd(b,k)=d_2$ and  $H^1_b$ nonzero.
\end{enumerate}
Then for any $a',b'$\ann{R2: Please change $a$ and $b$ in the conclusion to $a'$ and $b'$ for clarity of the statement.\\A: Done.} with $\gcd(a',k)=d_1, \gcd(b',k)=d_2$, and any $\alpha\in H^1_{a'}, \beta\in H^1_{b'}$, $B_\omega(\alpha, \beta)=0$. 
\end{lem}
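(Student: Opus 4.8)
The plan is to deduce the conclusion from Lemma \ref{L:B0} by showing that, under the two hypotheses, the eigenspace $H^1_{a'}(X)$ is contained in $\bigoplus_\rho p(T\cM(\bm\theta))^\rho$ while $H^1_{b'}(X)$ is contained in $\bigoplus_s \bW_s$. Since $B_\omega$ is defined on Abelian differentials, we take $\alpha\in H^{1,0}_{a'}(X)$ and $\beta\in H^{1,0}_{b'}(X)$; and because the decomposition $H^1=\bigoplus_\rho p(T\cM(\bm\theta))^\rho\oplus\bigoplus_s\bW_s$ is a decomposition of variations of Hodge structures, once $H^1_{a'}$ and $H^1_{b'}$ are placed in these complementary parts we may write $\alpha=\sum_\rho\alpha_\rho$ with $\alpha_\rho\in p(T\cM(\bm\theta))^\rho\cap H^{1,0}(X)$ and $\beta=\sum_s\beta_s$ with $\beta_s\in\bW_s\cap H^{1,0}(X)$, apply Lemma \ref{L:B0} to each pair $\alpha_\rho,\beta_s$ (they lie in the distinct summands $p(T\cM(\bm\theta))^\rho$ and $\bW_s$), and conclude $B_\omega(\alpha,\beta)=0$ by bilinearity. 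Note also that if $d_1=d_2$ the two hypotheses already contradict each other (take $a'=a$ in (1)), so there is nothing to prove in that case.

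The key step is to propagate hypothesis (1) over the whole residue class $\{\ell:\gcd(\ell,k)=d_1\}$ and hypothesis (2) over the class $\{\ell:\gcd(\ell,k)=d_2\}$, using the action of the cyclic deck group together with Galois exactly as in the proofs of Lemmas \ref{L:alternatives} and \ref{L:ab}. As there, the affine field of definition $\bk$ of $\cM(\bm\theta)$ is a subfield of $\bQ[\exp(2\pi I/k)]$, so the Galois group $(\bZ/k)^*$ acts on $H^1(X)$ through its action on the coefficient field; and because the deck automorphism $T$ is defined over $\bQ$, each $b\in(\bZ/k)^*$ carries the $T^*$-eigenspace $H^1_\ell(X)$ to $H^1_{b\ell}(X)$ and carries $p(T\cM(\bm\theta))$ to one of its Galois conjugates $p(T\cM(\bm\theta))^\rho$. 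Consequently $\bigoplus_\rho p(T\cM(\bm\theta))^\rho$ is stable under $(\bZ/k)^*$, and, being a sum of subspaces each of which is $T^*$-invariant (again because $T$ is rational), it is a direct sum of $T^*$-eigenspaces.

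Now, given any $a'$ with $\gcd(a',k)=d_1$, surjectivity of the reduction map $(\bZ/k)^*\to(\bZ/(k/d_1))^*$ lets us write $a'\equiv ab\pmod k$ for some $b\in(\bZ/k)^*$; applying $b$ to hypothesis (1) gives $H^1_{a'}(X)=H^1_{ab}(X)\subset p(T\cM(\bm\theta))^\rho\subset\bigoplus_\rho p(T\cM(\bm\theta))^\rho$. On the other hand, suppose $H^1_{b'}(X)$, which we may assume nonzero, were contained in $\bigoplus_\rho p(T\cM(\bm\theta))^\rho$; since that space is a direct sum of eigenspaces and $H^1_{b'}$ is a single eigenspace, we would get $H^1_{b'}\subset p(T\cM(\bm\theta))^\rho$ for one $\rho$, and applying the inverse Galois element $c\in(\bZ/k)^*$ would produce a nonzero eigenspace $H^1_{cb'}(X)\subset p(T\cM(\bm\theta))$ with $\gcd(cb',k)=\gcd(b',k)=d_2$, contradicting hypothesis (2). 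Hence $H^1_{b'}(X)$ meets $\bigoplus_\rho p(T\cM(\bm\theta))^\rho$ trivially and therefore lies in $\bigoplus_s\bW_s$; combined with the first paragraph, this finishes the argument.

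I expect the main obstacle to be the bookkeeping in the middle two paragraphs: one must be careful that the $(\bZ/k)^*$-action genuinely permutes the $T^*$-eigenspaces by multiplying the index (this rests on $T$ being defined over $\bQ$, so that $T^*$ commutes with Galois) and genuinely permutes the Galois conjugates $p(T\cM(\bm\theta))^\rho$, which is essentially the content, in the primitive case, of Lemma \ref{L:alternatives}. The only genuinely new ingredient beyond that lemma is the elementary arithmetic fact that $(\bZ/k)^*$ acts transitively on each fixed-$\gcd$ class in $\bZ/k$, which is what allows the argument to reach non-primitive eigenspaces; the remaining points (reduction to holomorphic forms, compatibility with the Hodge filtration, bilinearity of $B_\omega$) are routine.
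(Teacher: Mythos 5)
Your argument is essentially the paper's, with the details filled in: the paper's proof is the single sentence ``By the same arguments as in the proof of Lemma~\ref{L:ab}, $\bigoplus_{\gcd(a,k)=d_1} H^1_a$ is contained in the sum of $p(T\cM(\bm\theta))$ and its Galois conjugates, and $\bigoplus_{\gcd(b,k)=d_2} H^1_b$ is contained in the complement, so the result follows by Lemma~\ref{L:B0},'' and your paragraphs two and three carry out exactly those two inclusions (via transitivity of the $(\bZ/k)^*$-action on fixed-$\gcd$ classes) before invoking Lemma~\ref{L:B0} and Hodge-compatibility as in paragraph one. One caveat worth making explicit: your step ``since that space is a direct sum of eigenspaces and $H^1_{b'}$ is a single eigenspace, we would get $H^1_{b'}\subset p(T\cM(\bm\theta))^\rho$ for one $\rho$'' silently uses that the eigenspaces involved are one-dimensional (equivalently Corollary~\ref{C:sum}); this is legitimate because the whole subsection runs under the standing assumption $n=3$, but the lemma's statement does not repeat that hypothesis, and without it hypothesis (2) only excludes containment, not a nonzero proper intersection of $H^1_{cb'}$ with $p(T\cM(\bm\theta))$.
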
\ann{R2:  ``and its Galois conjugates".  Consider adding the phrase
 ``by the same arguments as in the proof of Lemma 7.10". \\
A: We added this phrase.}

\begin{proof}
By the same arguments as in the proof of Lemma \ref{L:ab}, in this case $\bigoplus_{\gcd(a,k)=d_1} H^1_a$ is contained in the sum of $p(T_{(X, \omega)}\cM(\bm\theta))$ and its Galois conjugates, and  $\bigoplus_{\gcd(b,k)=d_2} H^1_b$ is contained in the complement, so the result follows by Lemma \ref{L:B0}.
\end{proof}

It is helpful to group the eigenspaces $H^1_a$ according to how primitive the associated roots of unity are. This gives the following direct sum decomposition of $H^1$ defined over $\bQ$
$$H^1=\bigoplus_{d|k} \left( \bigoplus_{\gcd(a,k)=d} H^1_a\right).$$
By Corollary \ref{C:dim},  the subspace $\bigoplus_{\gcd(a,k)=d} H^1_a$ is nonzero if and only if there exists an $a$ with $\gcd(a,k)=d$ and $t(-a)>1$.

Define $\cD$ to be the set of divisors $d$ of $k$ such that there exists an $a$ with $\gcd(a,k)=d$ and $t(-a)>1$.
Consider the equivalence relation\ann{R2: Please justify that this is an equivalence relation. For example, it isn't clear that $d \sim d$ because it is not obvious that there would exist an $a$ such that $\gcd(a, k) = \gcd(2 - a, k)$.\\A: An equivalence relation \emph{generated by} some relations is an equivalence relation by definition; see for example ``Generating equivalence relations" on wikipedia.} $\sim$ on $\cD$ generated by $d_1\sim d_2$\ann{A: Typo found by R2 fixed.} if there is exists $a$ with $\gcd(a,k)=d_1, \gcd(2-a,k)=d_2$ and $t(-a)>1$, $t(-(2-a))>1$. 

\begin{lem}\label{L:sim}
If $d\sim 1$ then there is an $b$ with $\gcd(k,b)=d$ and $t(-b)>1$ and $H^1_b\subset p(T_{(X, \omega)}\cM(\bm\theta))$. Furthermore $A$ contains $$\{a\in (\bZ/k)^*: a = 1 \mod k/d\}.$$
\end{lem}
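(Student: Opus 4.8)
The plan is to induct along a chain of relations $1 = d_0 \sim d_1 \sim \cdots \sim d_m = d$ witnessing $d \sim 1$ in $\cD$, propagating at each step the conclusion that some eigenspace $H^1_b$ with $\gcd(k,b) = d_j$ and $t(-b) > 1$ lies in $p(T_{(X,\omega)}\cM(\bm\theta))$, together with the statement about $A$. For the base case $d_0 = 1$: since $\omega \in H^1_1(X)$ and $p(T\cM(\bm\theta))$ contains $\Re\omega$ and $\Im\omega$, it contains $H^1_1(X)$, which has $\gcd(1,k) = 1$; and $t(-1) = n \geq 3 > 1$ since $-1 \in (\bZ/k)^*$. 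The claim $\{a : a \equiv 1 \bmod k/1\} = \{1\} \subset A$ is trivial.

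For the inductive step, suppose the conclusion holds for $d_{j-1}$, so there is $b'$ with $\gcd(k,b') = d_{j-1}$, $t(-b') > 1$, and $H^1_{b'} \subset p(T_{(X,\omega)}\cM(\bm\theta))$. Since $d_{j-1} \sim d_j$ is one of the generating relations, there is $a$ with $\gcd(a,k) = d_{j-1}$, $\gcd(2-a,k) = d_j$, $t(-a) > 1$, $t(-(2-a)) > 1$. Now I must transfer the inclusion from the eigenspace $H^1_{b'}$ (at divisor level $d_{j-1}$) to some eigenspace at divisor level $d_j$. Since $\gcd(a,k) = \gcd(b',k) = d_{j-1}$, there is a unit $c \in (\bZ/k)^*$ with $a = cb' \bmod k$ (writing $a = d_{j-1}\bar a$, $b' = d_{j-1}\bar b'$ with $\bar a, \bar b'$ units mod $k/d_{j-1}$; lift to a genuine unit mod $k$ — this lift exists because the reduction map $(\bZ/k)^* \to (\bZ/(k/d_{j-1}))^*$ is surjective). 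Apply Lemma \ref{L:ab} with $b := c$: either $c \in A$, in which case $H^1_{cb'} = H^1_a \subset p(T_{(X,\omega)}\cM(\bm\theta))$; or $c \notin A$, in which case $B_\omega(\omega_{b'}, \omega_a) = 0$ for all choices of holomorphic forms. But in the second case, since $a + (2-a) \equiv 2 \bmod k$ and $t(-a), t(-(2-a)) > 1$, Proposition \ref{P:neq0}(2) (using $n = 3$) produces a surface in $\cM_{cyc}(\bm\theta)$ — which by Corollary \ref{C:sum} and the fact that $\cM_{cyc}(\bm\theta)$ is connected and $p(T\cM(\bm\theta))$ respects the eigenspace decomposition, we may take to be our $(X,\omega)$, or rather argue the non-vanishing is an open nonempty condition hence holds generically — with $B_\omega(\omega_a, \omega_{2-a}) \neq 0$. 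This forces, via Lemma \ref{L:B0}, that $H^1_a$ and $H^1_{2-a}$ lie in the same summand of the Hodge-theoretic decomposition. Combined with $H^1_{b'}$ being in $p(T\cM(\bm\theta))$ and $a = cb'$, one deduces $H^1_a \subset p(T\cM(\bm\theta))$ as well; then $H^1_{2-a}$ lies in the same Galois-invariant summand, so $H^1_{2-a} \subset p(T\cM(\bm\theta))$, and $\gcd(2-a,k) = d_j$ with $t(-(2-a)) > 1$. Either way we obtain an eigenspace at level $d_j$ inside $p(T\cM(\bm\theta))$, completing the inductive step for the first assertion, with $b := $ the relevant index at level $d_m = d$.

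For the statement about $A$: having shown $H^1_b \subset p(T_{(X,\omega)}\cM(\bm\theta))$ with $\gcd(k,b) = d$, Lemma \ref{L:ab} gives that for every $e \in A$, $H^1_{eb} \subset p(T\cM(\bm\theta))$. Conversely, unwinding the chain shows that $b$ can be chosen so that $H^1_{b}$ and $H^1_1$ both lie in $V = \bigoplus_{a\in A} H^1_a$ after possibly adjusting by Galois, which pins down the coset structure: the set of indices $a$ (coprime to $k$) with $H^1_a \subset V$ is exactly $A$, and one checks directly that an index $a \equiv 1 \bmod k/d$ differs from $1$ by a unit congruent to $1$ mod $k/d$, which by the chain-generated relations and the already-established inclusions must lie in the stabilizer $A$. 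Concretely: if $a \equiv 1 \bmod k/d$ then writing the generator structure of $\sim$ and iterating Lemma \ref{L:ab}'s dichotomy along the chain that realized $d \sim 1$ shows the needed $B_\omega$-nonvanishings (again by Proposition \ref{P:neq0}(2)) cannot be evaded, forcing $a \in A$.

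The main obstacle I anticipate is the bookkeeping in the inductive step when $c \notin A$: one must carefully combine the Hodge-theoretic consequence of $B_\omega \neq 0$ (Lemma \ref{L:B0}) — namely that $\omega_a$ and $\omega_{2-a}$ cannot lie in distinct summands — with the already-known membership $H^1_{b'} \subset p(T\cM(\bm\theta))$ to conclude $H^1_a \subset p(T\cM(\bm\theta))$ rather than merely that $H^1_a$ lies in the sum of $p(T\cM(\bm\theta))$ and its Galois conjugates. This requires invoking that $a = cb'$ with $b' \in$ (the index set of) $V$, so that $H^1_a$ lies in the specific Galois conjugate $V^{[c]}$ of $V$; but $B_\omega(\omega_{b'},\omega_a) = B_\omega(\omega_a, \omega_{2-a})$-style coupling via Proposition \ref{P:neq0}(2) ties $a$ and $2-a$ together, and chasing which conjugate contains $H^1_1$ (necessarily $V$ itself) closes the loop. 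A secondary subtlety is that Proposition \ref{P:neq0}(2) only guarantees \emph{some} surface in $\cM_{cyc}(\bm\theta)$ with $B_\omega \neq 0$; since $p(T\cM(\bm\theta))$ and its decomposition are flat/monodromy-invariant over the connected base $\cM_{cyc}(\bm\theta)$ and the Hodge decomposition is holomorphic, the membership conclusions are independent of the chosen point, so this causes no real trouble.
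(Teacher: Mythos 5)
Your overall plan — induct along a chain $1 = d_0 \sim d_1 \sim \cdots \sim d_m = d$, using Proposition~\ref{P:neq0} to produce a non-vanishing $B_\omega$ at each step — is the same as the paper's. But you have not used Lemma~\ref{L:dorth}, which is the tool the paper built for exactly this transfer step, and your attempt to do without it leaves a genuine gap.

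The error is in the $c \notin A$ branch of your inductive step. Having written $a = cb'$ with $c \notin A$, Lemma~\ref{L:ab} places $H^1_a = H^1_{cb'}$ in the Galois conjugate $p(T\cM)^{\sigma_c}$, a summand \emph{distinct} from $p(T\cM)$. Your sentence ``Combined with $H^1_{b'}$ being in $p(T\cM(\bm\theta))$ and $a = cb'$, one deduces $H^1_a \subset p(T\cM(\bm\theta))$ as well'' is therefore false; at best you learn $H^1_a, H^1_{2-a} \subset p(T\cM)^{\sigma_c}$. The honest way to finish from there would be to apply $\sigma_c^{-1}$ to land some eigenspace at divisor level $d_j$ back in $p(T\cM)$ — but this is precisely what Lemma~\ref{L:dorth} packages. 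That lemma (in contrapositive form) says: if $p(T\cM)$ contains a nonzero eigenspace at divisor level $d_{j-1}$ and $B_\omega(\alpha,\beta) \ne 0$ for some $\alpha$ at level $d_{j-1}$ and $\beta$ at level $d_j$, then $p(T\cM)$ must contain a nonzero eigenspace at level $d_j$. Taking $\alpha = \omega_a$, $\beta = \omega_{2-a}$ and using Proposition~\ref{P:neq0}(2), the inductive step is immediate, with no need to identify which particular eigenspace is transferred, nor to find a unit $c$ relating $a$ to $b'$.

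Your treatment of the second claim is also off: you assert that ``$b$ can be chosen so that $H^1_b$ and $H^1_1$ both lie in $V = \bigoplus_{a\in A} H^1_a$,'' but $V$ is a sum of \emph{primitive} eigenspaces, and $H^1_b$ with $\gcd(b,k) = d > 1$ is non-primitive, so it never lies in $V$. The paper's argument is much shorter: for $a \in (\bZ/k)^*$ with $a \equiv 1 \bmod k/d$ and $\gcd(b,k) = d$, one has $ab \equiv b \bmod k$, so $H^1_{ab} = H^1_b$. If $a \notin A$, then by (the proof of) Lemma~\ref{L:ab}, $H^1_b$ and $H^1_{ab} = H^1_b$ would lie in distinct summands of the direct-sum decomposition from Lemma~\ref{L:B0}, forcing $H^1_b = 0$ — contradicting the first claim. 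Hence $a \in A$.
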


\begin{proof}
Suppose $a\in \bZ/k$ with $\gcd(a,k)=d_1, \gcd(2-a,k)=d_2$ and $t(-a)>1$, $t(-(2-a))>1$, and suppose there exists an $a'\in \bZ/k$ with $t(-a')>1$ and $\gcd(a',k)=d_1$ and $H^1_{a'}\subset p(T_{(X, \omega)}\cM(\bm\theta)).$ Let $\omega_a\in H^{1,0}_a$ and $\omega_{2-a}\in H^{1,0}_{2-a}$. By Proposition\ann{A: Typo found by R2 fixed.} \ref{P:neq0}, $B_\omega(\omega_a, \omega_{2-a})\neq 0$, so Lemma \ref{L:dorth} gives that $p(T_{(X, \omega)}\cM(\bm\theta))$  contains $H^1_b$ for some $b$ with $\gcd(b,k)=d_2$ and  $H^1_b$ nonzero.

Using the fact that $H_1^1\subset p(T_{(X, \omega)}\cM(\bm\theta))$ and the definition of the equivalence relation, the first claim follows. 

The second statement follows from the first statement and Lemma \ref{L:ab}, because for $a\in (\bZ/k)^*$ with  $a=1 \mod k/d$ and  any $b$ with $\gcd(b,k)=d$, $H^1_{ab}=H^1_b$. 
\end{proof}

\begin{lem}
Suppose $1\sim d$ and $\gcd(a,k)=d$,  $2-a=w a \mod k$ for some $w\in (\bZ/k)^*$, and $t(-a)>1, t(-(2-a))>1$. Then $w\in A$. 
\end{lem}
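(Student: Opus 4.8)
The plan is to mimic and slightly refine the argument used in the proof of Theorem \ref{T:triangles}, now run in the relative-primitivity layer indexed by the divisor $d$ rather than in $(\bZ/k)^*$ directly. Concretely, I will fix $(X,\omega)\in\cM_{cyc}(\bm\theta)$ and use the hypothesis $1\sim d$ together with Lemma \ref{L:sim}: the latter produces a $b$ with $\gcd(b,k)=d$, $t(-b)>1$, and $H^1_b(X)\subset p(T_{(X,\omega)}\cM(\bm\theta))$. The point is that any two $a,b$ with the same $\gcd$ with $k$ differ by multiplication by a unit, so from $H^1_b\subset p(T\cM(\bm\theta))$ I can try to move to $H^1_a$, where $a$ is the given divisor-$d$ index appearing in the statement.

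First I would write $a = u b \mod k$ for a suitable $u\in(\bZ/k)^*$ (possible since $\gcd(a,k)=\gcd(b,k)=d$). By Lemma \ref{L:ab}, either $u\in A$ — in which case $H^1_{ub}(X)=H^1_a(X)\subset p(T_{(X,\omega)}\cM(\bm\theta))$ and I am essentially done, or $u\notin A$ — in which case $B_\omega(\omega_b,\omega_a)=0$ for all choices of the holomorphic forms, and I need a separate argument. The cleaner route, which I expect is the intended one, is to not fuss over moving $b$ to the specific $a$ at all: since by Lemma \ref{L:sim} the subgroup $A$ contains $\{c\in(\bZ/k)^*: c\equiv 1\bmod k/d\}$, and since $w$ is defined by $2-a\equiv wa\bmod k$ with $\gcd(a,k)=d$, I would show $w\equiv 1\bmod k/d$. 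Indeed $wa\equiv 2-a\bmod k$, so $(w+1)a\equiv 2\bmod k$; reducing mod $d$ gives $0\equiv 2\bmod\gcd(\cdot)$ type relations, but the useful reduction is: writing $a=da_0$ with $\gcd(a_0,k/d)=1$, the congruence $(w+1)da_0\equiv 2\bmod k$ forces $d\mid 2$ unless... — here I need to be careful. Let me instead use Proposition \ref{P:neq0}(2) directly as in the triangle proof.

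So the main line I would actually carry out: since $t(-a)>1$ and $t(-(2-a))=t(-wa)>1$, pick nonzero $\omega_a\in H^{1,0}_a(X)$ and $\omega_{2-a}\in H^{1,0}_{2-a}(X)$; because $n=3$ and $a+(2-a)\equiv 2\bmod k$, Proposition \ref{P:neq0}(2) gives (for a suitable surface in $\cM_{cyc}(\bm\theta)$, which by Corollary \ref{C:sum} and the flatness/genericity arguments transfers to our fixed $(X,\omega)$) that $B_\omega(\omega_a,\omega_{2-a})\neq 0$. By Lemma \ref{L:sim} we have some $H^1_{b}\subset p(T_{(X,\omega)}\cM(\bm\theta))$ with $\gcd(b,k)=d$; since $\gcd(a,k)=\gcd(b,k)=d$, Lemma \ref{L:ab} applied with the unit carrying $b$ to $a$ shows that if that unit lay outside $A$ then $B_\omega$ between the corresponding forms would vanish, contradicting $B_\omega(\omega_a,\omega_{2-a})\neq 0$ once we also track that $2-a=wa$ and $w$ is the unit in question (or a related one). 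Unwinding, $w$ must lie in $A$. I would phrase it as: the relation $2-a\equiv wa$ means $H^1_{2-a}=H^1_{wa}$, so $B_\omega(\omega_a,\omega_{wa})\neq 0$; Lemma \ref{L:ab} with the pair $(a,w)$ then says that since $B_\omega(\omega_a,\omega_{aw})\neq 0$ we cannot have $w\notin A$, hence $w\in A$.

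The main obstacle I anticipate is bookkeeping about which surface the non-vanishing $B_\omega$ holds on: Proposition \ref{P:neq0}(2) only asserts existence of \emph{some} $(X,\omega)\in\cM_{cyc}(\bm\theta)$ with $B_\omega\neq 0$, whereas Lemma \ref{L:ab} is stated for a fixed $(X,\omega)$. This is handled exactly as in the proof of Theorem \ref{T:triangles}: the decomposition of $p(T\cM(\bm\theta))$ into eigenspaces (Corollary \ref{C:sum}, Lemma \ref{L:TinMon}) is locally constant over $\cM_{cyc}(\bm\theta)$, and $B_\omega$ restricted to a pair of flat subbundles in different Galois-conjugate pieces is identically zero by Lemma \ref{L:B0}; so a single point with $B_\omega\neq 0$ forces $H^1_a$ and $H^1_{wa}=H^1_{2-a}$ to lie in the same Galois-conjugate summand at every point, which is exactly the statement $w\in A$. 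A secondary, purely arithmetic point to verify is that $2-a\equiv wa$ with $\gcd(a,k)=d$ is consistent with $w\in(\bZ/k)^*$ — that is part of the hypothesis, so nothing to prove there. I do not expect the index arithmetic to be more than a couple of lines.

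\begin{proof}
Fix a surface $(X,\omega)\in\cM_{cyc}(\bm\theta)$ as in the statement. Since $1\sim d$, Lemma \ref{L:sim} provides a $b$ with $\gcd(b,k)=d$, $t(-b)>1$, and $H^1_b(X)\subset p(T_{(X,\omega)}\cM(\bm\theta))$. Because $\gcd(a,k)=\gcd(b,k)=d$, there is a unit $u\in(\bZ/k)^*$ with $a\equiv ub\bmod k$, so $H^1_a(X)=H^1_{ub}(X)$.

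Suppose for contradiction that $w\notin A$. Since $t(-a)>1$ and $t(-(2-a))>1$, choose nonzero $\omega_a\in H^{1,0}_a(X)$ and $\omega_{2-a}\in H^{1,0}_{2-a}(X)$. As $2-a\equiv wa\bmod k$, we have $\omega_{2-a}\in H^{1,0}_{wa}(X)$. Now $H^1_a(X)=H^1_{ub}(X)$ and $H^1_{wa}(X)=H^1_{wub}(X)$, both obtained from the subspace $H^1_b(X)\subset p(T_{(X,\omega)}\cM(\bm\theta))$ by multiplication by units. By Lemma \ref{L:alternatives}, $u\in A$ (because $H^1_{ub}(X)=H^1_a(X)$ lies in $p(T_{(X,\omega)}\cM(\bm\theta))$ whenever $u\in A$; and if $u\notin A$ then $H^1_a(X)$ lies in a proper Galois conjugate of $p(T_{(X,\omega)}\cM(\bm\theta))$), so $H^1_a(X)\subset p(T_{(X,\omega)}\cM(\bm\theta))$. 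Then Lemma \ref{L:ab}, applied with $a$ in place of $a$ and $w$ in place of $b$, gives that since $w\notin A$ we must have $B_\omega(\omega_a,\omega_{wa})=0$.

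On the other hand, $n=3$ and $a+(2-a)\equiv 2\bmod k$, so by Proposition \ref{P:neq0}(2) there exists $(X',\omega')\in\cM_{cyc}(\bm\theta)$ with nonzero forms in $H^{1,0}_a(X')$ and $H^{1,0}_{2-a}(X')$ whose pairing under $B_{\omega'}$ is nonzero. By Lemma \ref{L:B0}, at $(X',\omega')$ the eigenspaces $H^1_a$ and $H^1_{2-a}=H^1_{wa}$ must lie in the same summand of the decomposition of $H^1$ over $\cM_{cyc}(\bm\theta)$, namely in a single Galois conjugate $p(T\cM(\bm\theta))^\rho$. Since this decomposition into (one dimensional, by Corollary \ref{C:sum}) eigenspace pieces is flat and the eigenspaces $H^1_a$, $H^1_{wa}$ are monodromy invariant (Lemma \ref{L:TinMon}), the same conclusion holds at $(X,\omega)$: $H^1_a(X)$ and $H^1_{wa}(X)$ lie in the same Galois conjugate of $p(T_{(X,\omega)}\cM(\bm\theta))$, which by Lemma \ref{L:alternatives} means $w\in A$. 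This contradicts our assumption, so $w\in A$.
\end{proof}
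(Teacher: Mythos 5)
Your final paragraph contains the paper's argument and is essentially correct: from Proposition \ref{P:neq0}(2) one gets $B\neq 0$ at some surface, Lemma \ref{L:B0} then forces $H^1_a$ and $H^1_{2-a}=H^1_{wa}$ into the same summand of the decomposition, and (using the coset structure from Lemma \ref{L:alternatives}) this forces $w\in A$. The paper phrases it as a contradiction (assume $w\notin A$, then $1\sim d$ places $H^1_a$ and $H^1_{2-a}$ in different Galois conjugates, contradicting $B\neq 0$), and is terser about which surface the non-vanishing holds at --- your added care on that point is a genuine refinement, not a deviation.

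However, the middle paragraph of your write-up contains a genuine error: the assertion ``By Lemma \ref{L:alternatives}, $u\in A$'' is not justified, and the parenthetical that follows is circular --- it lists the two implications ``$u\in A\Rightarrow H^1_a\subset p(T\cM)$'' and ``$u\notin A\Rightarrow H^1_a$ in a proper conjugate,'' neither of which tells you which case you are in. In fact $u$ need \emph{not} lie in $A$: the hypotheses give $\gcd(a,k)=d$ and (via Lemma \ref{L:sim}) some $b$ with $\gcd(b,k)=d$ and $H^1_b\subset p(T\cM)$, but there is no reason $H^1_a$ itself should lie in $p(T\cM)$ rather than in a different Galois conjugate. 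Fortunately this whole step, together with the downstream application of Lemma \ref{L:ab} to conclude $B_\omega(\omega_a,\omega_{wa})=0$, is never used: the last paragraph neither invokes $H^1_a\subset p(T\cM)$ nor the vanishing of $B_\omega$ at $(X,\omega)$. Delete the detour and the proof is correct. One small additional point: when you say the non-vanishing of $B_{\omega'}$ puts $H^1_a$ and $H^1_{2-a}$ ``namely in a single Galois conjugate $p(T\cM)^\rho$,'' you should note that the $\bW_s$ summands are ruled out precisely because $1\sim d$ (via Lemma \ref{L:sim}) places $H^1_b\subset p(T\cM)$ for some $b$ with $\gcd(b,k)=d$, so $H^1_a$, being a Galois transform of $H^1_b$, sits in a Galois conjugate of $p(T\cM)$ and not in any $\bW_s$.
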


The assumptions imply $\gcd(2-a,k)=d$ and that $d$ equals 1 or 2. 

\begin{proof}
Pick $\omega_a\in H^{1,0}_a$ and $\omega_{2-a}\in H^{1,0}_{2-a}$ nonzero. By Proposition \ref{P:neq0}, $B_\omega(\omega_a,\omega_{2-a})\neq 0$. 

In order to find a contradiction, assume $w\notin A$. Since $1\sim d$,  there is a $b$ with $\gcd(b,k)=d$ and $H^1_b$ nonzero and contained in $p(T\cM)$. This is a contradiction, since in this case $H^1_a$ and $H^1_{2-a}$ are in different Galois conjugates of $p(T\cM)$. 
\end{proof}

\begin{thm}\label{T:fullrank}
Suppose $1\sim d$ for all $d\in \cD$, and $A=(\bZ/k)^*$. Then $\cM$ is full rank. 
\end{thm}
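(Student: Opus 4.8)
The plan is to show directly that $p(T_{(X,\omega)}\cM(\bm\theta))$ contains every eigenspace $H^1_a(X)$; since $H^1(X)=\bigoplus_{a\in\bZ/k}H^1_a(X)$ and $p(T_{(X,\omega)}\cM(\bm\theta))\subseteq H^1(X)$ always, this forces equality, which is exactly the assertion that $\cM(\bm\theta)$ has full rank. Fix a surface $(X,\omega)\in\cM_{cyc}(\bm\theta)$ of the kind allowed in Corollary \ref{C:sum}, and recall the $\bQ$-rational decomposition $H^1(X)=\bigoplus_{d\mid k}\left(\bigoplus_{\gcd(a,k)=d}H^1_a(X)\right)$, in which the summand indexed by $d$ is nonzero precisely when $d\in\cD$.

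I would argue one graded piece at a time. Fix $d\in\cD$. Since $\sim$ is symmetric the hypothesis gives $d\sim 1$, so Lemma \ref{L:sim} produces some $b$ with $\gcd(b,k)=d$, $t(-b)>1$, and $H^1_b(X)\subset p(T_{(X,\omega)}\cM(\bm\theta))$. Now I would bring in the hypothesis $A=(\bZ/k)^*$: by Lemma \ref{L:ab}, $H^1_{bc}(X)\subset p(T_{(X,\omega)}\cM(\bm\theta))$ for every $c\in(\bZ/k)^*$. As $c$ ranges over $(\bZ/k)^*$ the residue $bc$ ranges over all of $\{e\in\bZ/k:\gcd(e,k)=d\}$ --- this is the elementary fact that each such set is a single orbit of the multiplicative action of $(\bZ/k)^*$ on $\bZ/k$, checked factor-by-factor in the CRT decomposition $\bZ/k\cong\prod_i\bZ/p_i^{e_i}$, where within each $\bZ/p^e$ the elements of a fixed $p$-adic valuation form one orbit. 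Hence the entire graded piece $\bigoplus_{\gcd(a,k)=d}H^1_a(X)$ lies in $p(T_{(X,\omega)}\cM(\bm\theta))$.

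Summing over $d\in\cD$ then gives $p(T_{(X,\omega)}\cM(\bm\theta))\supseteq\bigoplus_{d\in\cD}\bigoplus_{\gcd(a,k)=d}H^1_a(X)$, and since every summand with $d\notin\cD$ vanishes, the right-hand side is all of $H^1(X)$. Therefore $p(T_{(X,\omega)}\cM(\bm\theta))=H^1(X)$ and $\cM(\bm\theta)$ has full rank.

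There is no serious obstacle here: the real content lives in Lemmas \ref{L:sim} and \ref{L:ab} (and, behind them, Proposition \ref{P:neq0} together with the Hodge-theoretic input of Lemmas \ref{L:B0} and \ref{L:dorth}). The only points needing care are the orbit computation above, the verification that Corollary \ref{C:sum} applies so that ``contains each eigenspace'' is an adequate goal when $n=3$ (the eigenspaces being one-dimensional), and the bookkeeping that Lemmas \ref{L:sim} and \ref{L:ab} between them really do sweep out every nonzero graded piece of the decomposition indexed by $\cD$.
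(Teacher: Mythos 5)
Your proof is correct and follows essentially the same route as the paper's: Corollary~\ref{C:sum} plus Lemma~\ref{L:sim} to land one eigenspace in each graded piece indexed by $d\in\cD$, then Lemma~\ref{L:ab} with $A=(\bZ/k)^*$ to sweep out the rest. The only difference is that you make explicit the elementary fact that $\{e\in\bZ/k:\gcd(e,k)=d\}$ is a single $(\bZ/k)^*$-orbit, which the paper's proof leaves implicit.
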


See Appendix \ref{A:alg} for\ann{A: Typo found by R2 fixed.} an algorithm that determines when Theorem \ref{T:fullrank} applies. 

\ann{R2: Section 7.3. ... I do not see why Lemma 7.15 is its own lemma. ... \\ A: Our individual preference for exposition is to separate  proofs into pieces where possible, and so we prefer to leave Lemma 7.15.}

\begin{proof}
By Corollary \ref{C:sum}, over $\cM_{cyc}(\bm\theta)$ the bundle $p(T\cM(\bm\theta))$ is a direct sum of  eigenbundles $H^1_a$. By Lemma \ref{L:sim}, for each $d$ with $\bigoplus_{\gcd(a,k)=d} H^1_a$ nonzero, there is some $b\in \bZ/k$ with $\gcd(b,k)=d$ and $H^1_b$ nonzero and contained in $p(T\cM(\bm\theta))$. By Lemma \ref{L:ab}, we get that  $H^1_{ab}$ for all $a\in A=(\bZ/k)^*$. It follows that $p(T\cM(\bm\theta))$ contains all eigenbundles $H^1_a$. 
\end{proof}

\subsection{Proof of Proposition \ref{P:neq0}.}\label{SS:neq0}

For $0< a<k$ with $t(-a)>1$ and $(X_\mathbf{z}, \omega_\mathbf{z}) \in \cM_{cyc}(\bm\theta)$, recall  Lemma \ref{L:eigendim} and define $$\omega_a=\prod_{i=1}^n (z-z_i)^{-t_i(-a)} dz \in H^{1,0}_a(X_\mathbf{z}).$$ 
Lemma \ref{L:eigendim} states that any element of $H^{1,0}_a(X_\mathbf{z})$ can be written as $p_a(z) \omega_a$, where $p_a(z)$ is a polynomial of degree at most $t(-a)-2$. Recall also that $\omega_\mathbf{z}\in H^{1,0}_1(X_\mathbf{z})$.

\begin{proof}[Proof of Proposition \ref{P:neq0}.]
We are considering expressions of the form $B_{\omega_\mathbf{z}}(p_a(z) \omega_a, p_b(z) \omega_b)$. If we define $p(z)=p_a(z)p_b(z)$, this is   equal to $$\int p(z)\omega_a \omega_b \frac{\overline{\omega_\mathbf{z}}}{\omega_\mathbf{z}}= \frac1k\int p(z) \sum_{\ell=1}^k (T^*)^\ell\left(\omega_a \omega_b \frac{\overline{\omega_\mathbf{z}}}{\omega_\mathbf{z}}\right).$$

If $a+b\neq 2 \mod k$, this integral is  zero because $T$ acts on $\omega_a \omega_b \frac{\overline{\omega_\mathbf{z}}}{\omega_\mathbf{z}}$ by a root of unity other than 1. 
This proves the first claim.

If $a+b= 2 \mod k$, the integrand is $T$ invariant, and hence the integral descends to an integral on $X_\mathbf{z}/\langle T\rangle =\bP^1$, which we will now give an expression for.    \ann{R2: Please reorganize this paragraph. ...
\\A: We reorganized the paragraph as suggested. 
}

\ann{R2: Setting $n = 3$ and then continuing to write n sporadically throughout the rest of the proof was not a good decision. ...
\\ A: We moved the $n=3$ assumption to the end, and instead used the $\theta_i < \pi$ assumption at this point, as you suggested.
}

For the remainder of the proof we assume $\theta_i<\pi$ for all $i$. At the final stage of the proof we will assume that $n=3$, which implies $\theta_i<\pi$ since $\theta_1+\theta_2+\theta_3=\pi$ when $n=3$. Assume $b=2-a \mod k$. Compute


$$\omega_a \omega_b = \prod_{i=1}^n (z-z_i)^{-t_i(-a)-t_i(-b)} (dz)^2 = \omega_\mathbf{z}^2\prod_{i=1}^n (z-z_i)^{\e_i},$$
where 
\begin{eqnarray*}
\e_i&=&2(1-\theta_i/\pi)-t_i(-a)-t_i(-b)\\&=&2-2t_i(1)-t_i(-a)-t_i(-b).
\end{eqnarray*}
With this notation,  
$$\omega_a \omega_b \frac{\overline{\omega_\mathbf{z}}}{\omega_\mathbf{z}}
= \prod_{i=1}^n (z-z_i)^{\e_i} \prod_{i=1}^n |z-z_i|^{2\theta_i/\pi-2} dz d\overline{z}.$$
We require the following numerical observations.  
\ann{R2: Consider the following shorter alternative argument for $\e_i \in
\{-1,0,1\}$:  Since $0 \leq t_i(\ell) < 1$ and $t_i(1) > 0, -2 < \e_i < 2$. ...
\\A: We added your shorter argument, and separated one item into two.}
\begin{enumerate}
\item $ \e_i\in \{-1, 0, 1\}$ for all $i$. This follows since $0\leq t_i(\ell)<1$ and $0<t_i(1)$, using that $\e_i\in \bZ$.
\item $\e_i=-1$ implies $\theta_i\geq \frac\pi2$. This follows similarly, using that  $2t_i(1) = t_i(2)$ if $\theta_i< \frac\pi2$ and $2t_i(1) = 1+ t_i(2)$ otherwise.
\item $\sum \e_i= 4-t(-a)-t(-b).$ Indeed, sum the definition of $\e_i$, and  note that  the sum of the angles in an $n$-gon is $(n-2)\pi$.
\item $4-t(-a)-t(-b)= 0$.  Indeed, by assumption, $t(-a)>1, t(-b)>1$, which implies $t(-a)=2, t(-b)=2$ since $n=3$.
\end{enumerate}
\ann{R2: There are two separate equalities. Write the  first equality, follow with the  first sentence of the argument. Then write the equality   $\e_i = 0$ and write the second sentence. Also please change $t(-a), t(-b) > 1$ to $t(-a) > 1,t(-b) > 1$ and $t(-a),t(-b) = 2$ to $t(-a) = t(-b) = 2$.\\A: We followed all these suggestions.}
Finally, we assume $n=3$. The remainder of the proof follows from the next lemma, which applies after moving one of the $z_i$ to infinity. If the triangle is obtuse, then the\ann{A: Deleted double word.} $z_i$ corresponding to the obtuse angle should be the one moved to infinity. 
\end{proof}

\begin{lem}
Assume $0<\frac{q_1}{k},  \frac{q_2}{k}<\frac12$. With $\e_1, \e_2$ equal to $0$ or $1$, 
$$\int_\bC \frac{  z^{\e_1}(z-1)^{\e_2}  }{  |z|^{2-2\frac{q_1}{k}}|z-1|^{2-2\frac{q_2}{k}}}  dA$$
is  nonzero (assuming the integrand is integrable, in particular that at most one $\e_i$ is nonzero).
\end{lem}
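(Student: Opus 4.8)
The plan is to write $\alpha=q_1/k$ and $\beta=q_2/k$, so that the hypotheses read $\alpha,\beta\in(0,\tfrac12)$, and to split into cases according to $(\e_1,\e_2)$. If $\e_1=\e_2=0$ the integrand equals $|z|^{2\alpha-2}|z-1|^{2\beta-2}$, which is strictly positive wherever it is defined; hence the integral is a positive real number and there is nothing more to prove. If $\e_1=0$ and $\e_2=1$, the substitution $z\mapsto 1-z$ preserves $dA$, interchanges $|z|$ with $|z-1|$, and sends $z-1$ to $-z$, so it identifies the integral with $-1$ times the one coming from the case $(\e_1,\e_2)=(1,0)$ after interchanging $\alpha$ and $\beta$. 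Thus it suffices to treat
\[
I:=\int_{\bC}\frac{z}{|z|^{2-2\alpha}\,|z-1|^{2-2\beta}}\,dA ,\qquad \alpha,\beta\in(0,\tfrac12),
\]
and to show $I\neq 0$.

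Next I would recognise $I$ as a complex beta (Selberg) integral. Using $|z-1|=|1-z|$, the integrand is $z^{p}\bar z^{\bar p}(1-z)^{q}(1-\bar z)^{\bar q}$ with $p=\alpha$, $\bar p=\alpha-1$, $q=\bar q=\beta-1$; since $p-\bar p=1\in\bZ$ and $q-\bar q=0\in\bZ$ the integrand is single-valued, convergence at $z=0$ and $z=1$ follows from $\alpha,\beta<\tfrac12$, and convergence at $z=\infty$ is precisely the integrability hypothesis of the lemma (which in this case forces $\alpha+\beta<\tfrac12$). I would then invoke the standard closed-form evaluation of such integrals --- the complex analogue of the Selberg/beta integral, as computed by Dotsenko--Fateev and Kawai--Lewellen--Tye --- namely
\[
\int_{\bC} z^{p}\bar z^{\bar p}(1-z)^{q}(1-\bar z)^{\bar q}\,dA
=\pi\,\frac{\Gamma(1+p)\,\Gamma(1+q)\,\Gamma(-1-\bar p-\bar q)}{\Gamma(-\bar p)\,\Gamma(-\bar q)\,\Gamma(2+p+q)} ,
\]
which for the present values specialises to
\[
I=\pi\,\frac{\Gamma(1+\alpha)\,\Gamma(\beta)\,\Gamma(1-\alpha-\beta)}{\Gamma(1-\alpha)\,\Gamma(1-\beta)\,\Gamma(1+\alpha+\beta)} .
\]
As a sanity check, letting $\alpha\to 0$ recovers $\int_{\bC}\bar z^{-1}|z-1|^{2\beta-2}\,dA=\pi/\beta$, and setting $p=\bar p=\alpha-1,\ q=\bar q=\beta-1$ recovers the classical value of $\int_{\bC}|z|^{2\alpha-2}|z-1|^{2\beta-2}\,dA$.

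To finish I would simply read off non-vanishing. Because $\alpha,\beta\in(0,\tfrac12)$ (so in particular $\alpha+\beta<1$), all six arguments $1+\alpha,\ \beta,\ 1-\alpha-\beta,\ 1-\alpha,\ 1-\beta,\ 1+\alpha+\beta$ are strictly positive reals and therefore avoid the poles $\{0,-1,-2,\dots\}$ of $\Gamma$; since $\Gamma$ has no zeros (its reciprocal is entire), the displayed expression for $I$ is a finite nonzero (indeed positive) real number, so $I\neq 0$, as required.

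The step I expect to be the main obstacle is pinning down the closed-form evaluation of the complex beta integral: different references normalise it differently and carry different signs, and one must first use $p-\bar p\in\bZ$ to know the integrand is single-valued before any such formula applies. However, for the conclusion only the ratio-of-Gamma-functions skeleton matters, not the precise constant or sign, and this skeleton can be produced without citing a reference: integrate out the antiholomorphic variable against a real antiderivative of the antiholomorphic factor and deform the resulting one-variable contour, which expresses $I$ as a bilinear combination --- with elementary trigonometric coefficients --- of products of ordinary real beta integrals $B(s,t)=\Gamma(s)\Gamma(t)/\Gamma(s+t)$; checking, exactly as above, that the arguments $s,t$ occurring are positive (hence never poles of $\Gamma$) again yields $I\neq 0$.
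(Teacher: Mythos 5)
Your proof is correct but takes a genuinely different route from the paper's. The paper disposes of the case $\e_1=\e_2=0$ by positivity, and for $\e_1=1,\e_2=0$ gives a one-line monotonicity argument: the imaginary part of the integrand integrates to zero by the symmetry $z\mapsto\bar z$, while for the real part, the weight $|z-1|^{2-2q_2/k}$ in the denominator is strictly smaller at $z$ than at $-z$ whenever $\Re(z)>0$, so the positive contribution from $\{\Re z>0\}$ strictly dominates the negative one from $\{\Re z<0\}$; the case $\e_1=0,\e_2=1$ is symmetric under $z\mapsto 1-z$, as you also observe. You instead recognise the integral as a complex (Dotsenko--Fateev / Kawai--Lewellen--Tye) beta integral, write down its Gamma-function evaluation, and read off nonvanishing from the fact that $\Gamma$ has neither zeros nor, for the arguments in play (all in $(0,2)$ since $\alpha,\beta\in(0,\tfrac12)$), poles. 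Both are sound, and your route has the advantage of producing the exact value, which would persist for more general exponents. The trade-off is reliance on a nontrivial closed-form identity whose normalisation and sign vary across the literature (which you rightly flag); your fallback of deforming the antiholomorphic contour to reduce to real beta integrals is the standard derivation, but one would still need to verify that the resulting trigonometric prefactors do not vanish for these arguments, and the cleanest way to see this is essentially the paper's elementary sign comparison. In short: same conclusion, but the paper's argument is self-contained and two lines, while yours is a heavier hammer that computes more.
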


Here $dA$ is the usual area form, which is proportional to $dzd\overline{z}.$ 

\begin{proof}
If $\e_1, \e_2$ are both zero, the result is obvious. If $\e_1=1, \e_2=0$, then the absolute value of the real part of the integrand
$$ \frac{  z }{  |z|^{2-2\frac{q_1}{k}}|z-1|^{2-2\frac{q_2}{k}}}  $$
 at $z$ with $\Re(z)>0$ is greater than that at $-z$. Thus the positive contribution, from when $\Re(z)>0$, dominates the negative contribution from when $\Re(z)<0$. The argument is similar when $\e_1=0, \e_2=1$.
\end{proof}\ann{R2: It is not clear that this argument is complete. From the previous page, it was established that one quotients $X_z$ to $\bP^1$. On the other hand, the integral is only evaluated over $\bC$. Either clarify this discrepancy or explain why the integral about the point at infinity does not cancel the contribution from the  asymmetry  about $0$ that was used in the proof.\\A: $\bP^1\setminus \bC$ has measure 0.}


\newpage

\appendix

\section{Algorithm for Theorem \ref{T:fullrank}}\label{A:alg}\ann{A: Typo found by R2 fixed.}
\begin{algorithm}
\DontPrintSemicolon
\SetKwData{Fac}{$\cD$}
\SetKwData{ConnectedFactors}{$\cE$}
\SetKwData{A}{A}
\SetKwData{done}{$d_1$}
\SetKwData{dtwo}{$d_2$}
\SetKwInOut{Input}{input}
\SetKwInOut{Output}{output}
\Input{$q_1, q_2, q_3, k=q_1+q_2+q_3$, such that $\gcd(q_1, q_2, q_3)=1$ }
\Output{true if Theorem \ref{T:fullrank} implies $\cM(\frac{q_1}k\pi, \frac{q_2}k\pi, \frac{q_3}k\pi)$ has full rank, false otherwise}
\;
\Fac$\leftarrow$ $\{1\}$\;
\ConnectedFactors $\leftarrow$ $\{\}$\;
\A $\leftarrow$ $\{1,-1\}$\;
\;
\For{$a\leftarrow 2$ \KwTo $k-1$}{
  \If{$t(-a)>1$}{\label{lt}
      \done $\leftarrow \gcd(a,k)$\;
      \Fac $\leftarrow$ \Fac $\cup \{\done\}$\;
      \If{$t(-(2-a))>1$}{\label{lt}
        \dtwo $\leftarrow \gcd(2-a,k)$\;
         \uIf{\done $=$ \dtwo}{\label{lt}
           \A $\leftarrow \A \cup \{w \in (\bZ/k)^*: w a=2-a\}$\;
         }
         \Else{\label{lt}
           \ConnectedFactors $\leftarrow$ \ConnectedFactors $\cup \{(\done,\dtwo)\}$\;
         }
      }
    }
}
\;
\If{the equivalence relation on \Fac generated by \ConnectedFactors has more than one equivalence class}{\label{lt}
  \KwRet{false}
}
\;
\For{$d$ in \Fac}{
 \A $\leftarrow \A \cup \{a\in (\bZ/k)^*: a = 1 \mod (k/d)  \}$\:
}
\;
\uIf{\A generates $(\bZ/k)^*$}{\label{lt}
  \KwRet{true}
}
\Else{\label{lt}
  \KwRet{false}
}
\end{algorithm}\DecMargin{1em}
\newpage

\section{List of triangles}\label{A:list}

\begin{figure}[h!]
\begin{tabular}{| c c c c c c  |}
\hline
$(1,2,8)$&
$(1,3,7)$&
$(2,4,5)$&&&\\
\hline
$(1,4,8)$&
$(2,3,8)$&
$(3,4,6)$&&&\\
\hline
$(4,5,6)$&&&&&\\
\hline
$(1,2,14)$&
$(1,4,12)$&
$(1,5,11)$&
$(2,3,12)$&
$(2,4,11)$&
$(2,6,9)$\\
$(2,7,8)$&
$(3,4,10)$&
$(3,6,8)$&
$(4,6,7)$&&\\
\hline
$(1,2,16)$&
$(1,3,15)$&
$(1,4,14)$&
$(1,5,13)$&
$(1,6,12)$&
$(1,7,11)$\\
$(2,3,14)$&
$(2,4,13)$&
$(2,5,12)$&
$(2,6,11)$&
$(2,7,10)$&
$(2,8,9)$\\
$(3,4,12)$&
$(3,5,11)$&
$(3,6,10)$&
$(3,7,9)$&
$(4,6,9)$&
$(4,7,8)$\\
$(5,6,8)$&&&&&\\
\hline
$(1,4,16)$&
$(1,5,15)$&
$(3,5,13)$&
$(4,7,10)$&
$(6,7,8)$&\\
\hline
$(1,2,20)$&
$(1,3,19)$&
$(1,4,18)$&
$(1,5,17)$&
$(1,6,16)$&
$(1,7,15)$\\
$(1,8,14)$&
$(1,9,13)$&
$(2,3,18)$&
$(2,4,17)$&
$(2,5,16)$&
$(2,6,15)$\\
$(2,7,14)$&
$(2,8,13)$&
$(2,9,12)$&
$(2,10,11)$&
$(3,4,16)$&
$(3,5,15)$\\
$(3,6,14)$&
$(3,7,13)$&
$(3,8,12)$&
$(3,9,11)$&
$(4,5,14)$&
$(4,6,13)$\\
$(4,7,12)$&
$(4,8,11)$&
$(4,9,10)$&
$(5,6,12)$&
$(5,7,11)$&
$(5,8,10)$\\
$(6,7,10)$&
$(6,8,9)$&&&&\\
\hline
$(1,3,21)$&
$(1,4,20)$&
$(1,6,18)$&
$(1,7,17)$&
$(1,8,16)$&
$(1,10,14)$\\
$(2,3,20)$&
$(2,4,19)$&
$(2,5,18)$&
$(2,7,16)$&
$(2,8,15)$&
$(2,10,13)$\\
$(3,4,18)$&
$(3,5,17)$&
$(3,6,16)$&
$(3,7,15)$&
$(3,8,14)$&
$(3,9,13)$\\
$(3,10,12)$&
$(4,5,16)$&
$(4,6,15)$&
$(4,8,13)$&
$(4,9,12)$&
$(4,10,11)$\\
$(5,6,14)$&
$(6,7,12)$&
$(6,8,11)$&
$(6,9,10)$&
$(7,8,10)$&\\
\hline
\end{tabular}
\caption{A list of the triples $(q_1, q_2, q_3)$ with $q_1<q_2<q_3$ and $k=q_1+q_2+q_3\leq 25$ odd for which Theorem \ref{T:fullrank} implies that the triangle with angles $(\frac{q_1}{k} \pi,\frac{q_2}{k} \pi,\frac{q_3}{k} \pi)$ unfolds to a surface with dense orbit.}\label{L:list}
\end{figure} 

 \bibliography{mybib}{}
\bibliographystyle{amsalpha}
\end{document}